\newtheorem{theorem}{Theorem}[section]
\newtheorem{construction}{Construction}[theorem]
\newtheorem{lemma}{Lemma}[theorem]
\newtheorem{definition}[theorem]{Definition}
\newtheorem{proposition}[theorem]{Proposition}
\newtheorem{remark}{Remark}[theorem]
\tikzset{cross/.style={cross out, draw=black, minimum size=2*(#1-\pgflinewidth), inner sep=0pt, outer sep=0pt},
cross/.default={1pt}}
\title{Maximal Green Sequences for Cluster Algebras Associated to Closed Orbifolds}
\author{Hin Chung Henry TSANG}
\date{21 January, 2026}
\begin{document}

\maketitle

\begin{abstract}
    It is known that the existence of a maximal green sequence for a quiver associated to surfaces is equivalent to the equality of the cluster algebra and upper cluster algebra generated by the quiver \cite{mills2017maximal}. 
    
    This paper makes the first steps in investigating this behavior in the generalised case of cluster algebras from orbifolds; determining when such surfaces admit a diagram with a maximal green sequence.
    
    Specifically, we will provide a triangulation for the orientable surfaces of genus $n$ with an arbitrary number of orbifold points and arbitrary number of punctures, determine when it has a maximal green sequence, and construct one if it exists.
\end{abstract}

\section{Introduction}

Cluster algebras were introduced by Fomin and Zelevinsky in 2002 in \cite{FZ}. To define a cluster algebra, we start with a set of initial (algebraically independent) variables, called the initial cluster, together with a skew-symmetrisable matrix. We then replace any one of the variables with a certain Laurent polynomial expression in the cluster's variables to create a new cluster using variables from the original cluster, and change the entries in the corresponding row and column; this process is called mutation at that variable. By iterating this process, we get a set of variables, called the cluster variables, and the algebra generated by all cluster variables is defined as the cluster algebra. Even though, a priori, the cluster variables are just rational expressions in terms of the initial cluster variables by definition, Fomin and Zelevinsky showed that they are actually Laurent polynomials in any cluster (therefore so is everything in the cluster algebra) in \cite{FZ2}.

The upper cluster algebra is defined to be the set of all expressions that can be expressed as Laurent polynomials in any cluster. Using this definition, the cluster algebra is contained in the upper cluster algebra, a question studied by many people in the field is that under what conditions this will be an equality. In general, the existence of a maximal green sequence for a quiver (which corresponds to the cluster algebra defined by a skew-symmetric matrix) is not equivalent to the equality of the cluster algebra and upper cluster algebra generated by the quiver, a counterexample is given in \cite{Mil}. However, when the quiver is generated from a surface, the statement is true in all known cases. It therefore makes sense to determine the existence of maximal green sequences of such quivers and the generalisation of quiver, which is diagram.

The search for maximal green sequences for quivers associated to triangulated surfaces without orbifold points was done in \cite{ACCER}, \cite{Lad}, and \cite{Buc}, and has been concluded by Bucher and Mills in \cite{BM}. For skew-symmetrisable matrices, Ahmet Seven studied $3\times3$ matrices in \cite{Sev}. We shall continue with orientable surfaces of arbitrary genus $n$ with arbitrary orbifold points $q$ and arbitrary punctures $p$, which correspond to a class of skew-symmetrisable matrices.

In this paper, we start with preliminaries on cluster algebras in Section \ref{prelim} and some notations in Section \ref{notations}. After that, we will consider three cases: $n=1$, $n=0$, and $n>1$. In all cases, there is no maximal green sequence when $p=1$, a short proof is given in Section \ref{puncture}. In each case we start with constructing triangulations, then give mutation sequences, and after that we prove that the mutation sequences are in fact maximal green sequences.

In Section \ref{n=1}, we consider the case where $n=1$, which will be further divided into six cases due to the differences in the mutation sequences and diagrams: ($p=2,q=1$ in Section \ref{2,1}), ($p=2,q>1$ in Section \ref{2,>1}), ($p=4$ in Section \ref{p=4}), ($p>4$ and is even in Section \ref{even}), ($p=3$ in Section \ref{p=3}), and ($p>4$ and is odd in Section \ref{odd}). The case $p=2,q=1$ gives a detailed explanation of the proving strategy used throughout this paper.

With the case $n=1$ done, we shall move on to the case $n=0$ in Section \ref{n=0}. This case will be further divided into two cases: ($p=2$ in Section \ref{0,2}) and ($p>2$ in Section \ref{0,>2}). In the former case the proof does not require Section \ref{n=1}, but in the latter case Section \ref{n=1} is required.

For $n>1$ in Section \ref{n>1}, it will be divided into two cases due to the differences in the diagrams: ($n=2$ in Section \ref{n=2}) and ($n>2$ in Section \ref{n>2}). In both cases Section \ref{n=1} will be required for the proof.

Combining the above cases we have our \textbf{main result}:

\begin{theorem}[Theorem \ref{main_1}, \ref{main_0}, \ref{main_n}]
    For an orbifold $\mathcal{O}$ of genus $n$ with $p$ punctures and $q$ orbifold points, the diagram $D(T_{n,p,q})$ associated to the triangulation $T_{n,p,q}$ has a maximal green sequence if $p\geq 2$. Moreover, if $p=1$ then $D(T)$ does not admit a maximal green sequence for any triangulation $T$ of $\mathcal{O}$.
\end{theorem}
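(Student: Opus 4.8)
The plan is to prove the theorem in two independent pieces: the positive existence claim for $p \geq 2$, and the negative non-existence claim for $p = 1$. Since the main result is an amalgamation of Theorems \ref{main_1}, \ref{main_0}, and \ref{main_n}, the existence part is really a matter of case-assembly: I would organize the argument by the genus $n$, invoking the constructions of the triangulations $T_{n,p,q}$ and the explicit mutation sequences produced in the corresponding sections. The work in each case is to (i) exhibit the diagram $D(T_{n,p,q})$, (ii) write down a candidate mutation sequence, and (iii) verify that it is genuinely a maximal green sequence — meaning each mutation is green (the mutated vertex has no incoming arrows from frozen/previously-mutated vertices in the framed quiver sense) and the sequence terminates with all vertices red.

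Let me sketch how I would handle the negative statement for $p=1$, since this is the cleaner and more self-contained half and is promised a short proof in Section \ref{puncture}. The key structural fact about cluster algebras from punctured surfaces is that a once-punctured surface forces a specific local configuration: any triangulation must contain a self-folded triangle, or more precisely the puncture is surrounded by arcs in a way that creates a vertex in the diagram which is a \emph{sink-source obstruction}. The standard obstruction to maximal green sequences comes from the existence of a ``cyclic'' or ``no-gentle'' configuration; the plan is to locate, for any triangulation $T$ of $\mathcal{O}$ with a single puncture, a subquiver (an induced $3$-cycle or oriented cycle through the puncture's incident arcs) that is invariant under all mutations in the sense that it can never be fully reversed. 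Concretely, I would argue that a maximal green sequence must take every vertex from green to red, but the once-punctured structure produces an oriented cycle whose vertices cannot all be made red without some vertex being revisited infinitely, contradicting finiteness of the sequence.

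The technical heart of the $p=1$ argument is to translate the topology into the combinatorics of the diagram. Here I would use that the puncture is incident to arcs forming a closed cycle in the diagram (the arcs bounding the region around the puncture), and that mutating along any green sequence preserves a certain invariant — for instance, that the total ``charge'' or the existence of an oriented $3$-cycle local to the puncture is mutation-stable up to the relevant subquiver. The cleanest route is probably to reduce to a known minimal obstruction: show that $D(T)$ contains, as a full subquiver after any sequence of mutations, a configuration already known not to admit a maximal green sequence (such as the once-punctured torus or the $\widetilde{A}$-type affine quiver appearing in prior work), and then invoke that maximal green sequences are inherited by restriction to full subquivers only under controlled hypotheses — so the real content is showing the obstruction survives.

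The hardest step will be the existence direction in the high-genus case $n > 1$, specifically verifying that the concatenated mutation sequence remains green throughout. Because the proofs for $n=0$ and $n>1$ are explicitly stated to \emph{require} the $n=1$ analysis, the strategy is inductive/modular: the genus-$n$ surface triangulation $T_{n,p,q}$ is built so that it decomposes into a genus-$1$ ``block'' (handled in Section \ref{n=1}) glued to lower-complexity pieces along identified arcs, and the mutation sequence respects this decomposition. The obstacle is that greenness is a \emph{global} property of the framed quiver — mutating in one block can flip the color of arrows crossing into an adjacent block — so I would need a lemma guaranteeing that the blocks are ``separated'' enough (e.g., the gluing arcs are mutated last, or the interaction arrows point in a consistent direction) that greenness in each block is not destroyed by mutations in the others. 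Establishing this compatibility between the modular mutation sequences is where I expect the genuine difficulty to lie; the per-block greenness and termination, by contrast, should follow from the detailed template established in the $p=2,q=1$ base case.
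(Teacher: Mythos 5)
Your plan for the existence half matches the paper's structure closely: the proof is organized by genus, the $n=0$ and $n>1$ cases are reduced to the $n=1$ analysis, and the ``separation lemma'' you anticipate is exactly what the paper supplies (Lemma \ref{one_puncture_summary}, which isolates the genus-one block $\Tilde{D}(T_{1,p,q})$ and controls how the extra vertices --- $s$ for genus $0$, the pentagon blocks and $f_i$'s for genus $\geq 2$ --- interact with it). So on that side you have the right architecture, even though the real content is the explicit verification of the concatenated sequences, which a sketch cannot supply.

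The genuine gap is in your $p=1$ argument, and it is a gap of approach, not just of detail. The obstructions you propose are not actual obstructions: an oriented $3$-cycle does admit a maximal green sequence (the paper's own worked example in Section \ref{prelim} is an oriented cycle mutated to all-red in four steps), so ``the once-punctured structure produces an oriented cycle whose vertices cannot all be made red'' is false as stated, and no mutation-invariant ``charge'' is identified. Worse, the strategy of finding a configuration that ``survives mutation'' runs headlong into the fact you yourself cite: existence of a maximal green sequence is \emph{not} mutation-invariant in general (the counterexample in \cite{Mil}), so establishing that an obstruction persists under arbitrary mutation is precisely the hard, unproven step. The paper's actual argument is different and genuinely short: a maximal green sequence, read as a sequence of flips, carries the triangulation $T$ to the \emph{same} triangulation with every arc-end at a puncture tagged notched; but an end can only become notched by passing through a triangulated once-punctured digon, and on a closed surface with a single puncture no such digon can ever be formed (equivalently, the tagged flip graph of a once-punctured closed orbifold is disconnected, as recorded in the paper's second proposition on flips). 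To repair your version you would have to either reproduce this tagged-triangulation argument or exhibit a concrete full subdiagram with no maximal green sequence present in \emph{every} triangulation and invoke the restriction theorem --- neither of which your sketch does.
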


\section{Preliminaries and definitions}\label{prelim}

\subsection{Cluster algebras associated to triangulated orbifolds}

In this subsection we refer to the works of Felikson, Shapiro, and Tumarkin in \cite{FST} and the works of Fomin, Shapiro, and Thurston in \cite{FST2}.

We shall start with introducing orbifolds:

\begin{definition}[Orbifold]
    An orbifold is an ordered triple $\mathcal{O}=(S,M,Q)$, where $S$ is a bordered surface, $M\subset S$ is a finite set of marked points, $Q\subset S$ is a finite set of orbifold points, with $M\cap Q=\phi$. Moreover, every boundary component of $S$ contains at least one marked point (and interior marked points are called punctures), while all orbifold points are interior points of $S$. For technical reasons, if $S$ has no boundary component and has a genus of $0$, we require that $|M|+|Q|>3$.
\end{definition}

A marked point is denoted by a dot while an orbifold point is denoted by a cross. Figure \ref{O_0} shows an orbifold $\hat{\mathcal{O}}$ with one boundary component, four marked points (two of them are punctures) and three orbifold points:

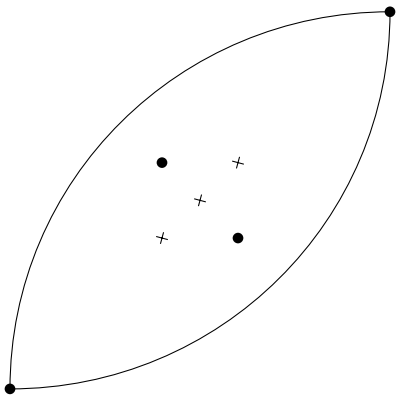
\begin{figure}
    \[\begin{tikzpicture}[every edge quotes/.style={auto=right}]
        \begin{scope}[every node/.style={sloped,allow upside down}][every edge quotes/.style={auto=right}]
            \draw (0,0) to [bend right=45] (5,5);
            \draw (5,5) to [bend right=45] (0,0);
            \fill (3,3) node[cross=2pt,rotate=30] {};
            \fill (2,2) node[cross=2pt,rotate=30] {};
            \fill (2.5,2.5) node[cross=2pt,rotate=30] {};
            \fill (3,2) circle (2pt);
            \fill (2,3) circle (2pt);
            \fill (5,5) circle (2pt);
            \fill (0,0) circle (2pt);
        \end{scope}
    \end{tikzpicture}\]
    \caption{An example of an orbifold $\hat{\mathcal{O}}$ with $2$ marked points on the bounary, $2$ punctures, and $3$ orbifold points} \label{O_0}
\end{figure}

Within an orbifold we can connect points to form arcs:

\begin{definition}[Arc]

An arc $\gamma$ in $\mathcal{O}$ is a curve in $S$ considered up to relative isotopy of $S\setminus\{M\cup Q\}$ modulo endpoints satisfying the following conditions:

\begin{enumerate}
    \item one of the following holds:
    \begin{itemize}
        \item both endpoints of $\gamma$ are in $M$ (in which case $\gamma$ is called an ordinary arc)
        \item one endpoint of $\gamma$ is in $M$ and another is in $Q$ (in which case $\gamma$ is called a pending arc);
    \end{itemize}
    \item $\gamma$ does not intersect with itself except possibly at its endpoints;
    \item $\gamma$ does not intersect with $M\cup Q\cup \partial S$ except at its endpoints;
    \item if $\gamma$ cuts out a monogon then this monogon contains either one point in $M$ or two points in $Q$;
    \item $\gamma$ is not homotopic to a boundary segment.
\end{enumerate}
    
\end{definition}

Using the orbifold $\hat{\mathcal{O}}$ as an example, in the leftmost illustration of Figure \ref{arcs}, the blue curves are ordinary arcs, the red curves are pending arcs, and the green curves in the centre are not arcs.

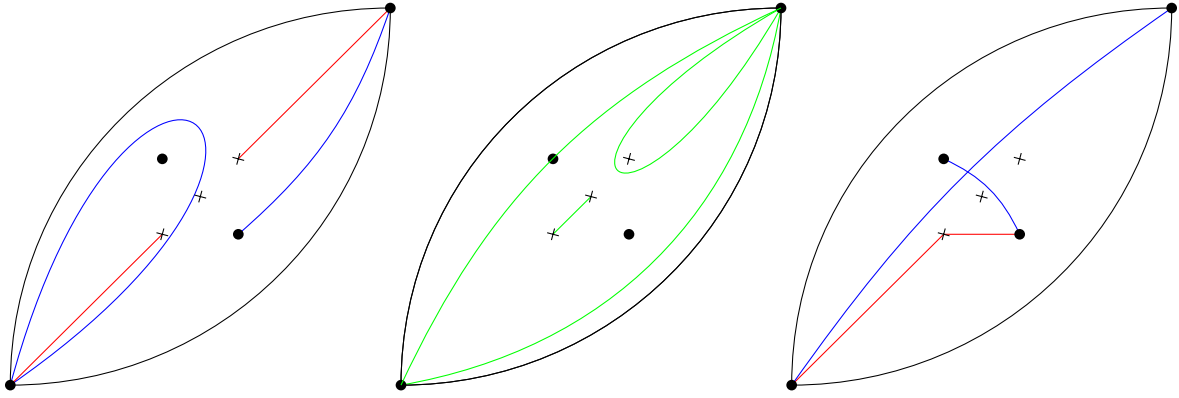
\begin{figure}\[\begin{tikzpicture}[every edge quotes/.style={auto=right}]
    \begin{scope}[every node/.style={sloped,allow upside down}][every edge quotes/.style={auto=right}]
        \draw (0,0) to [bend right=45] (5,5);
        \draw (5,5) to [bend right=45] (0,0);
        \draw[blue] (0,0) to [loop,in=35,out=75,min distance=60mm] (0,0);
        \draw[blue] (3,2) to [bend right=15] (5,5);
        \draw[red] (0,0) to (2,2);
        \draw[red] (5,5) to (3,3);
        \fill (3,3) node[cross=2pt,rotate=30] {};
        \fill (2.5,2.5) node[cross=2pt,rotate=30] {};
        \fill (2,2) node[cross=2pt,rotate=30] {};
        \fill (3,2) circle (2pt);
        \fill (2,3) circle (2pt);
        \fill (5,5) circle (2pt);
        \fill (0,0) circle (2pt);
    \end{scope}
\end{tikzpicture}
\begin{tikzpicture}[every edge quotes/.style={auto=right}]
    \begin{scope}[every node/.style={sloped,allow upside down}][every edge quotes/.style={auto=right}]
        \draw (0,0) to [bend right=45] (5,5);
        \draw (5,5) to [bend right=45] (0,0);
        \draw[green] (0,0) to [bend right=35] (5,5);
        \draw[green] (5,5) to [loop,in=210,out=240,min distance=42mm] (5,5);
        \draw[green] (2,2) to (2.5,2.5);
        \draw (0,0) to [bend right=45] (5,5);
        \draw (5,5) to [bend right=45] (0,0);
        \fill (3,3) node[cross=2pt,rotate=30] {};
        \fill (2,2) node[cross=2pt,rotate=30] {};
        \fill (2.5,2.5) node[cross=2pt,rotate=30] {};
        \fill (3,2) circle (2pt);
        \fill (2,3) circle (2pt);
        \fill (5,5) circle (2pt);
        \fill (0,0) circle (2pt);
        \draw[green] (0,0) to [bend left=20] (5,5);
    \end{scope}
\end{tikzpicture}
\begin{tikzpicture}[every edge quotes/.style={auto=right}]
        \begin{scope}[every node/.style={sloped,allow upside down}][every edge quotes/.style={auto=right}]
            \draw (0,0) to [bend right=45] (5,5);
            \draw (5,5) to [bend right=45] (0,0);
            \draw[red] (0,0) to (2,2);
            \draw[red] (2,2) to (3,2);
            \draw[blue] (2,3) to [bend left=20] (3,2);
            \draw[blue] (0,0) to [bend left=10] (5,5);
            \fill (3,3) node[cross=2pt,rotate=30] {};
            \fill (2,2) node[cross=2pt,rotate=30] {};
            \fill (2.5,2.5) node[cross=2pt,rotate=30] {};
            \fill (3,2) circle (2pt);
            \fill (2,3) circle (2pt);
            \fill (5,5) circle (2pt);
            \fill (0,0) circle (2pt);
        \end{scope}
    \end{tikzpicture}\]\caption{In the leftmost illustration we show valid arcs - ordinary arcs are blue, and pending arcs are red. In the centre we illustrate curves which are not arcs. The rightmost image shows two pairs of non-compatible arcs; one in blue, and one in red.} \label{arcs}
\end{figure}

\begin{definition}[Compatibility of arcs]

Two arcs $\gamma$ and $\gamma'$ are compatible if the following conditions hold:

\begin{enumerate}
    \item There are curves in the isotopy classes of $\gamma$ and $\gamma'$ that do not intersect in the interior of $\mathcal{S}$.
    \item If both $\gamma$ and $\gamma'$ are pending arcs, they do not share an orbifold point as endpoint.
\end{enumerate}
    
\end{definition}

In Figure \ref{arcs}, the arcs in the leftmost illustration are pairwise compatible. On the other hand, the pairs of same-colour arcs in the rightmost illustration are non-compatible.

By choosing a suitable set of arcs we can triangulate an orbifold.

\begin{definition}[Ideal triangulation of an orbifold]

An ideal triangulation of $\mathcal{O}$ is a maximal collection of distinct pairwise compatible arcs. The arcs of a triangulation partition $\mathcal{O}$ into triangles which can possibly be self-folded triangles or have pending arcs as edges.
    
\end{definition}

As an example, in Figure \ref{triangulation} we shall add arcs to the picture on the left of Figure \ref{arcs} to make it an ideal triangulation. Note that some points are moved to make the arcs clearer.

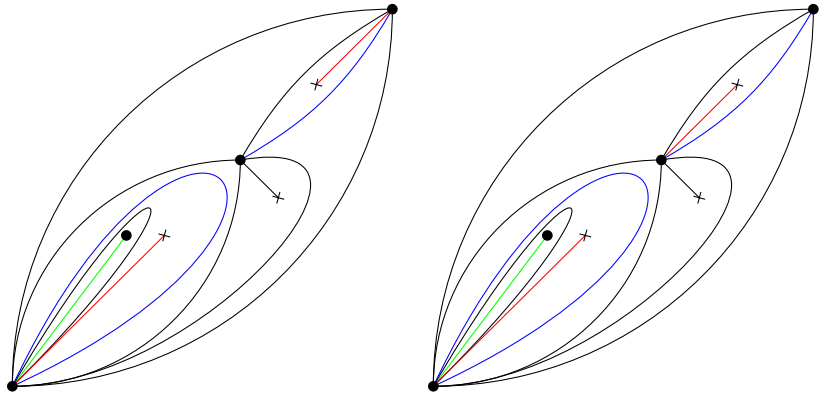
\begin{figure}\[\begin{tikzpicture}[every edge quotes/.style={auto=right}]
    \begin{scope}[every node/.style={sloped,allow upside down}][every edge quotes/.style={auto=right}]
        \draw (0,0) to [bend right=45] (5,5);
        \draw (5,5) to [bend right=45] (0,0);
        \draw (0,0) to [bend right=45] (3,3);
        \draw[blue] (3,3) to [bend right=15] (5,5);
        \draw (3,3) to [bend right=45] (0,0);
        \draw[green] (0,0) to (1.5,2);
        \draw (0,0) to [loop,in=45,out=60,min distance=40mm] (0,0);
        \draw[blue] (0,0) to [loop,in=25,out=65,min distance=55mm] (0,0);
        \draw (3,3) to [bend left=15] (5,5);
        \draw[red] (0,0) to (2,2);
        \draw[red] (5,5) to (4,4);
        \draw (0,0) to [in=10, out=0,min distance=25mm] (3,3);
        \draw (3,3) to (3.5,2.5);
        \fill (4,4) node[cross=2pt,rotate=30] {};
        \fill (2,2) node[cross=2pt,rotate=30] {};
        \fill (3.5,2.5) node[cross=2pt,rotate=30] {};
        \fill (3,3) circle (2pt);
        \fill (1.5,2) circle (2pt);
        \fill (5,5) circle (2pt);
        \fill (0,0) circle (2pt);
    \end{scope}
\end{tikzpicture}\begin{tikzpicture}[every edge quotes/.style={auto=right}]
    \begin{scope}[every node/.style={sloped,allow upside down}][every edge quotes/.style={auto=right}]
        \draw (0,0) to [bend right=45] (5,5);
        \draw (5,5) to [bend right=45] (0,0);
        \draw (0,0) to [bend right=45] (3,3);
        \draw[blue] (3,3) to [bend right=15] (5,5);
        \draw (3,3) to [bend right=45] (0,0);
        \draw[green] (0,0) to (1.5,2);
        \draw (0,0) to [loop,in=45,out=60,min distance=40mm] (0,0);
        \draw[blue] (0,0) to [loop,in=25,out=65,min distance=55mm] (0,0);
        \draw (3,3) to [bend left=15] (5,5);
        \draw[red] (0,0) to (2,2);
        \draw[red] (3,3) to (4,4);
        \draw (0,0) to [in=10, out=0,min distance=25mm] (3,3);
        \draw (3,3) to (3.5,2.5);
        \fill (4,4) node[cross=2pt,rotate=30] {};
        \fill (2,2) node[cross=2pt,rotate=30] {};
        \fill (3.5,2.5) node[cross=2pt,rotate=30] {};
        \fill (3,3) circle (2pt);
        \fill (1.5,2) circle (2pt);
        \fill (5,5) circle (2pt);
        \fill (0,0) circle (2pt);
    \end{scope}
\end{tikzpicture}\]\caption{An example of two ideal triangulations of $\hat{\mathcal{O}}$ which are related by a flip. However, the green arc cannot be flipped in either triangulation} \label{triangulation}
\end{figure}

Given any ideal triangulation, we can create a new ideal triangulation.

\begin{definition}[Flip]
    A flip on an arc $\gamma$ of an ideal triangulation $T$ replaces $\gamma$ with a unique arc $\gamma'\neq\gamma$ that is compatible with all arcs in $T\setminus\{\gamma\}$, the resulting collection of arcs is also an ideal triangulation.
\end{definition}

In fact, we can generate all triangulations by flipping arcs:

\begin{proposition}
    Given any two ideal triangulations $T$, $T'$ of an orbifold $\mathcal{O}$, there is a sequence of flips on arcs that transforms $T$ to $T'$.
\end{proposition}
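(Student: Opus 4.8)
The plan is to prove flip-connectivity by induction on a numerical measure of how far apart the two triangulations are. Fixing a complete finite-area hyperbolic metric on $S$ with cusps at the marked points and cone points at the orbifold points of $Q$, I would realize every arc as the unique geodesic in its isotopy class, so that any two arcs meet in the minimal possible number of interior points. The natural complexity to induct on is a combination $c(T,T') = d(T,T') + e(T,T')$, where $e(T,T') = \sum_{\gamma\in T,\ \gamma'\in T'} |\gamma\cap\gamma'|$ counts interior crossings (endpoints at shared marked or orbifold points are \emph{not} counted), and $d(T,T')$ counts the orbifold points at which the pending arc of $T$ and the pending arc of $T'$ differ. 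The whole argument then amounts to showing that, holding $T'$ fixed, a single flip applied to $T$ can always strictly decrease $c(\cdot,T')$ whenever it is positive; since flips are reversible, driving $c$ to zero produces the required sequence transforming $T$ into $T'$.

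For the base case I would argue that $c(T,T')=0$ forces $T=T'$. If no arc of $T'$ crosses any arc of $T$ in the interior, and moreover the pending arc of $T$ at each orbifold point coincides with that of $T'$, then every arc of $T'$ is compatible with every arc of $T$: the first compatibility condition holds because there are no interior crossings, and the second (two pending arcs may not share an orbifold endpoint) holds precisely because $d(T,T')=0$ means the pending arcs agree rather than conflict at each orbifold point. Here I would use the standard fact that in any ideal triangulation each orbifold point is the endpoint of exactly one pending arc. Consequently $T\cup T'$ is a collection of pairwise compatible arcs containing the maximal collection $T$, so $T'\subseteq T$, and maximality of $T'$ gives $T=T'$.

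The inductive step for the $e$-part is the standard intersection-reduction move. Assuming some arc $\gamma'\in T'$ crosses $T$ in the interior, I would follow $\gamma'$ and locate an ``innermost'' configuration: a triangle of $T$ in which $\gamma'$ enters and exits across two sides without meeting the third. The diagonal $\gamma$ of the quadrilateral formed by that triangle and its neighbour can be flipped to the other diagonal, and after returning everything to minimal (geodesic) position the number of crossings of $\gamma'$ strictly drops while no other arc of $T'$ gains crossings, so $e(T_{\mathrm{new}},T')<e(T,T')$. The delicate point is that this elementary move presupposes an honest quadrilateral built from two ordinary triangles, so I would split the step into cases according to the type of the innermost crossing and, as a simplifying preliminary, first perform flips removing every self-folded triangle of $T$ (flip the loop enclosing its radius), so that the main induction only treats ordinary triangles and single pending arcs.

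The part I expect to be genuinely the obstacle is the interaction with pending arcs and orbifold points, i.e.\ controlling the term $d(T,T')$ and the crossings involving pending arcs. A crossing between $\gamma'$ and a pending arc of $T$, or a pending arc of $T'$ whose orbifold endpoint carries a different pending arc of $T$, does not fit the quadrilateral picture, and the relevant move is the orbifold flip (which for a pending arc takes place inside the cone-point region it bounds) rather than an ordinary diagonal exchange. I would therefore organize the argument in two stages: first a normalization stage that, orbifold point by orbifold point, flips the pending arc of $T$ to match that of $T'$ while bounding the effect on interior crossings, thereby reducing $d(T,T')$ to zero; then the crossing-reduction induction above on the resulting ordinary configuration. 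Verifying that the normalization flips genuinely lower the combined complexity $c$ — and in particular that realigning a pending arc at one orbifold point does not irreparably worsen the data at another — is where the orbifold case departs from the classical surface argument and is where I would concentrate the careful local analysis. Once $c(T,T')$ reaches $0$, the base case yields $T=T'$, completing the proof.
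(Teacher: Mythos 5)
The paper does not prove this proposition at all: it is quoted as background from the cited works of Felikson--Shapiro--Tumarkin and Fomin--Shapiro--Thurston, so there is no in-paper argument to compare against. Your proposal is an attempt at the standard connectivity-of-the-flip-graph argument, and the overall architecture (minimal position, intersection-number induction, separate treatment of pending arcs) is the right one, but as written it has a genuine gap at its central step. You claim that the innermost flip chosen for a particular $\gamma'\in T'$ strictly drops $|\gamma'\cap T|$ ``while no other arc of $T'$ gains crossings,'' and hence that the total $e(T,T')=\sum_{\gamma'\in T'}|\gamma'\cap T|$ decreases. That second clause is false in general: when the diagonal $\gamma$ of a quadrilateral is replaced by the other diagonal $\gamma^*$, an arc $\delta\in T'$ whose segments enter and leave the quadrilateral through a pair of adjacent sides meets exactly one of the two diagonals, so the flip can increase $|\delta\cap T|$ even as it decreases $|\gamma'\cap T|$. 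Consequently $c(\cdot,T')$ is not monotone under the flip you select, and the induction as formulated does not close. The standard repair is to abandon the global sum and instead induct on the number of arcs common to $T$ and $T'$: fix a single $\gamma'\in T'\setminus T$, flip $T$ repeatedly to drive $|\gamma'\cap T|$ to zero (each such flip is chosen for $\gamma'$ alone, so monotonicity is only needed for that one arc), conclude that $\gamma'$ now lies in the triangulation, and then recurse on the orbifold cut along $\gamma'$ (or restrict all further flips to arcs other than $\gamma'$).

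The second gap is the one you yourself flag: the entire content of the orbifold version of this proposition, as opposed to the classical surface version, is the local analysis at orbifold points --- showing that a pending arc of $T$ crossing arcs of $T'$, or disagreeing with the pending arc of $T'$ at its orbifold endpoint, can always be flipped so as to make progress, and that realigning at one orbifold point does not undo progress elsewhere. Your proposal defers exactly this to ``careful local analysis'' without exhibiting the local moves or verifying that they reduce any complexity, so the part of the statement that is specific to orbifolds remains unproved. The same applies to self-folded triangles: removing them from $T$ as a preliminary does not prevent them from reappearing after later flips, nor does it address a target $T'$ that contains them, which is why the literature passes to tagged triangulations (as this paper also does immediately after stating the proposition) rather than handling ideal triangulations head-on.
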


In Figure \ref{triangulation} the upper red pending arc (enclosed in an orbifold with $2$ marked points on the bounary and $1$ orbifold point) of either triangulation can be flipped to obtain the other triangulation.

However, not all arcs can be flipped. Indeed, in Figure \ref{triangulation}, in either triangulation, the green arc enclosed in the self-folded triangle cannot be flipped.

To make things complete, we shall define a "tagged" version of arcs, compatibility of arcs, triangulations and flips (and from now on they are "tagged" by default).

\begin{definition}[Tagged arc]
    Near each endpoint of an (untagged) arc we may tag it in one of two ways, plain or notched. A tagged arc is an (untagged) arc tagged in this way, subject to the following conditions:
    \begin{enumerate}
        \item the arc does not cut out a once-punctured monogon;
        \item an endpoint lying on the boundary or is an orbifold point is always tagged plain;
        \item both ends of a loop are tagged in the same way.
    \end{enumerate}
\end{definition}

In figures, we omit plain tags and denote notched tags with a $\bowtie$ symbol.

\begin{construction}
    Each untagged arc $\gamma$ can be represented by a tagged arc $\tau(\gamma)$ defined as follows:

    \begin{itemize}
        \item if $\gamma$ does not cut out a once-punctured monogon, then $\tau(\gamma)$ is simply $\gamma$ with both ends tagged plain;
        \item if $\gamma$ cuts out a once-punctured monogon (which implies that $\gamma$ is a loop based at a marked point, say $m$), enclosing puncutre $p$, then $\tau(\gamma)$ is and arc with end points $m$ and $p$, tagged plain near $m$ and notched near $p$.
    \end{itemize}
\end{construction}

\begin{remark}    
    This representation will be used to transform an ideal triangulation to its tagged version, which will be mentioned later.
\end{remark}

\begin{definition}[Compatibility of tagged arcs]

Two tagged arcs $\gamma$ and $\gamma'$ are compatible if the following conditions hold:

\begin{enumerate}
    \item the untagged versions of $\gamma$ and $\gamma'$ are compatible;
    \item if the untagged versions of $\gamma$ and $\gamma'$ are different, then they must be tagged the same at any endpoint where they coincide;
    \item if the untagged versions of $\gamma$ and $\gamma'$ are the same, then they are tagged the same at one of the endpoints.
\end{enumerate}
    
\end{definition}

\begin{remark}
    If two untagged arcs $\gamma$ and $\gamma'$ are compatible, then the tagged arcs $\tau(\gamma)$ and $\tau(\gamma')$ are compatible.
\end{remark}

For ideal triangulations, the tagged version is simply called triangulations and the definition is the same except that we do not allow self-folded triangles.

\begin{definition}[Triangulation of an orbifold]

A triangulation of $\mathcal{O}$ is a maximal collection of distinct pairwise compatible tagged arcs. The arcs of a triangulation partition $\mathcal{O}$ into triangles which can possibly have pending arcs as edges.
    
\end{definition}

\begin{remark}\label{tau}
    Given an ideal triangulation $T$, we get a triangulation by replacing each arc $\gamma$ in $T$ by $\tau(\gamma)$.
\end{remark}

\begin{definition}[Flipping of a tagged arc]
    A flip on a tagged arc $\gamma$ of a triangulation $T$ replaces $\gamma$ with a unique tagged arc $\gamma'\neq\gamma$ that is compatible with all arcs in $T\setminus\{\gamma\}$, the resulting collection of arcs is also a triangulation.
\end{definition}

By transforming the ideal triangulation in Figure \ref{triangulation} in the way mentioned in Remark \ref{tau}, we get a triangulation shown on the left in Figure \ref{true_triangulation}, where every arc can be flipped, including the arc that could not be flipped in the ideal triangulation in Figure \ref{triangulation}. The resulting flipped triangulation is shown on the right in Figure \ref{true_triangulation}.

\begin{figure}\[\begin{tikzpicture}[every edge quotes/.style={auto=right}]
    \begin{scope}[every node/.style={sloped,allow upside down}][every edge quotes/.style={auto=right}]
        \draw (0,0) to [bend right=45] (5,5);
        \draw (5,5) to [bend right=45] (0,0);
        \draw (0,0) to [bend right=45] (3,3);
        \draw[blue] (3,3) to [bend right=15] (5,5);
        \draw (3,3) to [bend right=45] (0,0);
        \draw[green] (0,0) to [bend left=10] (1.5,2);
        \draw (0,0) to [bend right=10] node[near end,rotate=90] {$\bowtie$} (1.5,2);
        \draw[blue] (0,0) to [loop,in=25,out=65,min distance=55mm] (0,0);
        \draw (3,3) to [bend left=15] (5,5);
        \draw[red] (0,0) to (2.3,2);
        \draw[red] (5,5) to (4,4);
        \draw (0,0) to [in=10, out=0,min distance=25mm] (3,3);
        \draw (3,3) to (3.5,2.5);
        \fill (4,4) node[cross=2pt,rotate=30] {};
        \fill (2.3,2) node[cross=2pt,rotate=30] {};
        \fill (3.5,2.5) node[cross=2pt,rotate=30] {};
        \fill (3,3) circle (2pt);
        \fill (1.5,2) circle (2pt);
        \fill (5,5) circle (2pt);
        \fill (0,0) circle (2pt);
    \end{scope}
\end{tikzpicture}\begin{tikzpicture}[every edge quotes/.style={auto=right}]
    \begin{scope}[every node/.style={sloped,allow upside down}][every edge quotes/.style={auto=right}]
        \draw (0,0) to [bend right=45] (5,5);
        \draw (5,5) to [bend right=45] (0,0);
        \draw (0,0) to [bend right=45] (3,3);
        \draw[blue] (3,3) to [bend right=15] (5,5);
        \draw (3,3) to [bend right=45] (0,0);
        \draw (0,0) to [bend left=15] node[near end,rotate=90] {$\bowtie$} (2.4,2.4);
        \draw[green] (0,0) to [bend right=15] node[near end,rotate=90] {$\bowtie$} (2.4,2.4);
        \draw[blue] (0,0) to [loop,in=25,out=65,min distance=55mm] (0,0);
        \draw (3,3) to [bend left=15] (5,5);
        \draw[red] (0,0) to (2,2);
        \draw[red] (5,5) to (4,4);
        \draw (0,0) to [in=10, out=0,min distance=25mm] (3,3);
        \draw (3,3) to (3.5,2.5);
        \fill (4,4) node[cross=2pt,rotate=30] {};
        \fill (2,2) node[cross=2pt,rotate=30] {};
        \fill (3.5,2.5) node[cross=2pt,rotate=30] {};
        \fill (3,3) circle (2pt);
        \fill (2.4,2.4) circle (2pt);
        \fill (5,5) circle (2pt);
        \fill (0,0) circle (2pt);
    \end{scope}
\end{tikzpicture}\]\caption{An example of two triangulations of $\hat{\mathcal{O}}$ related by a flip}\label{true_triangulation}
\end{figure}
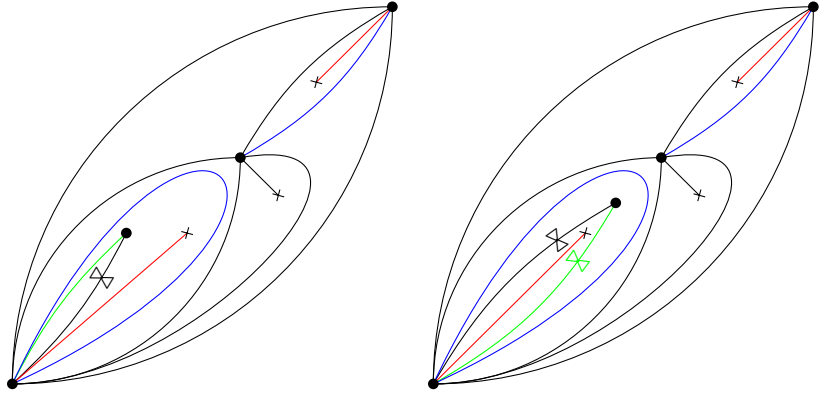

Just like in the case of ideal triangulation, in most of the cases we can generate all triangulations by flipping arcs:

\begin{proposition}
    Given any two ideal triangulations $T$, $T'$ of an orbifold $\mathcal{O}$ \textbf{that is not a closed surface with one puncture with any number of orbifold points}, there is a sequence of flips on arcs that transforms $T$ to $T'$. In the case where $\mathcal{O}$ is a closed surface with one puncture with any number of orbifold points, such sequence does not exist.
\end{proposition}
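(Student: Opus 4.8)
The statement has two halves, and I would treat them separately.

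\textbf{The exceptional case (no such sequence exists).} Suppose $\mathcal{O}$ is a closed surface with a single puncture $p$ and $q\geq 0$ orbifold points. The plan is to exhibit a flip-invariant that separates two triangulations. The key observation is that, since $p$ is the only marked point, every tagged arc of $\mathcal{O}$ has $p$ as an endpoint: ordinary arcs are loops based at $p$, while pending arcs run from $p$ to an orbifold point, whose end is necessarily tagged plain. Using compatibility conditions $2$ and $3$ for tagged arcs, I would show that in any triangulation all tags at $p$ must agree, so that the assignment $\sigma$ sending a triangulation to its common tag at $p$ is a well-defined map into $\{\text{plain},\text{notched}\}$. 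I would then verify that a single flip cannot change $\sigma$: replacing one arc $\gamma$ by the unique compatible arc $\gamma'$, the new arc still ends at $p$ and must be compatible with the unchanged arcs, which already carry the tag $\sigma(T)$ at $p$, forcing $\gamma'$ to carry the same tag. Hence $\sigma$ is constant along any flip sequence, so the all-plain and all-notched versions of a fixed triangulation lie in distinct classes and cannot be joined by flips.

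\textbf{The generic case (a sequence exists).} Now assume $\mathcal{O}$ is not a closed surface with a single puncture. The plan is to reduce to the already-established connectivity of ideal triangulations via the correspondence $\tau$ of Remark \ref{tau}, in three steps. In Step~1 I would show that any triangulation $T$ can be connected by flips to one of the form $\tau(T_0)$ for an ideal triangulation $T_0$, i.e.\ a triangulation carrying no notched tags beyond those forced by once-punctured monogons. The mechanism is local: a notch at a puncture can be removed by flipping the notched arc, provided there is room for its endpoint to move, which is guaranteed precisely because $\mathcal{O}$ has a second marked point or a boundary component. Iterating clears every removable notch; pending arcs cause no difficulty, as their orbifold endpoints are always plain. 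In Step~2 I would invoke the preceding proposition on ideal triangulations to connect $T_0$ and $T_0'$ by ideal flips, and lift this to a sequence of tagged flips between $\tau(T_0)$ and $\tau(T_0')$, using the Remark that $\tau$ preserves compatibility. The only subtlety is the inner arc of a self-folded triangle, which admits no ideal flip; but $\tau$ converts it into an ordinary flippable tagged arc, so each ideal flip corresponds to a tagged flip and the lift succeeds. In Step~3 I would assemble the pieces as $T\to\tau(T_0)\to\tau(T_0')\to T'$, the outer segments coming from Step~1 applied to $T$ and $T'$ and the middle from Step~2.

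\textbf{Main obstacle.} I expect Step~1 to be the crux. The work is in proving that every notch can be pushed off and that the hypothesis ``not closed with one puncture'' is exactly what makes this possible: in the excluded case there is no marked point to which an endpoint can be moved, which is precisely why the invariant $\sigma$ above cannot be altered. Making the local flip analysis uniform across ordinary arcs, loops, and pending arcs incident to orbifold points is the delicate part, since the orbifold points are the genuinely new feature beyond the surface case treated in \cite{FST2}; here I would lean on the orbifold framework of \cite{FST} to ensure each local move remains a valid tagged flip.
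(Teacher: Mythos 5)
The paper gives no proof of this proposition: it appears in Section \ref{prelim} as background quoted from \cite{FST} and \cite{FST2} (and, as printed, it says ``ideal triangulations'' where it must mean tagged triangulations, since the once-punctured obstruction is a purely tagged phenomenon). So your attempt can only be measured against the standard argument in those references. Your overall architecture --- a flip-invariant for the exceptional case, and a reduction to the ideal flip graph via $\tau$ for the generic case --- is exactly that standard architecture, and your exceptional-case argument is essentially sound. One point to tighten there: for two pending arcs with the same underlying untagged arc, compatibility condition 3 is satisfied automatically at the orbifold endpoint (which is always plain), so it places no constraint on the tags at the puncture $p$; to force the flipped arc to inherit the tag $\sigma(T)$ you must invoke condition 2 against some remaining arc with a \emph{different} untagged version, and you should say why such an arc always exists.

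The genuine gap is the local mechanism in Step 1. ``A notch at a puncture can be removed by flipping the notched arc'' is not what a single flip does: the flipped arc must be compatible with the remaining arcs, which are still notched at the puncture $P$, so by your own condition-2 reasoning the new arc either stays notched at $P$ or detaches from $P$ altogether --- it does not change its tag from notched to plain except in one special configuration. The actual mechanism, and the only place the hypothesis ``not closed with one puncture'' enters, is in two stages: first flip arcs off $P$ until exactly two arcs remain incident to it, cutting out a once-punctured digon based at some \emph{other} marked point $m$ (this is where a second marked point or boundary component is indispensable); then perform the short flip sequence inside that digon which switches both tags at $P$ simultaneously. This is precisely the move the paper exploits repeatedly in its maximal green sequences (e.g.\ the steps labelled ``tagging the first interior puncture notched,'' where $h_{q+1},\dots,h_1$ are flipped to form the digon before the tag is switched). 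You also need to ensure that clearing the notches at one puncture does not reintroduce notches at punctures already treated, which your ``iterating clears every removable notch'' passes over. Without these two corrections Step 1 does not go through as written; with them, your plan coincides with the proof in \cite{FST}/\cite{FST2}.
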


Now we shall take a pause with triangulations and introduce objects to define cluster algebras.

Let $n\leq m$ be positive integers and let $\mathcal{F}$ be a field of rational functions in $m$ independent variables. Fix a collection $X_1,...,X_n,x_{n+1},...,x_m$ of algebraically independent variables in $\mathcal{F}$. We define the coefficient ring to be $\mathbb{ZP}=\mathbb{Z}[x_{n+1},...,x_m]$

\pagebreak

\begin{definition}[Seed]

A (labelled) seed consists of a pair, $(x,B)$, where
\begin{itemize}
    \item $x=(x_1,...,x_n)$ is a collection of variables in $\mathcal{F}$ which are algebraically independent over $\mathbb{ZP}$.
    \item $B=(b_{jk})_{j,k\in\{1,...,n\}}$ is an $n\times n$ skew-symmetrisable integer matrix.
\end{itemize}
The variables in any seed are called cluster variables. The variables $x_{n+1},...,x_m$ are called frozen variables.

\end{definition}

Just like flipping an arc gives a new triangulation, we can mutate a cluster variable to get a new seed:

\begin{definition}[Seed mutation]
    Let $i\in\{1,...,n\}$, we define a new seed $\mu_i(x,B)=(x',B')$ given by the following:
    \begin{itemize}
        \item $x'_i=\frac{\prod_{b_{ki}>0}x_k^{b_{ki}}+\prod_{b_{ki}<0}x_k^{-b_{ki}}}{x_i}$
        \item $b'_{jk}=\begin{cases}
            -b_{jk},&\text{if }j=i \text{ or }k=i\\
            b_{jk}+\operatorname{max}(0,-b_{ji})b_{ik}+\operatorname{max}(0,b_{ik})b_{ji},&\text{otherwise.}
        \end{cases}$
    \end{itemize}
\end{definition}

\begin{remark}\label{involution}
    Fix $i\in\{1,...,n\}$, $\mu_i$ is an involution.
\end{remark}

The new seed obtained can be mutated at any cluster variable to again to obtain another new seed. 

We are then ready to define the cluster algebra associated to a seed.

\begin{definition}[Cluster algebra]

Fix an initial seed $(x,B)$. Consider the set $\mathcal{X}$ of all cluster variables in each seed obtained via mutating $(x,B)$ arbitrarily many times, the cluster algebra of the seed $(x,B)$ is defined to be $\mathbb{ZP}[\mathcal{X}]$, denoted by $\mathcal{A}(x,B)$.
    
\end{definition}

For properties of cluster algebras, please read \cite{FZ}.

We shall connect triangulated orbifolds and cluster algebras:

\begin{definition}[Skew-symmetrisable matrix associated to a triangulated orbifold]

Let $T=\{\gamma_1,...,\gamma_n\}$ be a triangulation of an orbifold $\mathcal{O}$. The skew-symmetrisable matrix $B(T)=(b_{ij})_{i,j\in\{1,...,n\}}$ associated to $T$ is defined as follows:

\begin{itemize}
    \item in case the untagged versions of two arcs are the same, we treat them as the same arc;
    \item if $\gamma_i$ and $\gamma_j$ are the same or are not in the same triangle, $b_{ij}=0$;
    \item if $\gamma_i$ and $\gamma_j$ are in precisely one triangle and $\gamma_i$ is before $\gamma_j$ in clockwise direction, define $b_{ij}$ to be
    \begin{itemize}
        \item $1$ if both $\gamma_i$ and $\gamma_j$ are ordinary arcs;
        \item  $2$ if exactly one of $\gamma_i$ and $\gamma_j$ is a pending arc;
        \item $4$ if both $\gamma_i$ and $\gamma_j$ are pending arcs;
    \end{itemize}
    \item if $\gamma_i$ and $\gamma_j$ are in two distinct triangles, define $b_{ij}$ to be
    \begin{itemize}
        \item $0$ if $\gamma_i$ is before $\gamma_j$ in clockwise direction in one triangle and $\gamma_j$ is before $\gamma_i$ in clockwise direction in another triangle;
        \item $4$ otherwise.
    \end{itemize}
\end{itemize}
    
\end{definition}

We have introduced the skew-symmetrisable matrix associated to a triagulated orbifold in order to associate a cluster algebra to a triagulated orbifold. However, we have a simpler notation to arrive our main result:

\begin{definition}[Diagram]
    A diagram is a weighted directed graph. The set of vertices and arrows of a diagram $D$ are denoted by $D_0$ and $D_1$ respectively. In case the weight of an arrow is $1$, the weight is omitted.
\end{definition}

We can associate diagrams to two objects defined previously:

\begin{definition}[Diagram assocaited to a skew-symmetrisable matrix]

Let $B=(b_{ij})$ be an $n\times n$ skew-symmetrisable matrix. The diagram $D(B)$ associated to $B$ has vertices $\{1,...,n\}$ with arrows given by the following: There is an arrow from $i$ to $j$ iff $b_{ij}>0$, in such case the arrow is weighted $-b_{ij}b_{ji}$.
    
\end{definition}

\begin{definition}[Diagram assocaited to a triangulated orbifold]

Let $T=\{\gamma_1,...,\gamma_n\}$ be a triangulation of an orbifold $\mathcal{O}$. The diagram $D(T)$ associated to $T$ is a weighted directed graph with $n$ vertices $\{1,...,n\}$ given by the following:
\begin{itemize}
    \item in case the untagged versions of two arcs are the same, we treat them as the same arc;
    \item for each triangle, if $\gamma_i$ is before $\gamma_j$ in clockwise direction, draw an arrow from vertex $i$ to $j$, with weight:
    \begin{itemize}
        \item $1$ if both $\gamma_i$ and $\gamma_j$ are ordinary arcs;
        \item  $2$ if exactly one of $\gamma_i$ and $\gamma_j$ is a pending arc;
        \item $4$ if both $\gamma_i$ and $\gamma_j$ are pending arcs;
    \end{itemize}
    \item if there is a pair of arrows from vertex $i$ to vertex $j$, replace them with an arrow with weight $4$ from vertex $i$ to vertex $j$;
    \item if there is an arrow from vertex $i$ to vertex $j$ and vice versa, remove them.
\end{itemize}
    
\end{definition}

\begin{remark}
    Let $T$ be a triangulation of an orbifold $\mathcal{O}$, $D(T)=D(B(T))$.
\end{remark}

\begin{definition}[Mutation of a diagram]
    Let $D$ be a diagram with $n$ vertices $\{1,...,n\}$. Mutating $D$ at vertex $i$ does the following:
    \[\begin{tikzcd}
        &i\arrow[dr,"b"]&\\
        j\arrow[ur,"a"]&&k\arrow[dash,ll,"c"]
    \end{tikzcd}
    \xleftrightarrow[]{\mu_i}
    \begin{tikzcd}
        &i\arrow[dl,"a"]&\\
        j&&k\arrow[dash,ll,"d"]\arrow[ul,"b"]
    \end{tikzcd}\]
    where $\pm\sqrt{c}\pm\sqrt{d}=\sqrt{ab}$, the sign before $c$ (resp. $d$) is positive if the three vertices form an oriented cycle and negative otherwise. The resulting diagram is denoted by $\mu_i(D)$.
\end{definition}

The set of all diagrams we could obtain via mutating $D$ arbitrarily many times is denoted by $\operatorname{Mut}(D)$.

\begin{remark}
    Let $B$ be a skew-symmetrisable matrix, then $D(\mu_i(B))=\mu_i(D(B))$.
\end{remark}

\begin{remark}
    Let $T$ be a triangulation and $\gamma_i$ be an arc in $T$. Let $T'$ be the triangulation obtained by flipping $\gamma_i$, then $\mu_i(D(T))=D(T')$.
\end{remark}

\subsection{Maximal green sequences}

In this subsection we refer to the works of Brüstle, Dupont, and Pérotin in \cite{BDP} which defined maximal green sequences for quivers, a brief definition of maximal green sequences for skew-symmetrisable matrices was given in \cite{Sev}.

\begin{definition}[Framed diagram]
    Let $D$ be a diagram, the framed diagram is the diagram $\hat{D}$ with
    \[\hat{D}_0=D_0\sqcup\{i':i\in D_0\}\]
    \[\hat{D}_1=D_1\sqcup\{i\rightarrow i':i\in D_0\}\]
    The vertices in $D'_0:=\{i':i\in D_0\}$ are called frozen vertices, which means that mutations at vertices in $D'_0$ are not allowed.
\end{definition}

\begin{definition}[Green and red vertices]
    Let $E\in\operatorname{Mut}(\hat{D})$. A non-frozen vertex $i\in E_0$ is called green if
    \[\{j'\in D'_0:\exists j'\rightarrow i\in E_1\}=\phi.\]
    It is called red if
    \[\{j'\in D'_0:\exists i\rightarrow j'\in E_1\}=\phi.\]
\end{definition}

\begin{definition}[Maximal green sequences]
    Let $D$ be a diagram, a maximal green sequence for $D$ is a sequence $(i_1,i_2,...,i_l)$, $1\leq i_1,i_2,...,i_l\leq n$, such that $i_k$ is green in $\mu_{i_{k-1}}\circ...\circ\mu_{i_1}(\hat{D})$ for any $2\leq k\leq l$, and all non-frozen vertices in $\mu_{i_{l}}\circ...\circ\mu_{i_1}(\hat{D})$ are red.
\end{definition}

To illustrate mutation of diagrams and maximal green sequences, consider the following diagram:

\[\begin{tikzcd}
    & 1\arrow[dr,"2"]&\\
    2\arrow[ur,"2"]&&3\arrow[ll]
\end{tikzcd}\]

We start with the framed diagram and mutate at vertices and end up with a diagram where all non-frozen vertices are red.

\[\begin{tikzcd}
    &&\color{green}a\arrow[dr,"2"]\arrow[r]&a'\\
    b'&\color{green}b\arrow[ur,"2"]\arrow[l]&&\color{green}c\arrow[ll]\arrow[r]&c'
\end{tikzcd}\xrightarrow{\mu_b}
\begin{tikzcd}
    &&\color{green}a\arrow[dl,"2"]\arrow[r]&a'\\
    b'\arrow[r]&\color{red}b\arrow[rr]&&\color{green}c\arrow[lll,bend left]\arrow[r]&c'
\end{tikzcd}\]\\\[\xrightarrow{\mu_a}
\begin{tikzcd}
    &&\color{red}a&a'\arrow[l]\\
    b'\arrow[r]&\color{red}b\arrow[rr]\arrow[ur,"2"]&&\color{green}c\arrow[lll,bend left]\arrow[r]&c'
\end{tikzcd}\xrightarrow{\mu_c}
\begin{tikzcd}
    &&\color{red}a&a'\arrow[l]\\
    b'\arrow[rrr,bend right]&\color{green}b\arrow[rrr, bend left]\arrow[ur,"2"]&&\color{red}c\arrow[ll]&c'\arrow[l]
\end{tikzcd}\]\\\[\xrightarrow{\mu_b}
\begin{tikzcd}
    &&\color{red}a\arrow[dl,"2",shift right]&a'\arrow[l]\\
    b'\arrow[rrr,bend right]&\color{red}b\arrow[rr]&&\color{red}c\arrow[ul,"2"]&c'\arrow[lll, bend right=75]
\end{tikzcd}
\]

The corresponding maximal green sequence is $(b,a,c,b)$.

It is possible to define maximal green sequences for skew-symmetrisable matrices, but since maximal green sequences exist for a skew-symmetrisable matrix $B$ if and only if maximal green sequences exist for $D(B)$, and it is easier to compute for diagrams, we shall stick with diagrams.

\section{Notations}\label{notations}

\subsection{Framed diagrams}

We shall introduce a notation for framed diagrams. Let $D$ be a diagram. Observe that for any diagram $E\in\operatorname{Mut}(\hat{D})$, the full subdiagram of $E$ given by removing all frozen vertices belongs to $\operatorname{Mut}(D)$.

Therefore, when drawing diagrams in $\operatorname{Mut}(\hat{D})$, we can simply draw the full subdiagram without the frozen vertices, colour the vertices, and for each vertex add the frozen vertices it is connected to as superscript. We shall illustrate this with the maximal green sequence above:

\[\begin{tikzcd}
    &\color{green}a^a\arrow[dr,"2"]&\\
    \color{green}b^b\arrow[ur,"2"]&&\color{green}c^c\arrow[ll]
\end{tikzcd}\xrightarrow{\mu_2}
\begin{tikzcd}
    &\color{green}a^a\arrow[dl,"2"]&\\
    \color{red}b^b\arrow[rr]&&\color{green}c^{c,b}
\end{tikzcd}\xrightarrow{\mu_a}
\begin{tikzcd}
    &\color{red}a^a&\\
    \color{red}b^b\arrow[ur,"2"]\arrow[rr]&&\color{green}c^{c,b}
\end{tikzcd}\]\\\[\xrightarrow{\mu_c}
\begin{tikzcd}
    &\color{red}a^a&\\
    \color{green}b^c\arrow[ur,"2"]&&\color{red}c^{c,b}\arrow[ll]
\end{tikzcd}\xrightarrow{\mu_a}
\begin{tikzcd}
    &\color{red}a^a\arrow[dl,"2"]&\\
    \color{red}b^c\arrow[rr]&&\color{red}c^{b}\arrow[ul,"2"]
\end{tikzcd}
\]

We omitted the $'$ in $a',b',c'$ for the superscripts as it is clear from context that they represent the corresponding frozen vertices and we shall continue to do so. Notice that there is no need to point out the direction of arrows between mutable vertices and frozen vertices as the colour of a mutable vertex determines the possible direction.

In case there is a weight of an arrow between a mutable vertex and a frozen vertex, it is written before the frozen vertex, as shown in the following diagram:

$\begin{tikzcd}
    \color{green}a^{b,2c}
\end{tikzcd}$ corresponds to 
$\begin{tikzcd}
    b'&\color{green}a\arrow[l]\arrow[r,"2"]&c'
\end{tikzcd}$

We will label vertices using subscripts. Suppose we label $k$ vertices $a_1,a_2,...,a_k$. When we write $\begin{tikzcd}
    \color{green}b^{a_{i\nearrow j}}
\end{tikzcd}$, we mean that there are $j-i+1$ arrows of weight $1$ from $b$ to the frozen version of vertices $a_i$ to $a_j$ if $i\leq j$, and no arrows otherwise.

We will also use capital letters to represent a set of frozen vertices. Suppose we have sets of frozen vertices $A,B$ with weights for each vertex, we say $A\subset B$ if all vertices in $A$ are in $B$ with weight in $A$ not greater than weight in $B$. Consider the following diagram:

\[\begin{tikzcd}
    \color{green}a^A&\color{red}b^B\arrow[l]
\end{tikzcd}\]

If we mutate the vertex $a$, then $b$ will remain red, and if $A$ is non-empty, the superscript of $b$ will change. We use $A\setminus B$ to denote the superscript after the mutation.

\[\begin{tikzcd}
    \color{red}a^A\arrow[r]&\color{red}b^{B\setminus A}
\end{tikzcd}\]

For example, if $A=c,d$, $B=4c,d$, then $B\setminus A=c$ instead of $3c$.

Also note that we may use a single vertex to represent the set containing the vertex, for example, $A\setminus c=d$.

\subsection{Triangulations}

In a triangulation, to represent an end of an arc (which must be a puncture) is tagged notched, instead of drawing the $\bowtie$ symbol at that end, we colour the puncture blue unless that puncture has both plain tag and notched tag at it, in which case we revert to the original notation. An example is shown in Figure \ref{bowtie}.

\begin{figure}
    \[\begin{tikzpicture}[every edge quotes/.style={auto=right}]
    \begin{scope}[every node/.style={sloped,allow upside down}][every edge quotes/.style={auto=right}]
        \fill (0,0) circle (2pt);
        \fill (0,1) circle (2pt);
        \fill (-0.866,-0.5) circle (2pt);
        \fill (0.866,-0.5) circle (2pt);
        \draw (0,0) to node[near start,rotate=90] {$\bowtie$} (0,1);
        \draw (0,0) to node[near start,rotate=90] {$\bowtie$} (-0.866,-0.5);
        \draw (0,0) to node[near start,rotate=90] {$\bowtie$} (0.866,-0.5);
        \draw (0,1) to (-0.866,-0.5);
        \draw (0,1) to (0.866,-0.5);
        \draw (-0.866,-0.5) to (0.866,-0.5);
    \end{scope}
\end{tikzpicture}
\rightarrow
\begin{tikzpicture}[every edge quotes/.style={auto=right}]
    \begin{scope}[every node/.style={sloped,allow upside down}][every edge quotes/.style={auto=right}]
        \fill (0,1) circle (2pt);
        \fill (-0.866,-0.5) circle (2pt);
        \fill (0.866,-0.5) circle (2pt);
        \draw (0,0) to (0,1);
        \draw (0,0) to (-0.866,-0.5);
        \draw (0,0) to (0.866,-0.5);
        \draw (0,1) to (-0.866,-0.5);
        \draw (0,1) to (0.866,-0.5);
        \draw (-0.866,-0.5) to (0.866,-0.5);
        \fill[blue] (0,0) circle (2pt);
    \end{scope}
\end{tikzpicture}\]
\caption{Replacing the $\bowtie$ symbol with a puncture coloured blue}
    \label{bowtie}
\end{figure}

\section{The case where $p=1$}\label{puncture}

Just like in surfaces without orbifold points where the quiver associated to a triangulation of a once-punctured closed surface has no maximal green sequences (insert citation), the diagram associated to a once-punctured closed surface with any number of orbifold points has no maximal green sequences. A short explanation is given below:

Given any triangulation of any orbifold and any maximal green sequence, if we flip the arcs corresponding to the vertices in the sequence, we will end up with the same triangulation except that for each arc, all ends at a puncture must be tagged notched. In order to tag an end notched there must be a triangulated once-punctured digon before tagging, and this is impossible for once-punctured closed surfaces.

\section{The case where $n=1$}\label{n=1}

\subsection{Constructing the triangulation for $n=1$}\label{construction_1}

An orientable surface of genus $1$ with $p$ punctures and $q$ orbifold points, has triangulation $T_{1,p,q}$, as shown below, with an anatomy of the triangulation to facilitate the presentation of the proof.

\[\begin{tikzpicture}[every edge quotes/.style={auto=right}]
    \begin{scope}[every node/.style={sloped,allow upside down}][every edge quotes/.style={auto=right}]

\node at (6.85,6.85) {$\iddots$};
        \node at ($(0,5)!.5!(5,0)$) {$\ddots$};
        \draw[cyan] (0,0) to [bend right=15,black] node [below=.15,right] {$g_1$} (3.2,1.8);
        \draw[cyan] (0,0) to [bend left=15,black] node [above=.15,right] {$g_q$} (1.8,3.2);
        \draw[magenta] (0,0) -- node[rotate=-90,left,black] {$f_1$} node [rotate=-90,black] {$\blacktriangle$} (0,10);
        \draw[magenta] (0,10) -- node[above,black] {$f_2$} node [rotate=-90,black] {$\blacktriangle$} (10,10);
        \draw[magenta] (10,0) -- node[rotate=-90,right,black] {$f_1$} node [rotate=-90,black] {$\blacktriangle$} (10,10);
        \draw[magenta] (0,0) -- node[below,black] {$f_2$} node [rotate=-90,black] {$\blacktriangle$} (10,0);
        \draw[cyan] (0,0) to [bend right=45,black] node[below=.15,right=1] {$h_1$} (5,5);
        \draw[cyan] (0,0) to [bend left=45,black] node[above=.15,right=1] {$h_{q+1}$} (5,5);
        \draw[cyan] (0,0) to [bend right=15,black] node[right=1,below] {$h_2$} (5,5);
        \draw[cyan] (0,0) to [bend left=15,black] node[right=1,above] {$h_q$} (5,5);
        \draw[yellow] (0,0) to [bend right=45,black] node[near end,right,below=-.1] {$l_1$} (6.25,6.25);
        \draw[yellow] (0,0) to [bend left=45] node[near end,right,above=-.1,black] {$r_1$} (6.25,6.25);
        \draw (0,0) to [bend right=45,black] node[near end,below] {$l_{p-2}$} (8.75,8.75);
        \draw (0,0) to [bend left=45,black] node[near end,above] {$r_{p-2}$} (8.75,8.75);
        \draw[yellow] (0,0) to [bend right=45,black] node[near end,above] {$l_{p-3}$} (8,8);
        \draw[yellow] (0,0) to [bend left=45,black] node[near end,above] {$r_{p-3}$} (8,8);
        \draw[magenta] (0,0) to [bend right=45,black] node[near end,below,black] {$l_{p-1}$} (10,10);
        \draw[magenta] (0,0) to [bend left=45,black] node[near end,above] {$r_{p-1}$} (10,10);
        \draw (5,5) to node[below] {$m_1$} (6.25,6.25);
        \draw (8.75,8.75) to node[below] {$m_{p-1}$} (10,10);
        \draw (6.25,6.25) to node[below] {$m_2$} (6.5,6.5);
        \draw (8,8) to node[left=.2,below] {$m_{p-2}$} (8.75,8.75);
        \draw (7.5,7.5) to node[below=.2,left] {$m_{p-3}$} (8,8);
        \fill (0,10) circle (2pt);
        \fill (10,0) circle (2pt);
        \fill (0,0) circle (2pt);
        \fill (10,10) circle (2pt);
        \fill (5,5) circle (2pt);
        \fill (3.2,1.8) node[cross=2pt,rotate=30] {};
        \fill (1.8,3.2) node[cross=2pt,rotate=30] {};
        \fill (6.25,6.25) circle (2pt);
        \fill (8.75,8.75) circle (2pt);
        \fill (8,8) circle (2pt);
    \end{scope}
\end{tikzpicture}\]

The opposite sides of the squares are glued together, therefore the four corners are the same puncture. The remaining $p-1$ punctures are the interior punctures, ordered from the one incident to the $h_i$'s and $m_1$ to the one incident to $l_{p-2}, m_{p_2},r_{p-2},$ and $m_{p-1}$. The light blue arcs ($g_i$'s and $h_i$'s) are called the core arcs; the yellow arcs ($l_1$ up to $l_{p-3}$ and $r_1$ up to $r_{p-3}$ are called the inner arcs; the magenta arcs (the remaining arcs other than the $m_i$'s) are called the outer arcs. An exception is that when $p=2$, $h_1$ and $h_{q+1}$ are no longer core arcs.

For example, when $p=5$, $q=3$, the triangulation looks like this:

\[\begin{tikzpicture}[every edge quotes/.style={auto=right}]
    \begin{scope}[every node/.style={sloped,allow upside down}][every edge quotes/.style={auto=right}]
        \draw[cyan] (0,0) to node[below,black] {$g_2$} (2.5,2.5);
        \draw[yellow] (0,0) to [bend right=45] node[near end, below,black] {$l_2$} (7.5,7.5);
        \draw[yellow] (0,0) to [bend left=45,black] node[near end, above] {$r_2$} (7.5,7.5);
        \draw (6.25,6.25) to node[below] {$m_2$} (7.5,7.5);
        \draw (7.5,7.5) to node[below] {$m_3$} (8.75,8.75);
        \draw[cyan] (0,0) to [bend right=15,black] node [below=.15,right] {$g_1$} (3.2,1.8);
        \draw[cyan] (0,0) to [bend left=15,black] node [above=.15,right] {$g_3$} (1.8,3.2);
        \draw[magenta] (0,0) -- node[rotate=-90,left,black] {$f_1$} (0,10);
        \draw[magenta] (0,10) -- node[above,black] {$f_2$} (10,10);
        \draw[magenta] (10,0) -- node[rotate=-90,right,black] {$f_1$} (10,10);
        \draw[magenta] (0,0) -- node[below,black] {$f_2$} (10,0);
        \draw[cyan] (0,0) to [bend right=45,black] node[below=.15,right=1] {$h_1$} (5,5);
        \draw[cyan] (0,0) to [bend left=45,black] node[above=.15,right=1] {$h_{4}$} (5,5);
        \draw[cyan] (0,0) to [bend right=15,black] node[right=1,below] {$h_2$} (5,5);
        \draw[cyan] (0,0) to [bend left=15,black] node[right=1,above] {$h_3$} (5,5);
        \draw[yellow] (0,0) to [bend right=45] node[near end,right,below=-.1,black] {$l_1$} (6.25,6.25);
        \draw[yellow] (0,0) to [bend left=45,black] node[near end,right,above=-.1] {$r_1$} (6.25,6.25);
        \draw (0,0) to [bend right=45] node[near end,below] {$l_{3}$} (8.75,8.75);
        \draw (0,0) to [bend left=45] node[near end,above] {$r_{3}$} (8.75,8.75);
        \draw[magenta] (0,0) to [bend right=45,black] node[near end,below] {$l_{4}$} (10,10);
        \draw[magenta] (0,0) to [bend left=45,black] node[near end,above] {$r_{4}$} (10,10);
        \draw (5,5) to node[below] {$m_1$} (6.25,6.25);
        \draw (8.75,8.75) to node[below] {$m_{4}$} (10,10);
        \fill (0,10) circle (2pt);
        \fill (10,0) circle (2pt);
        \fill (0,0) circle (2pt);
        \fill (10,10) circle (2pt);
        \fill (5,5) circle (2pt);
        \fill (3.2,1.8) node[cross=2pt,rotate=30] {};
        \fill (1.8,3.2) node[cross=2pt,rotate=30] {};
        \fill (2.5,2.5) node[cross=2pt,rotate=30] {};
        \fill (6.25,6.25) circle (2pt);
        \fill (8.75,8.75) circle (2pt);
        \fill (7.5,7.5) circle (2pt);
    \end{scope}
\end{tikzpicture}\]

The following diagram is associated to $T_{1,p,q}$:

\[\begin{tikzcd}
 & g_1\arrow[dr,"2"] & & g_2\arrow[dr,"2"]&&&&g_q\arrow[dr,"2"]\\
 h_1\arrow[drrrr]\arrow[ur,"2"] && h_2\arrow[ur,"2"]\arrow[ll] && h_3\arrow[ll]&\cdots\arrow[l]&h_q\arrow[l]\arrow[ur,"2"]&&h_{q+1}\arrow[ll]\arrow[ddll]\\
 &&&&m_1\arrow[dll]\arrow[urrrr]&&&&\\
 &&l_1\arrow[uull]\arrow[drr]&&&&r_1\arrow[ull]\arrow[dd]&&\\
 &&&&m_2\arrow[dll]\arrow[urr]&&&&\\
 &&l_2\arrow[uu]\arrow[drr]&&&&r_2\arrow[ull]\arrow[dd]&&\\
 &&&&m_3\arrow[dll]\arrow[urr]&&&&\\
 &&\vdots\arrow[uu]&&\vdots&&\vdots\arrow[ull]\\
 &&\vdots&&\vdots&&\vdots\arrow[dd]\\
 &&&&m_{p-2}\arrow[dll]&&&&\\
 &&l_{p-2}\arrow[uu]\arrow[drr]&&&&r_{p-2}\arrow[ull]\arrow[dd]\\
 &&&&m_{p-1}\arrow[urr]\arrow[dll]&&&&\\
 &&l_{p-1}\arrow[uu]\arrow[drr]&&&&r_{p-1}\arrow[ull]\arrow[dll]\\
 &&&&f_1\arrow[d,"4"]&&&&\\
 &&&&f_2\arrow[uull]\arrow[uurr]&&&&
\end{tikzcd}\]

\subsection{Proof of the main result for $n=1$}\label{proof_1}

\begin{theorem}\label{main_1}
    For an orbifold $\mathcal{O}$ of genus $1$ with $p$ punctures and $q$ orbifold points, the diagram $D(T_{1,p,q})$ associated to the triangulation $T_{1,p,q}$ has a maximal green sequence if $p\geq 2$. Moreover, if $p=1$ then $D(T)$ does not admit a maximal green sequence for any triangulation $T$ of $\mathcal{O}$.
\end{theorem}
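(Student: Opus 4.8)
The statement splits into a negative part ($p=1$) and a positive part ($p\geq 2$), and I would dispatch the two by completely different methods. For $p=1$ the claim is exactly the content of Section \ref{puncture}: the orbifold is a genus-$1$ closed surface carrying a single puncture together with $q$ orbifold points, and the plan is simply to invoke the general obstruction argued there. Any maximal green sequence translates, under the flip/mutation dictionary $\mu_i(D(T))=D(T')$, into a sequence of flips returning the triangulation to itself but with every puncture-end re-tagged notched; notching an end requires passing through a triangulated once-punctured digon enclosing that puncture, and on a closed surface with only one puncture no such digon can be cut out. Hence no maximal green sequence exists, for \emph{any} triangulation $T$, the argument being independent of the chosen $T$ and of $q$.

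For $p\geq 2$ the plan is an explicit construction on the diagram $D(T_{1,p,q})$ displayed above, verified through the framed-diagram superscript notation of Section \ref{notations}. I would read $D(T_{1,p,q})$ as three linked regions: a zigzag of weight-$2$ arrows among the core arcs $h_1,g_1,h_2,\dots,g_q,h_{q+1}$ coming from the orbifold points, a central ladder with left rail $l_1,\dots,l_{p-1}$, right rail $r_1,\dots,r_{p-1}$ and rungs $m_1,\dots,m_{p-1}$, and a bottom cap consisting of the weight-$4$ pair $f_1\xrightarrow{4}f_2$. The construction would \textbf{sweep} the green front from the core down the ladder to the cap: mutate the orbifold zigzag to push greenness onto $m_1$, then mutate each ladder level $(l_i,m_i,r_i)$ in a fixed local order that turns the level red while advancing the front to level $i+1$, and finally clear the $f_1,f_2$ cap. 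Because the local sweep pattern alternates with the parity of $p$ and degenerates for small $p$ (and because the orbifold block behaves differently for $q=1$ versus $q>1$, with $h_1,h_{q+1}$ ceasing to be core arcs when $p=2$), I would organise this as the six cases of the introduction: spell out the method in full on the base case $p=2,q=1$, establish the remaining genuinely explicit sequences for $p=2$ with $q>1$, for $p=3$, and for $p=4$, and then treat $p>4$ even and $p>4$ odd either by a uniform sweep or by induction on $p$, appending two ladder levels at a time to a previously verified sequence.

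The verification is where the work concentrates, and it is the step I expect to be the main obstacle. At each mutation I must certify that the chosen vertex is green, i.e.\ that no frozen arrow points into it, using the recorded superscripts, and that the final diagram has every mutable vertex red; this amounts to tracking the full state of $\mu_{i_k}\circ\cdots\circ\mu_{i_1}(\hat D)$ step by step. The delicacy is that mutation of diagrams is nonlinear through the weighted arrows, since the relation $\pm\sqrt{c}\pm\sqrt{d}=\sqrt{ab}$ governs what happens across the weight-$2$ zigzag and especially across the weight-$4$ cap $f_1\xrightarrow{4}f_2$, so I must check that intermediate diagrams remain valid orbifold diagrams and that weights and orientations evolve exactly as the superscript bookkeeping predicts. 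The inductive step carries the extra burden of showing that appending a ladder level does not disturb the coloring already achieved below it, that is, that the green front advances cleanly without re-greening previously reddened vertices; confirming this locality is the crux that makes the parity argument go through.
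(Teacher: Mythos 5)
Your treatment of $p=1$ matches the paper's: you invoke the observation of Section \ref{puncture} that the flips realizing any maximal green sequence must carry a triangulation to itself with every puncture-end re-tagged notched, and that notching an end requires a triangulated once-punctured digon, which a closed orbifold with a single puncture cannot supply. That half is correct and is all the paper does there.

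The positive part ($p\geq 2$) has a genuine gap, and it sits exactly where you flag the crux. You propose a monotone sweep: redden the orbifold zigzag, then redden each ladder level $(l_i,m_i,r_i)$ in a fixed local order ``without re-greening previously reddened vertices,'' then clear the weight-$4$ cap $f_1,f_2$. The same geometric fact you use for $p=1$ rules this out. The flip sequence underlying any maximal green sequence of $D(T_{1,p,q})$ must terminate at $T_{1,p,q}$ with all puncture-ends notched; to notch the ends at the corner puncture (the identified vertex of the square), that puncture must at the moment of notching sit inside a triangulated once-punctured digon and hence be incident to exactly two arcs of the current triangulation. But in $T_{1,p,q}$ the corner is incident to essentially every arc: $f_1$, $f_2$, all $l_i$ and $r_i$, all $h_j$, and all pending arcs $g_j$. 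So any valid sequence must first flip nearly every arc off the corner, notch, and then flip them all back in order to land on the fully notched copy of $T_{1,p,q}$ --- forcing most vertices to be mutated at least twice, with re-greening in between. This is precisely the structure of the paper's sequence $\Delta_1$ (tag the interior punctures via digons, then steps 4--6: move all arcs away from the corner, tag the corner, move them back); already in the base case $p=2$, $q=1$ the sequence has length $22$ on $7$ vertices, with $l_1$ and $f_1$ each mutated four times. A sweep in which each level is reddened once and never revisited yields a flip sequence that cannot reach the all-notched triangulation, so it is not a maximal green sequence; the locality statement you hope to prove in the inductive step is false. The missing idea is the palindromic create-digon/notch/undo organization around each puncture, with the corner puncture handled by temporarily evacuating all of its incident arcs.
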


The case where $p=1$ is handled in Section \ref{puncture}. For other cases, we have a mutation sequence below for demonstration. To have a clearer view for each subcase, refer to the corresponding subsection.

\begin{proposition}

The mutation sequence $\Delta_1$ formed by concatenating the following steps is a maximal green sequence of $D(T_{1,p,q})$:

\begin{enumerate}
    \item Tagging alternate punctures notched. $(\alpha_{p-2},\alpha_{p-4},...,\alpha_{p-2\lfloor \frac{p-1}{2}\rfloor})$
    \item Tagging the first interior puncture notched. $(h_{q+1},h_q,...,h_1,m_{p+1-2\lfloor\frac{p}{2}\rfloor},h_2,h_3,...,h_{q+1-p+2\lfloor \frac{p}{2}\rfloor})$
    \item Tagging the remaining interior punctures notched and moving half the inner arcs away from the corner. $(\beta_{p-3},\beta_{p-5},...,\beta_{p+1-2\lfloor\frac{p}{2}\rfloor})$
    \item Moving arcs away from the corner.
    \begin{enumerate}
        \item Moving core arcs away from the corner. $(g_1,g_2,...,g_q,h_{q+\operatorname{sgn}(p-2)},h_{q+\operatorname{sgn}(p-2)-1},...,h_2,\mu)$
        \item Moving the remaining inner arcs away from the corner. $(l_{p-2\lfloor\frac{p-1}{2}\rfloor},r_{p-2\lfloor\frac{p-1}{2}\rfloor},l_{p+2-2\lfloor\frac{p-1}{2}\rfloor},r_{p+2-2\lfloor\frac{p-1}{2}\rfloor},\\...,l_{p-4},r_{p-4})$
        \item Moving outer arcs away from the corner. $(l_{p-1},f_1,r_{p-1},f_2,l_{p-1},f_1)$
    \end{enumerate}
    \item Tagging the corner notched. $(\gamma)$
    \item Moving arcs back to the corner.
    \begin{enumerate}
        \item Moving outer arcs back to the corner. 
        $(f_1,l_{p-1},f_2,r_{p-1},f_1,l_{p-1})$
        \item Moving inner arcs back to the corner. 
        $(\delta_{p-4},\delta_{p-6},...,\delta_{p-2\lfloor\frac{p-1}{2}\rfloor})$
        \item Moving core arcs back to the corner. $(\mu,\varepsilon,g_1,g_2,...,g_{q-1},g_q,\omega)$
        \item (If $p>2$ and is even) Moving the remaining pair of inner arc back to the corner. $(l_1,r_1)$
    \end{enumerate}
\end{enumerate}

where

\begin{itemize}
    \item $\alpha_k=r_k,l_k,m_{k+1},m_{k},l_k,r_k$
    \item $\beta_k=\begin{cases}
        r_k,l_k,m_{k+2},m_{k-1}&\text{if }k>1,\\
        r_k,l_k,m_{k+2},h_1&\text{otherwise}
    \end{cases}$
    \item $\mu=\begin{cases}
        m_{p+1-2\lfloor\frac{p}{2}\rfloor}&\text{if }p>2,\\
        \phi & \text{if }p=2
    \end{cases}$
    \item $\gamma=\begin{cases}
        m_{p-2},l_{p-2},r_{p-2},m_{p-2}&\text{if }p>2,\\
        h_1,m_1,h_{q+1},h_1 & \text{if }p=2
    \end{cases}$
    \item $\delta_k=r_k,r_{k+1},l_k,l_{k+1}$
    \item $\varepsilon=\begin{cases}
        h_2,h_3,...,h_{q+1}&\text{if }p\text{ is even and}>2\\
        h_2,h_3,...,h_q &\text{otherwise}
    \end{cases}$
    \item $\omega=\begin{cases}
        h_{q+1},g_q&\text{if }p\text{ is odd},\\
        \phi & \text{otherwise}
    \end{cases}$
\end{itemize}
    
\end{proposition}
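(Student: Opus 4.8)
The plan is to track the framed diagram $\hat{D}(T_{1,p,q})$ through the entire concatenated sequence $\Delta_1$ using the superscript notation of Section~\ref{notations}, and to verify the two defining conditions of a maximal green sequence: that the vertex mutated at each step is green immediately before it is mutated, and that every non-frozen vertex is red at the end. Since the exchange matrices here are skew-symmetrisable, sign-coherence of $c$-vectors holds, so every non-frozen vertex of any $E\in\operatorname{Mut}(\hat{D})$ is \emph{either} green \emph{or} red and never neither; this dichotomy is what makes the bookkeeping tractable, because to check greenness of $i_k$ it suffices to check that $i_k$ is not red in $\mu_{i_{k-1}}\circ\cdots\circ\mu_{i_1}(\hat{D})$. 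Geometrically, the argument of Section~\ref{puncture} shows that a maximal green sequence must flip the arcs so that every end at a puncture becomes notched; this tells us in advance what the terminal triangulation should be---every punctured end notched, with all arcs otherwise returned to their original positions---and it is exactly this configuration that Steps~5 and~6 are designed to reach. I would record this target first, since it organises the whole computation.

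Next I would process $\Delta_1$ block by block, exploiting the ladder structure of $D(T_{1,p,q})$. The diagram is built from repeating rungs $l_k\to m_{k+1}\to r_k$ glued along the $m_k$, together with the core fan $g_1,\dots,g_q,h_1,\dots,h_{q+1}$ near the corner and the outer pair $f_1,f_2$ joined by a weight-$4$ arrow. Steps~1 and~3 (the blocks $\alpha_k$ and $\beta_k$) act locally on one rung at a time; I would verify greenness and compute the change of superscripts for a single generic rung, then induct downward on $k$, checking that the mutations of rung $k$ leave the already-processed rungs red and alter only the frozen connections of the neighbouring $m$-vertices, in the controlled way encoded by the $A\setminus B$ notation. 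Step~2 tags the first interior puncture by sweeping $h_{q+1},\dots,h_1$ and back, which I would treat as a single green sweep across the core fan. Step~4 (moving the core, inner, and outer arcs away from the corner) and Step~6 (moving them back, after Step~5 has tagged the corner via $\gamma$) are mirror images of one another, and I would check that each vertex mutated twice is green on both occasions by tracking that its frozen superscript is nonempty before the first mutation and has been reloaded by the corner-tagging step before the second.

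The main obstacle, and the place where the argument is genuinely delicate rather than routine, is the interaction of the weighted arrows with the diagram mutation rule $\pm\sqrt{c}\pm\sqrt{d}=\sqrt{ab}$ at the orbifold points. Mutating across the weight-$2$ arrows incident to the $g_i$ (in Steps~4(a) and~6(c), the blocks $g_1,\dots,g_q$ and $\mu,\varepsilon,g_1,\dots,g_q,\omega$) and across the weight-$4$ arrow from $f_1$ to $f_2$ (in the block $l_{p-1},f_1,r_{p-1},f_2,l_{p-1},f_1$ and its reverse) can create and destroy arrows of varying weight, and one must confirm that at each such step no frozen vertex points \emph{into} the vertex about to be mutated. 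I expect this to need a careful hand computation of the local diagram around the corner at every one of these steps, carried out once for each structural subcase distinguished in the statement ($p=2$ versus $p>2$, and $p$ even versus odd), since the definitions of $\mu$, $\gamma$, $\varepsilon$, and $\omega$ all branch on precisely these parities. The remaining verifications---that Step~1 legitimately tags alternate punctures, that $\delta_k$ in Step~6(b) undoes $\beta_k$ from Step~3, and that after Step~6 every non-frozen vertex carries only an arrow to its own frozen vertex and is therefore red---then reduce to assembling these local computations along the ladder and reading off the terminal superscripts.
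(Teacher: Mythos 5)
Your plan is essentially the paper's own proof strategy: track the framed diagram through $\Delta_1$ with the superscript bookkeeping, verify greenness block by block using the local ladder structure and induction along the rungs, handle the weighted arrows at the $g_i$'s and the $f_1\to f_2$ edge by explicit local computation, and split into subcases by the parity and size of $p$ (the paper in fact needs six subcases, treating $p=3$ and $p=4$ separately from $p>4$ odd/even, and packages the recurring local computations into reusable lemmas). The approach is sound and matches the paper; the entire substance of the argument lies in actually carrying out the case-by-case mutations you have deferred.
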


\subsubsection{The case where $p=2, q=1$}\label{2,1}

When $p=2$ and $q=1$, $\Delta_1$ is different from any other cases, so we shall consider this case separately. This also serves as an introduction to the structure of $\Delta_1$.

\begin{proposition}
    $\Delta_1$ is a maximal green sequence when $p=2$ and $q=1$.
\end{proposition}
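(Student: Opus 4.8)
The statement is a finite, fully explicit combinatorial claim about a single diagram on eight vertices, so the plan is a direct verification: track the framed diagram through every mutation listed in $\Delta_1$ and confirm the green/red conditions hold. First I would specialize the general diagram $D(T_{1,p,q})$ to $p=2$, $q=1$, reading off the eight vertices $g_1,h_1,h_2,m_1,l_1,r_1,f_1,f_2$ together with their weighted arrows. Here one must be careful with the stated exception that for $p=2$ the arcs $h_1$ and $h_{q+1}$ are no longer core arcs, so the local configuration around $h_1,h_2$ differs from the generic picture and has to be extracted from the triangulation rather than copied blindly from the large diagram. I would then form the framed diagram $\hat{D}$ and record it in the superscript notation of Section~\ref{notations}, so that every non-frozen vertex $v$ begins green carrying exactly the decoration $v^{v}$.

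Unwinding the definitions of $\Delta_1$ for $p=2,q=1$, Steps~1, 3, 4(b), 6(b), 6(d) are empty and the sequence reduces to the twenty-two mutations obtained by concatenating Step~2 $=(h_2,h_1,m_1,h_2)$, Step~4(a) $=(g_1)$, Step~4(c) $=(l_1,f_1,r_1,f_2,l_1,f_1)$, Step~5 $=(h_1,m_1,h_2,h_1)$, Step~6(a) $=(f_1,l_1,f_2,r_1,f_1,l_1)$, and Step~6(c) $=(g_1)$. The core of the proof is then to apply these mutations to $\hat{D}$ in order, displaying the intermediate diagram at each stage. At every step I must check two things: that the vertex about to be mutated is green (no incoming frozen arrow appears in its superscript), and that after mutating I have correctly recomputed both the arrows among the eight mutable vertices and the frozen superscripts, using the diagram mutation rule together with the superscript manipulation conventions $A\setminus B$ introduced in Section~\ref{notations}. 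The geometric reading of Section~\ref{puncture}---each mutation is a flip, and a green sequence terminates precisely when every puncture-end is notched---gives a useful sanity check on the shape of the sequence (notch the interior puncture, move arcs away from the corner, notch the corner, move them back), and the final verification reduces to reading off the superscripts after step~22 and confirming all eight vertices are red.

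The main obstacle is the bookkeeping forced by the skew-symmetrizable (not skew-symmetric) setting. Several arrows carry weight $2$ or $4$, so each mutation uses the rule $\pm\sqrt{c}\pm\sqrt{d}=\sqrt{ab}$ with its orientation-dependent sign convention, and it is in applying this weighted rule correctly---especially where weight-$2$ arrows to $g_1$ and the weight-$4$ arrow $f_1\to f_2$ interact with the rest of the diagram---that essentially all the risk of error lies. Compounding this, most vertices are mutated more than once ($l_1$ and $f_1$ four times each, $h_1$ and $h_2$ three times, $m_1$, $r_1$, $f_2$, $g_1$ twice), so each such vertex must pass through green, become red, and be re-greened before its next mutation. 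The crux is therefore to verify the greenness condition at every one of these repeated visits rather than only on a first pass; I expect this to be where the argument is most delicate, and since this subsection is intended as the template for all later cases, I would present the intermediate framed diagrams in full so that the method, and not merely the conclusion, is transparent.
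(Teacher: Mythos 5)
Your proposal is correct and matches the paper's own proof in essentially every respect: the paper verifies the same twenty-two-mutation sequence $(h_2,h_1,m_1,h_2,g_1,l_1,f_1,r_1,f_2,l_1,f_1,h_1,m_1,h_2,h_1,f_1,l_1,f_2,r_1,f_1,l_1,g_1)$ by displaying the framed diagram in superscript notation after each step (or block of steps), using the parallel sequence of arc flips and notched taggings as the organising picture, exactly as you describe. The only difference is one of presentation: you plan to show all intermediate diagrams, while the paper groups some mutations (e.g.\ all of step 4(c)) into a single displayed before/after computation.
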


The triangulation $T_{2,1}$ and its associated diagram look like this:

\[\adjustbox{scale=0.9}{\begin{tikzpicture}[every node/.style={draw}][every edge quotes/.style={auto=right}][baseline=(current bounding box.center)]
    \begin{scope}[every node/.style={sloped,allow upside down}][every edge quotes/.style={auto=right}]
        \fill (0,5) circle (2pt);
        \fill (5,0) circle (2pt);
        \fill (0,0) circle (2pt);
        \fill (5,5) circle (2pt);
        \fill (2.5,2.5) circle (2pt);
        \fill (1.25,1.25) node[cross=2pt,rotate=30] {};
        \draw (0,0) -- node[rotate=-90,left] {$f_1$} (0,5);
        \draw (0,5) -- node[above] {$f_2$} (5,5);
        \draw (5,0) -- node[rotate=-90,right] {$f_1$} (5,5);
        \draw (0,0) -- node[below] {$f_2$} (5,0);
        \draw (0,0) -- node[below=0.15,right] {$g_1$} (1.25,1.25);
        \draw (0,0) to [bend right] node[above=.2,right] {$h_1$} (2.5,2.5);
        \draw (0,0) to [bend left] node[below=.2,right] {$h_2$} (2.5,2.5);
        \draw (0,0) to [bend right] node[below]{$l_1$} (5,5);
        \draw (0,0) to [bend left] node[above]{$r_1$} (5,5);
        \draw (2.5,2.5) -- node[above]{$m_1$} (5,5);
    \end{scope}
\end{tikzpicture}}\adjustbox{scale=0.9}{\begin{tikzcd}[baseline=-80pt]
    &&g_1\arrow[dr,"2"]&&\\
    &h_1\arrow[ur,"2"]\arrow[dr]&&h_2\arrow[ll]\arrow[dr]&\\
    l_1\arrow[ur]\arrow[drr]&&m_1\arrow[ur]\arrow[ll]&&r_1\arrow[ll]\arrow[dll]\\
    &&f_1\arrow[d,"4"]&&\\
    &&f_2\arrow[uull]\arrow[uurr]&&
\end{tikzcd}}\]

This diagram has maximal green sequence $(h_2,h_1,m_1,h_2,g_1,l_1,f_1,r_1,f_2,l_1,f_1,h_1,m_1,h_2,h_1,f_1,l_1,f_2,r_1,f_1,l_1,g_1)$. In this case the maximal green sequence consists of step 2, 4(a), 4(c), 5, 6(a) and 6(c) only.

To understand how this works, we shall have a closer look of the triangulation throughout the flips.

\[\adjustbox{scale=0.9}{\begin{tikzpicture}[every node/.style={draw}][every edge quotes/.style={auto=right}][baseline=(current bounding box.center)]
    \begin{scope}[every node/.style={sloped,allow upside down}][every edge quotes/.style={auto=right}]
        \fill (0,5) circle (2pt);
        \fill (5,0) circle (2pt);
        \fill (0,0) circle (2pt);
        \fill (5,5) circle (2pt);
        \fill (2.5,2.5) circle (2pt);
        \fill (1.25,1.25) node[cross=2pt,rotate=30] {};
        \draw (0,0) -- node[rotate=-90,left] {$f_1$} (0,5);
        \draw (0,5) -- node[above] {$f_2$} (5,5);
        \draw (5,0) -- node[rotate=-90,right] {$f_1$} (5,5);
        \draw (0,0) -- node[below] {$f_2$} (5,0);
        \draw (0,0) -- node[below=0.15,right] {$g_1$} (1.25,1.25);
        \draw (0,0) to [bend right] node[above=.2,right] {$h_1$} (2.5,2.5);
        \draw (0,0) to [bend left] node[below=.2,right] {$h_2$} (2.5,2.5);
        \draw (0,0) to [bend right] node[below]{$l_1$} (5,5);
        \draw (0,0) to [bend left] node[above]{$r_1$} (5,5);
        \draw (2.5,2.5) -- node[above]{$m_1$} (5,5);
    \end{scope}
\end{tikzpicture}}\adjustbox{scale=0.9}{\begin{tikzcd}[baseline=-80pt]
    &&\color{green}g_1^{g_1}\arrow[dr,"2"]&&\\
    &\color{green}h_1^{h_1}\arrow[ur,"2"]\arrow[dr]&&\color{green}h_2^{h_2}\arrow[ll]\arrow[dr]&\\
    \color{green}l_1^{l_1}\arrow[ur]\arrow[drr]&&\color{green}m_1^{m_1}\arrow[ur]\arrow[ll]&&\color{green}r_1^{r_1}\arrow[ll]\arrow[dll]\\
    &&\color{green}f_1^{f_1}\arrow[d,"4"]&&\\
    &&\color{green}f_2^{f_2}\arrow[uull]\arrow[uurr]&&
\end{tikzcd}}\]

Step 2 consists of ($h_2, h_1, m_1, h_2$). After flipping (mutating) at $h_2$, the triangulation (diagram) becomes:

\[\adjustbox{scale=0.9}{\begin{tikzpicture}[every node/.style={draw}][every edge quotes/.style={auto=right}][baseline=(current bounding box.center)]
    \begin{scope}[every node/.style={sloped,allow upside down}][every edge quotes/.style={auto=right}]
        \fill (0,5) circle (2pt);
        \fill (5,0) circle (2pt);
        \fill (0,0) circle (2pt);
        \fill (5,5) circle (2pt);
        \fill (3,2) circle (2pt);
        \fill (1,1.75) node[cross=2pt,rotate=30] {};
        \draw (0,0) -- node[rotate=-90,left] {$f_1$} (0,5);
        \draw (0,5) -- node[above] {$f_2$} (5,5);
        \draw (5,0) -- node[rotate=-90,right] {$f_1$} (5,5);
        \draw (0,0) -- node[below] {$f_2$} (5,0);
        \draw (0,0) -- node[below=0.15,right] {$g_1$} (1,1.75);
        \draw (0,0) to node[below=-.1] {$h_1$} (3,2);
        \draw (0,0) to node[below=-.1] {$h_2$} (5,5);
        \draw (0,0) to [bend right] node[below]{$l_1$} (5,5);
        \draw (0,0) to [bend left] node[above]{$r_1$} (5,5);
        \draw (3,2) -- node[below]{$m_1$} (5,5);
    \end{scope}
\end{tikzpicture}}\adjustbox{scale=0.9}{\begin{tikzcd}[baseline=-80pt]
    &&\color{green}g_1^{g_1,2h_2}\arrow[ddrr,"2",bend left]&&\\
    &\color{green}h_1^{h_1}\arrow[rr]&&\color{red}h_2^{h_2}\arrow[ul,"2"]\arrow[dl]&\\
    \color{green}l_1^{l_1}\arrow[ur]\arrow[drr]&&\color{green}m_1^{m_1,h_2}\arrow[ll]&&\color{green}r_1^{r_1}\arrow[ul]\arrow[dll]\\
    &&\color{green}f_1^{f_1}\arrow[d,"4"]&&\\
    &&\color{green}f_2^{f_2}\arrow[uull]\arrow[uurr]&&
\end{tikzcd}}\]

Observe that the arcs $h_2,l_1,h_1,m_1$ form a triangulated once-punctured digon. When flipping at $h_1$ and then $m_1$, they switch positions with their ends at the middle puncture tagged notched. Because of this, we shall switch the positions of $h_1$ and $m_1$ in the diagram as well:

\[\adjustbox{scale=0.9}{\begin{tikzpicture}[every node/.style={draw}][every edge quotes/.style={auto=right}][baseline=(current bounding box.center)]
    \begin{scope}[every node/.style={sloped,allow upside down}][every edge quotes/.style={auto=right}]
        \fill (0,5) circle (2pt);
        \fill (5,0) circle (2pt);
        \fill (0,0) circle (2pt);
        \fill (5,5) circle (2pt);
        \fill (1,1.75) node[cross=2pt,rotate=30] {};
        \draw (0,0) -- node[rotate=-90,left] {$f_1$} (0,5);
        \draw (0,5) -- node[above] {$f_2$} (5,5);
        \draw (5,0) -- node[rotate=-90,right] {$f_1$} (5,5);
        \draw (0,0) -- node[below] {$f_2$} (5,0);
        \draw (0,0) -- node[below=0.15,right] {$g_1$} (1,1.75);
        \draw (0,0) to node[below=-.1] {$m_1$}  (3,2);
        \draw (0,0) to node[below=-.1] {$h_2$} (5,5);
        \draw (0,0) to [bend right] node[below]{$l_1$} (5,5);
        \draw (0,0) to [bend left] node[above]{$r_1$} (5,5);
        \draw (3,2) -- node[below=-.1]{$h_1$}  (5,5);
        \fill[blue] (3,2) circle (2pt);
    \end{scope}
\end{tikzpicture}}\adjustbox{scale=0.9}{\begin{tikzcd}[baseline=-80pt]
    &&\color{green}g_1^{g_1,2h_2}\arrow[ddrr,"2",bend left]&&\\
    &\color{red}m_1^{m_1,h_2}\arrow[rr]&&\color{green}h_2^{m_1}\arrow[ul,"2"]\arrow[dl]&\\
    \color{green}l_1^{l_1,h_1}\arrow[ur]\arrow[drr]&&\color{red}h_1^{h_1}\arrow[ll]&&\color{green}r_1^{r_1}\arrow[ul]\arrow[dll]\\
    &&\color{green}f_1^{f_1}\arrow[d,"4"]&&\\
    &&\color{green}f_2^{f_2}\arrow[uull]\arrow[uurr]&&
\end{tikzcd}}\]

Flip $h_2$:

\[\adjustbox{scale=0.9}{\begin{tikzpicture}[every node/.style={draw}][every edge quotes/.style={auto=right}][baseline=(current bounding box.center)]
    \begin{scope}[every node/.style={sloped,allow upside down}][every edge quotes/.style={auto=right}]
        \fill (0,5) circle (2pt);
        \fill (5,0) circle (2pt);
        \fill (0,0) circle (2pt);
        \fill (5,5) circle (2pt);
        \fill (1.25,1.25) node[cross=2pt,rotate=30] {};
        \draw (0,0) -- node[rotate=-90,left] {$f_1$} (0,5);
        \draw (0,5) -- node[above] {$f_2$} (5,5);
        \draw (5,0) -- node[rotate=-90,right] {$f_1$} (5,5);
        \draw (0,0) -- node[below] {$f_2$} (5,0);
        \draw (0,0) -- node[below=-.1] {$g_1$}  (1.25,1.25);
        \draw (0,0) to [bend right] node[above=.2,right] {$m_1$}  (2.5,2.5);
        \draw (0,0) to [bend left] node[below=.2,right] {$h_2$}  (2.5,2.5);
        \draw (0,0) to [bend right] node[below]{$l_1$} (5,5);
        \draw (0,0) to [bend left] node[above]{$r_1$} (5,5);
        \draw (2.5,2.5) -- node[above]{$h_1$}  (5,5);
        \fill[blue] (2.5,2.5) circle (2pt);
    \end{scope}
\end{tikzpicture}}\adjustbox{scale=0.9}{\begin{tikzcd}[baseline=-80pt]
    &&\color{green}g_1^{g_1,2h_2}\arrow[dr,"2"]&&\\
    &\color{red}m_1^{h_2}\arrow[ur,"2"]\arrow[dr]&&\color{red}h_2^{m_1}\arrow[ll]\arrow[dr]&\\
    \color{green}l_1^{l_1,h_1}\arrow[ur]\arrow[drr]&&\color{red}h_1^{h_1}\arrow[ll]\arrow[ur]&&\color{green}r_1^{r_1,m_1}\arrow[ll]\arrow[dll]\\
    &&\color{green}f_1^{f_1}\arrow[d,"4"]&&\\
    &&\color{green}f_2^{f_2}\arrow[uull]\arrow[uurr]&&
\end{tikzcd}}\]

Now the triangulation is same as the start except that we tagged the arcs $h_1,h_2,m_1$ notched. In other words, the first (only) interior puncture is tagged notched (in the sense that every arc attached to it is tagged notched at this end).

Step 4(a) consists of only one mutation, $g_1$ (remember that since $p=2$, $h_1,h_2$ are not core arcs).

Flip $g_1$:

\[\adjustbox{scale=0.9}{\begin{tikzpicture}[every node/.style={draw}][every edge quotes/.style={auto=right}][baseline=(current bounding box.center)]
    \begin{scope}[every node/.style={sloped,allow upside down}][every edge quotes/.style={auto=right}]
        \fill (0,5) circle (2pt);
        \fill (5,0) circle (2pt);
        \fill (0,0) circle (2pt);
        \fill (5,5) circle (2pt);
        \fill (1.25,1.25) node[cross=2pt,rotate=30] {};
        \draw (0,0) -- node[rotate=-90,left] {$f_1$} (0,5);
        \draw (0,5) -- node[above] {$f_2$} (5,5);
        \draw (5,0) -- node[rotate=-90,right] {$f_1$} (5,5);
        \draw (0,0) -- node[below] {$f_2$} (5,0);
        \draw (1.25,1.25) -- node[below=-.1] {$g_1$}  (2.5,2.5);
        \draw (0,0) to [bend right] node[above=.2,right] {$m_1$}  (2.5,2.5);
        \draw (0,0) to [bend left] node[below=.2,right] {$h_2$}  (2.5,2.5);
        \draw (0,0) to [bend right] node[below]{$l_1$} (5,5);
        \draw (0,0) to [bend left] node[above]{$r_1$} (5,5);
        \draw (2.5,2.5) -- node[above]{$h_1$}  (5,5);
        \fill[blue] (2.5,2.5) circle (2pt);
    \end{scope}
\end{tikzpicture}}\adjustbox{scale=0.9}{\begin{tikzcd}[baseline=-80pt]
    &&\color{red}g_1^{g_1,2h_2}\arrow[dl,"2"]&&\\
    &\color{green}m_1^{h_2,2g_1}\arrow[rr]\arrow[dr]&&\color{red}h_2^{m_1}\arrow[ul,"2"]\arrow[dr]&\\
    \color{green}l_1^{l_1,h_1}\arrow[ur]\arrow[drr]&&\color{red}h_1^{h_1}\arrow[ll]\arrow[ur]&&\color{green}r_1^{r_1,m_1}\arrow[ll]\arrow[dll]\\
    &&\color{green}f_1^{f_1}\arrow[d,"4"]&&\\
    &&\color{green}f_2^{f_2}\arrow[uull]\arrow[uurr]&&
\end{tikzcd}}\]

Now the only core arc is moved away from the corner.

Step 4(c) consists of $(l_1,f_1,r_1,f_2,l_1,f_1)$. Flip them:

\[\adjustbox{scale=0.9}{\begin{tikzpicture}[every node/.style={draw}][every edge quotes/.style={auto=right}][baseline=(current bounding box.center)]
    \begin{scope}[every node/.style={sloped,allow upside down}][every edge quotes/.style={auto=right}]
        \fill (0,5) circle (2pt);
        \fill (5,0) circle (2pt);
        \fill (0,0) circle (2pt);
        \fill (5,5) circle (2pt);
        \fill (1.25,1.25) node[cross=2pt,rotate=30] {};
        \draw (2.5,2.5) -- node[above=-.1] {$f_1$}  (4.5,5);
        \draw (0,4) -- node[above] {$f_2$} (2.5,5);
        \draw (0,0.75) to [bend left] node[near start, above=-.1] {$f_1$}  (2.5,2.5);
        \draw (4.5,0) -- node[above=-.1] {$f_1$} (5,0.75);
        \draw (2.5,2.5) -- node[right,rotate=90] {$f_2$}  (2.5,0);
        \draw (2.5,2.5) -- node[below=-.1,near end] {$f_2$}  (5,4);
        \draw (1.25,1.25) -- node[below=-.1] {$g_1$}  (2.5,2.5);
        \draw (0,0) to [bend right] node[above=.2,right] {$m_1$}  (2.5,2.5);
        \draw (0,0) to [bend left] node[below=.2,right] {$h_2$}  (2.5,2.5);
        \draw (0,1.5) to [bend left] node[near start,above=-.1]{$l_1$}  (2.5,2.5);
        \draw (3.75,0) to node[above]{$l_1$} (5,1.5);
        \draw (0,3.5) to node[below=-.1]{$l_1$} (3.75,5);
        \draw (2.5,2.5) to node[near end,below=-.1]{$l_1$}  (5,3.5);
        \draw (2.5,2.5) to node[below]{$r_1$}  (5,2.5);
        \draw (0,2.5) to node[above]{$r_1$}  (2.5,2.5);
        \draw (2.5,2.5) -- node[below=-.1]{$h_1$}  (5,5);
        \fill[blue] (2.5,2.5) circle (2pt);
    \end{scope}
\end{tikzpicture}}\adjustbox{scale=0.9}{\begin{tikzcd}[baseline=-80pt]
    &&\color{red}g_1^{g_1,2h_2}\arrow[dl,"2"]&&\\
    &\color{green}m_1^{h_2,2g_1}\arrow[rr]\arrow[dddr]&&\color{green}h_2^{4r_1,f_1,f_2,m_1}\arrow[ul,"2"]\arrow[dl]&\\
    \color{red}l_1^{f_2}\arrow[ddrr]\arrow[drr]&&\color{green}h_1^{4l_1,f_1,f_2,h_1}\arrow[ul]\arrow[d]&&\color{red}r_1^{f_1}\arrow[llll,"4",bend right=15]\\
    &&\color{red}f_1^{r_1,m_1}\arrow[urr]\arrow[uur]&&\\
    &&\color{red}f_2^{l_1,h_1}\arrow[uurr]\arrow[uu,bend right=45]&&
\end{tikzcd}}\]

Now all outer arcs are moved away from the corner.

Step 5 consists of $(h_1,m_1,h_2,h_1)$.

Flip $h_1$:

\[\adjustbox{scale=0.9}{\begin{tikzpicture}[every node/.style={draw}][every edge quotes/.style={auto=right}]
    \begin{scope}[every node/.style={sloped,allow upside down}][every edge quotes/.style={auto=right}]
        \fill (0,5) circle (2pt);
        \fill (5,0) circle (2pt);
        \fill (0,0) circle (2pt);
        \fill (5,5) circle (2pt);
        \fill (1.25,1.25) node[cross=2pt,rotate=30] {};
        \draw (2.5,2.5) -- node[above=-.1] {$f_1$}  (4.5,5);
        \draw (0,4) -- node[above] {$f_2$} (2.5,5);
        \draw (0,1) to [bend left] node[near start, above=-.11] {$f_1$}  (2.5,2.5);
        \draw (4.5,0) -- node[above=-.1] {$f_1$} (5,1);
        \draw (2.5,2.5) -- node[right,rotate=90] {$f_2$}  (2.5,0);
        \draw (2.5,2.5) -- node[below=-.1,near end] {$f_2$}  (5,4);
        \draw (1.25,1.25) -- node[below=-.1] {$g_1$}  (2.5,2.5);
        \draw (0,0) to [bend right] node[above=.2,right] {$m_1$}  (2.5,2.5);
        \draw (0,0) to [bend left] node[below=.2,right] {$h_2$}  (2.5,2.5);
        \draw (0,1.75) to [bend left=15] node[near start,above=-.1]{$l_1$}  (2.5,2.5);
        \draw (3.75,0) to node[above]{$l_1$} (5,1.75);
        \draw (0,3.5) to node[below=-.1]{$l_1$} (3.75,5);
        \draw (2.5,2.5) to node[near end,below=-.1]{$l_1$}  (5,3.5);
        \draw (2.5,2.5) to node[below]{$r_1$}  (5,2.5);
        \draw (0,2.5) to node[above]{$r_1$}  (2.5,2.5);
        \draw (0,0.5) to [bend left]  (2.5,2.5);
        \draw (4.75,0) to [bend left] (5,0.5);
        \draw (4.75,5) to [bend right] (5,4.5);
        \draw (0,4.5) to [bend right] (1,5);
        \draw (1,0) to [bend right] node[below=-.1] {$h_1$}  (2.5,2.5);
        \draw (0,0.5) node[left=-.1] {$h_1$};
        \draw (5,0.5) node[right=-.1] {$h_1$};
        \draw (5,4.5) node[right=-.1] {$h_1$};
        \draw (0,4.5) node[left=-.1] {$h_1$};
        \fill[blue] (2.5,2.5) circle (2pt);
    \end{scope}
\end{tikzpicture}}\adjustbox{scale=0.9}{\begin{tikzcd}[baseline=-80pt]
    &&\color{red}g_1^{g_1,2h_2}\arrow[dl,"2"]&&\\
    &\color{green}m_1^{h_2,2g_1}\arrow[dr]&&\color{green}h_2^{4r_1,4l_1,4f_1,4f_2,m_1,h_1}\arrow[ul,"2"]&\\
    \color{red}l_1^{f_2}\arrow[ddrr]\arrow[drr]&&\color{red}h_1^{4l_1,f_1,f_2,h_1}\arrow[ur]\arrow[dd,bend left=45]&&\color{red}r_1^{f_1}\arrow[llll,"4",bend right=15]\\
    &&\color{red}f_1^{r_1,m_1}\arrow[urr]\arrow[u]&&\\
    &&\color{green}f_2^{l_1,f_1,f_2}\arrow[uurr]\arrow[u]&&
\end{tikzcd}}\]

Observe that the arcs $h_1,g_1,h_2,m_1$ form a triangulated once-punctured digon:

\[\begin{tikzpicture}[every node/.style={draw}][every edge quotes/.style={auto=right}][baseline=(current bounding box.center)]
    \begin{scope}[every node/.style={sloped,allow upside down}][every edge quotes/.style={auto=right}]
        \fill (0,0) circle (2pt);
        \fill (1.25,1.25) node[cross=2pt,rotate=30] {};
        \draw (1.25,1.25) -- node[below=-.1] {$g_1$}  (2.5,2.5);
        \draw (0,0) to [bend right] node[above=.2,right] {$h_2$}  (2.5,2.5);
        \draw (0,0) to [bend left] node[below=.2,right] {$m_1$}  (2.5,2.5);
        \draw (2.5,2.5) to [loop,in=255,out=195,min distance=60mm] node[below=-.1] {$h_1$} (2.5,2.5);
        \fill[blue] (2.5,2.5) circle (2pt);
    \end{scope}
\end{tikzpicture}\]

Now flip $m_1$ and then $h_2$ (and again we switch the positions of $h_2$ and $m_1$):

\[\adjustbox{scale=0.9}{\begin{tikzpicture}[every node/.style={draw}][every edge quotes/.style={auto=right}][baseline=(current bounding box.center)]
    \begin{scope}[every node/.style={sloped,allow upside down}][every edge quotes/.style={auto=right}]
        \fill (1.25,1.25) node[cross=2pt,rotate=30] {};
        \draw (2.5,2.5) -- node[above=-.1] {$f_1$}  (4.5,5);
        \draw (0,4) -- node[above] {$f_2$} (2.5,5);
        \draw (0,1) to [bend left] node[near start, above=-.11] {$f_1$}  (2.5,2.5);
        \draw (4.5,0) -- node[above=-.1] {$f_1$} (5,1);
        \draw (2.5,2.5) -- node[right,rotate=90] {$f_2$}  (2.5,0);
        \draw (2.5,2.5) -- node[below=-.1,near end] {$f_2$}  (5,4);
        \draw (1.25,1.25) -- node[below=-.1] {$g_1$}  (2.5,2.5);
        \draw (0,0) to [bend right] node[above=.2,right] {$h_2$}   (2.5,2.5);
        \draw (0,0) to [bend left] node[below=.2,right] {$m_1$}    (2.5,2.5);
        \draw (0,1.75) to [bend left=15] node[near start,above=-.1]{$l_1$}  (2.5,2.5);
        \draw (3.75,0) to node[above]{$l_1$} (5,1.75);
        \draw (0,3.5) to node[below=-.1]{$l_1$} (3.75,5);
        \draw (2.5,2.5) to node[near end,below=-.1]{$l_1$}  (5,3.5);
        \draw (2.5,2.5) to node[below]{$r_1$}  (5,2.5);
        \draw (0,2.5) to node[above]{$r_1$}  (2.5,2.5);
        \draw (0,0.5) to [bend left]  (2.5,2.5);
        \draw (4.75,0) to [bend left] (5,0.5);
        \draw (4.75,5) to [bend right] (5,4.5);
        \draw (0,4.5) to [bend right] (1,5);
        \draw (1,0) to [bend right] node[below=-.1] {$h_1$}  (2.5,2.5);
        \draw (0,0.5) node[left=-.1] {$h_1$};
        \draw (5,0.5) node[right=-.1] {$h_1$};
        \draw (5,4.5) node[right=-.1] {$h_1$};
        \draw (0,4.5) node[left=-.1] {$h_1$};
        \fill[blue] (0,5) circle (2pt);
        \fill[blue] (5,0) circle (2pt);
        \fill[blue] (0,0) circle (2pt);
        \fill[blue] (5,5) circle (2pt);
        \fill[blue] (2.5,2.5) circle (2pt);
    \end{scope}
\end{tikzpicture}}\adjustbox{scale=0.9}{\begin{tikzcd}[baseline=-80pt]
    &&\color{green}g_1^{g_1}\arrow[dl,"2"]&&\\
    &\color{red}h_2^{4r_1,4l_1,4f_1,4f_2,m_1,h_1}\arrow[dr]&&\color{red}m_1^{h_2,2g_1}\arrow[ul,"2"]&\\
    \color{red}l_1^{f_2}\arrow[ddrr]\arrow[drr]&&\color{green}h_1^{4r_1,f_1,f_2,m_1}\arrow[ur]\arrow[dd,bend left=45]&&\color{red}r_1^{f_1}\arrow[llll,"4",bend right=15]\\
    &&\color{red}f_1^{r_1,m_1}\arrow[urr]\arrow[u]&&\\
    &&\color{green}f_2^{l_1,f_1,f_2}\arrow[uurr]\arrow[u]&&
\end{tikzcd}}\]

Flip $h_1$:

\[\adjustbox{scale=0.9}{\begin{tikzpicture}[every node/.style={draw}][every edge quotes/.style={auto=right}][baseline=(current bounding box.center)]
    \begin{scope}[every node/.style={sloped,allow upside down}][every edge quotes/.style={auto=right}]
        \fill (1.25,1.25) node[cross=2pt,rotate=30] {};
        \draw (2.5,2.5) -- node[above=-.1] {$f_1$}  (4.5,5);
        \draw (0,4) -- node[above] {$f_2$} (2.5,5);
        \draw (0,1) to [bend left] node[near start, above=-.11] {$f_1$}  (2.5,2.5);
        \draw (4.5,0) -- node[above=-.1] {$f_1$} (5,1);
        \draw (2.5,2.5) -- node[right,rotate=90] {$f_2$}  (2.5,0);
        \draw (2.5,2.5) -- node[below=-.1,near end] {$f_2$}  (5,4);
        \draw (1.25,1.25) -- node[below=-.1] {$g_1$}  (2.5,2.5);
        \draw (0,0) to [bend right] node[above=.2,right] {$h_2$}   (2.5,2.5);
        \draw (0,0) to [bend left] node[below=.2,right] {$m_1$}    (2.5,2.5);
        \draw (0,1.75) to [bend left=15] node[near start,above=-.1]{$l_1$}  (2.5,2.5);
        \draw (3.75,0) to node[above]{$l_1$} (5,1.75);
        \draw (0,3.5) to node[below=-.1]{$l_1$} (3.75,5);
        \draw (2.5,2.5) to node[near end,below=-.1]{$l_1$}  (5,3.5);
        \draw (2.5,2.5) to node[below]{$r_1$}  (5,2.5);
        \draw (0,2.5) to node[above]{$r_1$}  (2.5,2.5);
        \draw (2.5,2.5) -- node[below=-.1]{$h_1$}   (5,5);
        \fill[blue] (0,5) circle (2pt);
        \fill[blue] (5,0) circle (2pt);
        \fill[blue] (0,0) circle (2pt);
        \fill[blue] (5,5) circle (2pt);
        \fill[blue] (2.5,2.5) circle (2pt);
    \end{scope}
\end{tikzpicture}}\adjustbox{scale=0.9}{\begin{tikzcd}[baseline=-80pt]
    &&\color{green}g_1^{g_1}\arrow[dl,"2"]&&\\
    &\color{red}h_2^{4l_1,f_1,f_2,h_1}\arrow[rr]\arrow[dddr]&&\color{red}m_1^{h_2,2g_1}\arrow[ul,"2"]\arrow[dl]&\\
    \color{red}l_1^{f_2}\arrow[ddrr]\arrow[drr]&&\color{red}h_1^{4r_1,f_1,f_2,m_1}\arrow[ul]\arrow[d]&&\color{red}r_1^{f_1}\arrow[llll,"4",bend right=15]\\
    &&\color{green}f_1^{r_1,f_1,f_2}\arrow[urr]\arrow[uur]&&\\
    &&\color{green}f_2^{l_1,f_1,f_2}\arrow[uurr]\arrow[uu,bend right=45]&&
\end{tikzcd}}\]

Again the triangulation is same as the start of step 5, except that the corner is tagged notched just like the interior puncture.

Step 6(a) consists of $(f_1,l_1,f_2,r_1,f_1,l_1)$. Flip them:

\[\adjustbox{scale=0.9}{\begin{tikzpicture}[every node/.style={draw}][every edge quotes/.style={auto=right}][baseline=(current bounding box.center)]
    \begin{scope}[every node/.style={sloped,allow upside down}][every edge quotes/.style={auto=right}]
        \fill (1.25,1.25) node[cross=2pt,rotate=30] {};
        \draw (0,5) -- node[above] {$f_2$}   (5,5);
        \draw (0,0) -- node[below] {$f_2$}   (5,0);
        \draw (5,0) to node[right,rotate=-90] {$f_1$}   (5,5);
        \draw (0,0) to node[left,rotate=-90] {$f_1$}   (0,5);
        \draw (1.25,1.25) -- node[below=-.1] {$g_1$}  (2.5,2.5);
        \draw (0,0) to [bend right] node[above=.2,right] {$h_2$}   (2.5,2.5);
        \draw (0,0) to [bend left] node[below=.2,right] {$m_1$}    (2.5,2.5);
        \draw (0,0) to [bend right] node[above]{$l_1$}   (5,5);
        \draw (0,0) to [bend left] node[above]{$r_1$}   (5,5);
        \draw (2.5,2.5) -- node[below=-.1]{$h_1$}   (5,5);
        \fill[blue] (0,5) circle (2pt);
        \fill[blue] (5,0) circle (2pt);
        \fill[blue] (0,0) circle (2pt);
        \fill[blue] (5,5) circle (2pt);
        \fill[blue] (2.5,2.5) circle (2pt);
    \end{scope}
\end{tikzpicture}}\adjustbox{scale=0.9}{\begin{tikzcd}[baseline=-80pt]
    &&\color{green}g_1^{g_1}\arrow[dl,"2"]&&\\
    &\color{red}h_2^{h_1}\arrow[rr]\arrow[dr]&&\color{red}m_1^{h_2,2g_1}\arrow[ul,"2"]\arrow[dr]&\\
    \color{red}l_1^{l_1}\arrow[drr]\arrow[ur]&&\color{red}h_1^{m_1}\arrow[ll]\arrow[ur]&&\color{red}r_1^{r_1}\arrow[ll]\arrow[dll]\\
    &&\color{red}f_1^{f_1}\arrow[d,"4"]&&\\
    &&\color{red}f_2^{f_2}\arrow[uull]\arrow[uurr]&&
\end{tikzcd}}\]

The outer arcs are back to their original position.

Step 6(c) consists of just $g_1$. Flip it:

\[\adjustbox{scale=0.9}{\begin{tikzpicture}[every node/.style={draw}][every edge quotes/.style={auto=right}][baseline=(current bounding box.center)]
    \begin{scope}[every node/.style={sloped,allow upside down}][every edge quotes/.style={auto=right}]
        \fill (1.25,1.25) node[cross=2pt,rotate=30] {};
        \draw (0,5) -- node[above] {$f_2$}   (5,5);
        \draw (0,0) -- node[below] {$f_2$}   (5,0);
        \draw (5,0) to node[right,rotate=-90] {$f_1$}   (5,5);
        \draw (0,0) to node[left,rotate=-90] {$f_1$}   (0,5);
        \draw (0,0) -- node[below=-.1] {$g_1$}  (1.25,1.25);
        \draw (0,0) to [bend right] node[above=.2,right] {$h_2$}   (2.5,2.5);
        \draw (0,0) to [bend left] node[below=.2,right] {$m_1$}    (2.5,2.5);
        \draw (0,0) to [bend right] node[above]{$l_1$}   (5,5);
        \draw (0,0) to [bend left] node[above]{$r_1$}   (5,5);
        \draw (2.5,2.5) -- node[below=-.1]{$h_1$}   (5,5);
        \fill[blue] (0,5) circle (2pt);
        \fill[blue] (5,0) circle (2pt);
        \fill[blue] (0,0) circle (2pt);
        \fill[blue] (5,5) circle (2pt);
        \fill[blue] (2.5,2.5) circle (2pt);
    \end{scope}
\end{tikzpicture}}\adjustbox{scale=0.9}{\begin{tikzcd}[baseline=-80pt]
    &&\color{red}g_1^{g_1}\arrow[dr,"2"]&&\\
    &\color{red}h_2^{h_1}\arrow[ur,"2"]\arrow[dr]&&\color{red}m_1^{h_2}\arrow[ll]\arrow[dr]&\\
    \color{red}l_1^{l_1}\arrow[drr]\arrow[ur]&&\color{red}h_1^{m_1}\arrow[ll]\arrow[ur]&&\color{red}r_1^{r_1}\arrow[ll]\arrow[dll]\\
    &&\color{red}f_1^{f_1}\arrow[d,"4"]&&\\
    &&\color{red}f_2^{f_2}\arrow[uull]\arrow[uurr]&&
\end{tikzcd}}\]

Since all vertices are now red, $\Delta_1$ is indeed a maximal green sequence. $\blacksquare$

To summarise the above, we start with creating a once-punctured digon at the middle puncture, which allows us to tag the ends at the puncture notched. After that we flip $h_2$ back, this allows us to flip $g_1$ and end up having the middle puncture instead of the corner puncture as an endpoint of $g_1$. We then flip arcs to create a once-punctured digon at the corner puncture by making both ends of most of the arcs the middle puncture, and then tag the ends at the corner puncture notched. After this, we undo the flips (which is to redo the flips in reverse order) to turn the triangulation back to the starting one except both ends of all ordinary arcs and one end of the pending arc are tagged notched.

As flipping an arc (mutating at a vertex) is a local operation, instead of drawing the entire triangulation (diagram), we shall draw only the arcs (vertices) involved and adjacent arcs (vertices).

\subsubsection{The case where $p=2,q>1$}\label{2,>1}
\begin{proposition}
    $\Delta_1$ is a maximal green sequence when $p=2$ and $q>1$.
\end{proposition}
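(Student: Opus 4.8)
The plan is to follow, almost verbatim, the six-step template developed for $p=2,q=1$ in Section~\ref{2,1}, and to upgrade each step so that it sweeps across all $q$ orbifold points rather than the single one. The triangulation $T_{1,2,q}$ of Section~\ref{construction_1} has, around the corner puncture, a fan of core arcs $h_1,g_1,h_2,g_2,\dots,h_q,g_q,h_{q+1}$ in which each pending arc $g_i$ occupies the orbifold triangle bounded by the consecutive ordinary arcs $h_i$ and $h_{i+1}$, while $m_1,l_1,r_1,f_1,f_2$ surround the unique interior puncture and the corner exactly as in the base case. Since $h_1$ and $h_{q+1}$ are not core arcs here, the fan is simply $q$ congruent copies of the single-orbifold configuration glued edge-to-edge along the $h_i$; this congruence is the structural fact I would exploit to transport the $q=1$ computation across the fan.

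First I would run step~2, whose sequence for $p=2$ specialises to $(h_{q+1},h_q,\dots,h_1,m_1,h_2,h_3,\dots,h_{q+1})$. The downward sweep $h_{q+1},\dots,h_1$ peels the ordinary core arcs off the interior puncture one at a time; flipping $m_1$ closes the resulting once-punctured digon; and the upward sweep $h_2,\dots,h_{q+1}$ rebuilds the fan with every end at the interior puncture now tagged notched. As in Section~\ref{2,1}, at the flip that creates the digon one must interchange in the diagram the labels of the two arcs bounding it, so that the frozen-vertex superscripts continue to match the geometry.

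Next I would carry out the ``away from the corner'' moves. Step~4(a), $(g_1,\dots,g_q,h_q,h_{q-1},\dots,h_2)$, pushes every pending arc and every interior ordinary core arc off the corner puncture; step~4(c), $(l_1,f_1,r_1,f_2,l_1,f_1)$, does the same for the outer arcs and builds a once-punctured digon at the corner. Step~5, $\gamma=(h_1,m_1,h_{q+1},h_1)$, tags the corner notched, and steps~6(a) and 6(c), the latter specialising to $(h_2,\dots,h_q,g_1,\dots,g_q)$ since $\mu=\omega=\phi$ and $\varepsilon=h_2,\dots,h_q$, reverse the earlier flips and return $T_{1,2,q}$ to its initial shape but with both punctures notched. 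The terminal diagram should then have all non-frozen vertices red.

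The main obstacle is the purely diagrammatic verification that each mutated vertex is green at the instant it is mutated and that the final diagram is entirely red; the genuinely new content relative to $q=1$ lives in the long sweeps of steps~2 and 6(c) and in the weight-$2$ arrows $h_i\to g_i\to h_{i+1}$. I would argue this by a uniform local analysis: because the fan is a chain of congruent orbifold triangles, the green/red data and the superscript updates $A\setminus B$ of Section~\ref{notations} produced by flipping a given $h_i$ or $g_i$ look the same around every orbifold point as around the single one in Section~\ref{2,1}. The one place where the bookkeeping truly differs is that flipping an $h_i$ meets the two adjacent triangles (those carrying $g_{i-1}$ and $g_i$) simultaneously, so the weight-$2$ contributions from both sides must be combined; checking that this coupling never turns a to-be-mutated vertex red, and that it propagates the notching cleanly along the whole fan, is the step I expect to require an explicit computation for a generic index $i$.
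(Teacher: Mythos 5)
Your proposal follows essentially the same route as the paper: specialise $\Delta_1$ to steps 2, 4(a), 4(c), 5, 6(a), 6(c), and verify greenness by exploiting the fact that the fan of orbifold triangles is a chain of congruent local configurations, so the sweep over the $h_i$ and $g_i$ can be checked on a generic index and propagated by induction (this is exactly the content of the paper's Lemma on step 2 and its combined treatment of steps 4(c), 5, 6(a)). The subtleties you flag — the label swap of $h_1$ and $m_1$ at the once-punctured digon and the weight-$2$ coupling when flipping an $h_i$ adjacent to two orbifold triangles — are the same ones the paper resolves by explicit computation, so your plan is sound.
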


\textbf{Proof} When $p=2,q>1$, the mutation sequence $\Delta_1$ translates to

\begin{enumerate}
    \item Tagging alternate punctures notched. $(\phi)$
    \item Tagging the first interior puncture notched. $(h_{q+1},h_q,...,h_1,m_1,h_2,h_3,...,h_{q+1})$
    \item Tagging the remaining interior punctures notched and moving half the inner arcs away from the corner. $(\phi)$
    \item Moving arcs away from the corner.
    \begin{enumerate}
        \item Moving core arcs away from the corner. $(g_1,g_2,...,g_q,h_{q},h_{q-1},...,h_2)$
        \item Moving the remaining inner arcs away from the corner. $(\phi)$
        \item Moving outer arcs away from the corner. $(l_1,f_1,r_1,f_2,l_1,f_1)$
    \end{enumerate}
    \item Tagging the corner notched. $(h_1,m_1,h_{q+1},h_1)$
    \item Moving arcs back to the corner.
    \begin{enumerate}
        \item Moving outer arcs back to the corner. 
        $f_1,l_1,f_2,r_1,f_1,l_1$
        \item Moving inner arcs back to the corner. 
        $(\phi)$
        \item Moving core arcs back to the corner. $(h_2,h_3,...,h_q,g_1,g_2,...,g_{q-1},g_q)$
        \item (If $p>2$ and is even) Moving the remaining pair of inner arc back to the corner. $(\phi)$
    \end{enumerate}
\end{enumerate}

Again we only have steps 2, 4(a), 4(c), 5, 6(a) and 6(c).

The triangulation $T_{2,q}$ looks like this:

\[\begin{tikzpicture}[every edge quotes/.style={auto=right}]
    \begin{scope}[every node/.style={sloped,allow upside down}][every edge quotes/.style={auto=right}]
        \node at ($(0,5)!.5!(5,0)$) {$\ddots$};
        \draw[cyan] (0,0) to [bend right=15] node [below=.15,right,black] {$g_1$} (3.2,1.8);
        \draw[cyan] (0,0) to [bend left=15] node [above=.15,right,black] {$g_q$} (1.8,3.2);
        \draw[magenta] (0,0) -- node[rotate=-90,left,black] {$f_1$} (0,10);
        \draw[magenta] (0,10) -- node[above,black] {$f_2$} (10,10);
        \draw[magenta] (10,0) -- node[rotate=-90,right,black] {$f_1$} (10,10);
        \draw[magenta] (0,0) -- node[below,black] {$f_2$} (10,0);
        \draw (0,0) to [bend right=45] node[below=.15,right=1] {$h_1$} (5,5);
        \draw (0,0) to [bend left=45] node[above=.15,right=1] {$h_{q+1}$} (5,5);
        \draw[cyan] (0,0) to [bend right=15] node[right=1,below,black] {$h_2$} (5,5);
        \draw[cyan] (0,0) to [bend left=15] node[right=1,above,black] {$h_q$} (5,5);
        \draw[magenta] (0,0) to [bend right=45] node[near end,below,black] {$l_{1}$} (10,10);
        \draw[magenta] (0,0) to [bend left=45] node[near end,above,black] {$r_{1}$} (10,10);
        \draw (5,5) to node[below] {$m_1$} (10,10);
        \fill (0,10) circle (2pt);
        \fill (10,0) circle (2pt);
        \fill (0,0) circle (2pt);
        \fill (10,10) circle (2pt);
        \fill (5,5) circle (2pt);
        \fill (3.2,1.8) node[cross=2pt,rotate=30] {};
        \fill (1.8,3.2) node[cross=2pt,rotate=30] {};
    \end{scope}
\end{tikzpicture}\]

Step 2 consists of $(h_{q+1},h_q,...,h_1,m_1,h_2,h_3,...,h_{q+1})$, look at them and adjacent vertices: 

\[\begin{tikzcd}
    &|[color=green]|g_{1}^{g_1}\arrow[dr,"2"]&&|[color=green]|g_{2}^{g_2}\arrow[dr,"2"]&&&&|[color=green]|g_{q}^{g_q}\arrow[dr,"2"]\\
    |[color=green]|h_{1}^{h_1}\arrow[ur,"2"]\arrow[drrrr]&&|[color=green]|h_{2}^{h_2}\arrow[ur,"2"]\arrow[ll]&&|[color=green]|{h_{3}^{h_{3}}}\arrow[ll]&\cdots\arrow[l]&|[color=green]|h_{q}^{h_q}\arrow[ur,"2"]\arrow[l]&&|[color=green]|{h_{q+1}^{h_{q+1}}}\arrow[ll]\arrow[ddll]\\
    &&&&|[color=green]|m_1^{m_1}\arrow[urrrr]\arrow[dll]\\
    &&|[color=green]|l_1^{l_1}\arrow[uull]&&&&|[color=green]|r_1^{r_1}\arrow[ull]
\end{tikzcd}\]

For convenience, we shall show the result of step 2 in a lemma:

\begin{lemma}\label{p>2}
    Mutating the following diagram:
\[\begin{tikzcd}
    &|[color=green]|g_{1}^{g_1}\arrow[dr,"2"]&&|[color=green]|g_{2}^{g_2}\arrow[dr,"2"]&&&&|[color=green]|g_{q}^{g_q}\arrow[dr,"2"]\\
    |[color=green]|h_{1}^{h_1}\arrow[ur,"2"]\arrow[drrrr]&&|[color=green]|h_{2}^{h_2}\arrow[ur,"2"]\arrow[ll]&&|[color=green]|{h_{3}^{h_{3}}}\arrow[ll]&\cdots\arrow[l]&|[color=green]|h_{q}^{h_q}\arrow[ur,"2"]\arrow[l]&&|[color=green]|{h_{q+1}^{h_{q+1}}}\arrow[ll]\arrow[ddll]\\
    &&&&|[color=green]|m_1^{m_1}\arrow[urrrr]\arrow[dll]\\
    &&|[color=green]|l_1^{L}\arrow[uull]&&&&|[color=green]|r_1^{R}\arrow[ull]
\end{tikzcd}\]
    , where $L,R$ are sets of (frozen) vertices, with the mutation sequence $(h_{q+1},h_q,...,h_1,m_1,h_2,h_3,...,h_{q+1})$ gives the following diagram (after relocating some vertices):
\[\adjustbox{scale=0.9,center}{
\begin{tikzcd}
    &\color{green}g_1^{g_1,2h_2}\arrow[dr,"2"]&&&&\color{green}g_{q-1}^{g_{q-1},2h_q}\arrow[dr,"2"]&&\color{green}g_q^{g_q,2h_{q+1}}\arrow[dr,"2"]\\
    \color{red}m_1^{h_2}\arrow[drrrr]\arrow[ur,"2"]&&\color{red}h_2^{h_3}\arrow[ll]&\cdots\arrow[l]&\color{red}h_{q-1}^{h_q}\arrow[ur,"2"]\arrow[l]&&\color{red}h_{q}^{h_{q+1}}\arrow[ll]\arrow[ur,"2"]&&\color{red}h_{q+1}^{m_1}\arrow[ll]\arrow[ddll]\\
    &&&&\color{red}h_1^{h_1}\arrow[urrrr]\arrow[dll]\\
    &&\color{green}l_1^{L,h_1}\arrow[uull]&&&&\color{green}r_1^{R,m_1}\arrow[ull]
\end{tikzcd}}\]
\end{lemma}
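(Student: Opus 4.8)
The plan is to verify the claim by directly computing the effect of the $2q+2$ mutations on the diagram, organised so that the repetitive $h$-chain is treated uniformly. A first useful remark is that the sequence $(h_{q+1},\dots,h_1,m_1,h_2,\dots,h_{q+1})$ never mutates $l_1$ or $r_1$; consequently the frozen symbols $L$ and $R$ can be carried through inertly, and only the arrows emanating from $l_1,r_1$ towards the \emph{mutated} vertices can change. Tracking those, one finds that $L$ and $R$ acquire exactly $h_1$ and $m_1$ — produced at the boundary mutations $\mu_{h_1}$ and $\mu_{m_1}$ respectively — and nothing else; this is the origin of the generic conclusion $l_1^{L,h_1}$, $r_1^{R,m_1}$, and it is independent of the internal structure of $L,R$ provided $h_1$ and $m_1$ are not already joined to their own frozen vertices, as holds in every application. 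This reduces the essential content to the concrete diagram with $L=\{l_1\}$, $R=\{r_1\}$, i.e. to $T_{2,q}$, with the formal $L,R$ reinstated at the end.

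It is conceptually cleanest to read the sequence through the flip/mutation dictionary $\mu_i(D(T))=D(T')$. The arcs $h_1,\dots,h_{q+1}$ and $m_1$ are precisely those incident to the first interior puncture $P$, and the listed flips sweep once around $P$. By the same mechanism as the once-punctured digon appearing in the $p=2,\,q=1$ computation of Section~\ref{2,1}, the net effect is to tag every end at $P$ notched and to interchange the roles of the two extreme arcs bounding the relevant digon, which here are $h_1$ and $m_1$. This predicts the target: $m_1$ should occupy the former chain-end slot of $h_1$ (retaining the weight-$2$ triangle with $g_1,h_2$), $h_1$ should fall into the former bottom slot of $m_1$, each chain vertex should be relabelled by its successor's frozen vertex, and the weight pattern of the $g_i$-triangles should be preserved. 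I would use this picture as the blueprint for a diagrammatic induction.

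For the rigorous computation I would prove, by induction on the number of mutations performed, an explicit template for the intermediate diagram. Phase~1 (descending $h_{q+1},\dots,h_1$) propagates a wavefront down the chain: the inductive step is a single mutation $\mu_{h_k}$ whose neighbourhood consists of the just-mutated $h_{k+1}$, the still-green $h_{k-1}$, the two weight-$2$ arrows to $g_{k-1}$ and $g_k$, and, at the boundary, $m_1,l_1,r_1$; one checks that the configuration after $\mu_{h_k}$ again matches the template with $k$ decreased by one. The central mutation $\mu_{m_1}$ is a separate boundary case, and Phase~2 (ascending $h_2,\dots,h_{q+1}$) is a second, mirror-image sweep. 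Having tracked colours and frozen superscripts throughout, one reads off that all $h_i$ and $m_1$ have turned red with the stated superscripts while the $g_i$ and $l_1,r_1$ remain green, which is the target after the relabelling; reinstating $L,R$ then completes the proof.

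The hard part will be the weighted mutation rule $\pm\sqrt{c}\pm\sqrt{d}=\sqrt{ab}$ applied at the triangles containing a weight-$2$ edge to a $g_i$. At each such triangle one has $ab=2$ together with a pre-existing weight-$2$ edge ($c=2$), so the rule forces either a delicate cancellation ($d=0$) or the appearance of a new weight-$2$ edge, with the sign dictated by whether the triangle is oriented before and after the mutation; getting every one of these signs right, while simultaneously keeping the green/red status (equivalently, the direction of the frozen arrows) consistent across all $2q+2$ steps, is where essentially all of the care is needed. By contrast, the weight-$1$ chain arrows and the two oriented $3$-cycles at the bottom ($m_1,l_1,h_1$ and $m_1,h_{q+1},r_1$) behave exactly as in the unweighted quiver case and cause no trouble; it is precisely the weight-$2$ pending-arc triangles that must be handled one at a time, which is why the induction is organised around the single representative step $\mu_{h_k}$.
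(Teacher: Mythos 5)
Your proposal is correct and follows essentially the same route as the paper: a direct computation organised as an induction on the repeating local pattern along the $h$-chain (a descending sweep, the $h_1$/$m_1$ once-punctured-digon flip with the two vertices interchanged, then an ascending sweep), guided by the triangulation picture of tagging the first interior puncture notched, with $L,R$ carried along inertly. One small correction to your tracking: $r_1$ acquires the frozen vertex $m_1$ not at the central mutation $\mu_{m_1}$ (the arrow between $r_1$ and $m_1$ is already cancelled by the very first mutation $\mu_{h_{q+1}}$) but at the final mutation $\mu_{h_{q+1}}$, when $h_{q+1}$ carries $m_1$ in its superscript and $r_1\to h_{q+1}$ is present.
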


\begin{proof}
    We start with the initial mutations to observe the pattern of the diagram and the frozen vertices each vertex is connected to.

    At $h_{q+1},h_q,h_{q-1}$ and adjacent vertices, the full subdiagram looks like this:

    \[\begin{tikzcd}
    &&\color{green}g_{q-2}^{g_{q-2}}\arrow[dr,"2"]&&\color{green}g_{q-1}^{g_{q-1}}\arrow[dr,"2"]&&|[color=green,fill=red]|g_{q}^{g_q}\arrow[dr,"2"]\\
    \cdots&\color{green}h_{q-2}^{h_{q-2}}\arrow[ur,"2"]\arrow[l]&&\color{green}h_{q-1}^{h_{q-1}}\arrow[ur,"2"]\arrow[ll]&&|[color=green,fill=red]|h_{q}^{h_q}\arrow[ur,"2"]\arrow[ll]&&|[color=green,fill=red]|{h_{q+1}^{h_{q+1}}}\arrow[ll]\arrow[ddll]\\
    &&&|[color=green,fill=red]|m_1^{m_1}\arrow[urrrr]\\
    &&&&&|[color=green,fill=red]|r_1^{R}\arrow[ull]
\end{tikzcd}\]

Mutate at $h_{q+1}$:

\[\begin{tikzcd}
    &&\color{green}g_{q-2}^{g_{q-2}}\arrow[dr,"2"]&&|[color=green,fill=red]|g_{q-1}^{g_{q-1}}\arrow[dr,"2"]&&&\color{green}g_{q}^{g_q,2h_{q+1}}\arrow[dddll,"2"]\\
    \cdots&\color{green}h_{q-2}^{h_{q-2}}\arrow[ur,"2"]\arrow[l]&&|[color=green,fill=red]|h_{q-1}^{h_{q-1}}\arrow[ur,"2"]\arrow[ll]&&|[color=green,fill=red]|h_{q}^{h_q}\arrow[ll]\arrow[r]&|[color=red,fill=green]|h_{q+1}^{h_{q+1}}\arrow[dlll]\arrow[ur,"2"]\\
    &&&|[color=green,fill=red]|m_1^{m_1,h_{q+1}}\arrow[urr]\\
    &&&&&\color{green}r_1^{R}\arrow[uur]
\end{tikzcd}\]

Mutate at $h_q$:

\[\begin{tikzcd}
    &&|[color=green,fill=red]|g_{q-2}^{g_{q-2}}\arrow[dr,"2"]&&&\color{green}g_{q-1}^{g_{q-1},2h_q}\arrow[dr,"2"]&&\color{green}g_{q}^{g_q,2h_{q+1}}\arrow[dddll,"2"]\\
    \cdots&|[color=green,fill=red]|h_{q-2}^{h_{q-2}}\arrow[ur,"2"]\arrow[l]&&|[color=green,fill=red]|h_{q-1}^{h_{q-1}}\arrow[ll]\arrow[r]&|[color=red,fill=green]|h_{q}^{h_q}\arrow[ur,"2"]\arrow[dl]&&\color{red}h_{q+1}^{h_{q+1}}\arrow[ur,"2"]\arrow[ll]\\
    &&&|[color=green,fill=red]|m_1^{m_1,h_{q+1},h_q}\arrow[u]\\
    &&&&&\color{green}r_1^{R}\arrow[uur]
\end{tikzcd}\]

Observe that in the diagrams above, the full subdiagrams formed by the highlighted vertices are isomorphic (not counting frozen vertices attached). So we may conclude that after mutating at $h_3$, we have this (the notation $h_{q+1\searrow3}$ in the superscript of $m_1$ means $h_{q+1},h_q,...,h_3$):

\[\begin{tikzcd}
 &\color{green}g_1^{g_1}\arrow[dr,"2"] &&&\color{green}g_2^{g_2,2h_3}\arrow[dr,"2"]&&&\color{green}g_{q-1}^{g_{q-1},2h_{q}}\arrow[dr,"2"]&\color{green}g_q^{g_q,2h_{q+1}}\arrow[dddll,"2"]\\
 \color{green}h_1^{h_1}\arrow[drrrr]\arrow[ur,"2"] && \color{green}h_2^{h_2}\arrow[ll]\arrow[r]& \color{red}h_3^{h_3}\arrow[ur,"2"]\arrow[dr]&&\cdots\arrow[ll]&\color{red}h_q^{h_q}\arrow[l]\arrow[ur,"2"]&&\color{red}h_{q+1}^{h_{q+1}}\arrow[ll]\arrow[u,"2"]\\
 &&&&\color{green}m_1^{m_1,h_{q+1\searrow3}}\arrow[dll]\arrow[ull]&&&&\\
 &&\color{green}l_1^{L}\arrow[uull]&&&&\color{green}r_1^{R}\arrow[uurr]&&
\end{tikzcd}\]

Mutate at $h_2$:

\[\adjustbox{scale=0.9,center}{\begin{tikzcd}
 &&\color{green}g_1^{g_1,2h_2}\arrow[dr,"2"] &&\color{green}g_2^{g_2,2h_3}\arrow[dr,"2"]&&&\color{green}g_{q-1}^{g_{q-1},2h_{q}}\arrow[dr,"2"]&\color{green}g_q^{g_q,2h_{q+1}}\arrow[dddll,"2"]\\ \color{green}h_1^{h_1}\arrow[r]&\color{red}h_2^{h_2}\arrow[ur,"2"]\arrow[drrr]&&\color{red}h_3^{h_3}\arrow[ur,"2"]\arrow[ll]&&\cdots\arrow[ll]&\color{red}h_q^{h_q}\arrow[l]\arrow[ur,"2"]&&\color{red}h_{q+1}^{h_{q+1}}\arrow[ll]\arrow[u,"2"]\\
 &&&&\color{green}m_1^{m_1,h_{q+1\searrow2}}\arrow[dll]&&&&\\
 &&\color{green}l_1^{L}\arrow[uull]&&&&\color{green}r_1^{R}\arrow[uurr]&&
\end{tikzcd}}\]

The corresponding subtriangulation looks like this:

\[\begin{tikzpicture}[every edge quotes/.style={auto=right}]
    \begin{scope}[every node/.style={sloped,allow upside down}][every edge quotes/.style={auto=right}]
        \fill (0,0) circle (2pt);
        \fill (10,10) circle (2pt);
        \fill (6,4) circle (2pt);
        \fill (2,3) node[cross=2pt,rotate=30] {};
        \fill (1,4) node[cross=2pt,rotate=30] {};
        \fill (1.5,3.5) node[cross=2pt,rotate=30] {};
        \node at ($(0,5)!.35!(5,0)$) {$\ddots$};
        \draw (0,0) to [bend left=10] node [below=.15,right] {$g_1$} (2,3);
        \draw (0,0) to [bend left=10] node [below=.15,right,near end] {$g_q$} (1,4);
        \draw (0,0) to [bend left=10] node [above=.1,right,near end] {$g_{q-1}$} (1.5,3.5);
        \draw (0,0) to node[below] {$h_1$} (6,4);
        \draw (0,0) to [bend left=35] node[above=-.1] {$h_{q+1}$} (10,10);
        \draw (0,0) to [bend left=5] node[right=1,above=-.1] {$h_2$} (10,10);
        \draw (0,0) to [bend left=25] node[right=1,above=-.1] {$h_q$} (10,10);
        \draw (0,0) to [bend right=45] node[near end,below] {$l_{1}$} (10,10);
        \draw (0,0) to [bend left=45] node[near end,above] {$r_{1}$} (10,10);
        \draw (6,4) to node[below] {$m_1$} (10,10);
    \end{scope}
\end{tikzpicture}\]

Observe that the arcs $h_2,l_1,h_1,m_1$ form a triangulated once-punctured digon, mutate at $h_1$ and $m_1$ (and switch their positions):

\[\adjustbox{scale=0.9,center}{\begin{tikzcd}
 &&|[color=green,fill=red]|g_1^{g_1,2h_2}\arrow[dr,"2"] &&\color{green}g_2^{g_2,2h_3}\arrow[dr,"2"]&&&\color{green}g_{q-1}^{g_{q-1},2h_{q}}\arrow[dr,"2"]&\color{green}g_q^{g_q,2h_{q+1}}\arrow[dddll,"2"]\\ |[color=red,fill=green]|m_1^{m_1,h_{q+1\searrow2}}\arrow[r]&|[color=green,fill=red]|h_2^{h_{q+1\searrow3},m_1}\arrow[ur,"2"]\arrow[drrr]&&|[color=red,fill=green]|h_3^{h_3}\arrow[ur,"2"]\arrow[ll]&&\cdots\arrow[ll]&\color{red}h_q^{h_q}\arrow[l]\arrow[ur,"2"]&&\color{red}h_{q+1}^{h_{q+1}}\arrow[ll]\arrow[u,"2"]\\
 &&&&|[color=red,fill=green]|h_1^{h_1}\arrow[dll]&&&&\\
 &&\color{green}l_1^{L,h_1}\arrow[uull]&&&&\color{green}r_1^{R}\arrow[uurr]&&
\end{tikzcd}}\]

By observing similar pattern one can argue by induction that mutating at $h_2,h_3,...,h_q$ gives:

\[\begin{tikzcd}
    &\color{green}g_1^{g_1,2h_2}\arrow[dr,"2"]&&&&\color{green}g_{q-1}^{g_{q-1},2h_q}\arrow[dr,"2"]&&\color{green}g_q^{g_q,2h_{q+1}}\arrow[dddll,"2"]\\
    \color{red}m_1^{h_2}\arrow[drrrr]\arrow[ur,"2"]&&\color{red}h_2^{h_3}\arrow[ll]&\cdots\arrow[l]&\color{red}h_{q-1}^{h_q}\arrow[ur,"2"]\arrow[l]&&\color{red}h_{q}^{h_{q+1},m_1}\arrow[ll]\arrow[r]&\color{green}h_{q+1}^{m_1}\arrow[u,"2"]\arrow[dlll]\\
    &&&&\color{red}h_1^{h_1}\arrow[urr]\arrow[dll]\\
    &&\color{green}l_1^{L,h_1}\arrow[uull]&&&\color{green}r_1^{R}\arrow[uurr]
\end{tikzcd}\]

Now mutate at $h_{q+1}$:

\[\adjustbox{scale=0.9,center}{
\begin{tikzcd}
    &\color{green}g_1^{g_1,2h_2}\arrow[dr,"2"]&&&&\color{green}g_{q-1}^{g_{q-1},2h_q}\arrow[dr,"2"]&&\color{green}g_q^{g_q,2h_{q+1}}\arrow[dr,"2"]\\
    \color{red}m_1^{h_2}\arrow[drrrr]\arrow[ur,"2"]&&\color{red}h_2^{h_3}\arrow[ll]&\cdots\arrow[l]&\color{red}h_{q-1}^{h_q}\arrow[ur,"2"]\arrow[l]&&\color{red}h_{q}^{h_{q+1}}\arrow[ll]\arrow[ur,"2"]&&\color{red}h_{q+1}^{m_1}\arrow[ll]\arrow[ddll]\\
    &&&&\color{red}h_1^{h_1}\arrow[urrrr]\arrow[dll]\\
    &&\color{green}l_1^{L,h_1}\arrow[uull]&&&&\color{green}r_1^{R,m_1}\arrow[ull]
\end{tikzcd}}\]

In terms of triangulation, we get back the starting triangulation with the middle puncture being tagged notched.
\end{proof}

By applying the above lemma, after step 2, we have:

\[\adjustbox{scale=0.9,center}{
\begin{tikzcd}
    &\color{green}g_1^{g_1,2h_2}\arrow[dr,"2"]&&&&\color{green}g_{q-1}^{g_{q-1},2h_q}\arrow[dr,"2"]&&\color{green}g_q^{g_q,2h_{q+1}}\arrow[dr,"2"]\\
    \color{red}m_1^{h_2}\arrow[drrrr]\arrow[ur,"2"]&&\color{red}h_2^{h_3}\arrow[ll]&\cdots\arrow[l]&\color{red}h_{q-1}^{h_q}\arrow[ur,"2"]\arrow[l]&&\color{red}h_{q}^{h_{q+1}}\arrow[ll]\arrow[ur,"2"]&&\color{red}h_{q+1}^{m_1}\arrow[ll]\arrow[ddll]\\
    &&&&\color{red}h_1^{h_1}\arrow[urrrr]\arrow[dll]\\
    &&\color{green}l_1^{l_1,h_1}\arrow[uull]&&&&\color{green}r_1^{r_1,m_1}\arrow[ull]
\end{tikzcd}}\]

Step 4(a) consists of $(g_1,g_2,...,g_q,h_{q},h_{q-1},...,h_2)$.

Mutating at $g_1$ does the following:

\[
\begin{tikzcd}
    &\color{green}g_{1}^{g_1,2h_2}\arrow[dr,"2"]&\\
    \color{red}m_1^{h_2}\arrow[ur,"2"]&&\color{red}h_2^{h_3}\arrow[ll]
\end{tikzcd}
\rightarrow
\begin{tikzcd}
    &\color{red}g_{1}^{g_1,2h_2}\arrow[dl,"2"]&\\
    \color{green}m_1^{h_2,2g_1}\arrow[rr]&&\color{red}h_2^{h_3}\arrow[ul,"2"]
\end{tikzcd}
\]

Observe that for all $g_k$, $k=1,2,...,q$, the full subdiagram containing $g_k$ and adjancent vertices are isomorphic (again except the frozen vertices attached), so after mutating at $g_1,g_2,...,g_q$, we have:

\[\adjustbox{scale=0.9,center}{
\begin{tikzcd}
    &\color{red}g_1^{g_1,2h_2}\arrow[dl,"2"]&&&&\color{red}g_{q-1}^{g_{q-1},2h_q}\arrow[dl,"2"]&&\color{red}g_q^{g_q,2h_{q+1}}\arrow[dl,"2"]\\
    \color{green}m_1^{h_2,2g_1}\arrow[drrrr]\arrow[rr]&&\color{green}h_2^{h_3,2g_2}\arrow[r]\arrow[ul,"2"]&\cdots\arrow[r]&\color{green}h_{q-1}^{h_q,2g_{q-1}}\arrow[rr]&&\color{green}h_{q}^{h_{q+1},2g_q}\arrow[rr]\arrow[ul,"2"]&&\color{red}h_{q+1}^{m_1}\arrow[ul,"2"]\arrow[ddll]\\
    &&&&\color{red}h_1^{h_1}\arrow[urrrr]\arrow[dll]\\
    &&\color{green}l_1^{l_1,h_1}\arrow[uull]&&&&\color{green}r_1^{r_1,m_1}\arrow[ull]
\end{tikzcd}}\]

In terms of triangulation, all pending arcs are now connected to the middle puncture:

\[\begin{tikzpicture}[every edge quotes/.style={auto=right}]
    \begin{scope}[every node/.style={sloped,allow upside down}][every edge quotes/.style={auto=right}]
        \fill (0,10) circle (2pt);
        \fill (10,0) circle (2pt);
        \fill (0,0) circle (2pt);
        \fill (10,10) circle (2pt);
        \fill (3.2,1.8) node[cross=2pt,rotate=30] {};
        \fill (1.8,3.2) node[cross=2pt,rotate=30] {};
        \node at ($(0,5)!.5!(5,0)$) {$\ddots$};
        \draw (3.2,1.8) to [bend right=15] node [below=.15,right] {$g_1$} (5,5);
        \draw (1.8,3.2) to [bend left=15] node [above=.15,right] {$g_q$} (5,5);
        \draw (0,0) -- node[rotate=-90,left] {$f_1$} (0,10);
        \draw (0,10) -- node[above] {$f_2$} (10,10);
        \draw (10,0) -- node[rotate=-90,right] {$f_1$} (10,10);
        \draw (0,0) -- node[below] {$f_2$} (10,0);
        \draw (0,0) to [bend right=45] node[below=.15,right=1] {$h_1$} (5,5);
        \draw (0,0) to [bend left=45] node[above=.15,right=1] {$h_{q+1}$} (5,5);
        \draw (0,0) to [bend right=15] node[right=1,below=-.1] {$h_2$} (5,5);
        \draw (0,0) to [bend left=15] node[right=1,above=-.1] {$h_q$} (5,5);
        \draw (0,0) to [bend right=45] node[near end,below] {$l_{1}$} (10,10);
        \draw (0,0) to [bend left=45] node[near end,above] {$r_{1}$} (10,10);
        \draw (5,5) to node[below] {$m_1$} (10,10);
        \fill[blue] (5,5) circle (2pt);
    \end{scope}
\end{tikzpicture}\]

Now look at $h_q$, $h_{q-1}$, and adjacent vertices:

\[\adjustbox{scale=0.8,center}{
\begin{tikzcd}
    &&\color{red}g_{q-3}^{g_{q-3},2h_{q-2}}\arrow[dl,"2"]&&\color{red}g_{q-2}^{g_{q-2},2h_{q-1}}\arrow[dl,"2"]&&\color{red}g_{q-1}^{g_{q-1},2h_q}\arrow[dl,"2"]&&\color{red}g_q^{g_q,2h_{q+1}}\arrow[dl,"2"]\\\cdots\arrow[r]&
    \color{green}h_{q-3}^{h_{q-2},2g_{q-3}}\arrow[rr]&&\color{green}h_{q-2}^{h_{q-1},2g_{q-2}}\arrow[rr]\arrow[ul,"2"]&&\color{green}h_{q-1}^{h_q,2g_{q-1}}\arrow[rr]\arrow[ul,"2"]&&\color{green}h_{q}^{h_{q+1},2g_q}\arrow[rr]\arrow[ul,"2"]&&\color{red}h_{q+1}^{m_1}\arrow[ul,"2"]\
\end{tikzcd}}\]

Mutate at $h_q$:

\[\adjustbox{scale=0.8,center}{
\begin{tikzcd}
    &&\color{red}g_{q-3}^{g_{q-3},2h_{q-2}}\arrow[dl,"2"]&&|[color=red,fill=green]|g_{q-2}^{g_{q-2},2h_{q-1}}\arrow[dl,"2"]&&\color{red}g_{q-1}^{g_{q-1},2h_q}\arrow[dr,"2"]&&\color{green}g_q^{g_q}\arrow[ll,"4"]\\\cdots\arrow[r]&
    \color{green}h_{q-3}^{h_{q-2},2g_{q-3}}\arrow[rr]&&|[color=green,fill=red]|h_{q-2}^{h_{q-1},2g_{q-2}}\arrow[rr]\arrow[ul,"2"]&&|[color=green,fill=red]|h_{q-1}^{h_{q,q+1},2g_{q-1,q}}\arrow[rrrr,bend right]\arrow[ul,"2"]&&|[color=red,fill=green]|h_{q}^{h_{q+1},2g_q}\arrow[ll]\arrow[ur,"2"]&&|[color=red,fill=green]|h_{q+1}^{m_1}\arrow[ll]\
\end{tikzcd}}\]

Mutate at $h_{q-1}$:

\[\adjustbox{scale=0.8,center}{
\begin{tikzcd}
    &&|[color=red,fill=green]|g_{q-3}^{g_{q-3},2h_{q-2}}\arrow[dl,"2"]&&\color{red}g_{q-2}^{g_{q-2},2h_{q-1}}\arrow[dr,"2"]&&\color{red}g_{q-1}^{g_{q-1},2h_q}\arrow[dr,"2"]&&\color{green}g_q^{g_q}\arrow[ll,"4"]\\\cdots\arrow[r]&
    |[color=green,fill=red]|h_{q-3}^{h_{q-2},2g_{q-3}}\arrow[rr]&&|[color=green,fill=red]|h_{q-2}^{h_{q-1\nearrow q+1},2g_{q-2\nearrow q}}\arrow[rrrrrr,bend right]\arrow[ul,"2"]&&|[color=red,fill=green]|h_{q-1}^{h_{q,q+1},2g_{q-1,q}}\arrow[rr]\arrow[ll]&&\color{green}h_{q}^{h_q,2g_{q-1}}\arrow[ulll,"2"]\arrow[ur,"2"]&&|[color=red,fill=green]|h_{q+1}^{m_1}\arrow[llll,bend left=15]\
\end{tikzcd}}\]

Observe the pattern at each $h_k$ and adjacent vertices, we can argue by induction that after mutating at $h_2$, we have:

$\adjustbox{scale=0.9,center}{
\begin{tikzcd}
    &&\color{red}g_1^{g_1,2h_2}\arrow[dl,"2"]&&\color{red}g_2^{g_2,2h_3}\arrow[dl,"2"]&&\color{red}g_{q-2}^{g_{q-2},2h_{q-1}}\arrow[dl,"2"]&\color{red}g_{q-1}^{g_{q-1},2h_q}\arrow[d,"2"]&\color{green}g_q^{g_q}\arrow[l,"4"]\\
    \color{green}m_1^{h_{2\nearrow q+1},2g_{1\nearrow q}}\arrow[rrrrrrrr,bend right]&\color{red}h_{2}^{h_{3\nearrow q+1},2g_{2\nearrow q}}\arrow[l]\arrow[rr]&&\color{green}h_3^{h_3,2g_2}\arrow[ul,"2"]\arrow[r]&\cdots\arrow[r]&\color{green}h_{q-1}^{h_{q-1},2g_{q-2}}\arrow[rr]&&\color{green}h_{q}^{h_q,2g_{q-1}}\arrow[ur,"2"]\arrow[ul,"2"]&\color{red}h_{q+1}^{m_1}\arrow[lllllll,bend left=15]
\end{tikzcd}}$

In terms of triangulation, we have wrapped the arcs $g_1,g_2,...,g_q,h_3,h_4,...,h_q$ inside $h_2$ (and thus removed all core arcs away from the corner).

\begin{flushleft}
\hspace*{-0.15\linewidth}
\begin{tikzpicture}[every node/.style={draw}][every edge quotes/.style={auto=right}]
    \begin{scope}[every node/.style={sloped,allow upside down}][every edge quotes/.style={auto=right}]
        \fill (1,1.5) node[cross=2pt,rotate=30] {};
        \fill (1.5,1) node[cross=2pt,rotate=30] {};
        \fill (2,0.5) node[cross=2pt,rotate=30] {};
        \fill (2.8,0.5) node[cross=2pt,rotate=30] {};
        \node[font=\small] at (-1.25,-1.25) {$\iddots$};
        \node at (2.6,0.5) {$\dots$};
        \draw (1,1.5) to node[near start,below=-.1] {$g_q$} (2.5,2.5);
        \draw (1.5,1) to node[near start,below=-.1] {$g_{q-1}$} (2.5,2.5);
        \draw (2,0.5) to node[near start,below=-.1] {$g_{q-2}$} (2.5,2.5);
        \draw (2.8,0.5) to node[near start,above=-.1] {$g_{1}$} (2.5,2.5);
        \draw (2.5,2.5) to [loop,in=255,out=195,min distance=60mm] node[below=-.1] {$h_q$} (2.5,2.5);
        \draw (2.5,2.5) to [loop,in=270,out=180,min distance=90mm] node[below=-.1] {$h_{q-1}$} (2.5,2.5);
        \draw (2.5,2.5) to [loop,in=285,out=180,min distance=130mm] node[below=0.1,left=.7] {$h_{2}$} (2.5,2.5);
        \fill[blue] (2.5,2.5) circle (2pt);
    \end{scope}
\end{tikzpicture}
\end{flushleft}

\vspace*{-0.4\linewidth}

Step 4(c) consists of $(l_1,f_1,r_1,f_2,l_1,f_1)$. We look at them and adjacent vertices:

\[\begin{tikzcd}
    \color{green}m_1^{h_{2\nearrow q+1},2g_{1\nearrow q}}\arrow[rrrr]\arrow[drr]&&&&\color{red}h_{q+1}^{m_1}\arrow[dl]\\
    &\color{green}l_1^{l_1,h_1}\arrow[ul]\arrow[dr]&\color{red}h_1^{h_1}\arrow[l]\arrow[urr]&\color{green}r_1^{r_1,m_1}\arrow[l]\arrow[dl]\\
    &&\color{green}f_1^{f_1}\arrow[d,"4"]&&\\
    &&\color{green}f_2^{f_2}\arrow[uul]\arrow[uur]&&
\end{tikzcd}\]

By computation we see that after mutating at $l_1,f_1,r_1,f_2,l_1,f_1$, we have:

\[\begin{tikzcd}
    \color{green}m_1^{h_{2\nearrow q+1},2g_{1\nearrow q}}\arrow[rrrr]\arrow[dddrr]&&&&\color{green}h_{q+1}^{4r_1,m_1,f_{1,2}}\arrow[dll]\\
    &\color{red}l_1^{f_2}\arrow[ddr]\arrow[dr]&\color{green}h_1^{h_1,4l_1,f_{1,2}}\arrow[d]\arrow[ull]&\color{red}r_1^{f_1}\arrow[ll,bend right,"4"]\\
    &&\color{red}f_1^{r_1,m_1}\arrow[ur]\arrow[uurr,bend right]&&\\
    &&\color{red}f_2^{l_1,h_1}\arrow[uur]\arrow[uu,bend right]&&
\end{tikzcd}\]

In terms of triangulation, we just made both ends of most of the arcs the middle punture like in the case of $p=2,q=1$.

Step 5 consists of $h_1,m_1,h_{q+1},h_1$. Look at them and adjacent vertices:

\[\begin{tikzcd}
    &&\color{red}h_{2}^{h_{3\nearrow q+1},2g_{2\nearrow q}}\arrow[dll]&&\\
    \color{green}m_1^{h_{2\nearrow q+1},2g_{1\nearrow q}}\arrow[rrrr]\arrow[dddrr]&&&&\color{green}h_{q+1}^{4r_1,m_1,f_{1,2}}\arrow[dll]\arrow[ull]\\
    &&\color{green}h_1^{h_1,4l_1,f_{1,2}}\arrow[d]\arrow[ull]&\\
    &&\color{red}f_1^{r_1,m_1}\arrow[uurr,bend right]&&\\
    &&\color{red}f_2^{l_1,h_1}\arrow[uu,bend right]&&
\end{tikzcd}\]

By computation we see that after mutating at $h_1,m_1,h_{q+1},h_1$, we have:

\[\begin{tikzcd}
    &&\color{green}h_{2}^{h_2,2g_1}\arrow[dll]&&\\
    \color{red}h_{q+1}^{h_1,4l_1,f_{1,2}}\arrow[rrrr]\arrow[dddrr]&&&&\color{red}m_1^{h_{2\nearrow q+1},2g_{1\nearrow q}}\arrow[ull]\arrow[dll]\\
    &&\color{red}h_1^{4r_1,m_1,f_{1,2}}\arrow[d]\arrow[ull]&\\
    &&\color{green}f_1^{f_{1,2},r_1}\arrow[uurr,bend right]&&\\
    &&\color{green}f_2^{l_1,f_{1,2}}\arrow[uu,bend right]&&
\end{tikzcd}\]

In terms of triangulation, we tagged the corner puncture notched (like in the previous case).

Step 6(a) consists of $f_1,l_1,f_2,r_1,f_1,l_1$. Look at them and adjacent vertices:

\[\begin{tikzcd}
    \color{red}h_{q+1}^{h_1,4l_1,f_{1,2}}\arrow[rrrr],\arrow[dddrr]&&&&\color{red}m_1^{h_{2\nearrow q+1},2g_{1\nearrow q}}\arrow[dll]\\
    &\color{red}l_1^{f_2}\arrow[ddr]\arrow[dr]&\color{red}h_1^{4r_1,m_1,f_{1,2}}\arrow[d]\arrow[ull]&\color{red}r_1^{f_1}\arrow[ll,bend right,"4"]\\
    &&\color{green}f_1^{f_{1,2},r_1}\arrow[ur]\arrow[uurr,bend right]&&\\
    &&\color{green}f_2^{l_1,f_{1,2}}\arrow[uur]\arrow[uu,bend right]&&
\end{tikzcd}\]

By computation we see that after mutating at $f_1,l_1,f_2,r_1,f_1,l_1$ (undo the flips of the arcs), we have:

\[\begin{tikzcd}
    \color{red}h_{q+1}^{h_1}\arrow[rrrr]\arrow[drr]&&&&\color{red}m_1^{h_{2\nearrow q+1},2g_{1\nearrow q}}\arrow[dl]\\
    &\color{red}l_1^{l_1}\arrow[dr]\arrow[ul]&\color{red}h_1^{m_1}\arrow[urr]\arrow[l]&\color{red}r_1^{r_1}\arrow[l]\arrow[dl]\\
    &&\color{red}f_1^{f_1}\arrow[d,"4"]&&\\
    &&\color{red}f_2^{f_2}\arrow[uur]\arrow[uul]&&
\end{tikzcd}\]

All outer arcs are moved back to their original position.

The mutations in steps 4(c), 5, and 6(a) can be combined into the following lemma which we will use in later cases:

\begin{lemma}\label{corner}
When mutating the following diagram using the mutation sequence $l_{p-1},f_1,r_{p-1},f_2,l_{p-1},f_1,$ $x,y,z,x,f_1,l_{p-1},f_2,r_{p-1},f_1,l_{p-1}$:
    \[\begin{tikzcd}
    &&\color{red}w^{W}\arrow[dll]&&\\
    \color{green}y^{Y}\arrow[rrrr]\arrow[drr]&&&&\color{red}z^{m_{p-1}}\arrow[dl]\arrow[ull]\\
    &\color{green}l_{p-1}^{l_{p-1,p-2}}\arrow[ul]\arrow[dr]&\color{red}x^{l_{p-2}}\arrow[l]\arrow[urr]&\color{green}r_{p-1}^{r_{p-1},m_{p-1}}\arrow[l]\arrow[dl]\\
    &&\color{green}f_1^{f_1}\arrow[d,"4"]&\\
    &&\color{green}f_2^{f_2}\arrow[uul]\arrow[uur]&
\end{tikzcd}\]

where $W,Y$ are sets of (frozen) vertices with $W\subset Y$, we will end up with the following diagram:

\[\begin{tikzcd}
    &&\color{green}w^{Y\setminus W}\arrow[dll]&&\\
    \color{red}z^{l_{p-2}}\arrow[rrrr]\arrow[drr]&&&&\color{red}y^{Y}\arrow[dl]\arrow[ull]\\
    &\color{red}l_{p-1}^{l_{p-1}}\arrow[ul]\arrow[dr]&\color{red}x^{m_{p-1}}\arrow[l]\arrow[urr]&\color{red}r_{p-1}^{r_{p-1}}\arrow[l]\arrow[dl]\\
    &&\color{red}f_1^{f_1}\arrow[d,"4"]&\\
    &&\color{red}f_2^{f_2}\arrow[uul]\arrow[uur]&
\end{tikzcd}\]
\end{lemma}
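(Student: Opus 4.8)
The plan is to establish the claim by direct computation, carrying out all sixteen mutations while tracking both the weighted arrows and the frozen-vertex superscripts at each step. The first thing I would record is that the mutation sequence is built from three blocks matching the labelled steps it comes from: the opening block $\Phi_1=(l_{p-1},f_1,r_{p-1},f_2,l_{p-1},f_1)$ (step 4(c)), the central block $\Phi_2=(x,y,z,x)$ (step 5), and the closing block $\Phi_3=(f_1,l_{p-1},f_2,r_{p-1},f_1,l_{p-1})$ (step 6(a)). The key structural observation is that $\Phi_3$ is precisely $\Phi_1$ read in reverse; since each $\mu_i$ is an involution (Remark \ref{involution}), $\Phi_3$ realises the inverse of $\Phi_1$, so the whole sequence acts as the conjugate $\Phi_1^{-1}\Phi_2\Phi_1$. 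This conjugation structure makes transparent the shape of the target diagram: the genuine change is localised to the neighbourhood of $x,y,z,w$ where the corner is tagged, while the outer vertices $l_{p-1},r_{p-1},f_1,f_2$ — untouched by $\Phi_2$ — are returned to red with their original superscripts, as the explicit computation confirms.

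Next I would carry out the blocks explicitly. Running $\Phi_1$ (``moving the outer arcs away from the corner'') reproduces, on the corner block $\{l_{p-1},r_{p-1},f_1,f_2,x\}$ together with its attachments, the evolution already carried out for step 4(c) in the cases above: a transient weight-$4$ arrow appears between $l_{p-1}$ and $r_{p-1}$, and the superscripts accumulate the contributions $4r_{p-1},f_1,f_2$ and $4l_{p-1},f_1,f_2$ on the two flanking vertices. It is the central block $\Phi_2$ that effects the corner-tagging, whose net consequences are to exchange the positions of $y$ and $z$, to send the superscript of $x$ from $l_{p-2}$ to $m_{p-1}$, and to flip $w$ from red to green. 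Finally $\Phi_3=\Phi_1^{-1}$ cancels the transient weight-$4$ arrows and restores $l_{p-1},r_{p-1},f_1,f_2$ to their starting superscripts, now all red. At each intermediate diagram I would verify the weighted mutation rule $\pm\sqrt{c}\pm\sqrt{d}=\sqrt{ab}$ and the cancellation of opposed arrows, which behave exactly as in the explicit examples.

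The one genuinely delicate point is the frozen-vertex bookkeeping, and in particular the superscript $Y\setminus W$ appearing on $w$. Since $w$ is never mutated, its superscript can only change through its arrows to $y$ and $z$, each of which is mutated once inside $\Phi_2$; applying the superscript-subtraction convention when $y$ (carrying the frozen set $Y$) is mutated against $w$ (carrying $W$) produces exactly $Y\setminus W$. Here the hypothesis $W\subset Y$ is essential: it is what makes this subtraction well defined, guaranteeing that no negative weights arise and that no spurious arrows between $w$ and the corner vertices survive. I expect this accounting — keeping the weighted arrows, the opposed-arrow cancellations, and the evolving frozen superscripts mutually consistent through the weight-$4$ edges — to be the main obstacle, whereas the underlying combinatorial shape of the diagram evolves routinely once the three-block decomposition and the conjugation structure are in hand.
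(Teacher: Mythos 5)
Your proposal is correct and takes essentially the same route as the paper, whose entire proof of this lemma is the single phrase ``direct computation'': you commit to carrying out all sixteen mutations explicitly, and your three-block organisation ($\Phi_1$, $\Phi_2$, $\Phi_3$) simply mirrors the narrative of steps 4(c), 5 and 6(a) in the surrounding text. One caution: the identity $\Phi_3=\Phi_1^{-1}$ does hold as maps on diagrams, but since $\Phi_2$ mutates $x$, which is adjacent to $l_{p-1}$ and $r_{p-1}$, the conjugation structure alone does not imply that the outer vertices return to their original superscripts — that conclusion, like the $Y\setminus W$ bookkeeping at $w$, still has to be extracted from the explicit computation, as you yourself acknowledge.
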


\begin{proof}
    Direct computation.
\end{proof}

Step 6(c) consists of $(h_2,h_3,...,h_q,g_1,g_2,...,g_{q-1},g_q)$. Look at them and adjacent vertices:

\[\adjustbox{scale=0.8,center}{
\begin{tikzcd}
    &&|[color=red,fill=green]|g_1^{g_1,2h_2}\arrow[dl,"2"]&&\color{red}g_2^{g_2,2h_3}\arrow[dl,"2"]&&&&\color{red}g_{q-2}^{g_{q-2},2h_{q-1}}\arrow[dl,"2"]&\color{red}g_{q-1}^{g_{q-1},2h_q}\arrow[d,"2"]&\color{green}g_q^{g_q}\arrow[l,"4"]\\
    |[color=red,fill=green]|h_{q+1}^{h_1}\arrow[rrrrrrrrrr,bend right]&|[color=green,fill=red]|h_{2}^{h_2,2g_1}\arrow[l]\arrow[rr]&&|[color=green,fill=red]|h_3^{h_3,2g_2}\arrow[ul,"2"]\arrow[rr]&&\color{green}h_4^{h_4,2g_3}\arrow[ul,"2"]\arrow[r]&\cdots\arrow[r]&\color{green}h_{q-1}^{h_{q-1},2g_{q-2}}\arrow[rr]&&\color{green}h_{q}^{h_q,2g_{q-1}}\arrow[ur,"2"]\arrow[ul,"2"]&|[color=red,fill=green]|m_1^{h_{2\nearrow q+1},2g_{1\nearrow q}}\arrow[lllllllll,bend left=15]
\end{tikzcd}}\]

Mutate at $h_2$:

\[\adjustbox{scale=0.8,center}{
\begin{tikzcd}
    &\color{green}g_1^{g_1}\arrow[dl,"2"]&&&|[color=red,fill=green]|g_2^{g_2,2h_3}\arrow[dl,"2"]&&&&\color{red}g_{q-2}^{g_{q-2},2h_{q-1}}\arrow[dl,"2"]&\color{red}g_{q-1}^{g_{q-1},2h_q}\arrow[d,"2"]&\color{green}g_q^{g_q}\arrow[l,"4"]\\
    \color{red}h_{q+1}^{h_1}\arrow[rr]&&|[color=red,fill=green]|h_{2}^{h_2,2g_1}\arrow[ul,"2"]\arrow[rrrrrrrr,bend right]&|[color=green,fill=red]|h_3^{h_3,2g_2}\arrow[l]\arrow[rr]&&|[color=green,fill=red]|h_4^{h_4,2g_3}\arrow[ul,"2"]\arrow[r]&\cdots\arrow[r]&\color{green}h_{q-1}^{h_{q-1},2g_{q-2}}\arrow[rr]&&\color{green}h_{q}^{h_q,2g_{q-1}}\arrow[ur,"2"]\arrow[ul,"2"]&|[color=red,fill=green]|m_1^{h_{3\nearrow q+1},2g_{2\nearrow q}}\arrow[lllllll,bend left=15]
\end{tikzcd}}\]

Again observe that the highlighted subdiagrams are isomorphic. Therefore after mutating at $h_{q-1}$, we have:

$\adjustbox{scale=0.9,center}{
\begin{tikzcd}
    &\color{green}g_1^{g_1}\arrow[dl,"2"]&&&&\color{green}g_{q-2}^{g_{q-2}}\arrow[dl,"2"]&&\color{red}g_{q-1}^{g_{q-1},2h_q}\arrow[d,"2"]&\color{green}g_q^{g_q}\arrow[l,"4"]\\
    \color{red}h_{q+1}^{h_1}\arrow[rr]&&\color{red}h_{2}^{h_2,2g_1}\arrow[ul,"2"]\arrow[r]&\cdots\arrow[r]&\color{red}h_{q-2}^{h_{q-2},2g_{q-3}}\arrow[rr]&&\color{red}h_{q-1}^{h_{q-1},2g_{q-2}}\arrow[ul,"2"]\arrow[rr,bend right]&\color{green}h_{q}^{h_q,2g_{q-1}}\arrow[ur,"2"]\arrow[l]&\color{red}m_1^{h_{q,q+1},2g_{q-1,q}}\arrow[l]
\end{tikzcd}}$

Mutating at $h_q$ gives:

$\adjustbox{scale=0.9,center}{\begin{tikzcd}
    &\color{green}g_1^{g_1}\arrow[dl,"2"]&&&&\color{green}g_{q-1}^{g_{q-1}}\arrow[dl,"2"]&&\color{green}g_{q}^{g_q}\arrow[dl,"2"]&\\
    \color{red}h_{q+1}^{h_1}\arrow[rr]&&\color{red}h_{2}^{h_2,2g_1}\arrow[ul,"2"]\arrow[r]&\cdots\arrow[r]&\color{red}h_{q-1}^{h_{q-1},2g_{q-2}}\arrow[rr]&&\color{red}h_{q}^{h_q,2g_{q-1}}\arrow[ul,"2"]\arrow[rr]&&\color{red}m_1^{h_{q+1},2g_{q}}\arrow[ul,"2"]
\end{tikzcd}}$

In terms of triangulation, we unwrapped the arcs from $h_2$.

Now we see that mutating at all $g_k$ makes all vertices red (and flips the arcs $g_k$'s back to their original position). Therefore, $\Delta_1$ is a maximal green sequence. $\blacksquare$

\subsubsection{The case where $p=4$}\label{p=4}

\begin{proposition}
    $\Delta_1$ is a maximal green sequence when $p=4$.
\end{proposition}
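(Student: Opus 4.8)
The plan is to specialise the general sequence $\Delta_1$ to $p=4$ (so that $\lfloor(p-1)/2\rfloor=1$ and $\lfloor p/2\rfloor=2$) and then verify it step by step in the style of Sections \ref{2,1} and \ref{2,>1}, tracking the triangulation and the framed diagram in parallel. Under this specialisation the sequence collapses to: step 1 is the single block $\alpha_2=(r_2,l_2,m_3,m_2,l_2,r_2)$; step 2 is $(h_{q+1},\dots,h_1,m_1,h_2,\dots,h_{q+1})$; step 3 is the block $\beta_1=(r_1,l_1,m_3,h_1)$; steps 4(b) and 6(b) are empty; step 4(a) is $(g_1,\dots,g_q,h_{q+1},\dots,h_2,m_1)$; step 4(c) is $(l_3,f_1,r_3,f_2,l_3,f_1)$; step 5 is $\gamma=(m_2,l_2,r_2,m_2)$; step 6(a) is $(f_1,l_3,f_2,r_3,f_1,l_3)$; step 6(c) is $(m_1,h_2,\dots,h_{q+1},g_1,\dots,g_q)$; and step 6(d) is $(l_1,r_1)$. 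The geometric heuristic from Section \ref{puncture} is that a flip-realisation of a maximal green sequence must return the triangulation to itself with every puncture-end notched, so I would use the evolving triangulation as a guide while discharging the actual obligation on the framed diagram: each mutated vertex must be green when mutated, and every non-frozen vertex must be red at the end.

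For the opening blocks I would argue largely geometrically, confirming greenness by local checks on the $l$–$m$–$r$ ladder. Step 1 ($\alpha_2$) builds a once-punctured digon around the middle puncture (the one incident to $l_2,m_2,m_3,r_2$) and notches it while leaving the rest of the triangulation unchanged. Step 2 is then exactly the hypothesis of Lemma \ref{p>2}, with $L,R$ the frozen-vertex sets accumulated from step 1; I would invoke that lemma verbatim to conclude that the first interior puncture is notched and to read off the updated superscripts. Step 3 ($\beta_1$) notches the remaining interior puncture by the same digon mechanism, now closing the digon with the arc $h_1$ rather than an $m_{k-1}$, and simultaneously displacing the innermost pair $l_1,r_1$. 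The point to watch throughout is how the frozen labels deposited by steps 1 and 3 propagate along the ladder.

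Once all three interior punctures are notched, the remaining steps parallel the $p=2,q>1$ argument. Step 4(a) drags the core arcs $g_i$ and the inner $h_i$ away from the corner, reusing the isomorphic-local-pattern induction of Section \ref{2,>1}, with the terminal $m_1$ of $\mu$ handling the innermost configuration. Steps 4(c), 5, and 6(a) together form precisely the mutation string of Lemma \ref{corner}, instantiated with $l_{p-1},r_{p-1}=l_3,r_3$ and with $x,y,z$ matched to $m_2,l_2,r_2$; applying that lemma notches the corner and undoes the outer-arc flips in one stroke, provided the required containment $W\subset Y$ holds. Step 6(c) then unwinds the core arcs back to the corner, and step 6(d) $=(l_1,r_1)$ restores the innermost inner pair. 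I would finish by checking that every non-frozen vertex is red, equivalently that all four punctures are now notched.

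The main obstacle I anticipate is the frozen-vertex bookkeeping at the two junctions where the $\alpha$/$\beta$ blocks meet the ladder and the corner machinery: unlike the $p=2$ cases, the arcs $l_1,r_1,l_2,r_2$ here carry superscripts that are altered by several different blocks, and one must verify that the sets $L,R$ fed into Lemma \ref{p>2} and the containment $W\subset Y$ demanded by Lemma \ref{corner} genuinely hold after the earlier blocks have run. The second delicate point is step 6(d), which has no counterpart in the $p=2$ cases; I would need to confirm that after step 6(c) the vertices $l_1,r_1$ remain green and that mutating them is exactly what turns the last plain ends notched and reddens the whole diagram. Because the surface is small, these reduce to the same kind of finite local computations performed elsewhere in the paper, so apart from the two invoked lemmas I expect the proof to be direct verification rather than a new idea.
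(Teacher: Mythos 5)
Your proposal is correct and follows essentially the same route as the paper: specialise $\Delta_1$ to $p=4$ (your instantiation of every block, including the correct indices $l_3,r_3$ for the outer arcs and $x,y,z=m_2,l_2,r_2$ in the corner lemma, matches the paper's actual computation), handle steps 1 and 3 by the once-punctured-digon mechanism, invoke Lemma \ref{p>2} for step 2 and Lemma \ref{corner} for the merged steps 4(c)--5--6(a), and finish with the local induction for 4(a)/6(c) and the final check on $l_1,r_1$. The only cosmetic difference is that the paper packages the step 4(a) induction as a separate lemma (Lemma \ref{p>4}) for reuse in the $p>4$ even case, whereas you carry out the same isomorphic-local-pattern argument inline.
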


\textbf{Proof} When $p=4$, the mutation sequence $\Delta_1$ translates to

\begin{enumerate}
    \item Tagging alternate punctures notched. $(\alpha_2)$
    \item Tagging the first interior puncture notched. $(h_{q+1},h_q,...,h_1,m_1,h_2,h_3,...,h_{q+1})$
    \item Tagging the remaining interior punctures notched and moving half the inner arcs away from the corner. $(\beta_1)$
    \item Moving arcs away from the corner.
    \begin{enumerate}
        \item Moving core arcs away from the corner. $(g_1,g_2,...,g_q,h_{q+1},h_{q},...,h_2,m_1)$
        \item Moving the remaining inner arcs away from the corner. $(\phi)$
        \item Moving outer arcs away from the corner. $(l_1,f_1,r_1,f_2,l_1,f_1)$
    \end{enumerate}
    \item Tagging the corner notched. $(m_2,l_2,r_2,m_2)$
    \item Moving arcs back to the corner.
    \begin{enumerate}
        \item Moving outer arcs back to the corner. 
        $f_1,l_1,f_2,r_1,f_1,l_1$
        \item Moving inner arcs back to the corner. 
        $(\phi)$
        \item Moving core arcs back to the corner. $(m_1,h_2,h_3,...,h_{q+1},g_1,g_2,...,g_{q-1},g_q)$
        \item (If $p>2$ and is even) Moving the remaining pair of inner arc back to the corner. $(l_1,r_1)$
    \end{enumerate}
\end{enumerate}

Now we have all the steps except 4(b) and 6(b).

The triangulation $T_{4,q}$ looks like this:

\[\begin{tikzpicture}[every edge quotes/.style={auto=right}]
    \begin{scope}[every node/.style={sloped,allow upside down}][every edge quotes/.style={auto=right}]
        \node at ($(0,5)!.5!(5,0)$) {$\ddots$};
        \draw[cyan] (0,0) to [bend right=15,black] node [below=.15,right] {$g_1$} (3.2,1.8);
        \draw[cyan] (0,0) to [bend left=15,black] node [above=.15,right] {$g_q$} (1.8,3.2);
        \draw[magenta] (0,0) -- node[rotate=-90,left,black] {$f_1$} (0,10);
        \draw[magenta] (0,10) -- node[above,black] {$f_2$} (10,10);
        \draw[magenta] (10,0) -- node[rotate=-90,right,black] {$f_1$} (10,10);
        \draw[magenta] (0,0) -- node[below,black] {$f_2$} (10,0);
        \draw[cyan] (0,0) to [bend right=45,black] node[below=.15,right=1] {$h_1$} (5,5);
        \draw[cyan] (0,0) to [bend left=45,black] node[above=.15,right=1] {$h_{q+1}$} (5,5);
        \draw[cyan] (0,0) to [bend right=15,black] node[right=1,below] {$h_2$} (5,5);
        \draw[cyan] (0,0) to [bend left=15,black] node[right=1,above] {$h_q$} (5,5);
        \draw[yellow] (0,0) to [bend right=45,black] node[near end,right,below=-.1] {$l_1$} (7,7);
        \draw[yellow] (0,0) to [bend left=45] node[near end,right,above=-.1,black] {$r_1$} (7,7);
        \draw (0,0) to [bend right=45,black] node[near end,below] {$l_{2}$} (9,9);
        \draw (0,0) to [bend left=45,black] node[near end,above] {$r_{2}$} (9,9);
        \draw[magenta] (0,0) to [bend right=45,black] node[near end,below,black] {$l_{3}$} (10,10);
        \draw[magenta] (0,0) to [bend left=45,black] node[near end,above] {$r_{3}$} (10,10);
        \draw (5,5) to node[below] {$m_1$} (7,7);
        \draw (9,9) to node[below] {$m_{3}$} (10,10);
        \draw (7,7) to node[below] {$m_2$} (9,9);
        \fill (0,10) circle (2pt);
        \fill (10,0) circle (2pt);
        \fill (0,0) circle (2pt);
        \fill (10,10) circle (2pt);
        \fill (5,5) circle (2pt);
        \fill (3.2,1.8) node[cross=2pt,rotate=30] {};
        \fill (1.8,3.2) node[cross=2pt,rotate=30] {};
        \fill (7,7) circle (2pt);
        \fill (9,9) circle (2pt);
    \end{scope}
\end{tikzpicture}\]

Step 1 consists of $\alpha_2$.

We first look at the vertices involved in $\alpha_2$ and adjacent vertices.

\[\begin{tikzcd}
    \color{green}l_{1}^{l_{1}}\arrow[dr]&&\color{green}r_{1}^{r_{1}}\arrow[dd]\\
    &\color{green}m_2^{m_2}\arrow[ur]\arrow[dl]&\\
    \color{green}l_{2}^{l_2}\arrow[uu]\arrow[dr]&&\color{green}r_{2}^{r_2}\arrow[ul]\arrow[dd]\\
    &\color{green}m_{3}^{m_{3}}\arrow[ur]\arrow[dl]&\\
    \color{green}l_{3}^{l_{3}}\arrow[uu]&&\color{green}r_{3}^{r_{3}}\arrow[ul]
\end{tikzcd}\]

By computation, mutating at $\alpha_2$ changes the subdiargam like this:

\[\begin{tikzcd}
    \color{green}l_{1}^{l_{1},m_2}\arrow[dr]&&\color{green}r_{1}^{r_{1,2}}\arrow[dd]\\
    &\color{red}m_{3}^{r_2}\arrow[ur]\arrow[dl]&\\
    \color{red}l_{2}^{m_2}\arrow[uu]\arrow[dr]&&\color{red}r_{2}^{m_{3}}\arrow[ul]\arrow[dd]\\
    &\color{red}m_2^{l_2}\arrow[ur]\arrow[dl]&\\
    \color{green}l_{3}^{l_{3,2}}\arrow[uu]&&\color{green}r_{3}^{r_{3},m_{3}}\arrow[ul]
\end{tikzcd}\]

In terms of triangulation, the corresponding subtriangulation at the start and the triangulation after flipping $r_2$ and $l_2$ look like the following:

\[\begin{tikzpicture}[every edge quotes/.style={auto=right}]
    \begin{scope}[every node/.style={sloped,allow upside down}][every edge quotes/.style={auto=right}]
        \fill (0,0) circle (2pt);
        \node at ($(1,1)$) {$\iddots$};
        \fill (2,2) circle (2pt);
        \fill (4,4) circle (2pt);
        \fill (6,6) circle (2pt);
        \draw (0,0) to [bend right] node[near end, below=-0.1] {$l_{1}$} (2,2);
        \draw (0,0) to [bend left] node[near end, above=-0.1] {$r_{1}$} (2,2);
        \draw (2,2) to node[below=-0.1] {$m_2$} (4,4);
        \draw (4,4) to node[below=-0.1] {$m_{3}$} (6,6);
        \draw (0,0) to [bend right] node[near end, below=-0.1] {$l_{2}$} (4,4);
        \draw (0,0) to [bend left] node[near end, above=-0.1] {$r_{2}$} (4,4);
        \draw (0,0) to [bend right] node[near end, below=-0.1] {$l_{3}$} (6,6);
        \draw (0,0) to [bend left] node[near end, above=-0.1] {$r_{3}$} (6,6);
    \end{scope}
\end{tikzpicture}
\begin{tikzpicture}[every edge quotes/.style={auto=right}]
    \begin{scope}[every node/.style={sloped,allow upside down}][every edge quotes/.style={auto=right}]
        \fill (0,0) circle (2pt);
        \node at ($(1,1)$) {$\iddots$};
        \fill (2,2) circle (2pt);
        \fill (4,4) circle (2pt);
        \fill (6,6) circle (2pt);
        \draw (0,0) to [bend right] node[near end, below=-0.1] {$l_{1}$} (2,2);
        \draw (0,0) to [bend left] node[near end, above=-0.1] {$r_{1}$} (2,2);
        \draw (2,2) to node[below=-0.1] {$m_2$} (4,4);
        \draw (4,4) to node[below=-0.1] {$m_{3}$} (6,6);
        \draw (2,2) to [bend right] node[above=-0.1] {$l_{2}$} (6,6);
        \draw (2,2) to [bend left] node[below=-0.1] {$r_{2}$} (6,6);
        \draw (0,0) to [bend right] node[near end, below=-0.1] {$l_{3}$} (6,6);
        \draw (0,0) to [bend left] node[near end, above=-0.1] {$r_{3}$} (6,6);
    \end{scope}
\end{tikzpicture}\]

Observe that the arcs $r_2,l_2,m_{3},m_2$ form a triangulated once-punctured digon. Flipping $m_{3},m_2$ tags the third interior puncture notched, and flipping $l_2$ and $r_2$ moves them back to the original positions:

\[\begin{tikzpicture}[every edge quotes/.style={auto=right}]
    \begin{scope}[every node/.style={sloped,allow upside down}][every edge quotes/.style={auto=right}]
        \fill (0,0) circle (2pt);
        \node at ($(1,1)$) {$\iddots$};
        \fill (2,2) circle (2pt);
        \fill (4,4) circle (2pt);
        \fill (6,6) circle (2pt);
        \draw (0,0) to [bend right] node[near end, below=-0.1] {$l_{1}$} (2,2);
        \draw (0,0) to [bend left] node[near end, above=-0.1] {$r_{1}$} (2,2);
        \draw (2,2) to node[below=-0.1] {$m_3$} (4,4);
        \draw (4,4) to node[below=-0.1] {$m_{2}$} (6,6);
        \draw (2,2) to [bend right] node[above=-0.1] {$l_{2}$} (6,6);
        \draw (2,2) to [bend left] node[below=-0.1] {$r_{2}$} (6,6);
        \draw (0,0) to [bend right] node[near end, below=-0.1] {$l_{3}$} (6,6);
        \draw (0,0) to [bend left] node[near end, above=-0.1] {$r_{3}$} (6,6);
        \fill[blue] (4,4) circle (2pt);
    \end{scope}
\end{tikzpicture}
\begin{tikzpicture}[every edge quotes/.style={auto=right}]
    \begin{scope}[every node/.style={sloped,allow upside down}][every edge quotes/.style={auto=right}]
        \fill (0,0) circle (2pt);
        \node at ($(1,1)$) {$\iddots$};
        \fill (2,2) circle (2pt);
        \fill (6,6) circle (2pt);
        \draw (0,0) to [bend right] node[near end, below=-0.1] {$l_{1}$} (2,2);
        \draw (0,0) to [bend left] node[near end, above=-0.1] {$r_{1}$} (2,2);
        \draw (2,2) to node[below=-0.1] {$m_3$} (4,4);
        \draw (4,4) to node[below=-0.1] {$m_{2}$} (6,6);
        \draw (0,0) to [bend right] node[near end, below=-0.1] {$l_{2}$} (4,4);
        \draw (0,0) to [bend left] node[near end, above=-0.1] {$r_{2}$} (4,4);
        \draw (0,0) to [bend right] node[near end, below=-0.1] {$l_{3}$} (6,6);
        \draw (0,0) to [bend left] node[near end, above=-0.1] {$r_{3}$} (6,6);
        \fill[blue] (4,4) circle (2pt);
    \end{scope}
\end{tikzpicture}\]

After step 1, the third interior puncture is tagged notched.

Step 2 consists of $(h_{q+1},h_q,...,h_1,m_{1},h_2,h_3,...,h_{q+1})$. Look at them and adjacent vertices:

\[\begin{tikzcd}
    &|[color=green]|g_{1}^{g_1}\arrow[dr,"2"]&&|[color=green]|g_{2}^{g_2}\arrow[dr,"2"]&&&&|[color=green]|g_{q}^{g_q}\arrow[dr,"2"]\\
    |[color=green]|h_{1}^{h_1}\arrow[ur,"2"]\arrow[drrrr]&&|[color=green]|h_{2}^{h_2}\arrow[ur,"2"]\arrow[ll]&&|[color=green]|{h_{3}^{h_{3}}}\arrow[ll]&\cdots\arrow[l]&|[color=green]|h_{q}^{h_q}\arrow[ur,"2"]\arrow[l]&&|[color=green]|{h_{q+1}^{h_{q+1}}}\arrow[ll]\arrow[ddll]\\
    &&&&|[color=green]|m_1^{m_1}\arrow[urrrr]\arrow[dll]\\
    &&|[color=green]|l_1^{l_1,m_2}\arrow[uull]&&&&|[color=green]|r_1^{r_{1,2}}\arrow[ull]
\end{tikzcd}\]

By applying Lemma \ref{p>2}, we have:

\[\adjustbox{scale=0.9,center}{
\begin{tikzcd}
    &\color{green}g_1^{g_1,2h_2}\arrow[dr,"2"]&&&&\color{green}g_{q-1}^{g_{q-1},2h_q}\arrow[dr,"2"]&&\color{green}g_q^{g_q,2h_{q+1}}\arrow[dr,"2"]\\
    \color{red}m_1^{h_2}\arrow[drrrr]\arrow[ur,"2"]&&\color{red}h_2^{h_3}\arrow[ll]&\cdots\arrow[l]&\color{red}h_{q-1}^{h_q}\arrow[ur,"2"]\arrow[l]&&\color{red}h_{q}^{h_{q+1}}\arrow[ll]\arrow[ur,"2"]&&\color{red}h_{q+1}^{m_1}\arrow[ll]\arrow[ddll]\\
    &&&&\color{red}h_1^{h_1}\arrow[urrrr]\arrow[dll]\\
    &&\color{green}l_1^{l_1,m_2,h_1}\arrow[uull]&&&&\color{green}r_1^{r_{1,2},m_1}\arrow[ull]
\end{tikzcd}}\]

In terms of triangulation, we have tagged the first interior puncture notched.

Step 3 consists of $\beta_1$.

Now look at the vertices involved in $\beta_k$ for $k>1$ and adjacent vertices. Note that as a result of $\alpha_2$ and step 2, $m_{3}$ (instead of $m_{2}$) is adjacent to $l_1$ and $r_1$, and $m_1,h_1$ switched position.

\[\begin{tikzcd}
    \color{red}m_1^{h_2}\arrow[dr]&&\color{red}h_{q+1}^{m_1}\arrow[dd]\\
    &\color{red}h_1^{h_1}\arrow[ur]\arrow[dl]&\\
    \color{green}l_{1}^{l_{1},m_{2},h_1}\arrow[uu]\arrow[dr]&&\color{green}r_{1}^{r_{1,2},m_1}\arrow[ul]\arrow[dd]\\
    &\color{red}m_{3}^{r_{2}}\arrow[ur]\arrow[dl]&\\
    \color{red}l_{2}^{m_{2}}\arrow[uu]&&\color{red}r_{2}^{m_{3}}\arrow[ul]
\end{tikzcd}\]

By computation, mutating at $\beta_1$ gives:

\[\begin{tikzcd}
    \color{red}m_1^{h_2}\arrow[dd]&&\color{green}h_{q+1}^{r_{1,2}}\arrow[dddd,bend left]\\
    &\color{red}m_{3}^{m_1,r_1}\arrow[dr]&\\
    \color{red}l_{1}^{h_1
    }\arrow[dd]\arrow[ur]&&\color{red}r_{1}^{r_{2}}\arrow[dl]\arrow[uu]\\
    &\color{red}h_1^{m_{2},l_1}\arrow[ul]&\\
    \color{green}l_{2}^{l_{1},h_1}\arrow[uuuu,bend left]&&\color{red}r_{2}^{m_{3}}\arrow[uu]
\end{tikzcd}\]

After step 3, the second interior puncture is tagged notched. Together with step 1 and step 2, the remaining puncture not yet tagged notched is the corner. Also the arcs $l_1$ and $r_1$ are moved away from the corner.

Step 4(a) consists of $(g_1,g_2,...,g_q,h_{q+1},h_{q},...,h_2,m_1)$.

Mutating at $g_1$ does the following:

\[
\begin{tikzcd}
    &\color{green}g_{1}^{g_1,2h_2}\arrow[dr,"2"]&\\
    \color{red}m_1^{h_2}\arrow[ur,"2"]&&\color{red}h_2^{h_3}\arrow[ll]
\end{tikzcd}
\rightarrow
\begin{tikzcd}
    &\color{red}g_{1}^{g_1,2h_2}\arrow[dl,"2"]&\\
    \color{green}m_1^{h_2,2g_1}\arrow[rr]&&\color{red}h_2^{h_3}\arrow[ul,"2"]
\end{tikzcd}
\]

Observe that for all $g_k$, $k=1,2,...,q$, the full subdiagram containing $g_k$ and adjancent vertices are isomorphic (again except the frozen vertices attached), so after mutating at $g_1,g_2,...,g_q$, we have:

\[\adjustbox{scale=0.9,center}{
\begin{tikzcd}
    &\color{red}g_1^{g_1,2h_2}\arrow[dl,"2"]&&&&\color{red}g_{q-1}^{g_{q-1},2h_q}\arrow[dl,"2"]&&\color{red}g_q^{g_q,2h_{q+1}}\arrow[dl,"2"]\\
    \color{green}m_1^{h_2,2g_1}\arrow[rr]&&\color{green}h_2^{h_3,2g_2}\arrow[r]\arrow[ul,"2"]&\cdots\arrow[r]&\color{green}h_{q-1}^{h_q,2g_{q-1}}\arrow[rr]&&\color{green}h_{q}^{h_{q+1},2g_q}\arrow[rr]\arrow[ul,"2"]&&\color{green}h_{q+1}^{r_{1,2}}\arrow[ul,"2"]
\end{tikzcd}}\]

In terms of triangulation, all pending arcs are now connected to the first interior puncture (like in the case where $p=2,q>1$).

At the vertices $h_{q+1},h_q,...,h_2,m_1$, and adjacent vertices, we have:

\[\adjustbox{scale=0.9,center}{
\begin{tikzcd}
    &\color{red}g_1^{g_1,2h_2}\arrow[dl,"2"]&&&&\color{red}g_{q-1}^{g_{q-1},2h_q}\arrow[dl,"2"]&&\color{red}g_q^{g_q,2h_{q+1}}\arrow[dl,"2"]\\
    \color{green}m_1^{h_2,2g_1}\arrow[rr]\arrow[d]&&\color{green}h_2^{h_3,2g_2}\arrow[r]\arrow[ul,"2"]&\cdots\arrow[r]&\color{green}h_{q-1}^{h_q,2g_{q-1}}\arrow[rr]&&\color{green}h_{q}^{h_{q+1},2g_q}\arrow[rr]\arrow[ul,"2"]&&\color{green}h_{q+1}^{r_{1,2}}\arrow[ul,"2"]\arrow[dd,bend left]
    \\\color{red}l_1^{h_1}\arrow[d]&&&&&&&&\color{red}r_1^{r_2}\arrow[u]
    \\\color{green}l_2^{l_1,h_1}\arrow[uu,bend left]&&&&&&&&\color{red}r_2^{m_3}\arrow[u]
\end{tikzcd}}\]

For convenience, we shall show the result of step 4(a) in a lemma:

\begin{lemma}\label{p>4}
    Mutating the following diagram (the blue colour of the vertex $r_2$ means that the greenness of $r_2$ does not matter):
\[\adjustbox{scale=0.9,center}{
\begin{tikzcd}
    &\color{red}g_1^{g_1,2h_2}\arrow[dl,"2"]&&&&\color{red}g_{q-1}^{g_{q-1},2h_q}\arrow[dl,"2"]&&\color{red}g_q^{g_q,2h_{q+1}}\arrow[dl,"2"]\\
    \color{green}m_1^{h_2,2g_1}\arrow[rr]\arrow[d]&&\color{green}h_2^{h_3,2g_2}\arrow[r]\arrow[ul,"2"]&\cdots\arrow[r]&\color{green}h_{q-1}^{h_q,2g_{q-1}}\arrow[rr]&&\color{green}h_{q}^{h_{q+1},2g_q}\arrow[rr]\arrow[ul,"2"]&&\color{green}h_{q+1}^{r_{1,2}}\arrow[ul,"2"]\arrow[dd,bend left]
    \\\color{red}l_1^{h_1}\arrow[d]&&&&&&&&\color{red}r_1^{r_2}\arrow[u]
    \\\color{green}l_2^{l_1,h_1}\arrow[uu,bend left]&&&&&&&&\color{blue}r_2^{R}\arrow[u]
\end{tikzcd}}\]
    , where $R$ is a set of (frozen) vertices, with the mutation sequence $(h_{q+1},h_q,...,h_2,m_1)$ gives the following diagram (after relocating some vertices):
\[\adjustbox{scale=0.8,center}{
\begin{tikzcd}
    &&\color{red}g_1^{g_1,2h_2}\arrow[dl,"2"]&&\color{red}g_2^{g_2,2h_3}\arrow[dl,"2"]&&\color{red}g_{q-1}^{g_{q-1},2h_q}\arrow[dl,"2"]&&\color{red}g_q^{g_q,2h_{q+1}}\arrow[dl,"2"]\\
    \color{red}m_1^{h_{2\nearrow q+1},2g_{1\nearrow q},r_{1,2}}\arrow[dd,bend right]\arrow[r]&\color{green}h_{2}^{h_{2},2g_{1}}\arrow[dl]\arrow[rr]&&\color{green}h_3^{h_3,2g_2}\arrow[ul,"2"]\arrow[r]&\cdots\arrow[r]&\color{green}h_{q}^{h_q,2g_{q-1}}\arrow[rr]&&\color{green}h_{q+1}^{h_{q+1},2g_q}\arrow[ul,"2"]\arrow[rr]&&\color{green}r_1^{r_1}\arrow[ul,"2"]\\
    \color{red}l_1^{h_1}\arrow[u]&\color{blue}r_2^{R}\arrow[ul]\\
    \color{green}l_2^{l_1,h_{1\nearrow q+1},2g_{1\nearrow q},r_{1,2}}\arrow[ur]
\end{tikzcd}}\]
\end{lemma}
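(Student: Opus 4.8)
The plan is to follow the same methodology established in the proof of Lemma \ref{p>2}: perform the mutations one at a time, isolate around each mutated vertex a local window whose shape recurs up to relabelling and up to the attached frozen decorations, and then close the argument by induction on the index. The sequence $(h_{q+1},h_q,\ldots,h_2,m_1)$ sweeps the chain $m_1\to h_2\to\cdots\to h_{q+1}$ from its right-hand end towards the left, so the content of the lemma is really a precise record of how the arrows reverse and how the frozen-vertex superscripts redistribute as the sweep passes along the chain. I keep the set $R$ symbolic throughout so that the statement remains reusable in the later cases.

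First I would compute $\mu_{h_{q+1}}$ and $\mu_{h_q}$ explicitly. Each such mutation reverses the oriented triangle that the mutated vertex forms with its neighbouring $g$ and $h$ vertices, turns the mutated vertex red, advances the ``green frontier'' one step to the left, and funnels the frozen vertices that were attached to the mutated vertex towards $m_1$, which acts as the accumulation sink (this is the source of the long superscript $h_{2\nearrow q+1},2g_{1\nearrow q},r_{1,2}$ eventually appearing on $m_1$). Writing out these two steps exposes the invariant interior window: for an interior index $k$, the full subdiagram supported on $\{g_{k-1},g_k,h_{k-1},h_k,h_{k+1}\}$, together with its frozen decorations, is isomorphic just before each successive mutation, so one may invoke induction exactly as the highlighted isomorphic windows are used in Lemma \ref{p>2} to pin down the diagram after $\mu_{h_3}$ has been applied.

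The two ends have to be handled by hand, since they break the symmetry of the interior chain. At the right end the very first mutation $\mu_{h_{q+1}}$ meets the oriented three-cycle $r_2\to r_1\to h_{q+1}\to r_2$; here I would verify that $r_1$ is released to become green with superscript $r_1$ (taking over the structural role of $h_{q+1}$, consistent with the index shift in the target) and that $r_2$, carrying $R$, is reconnected to the chain. At the left end the concluding mutations $\mu_{h_2}$ and $\mu_{m_1}$ interact with the cycle $m_1\to l_1\to l_2\to m_1$, and this is precisely where the superscripts $h_{1\nearrow q+1},2g_{1\nearrow q},r_{1,2}$ are assembled onto $m_1$ and $l_2$ while $l_1$ is left essentially untouched with superscript $h_1$. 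Comparing the resulting diagram termwise with the asserted one then finishes the proof.

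The hard part will not be any single mutation but the bookkeeping. One must keep an exact account of the frozen-vertex superscripts as they pile onto $m_1$ and $l_2$, watch the transient weight-$4$ arrows that appear between consecutive $g$-vertices (and between $g_q$ and $r_1$) during the sweep and then cancel, and confirm that the two asymmetric end-triangles output precisely the green $r_1$ and red $l_1$ recorded in the target. These end computations are the only genuinely non-inductive parts, and are where an error is most likely to hide; I expect the safest route is to discharge them by \emph{direct computation} after the inductive interior step has reduced the problem to a bounded-size configuration.
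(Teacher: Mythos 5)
Your proposal matches the paper's proof of this lemma essentially step for step: compute $\mu_{h_{q+1}}$ and $\mu_{h_q}$ explicitly, observe that the local windows around successive $h_k$ are isomorphic up to relabelling and the attached frozen decorations, induct down to $h_2$, and finish with the explicit mutation at $m_1$ that assembles the long superscripts onto $m_1$ and $l_2$ while handling the $l_1,l_2$ and $r_1,r_2$ end-triangles by hand. One small correction to your anticipated bookkeeping: in this sweep the oriented triangle $h_k\to h_{k+1}\to g_k\to h_k$ carries weights $1,2,2$, so mutating at $h_{k+1}$ simply deletes the $g_k$--$h_k$ arrow rather than producing transient weight-$4$ arrows between consecutive $g$-vertices (and the new arrow from $r_1$ to $g_q$ created by $\mu_{h_{q+1}}$ has weight $2$, not $4$); since you defer these checks to direct computation this does not affect the validity of the argument.
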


\begin{proof}
    Look at $h_{q+1}$, $h_{q}$, $h_{q-1}$, and adjacent vertices:

$\adjustbox{scale=0.9,center}{
\begin{tikzcd}
    &\color{red}g_{q-2}^{g_{q-2},2h_{q-1}}\arrow[dl,"2"]&&\color{red}g_{q-1}^{g_{q-1},2h_q}\arrow[dl,"2"]&&|[color=red,fill=green]|g_q^{g_q,2h_{q+1}}\arrow[dl,"2"]\\
    \color{green}h_{q-2}^{h_{q-1},2g_{q-2}}\arrow[rr]&&\color{green}h_{q-1}^{h_q,2g_{q-1}}\arrow[rr]\arrow[ul,"2"]&&|[color=green,fill=red]|h_{q}^{h_{q+1},2g_q}\arrow[rr]\arrow[ul,"2"]&&|[color=green,fill=red]|h_{q+1}^{r_{1,2}}\arrow[ul,"2"]\arrow[dd,bend left]\\
    &&&&&&|[color=red,fill=green]|r_1^{r_2}\arrow[u]\\
    &&&&&&|[color=blue,fill=green]|r_2^{R}\arrow[u]
\end{tikzcd}}$

Mutate at $h_{q+1}$:

$\adjustbox{scale=0.9,center}{
\begin{tikzcd}
    &\color{red}g_{q-2}^{g_{q-2},2h_{q-1}}\arrow[dl,"2"]&&|[color=red,fill=green]|g_{q-1}^{g_{q-1},2h_q}\arrow[dl,"2"]&&\color{red}g_q^{g_q,2h_{q+1}}\arrow[dr,"2"]\\
    \color{green}h_{q-2}^{h_{q-1},2g_{q-2}}\arrow[rr]&&|[color=green,fill=red]|h_{q-1}^{h_q,2g_{q-1}}\arrow[rr]\arrow[ul,"2"]&&|[color=green,fill=red]|h_{q}^{h_{q+1},2g_q,r_{1,2}}\arrow[dd]\arrow[ul,"2"]&&|[color=red,fill=green]|h_{q+1}^{r_{1,2}}\arrow[ll]\arrow[d]\\
    &&&&&&\color{green}r_1^{r_1}\arrow[uul,"2",near end]\\
    &&&&|[color=blue,fill=green]|r_2^{R}\arrow[uurr]
\end{tikzcd}}$

Mutate at $h_{q}$:

$\adjustbox{scale=0.9,center}{
\begin{tikzcd}
    &|[color=red,fill=green]|g_{q-2}^{g_{q-2},2h_{q-1}}\arrow[dl,"2"]&&\color{red}g_{q-1}^{g_{q-1},2h_q}\arrow[dr,"2"]&&\color{red}g_q^{g_q,2h_{q+1}}\arrow[dr,"2"]\\
    |[color=green,fill=red]|h_{q-2}^{h_{q-1},2g_{q-2}}\arrow[rr]&&|[color=green,fill=red]|h_{q-1}^{h_{q,q+1},2g_{q-1,q},r_{1,2}}\arrow[ddrr]\arrow[ul,"2"]&&|[color=red,fill=green]|h_{q}^{h_{q+1},2g_q,r_{1,2}}\arrow[ll]\arrow[rr]&&\color{green}h_{q+1}^{h_{q+1},2g_q}\arrow[ulll,"2"]\arrow[d]\\
    &&&&&&\color{green}r_1^{r_1}\arrow[uul,"2",near start]\\
    &&&&|[color=blue,fill=green]|r_2^{R}\arrow[uu]
\end{tikzcd}}$

Observe the pattern at each $h_k$ and adjacent vertices, we can argue by induction that after mutating at $h_2$, we have:

$\adjustbox{scale=0.8,center}{
\begin{tikzcd}
    &&\color{red}g_1^{g_1,2h_2}\arrow[dl,"2"]&&\color{red}g_2^{g_2,2h_3}\arrow[dl,"2"]&&\color{red}g_{q-1}^{g_{q-1},2h_q}\arrow[dl,"2"]&&\color{red}g_q^{g_q,2h_{q+1}}\arrow[dl,"2"]\\
    \color{green}m_1^{h_{2\nearrow q+1},2g_{1\nearrow q},r_{1,2}}\arrow[d]\arrow[dr]&\color{red}h_{2}^{h_{3\nearrow q+1},2g_{2\nearrow q},r_{1,2}}\arrow[l]\arrow[rr]&&\color{green}h_3^{h_3,2g_2}\arrow[ul,"2"]\arrow[r]&\cdots\arrow[r]&\color{green}h_{q}^{h_q,2g_{q-1}}\arrow[rr]&&\color{green}h_{q+1}^{h_{q+1},2g_q}\arrow[ul,"2"]\arrow[rr]&&\color{green}r_1^{r_1}\arrow[ul,"2"]\\
    \color{red}l_1^{h_1}\arrow[d]&\color{blue}r_2^{R}\arrow[u]\\
    \color{green}l_2^{l_1,h_1}\arrow[uu,bend left]
\end{tikzcd}}$

Mutate at $m_1$:

$\adjustbox{scale=0.8,center}{
\begin{tikzcd}
    &&\color{red}g_1^{g_1,2h_2}\arrow[dl,"2"]&&\color{red}g_2^{g_2,2h_3}\arrow[dl,"2"]&&\color{red}g_{q-1}^{g_{q-1},2h_q}\arrow[dl,"2"]&&\color{red}g_q^{g_q,2h_{q+1}}\arrow[dl,"2"]\\
    \color{red}m_1^{h_{2\nearrow q+1},2g_{1\nearrow q},r_{1,2}}\arrow[dd,bend right]\arrow[r]&\color{green}h_{2}^{h_{2},2g_{1}}\arrow[dl]\arrow[rr]&&\color{green}h_3^{h_3,2g_2}\arrow[ul,"2"]\arrow[r]&\cdots\arrow[r]&\color{green}h_{q}^{h_q,2g_{q-1}}\arrow[rr]&&\color{green}h_{q+1}^{h_{q+1},2g_q}\arrow[ul,"2"]\arrow[rr]&&\color{green}r_1^{r_1}\arrow[ul,"2"]\\
    \color{red}l_1^{h_1}\arrow[u]&\color{blue}r_2^{R}\arrow[ul]\\
    \color{green}l_2^{l_1,h_{1\nearrow q+1},2g_{1\nearrow q},r_{1,2}}\arrow[ur]
\end{tikzcd}}$
\end{proof}

In terms of triangulation, we have wrapped the arcs $g_1,g_2,...,g_q,h_2,h_3,...,h_q,l_1,r_1$ inside $m_1$ (and thus removed all core arcs away from the corner).

By applying the above lemma, after step 4(a), we have:

$\adjustbox{scale=0.8,center}{
\begin{tikzcd}
    &&\color{red}g_1^{g_1,2h_2}\arrow[dl,"2"]&&\color{red}g_2^{g_2,2h_3}\arrow[dl,"2"]&&\color{red}g_{q-1}^{g_{q-1},2h_q}\arrow[dl,"2"]&&\color{red}g_q^{g_q,2h_{q+1}}\arrow[dl,"2"]\\
    \color{red}m_1^{h_{2\nearrow q+1},2g_{1\nearrow q},r_{1,2}}\arrow[dd,bend right]\arrow[r]&\color{green}h_{2}^{h_{2},2g_{1}}\arrow[dl]\arrow[rr]&&\color{green}h_3^{h_3,2g_2}\arrow[ul,"2"]\arrow[r]&\cdots\arrow[r]&\color{green}h_{q}^{h_q,2g_{q-1}}\arrow[rr]&&\color{green}h_{q+1}^{h_{q+1},2g_q}\arrow[ul,"2"]\arrow[rr]&&\color{green}r_1^{r_1}\arrow[ul,"2"]\\
    \color{red}l_1^{h_1}\arrow[u]&\color{red}r_2^{m_3}\arrow[ul]\\
    \color{green}l_2^{l_1,h_{1\nearrow q+1},2g_{1\nearrow q},r_{1,2}}\arrow[ur]
\end{tikzcd}}$

We shall merge steps 4(c), 5, and 6(a) so that we can apply Lemma \ref{corner}

At $l_{p-1},r_{p-1},f_1,f_2,m_{p-2},l_{p-2},r_{p-2}$ and adjacent vertices, we have:

\[\begin{tikzcd}
    &&\color{red}m_1^{h_{2\nearrow q+1},2g_{1\nearrow q},r_{1,2}}\arrow[dll]&&\\
    \color{green}l_{2}^{l_1,h_{1\nearrow q+1},2g_{1\nearrow q},r_{1,2}}\arrow[rrrr]\arrow[drr]&&&&\color{red}r_{2}^{m_{3}}\arrow[dl]\arrow[ull]\\
    &\color{green}l_{3}^{l_{3,2}}\arrow[ul]\arrow[dr]&\color{red}m_{2}^{l_{2}}\arrow[l]\arrow[urr]&\color{green}r_{3}^{r_{3},m_{3}}\arrow[l]\arrow[dl]\\
    &&\color{green}f_1^{f_1}\arrow[d,"4"]&\\
    &&\color{green}f_2^{f_2}\arrow[uul]\arrow[uur]&
\end{tikzcd}\]

By applying Lemma \ref{corner}, after mutating at $l_{3},f_1,r_{3},f_2,l_{3},f_1,m_{2},l_{2},r_{2},m_{2},f_1,l_{3},f_2,r_{3},f_1,l_{3}$, we have:

\[\begin{tikzcd}
    &&\color{green}m_1^{l_1,h_1}\arrow[dll]&&\\
    \color{red}r_{2}^{l_{2}}\arrow[rrrr]\arrow[drr]&&&&\color{red}l_{2}^{l_{1},r_{1,2},h_{1\nearrow q+1},2g_{1\nearrow q}}\arrow[dl]\arrow[ull]\\
    &\color{red}l_{3}^{l_{3}}\arrow[ul]\arrow[dr]&\color{red}m_{2}^{m_{3}}\arrow[l]\arrow[urr]&\color{red}r_{3}^{r_{3}}\arrow[l]\arrow[dl]\\
    &&\color{red}f_1^{f_1}\arrow[d,"4"]&\\
    &&\color{red}f_2^{f_2}\arrow[uul]\arrow[uur]&
    \vspace*{-0.1\linewidth}
\end{tikzcd}\]

Step 6(c) consists of $m_1,h_2,h_3,...,h_{q+1},g_1,g_2,...,g_q$. We shall look at them and adjacent vertices:

$\adjustbox{scale=0.8,center}{
\begin{tikzcd}
    &&\color{red}g_1^{g_1,2h_2}\arrow[dl,"2"]&&\color{red}g_2^{g_2,2h_3}\arrow[dl,"2"]&&\color{red}g_{q-1}^{g_{q-1},2h_q}\arrow[dl,"2"]&&\color{red}g_q^{g_q,2h_{q+1}}\arrow[dl,"2"]\\
    \color{green}m_1^{l_1,h_1}\arrow[dd,bend right]\arrow[r]&\color{green}h_{2}^{h_{2},2g_{1}}\arrow[dl]\arrow[rr]&&\color{green}h_3^{h_3,2g_2}\arrow[ul,"2"]\arrow[r]&\cdots\arrow[r]&\color{green}h_{q}^{h_q,2g_{q-1}}\arrow[rr]&&\color{green}h_{q+1}^{h_{q+1},2g_q}\arrow[ul,"2"]\arrow[rr]&&\color{green}r_1^{r_1}\arrow[ul,"2"]\\
    \color{red}l_1^{h_1}\arrow[u]&\color{red}l_{2}^{l_{1},r_{1,2},h_{1\nearrow q+1},2g_{1\nearrow q}}\arrow[ul]\\
    \color{red}r_{2}^{l_{2}}\arrow[ur]
\end{tikzcd}}$

Mutate at $m_1$:

\[\adjustbox{scale=0.8,center}{
\begin{tikzcd}
    &&|[color=red,fill=green]|g_1^{g_1,2h_2}\arrow[dl,"2"]&&\color{red}g_2^{g_2,2h_3}\arrow[dl,"2"]&&\color{red}g_{q-1}^{g_{q-1},2h_q}\arrow[dl,"2"]&&\color{red}g_q^{g_q,2h_{q+1}}\arrow[dl,"2"]\\
    |[color=red,fill=green]|m_1^{l_1,h_1}\arrow[dr]\arrow[d]&|[color=green,fill=red]|h_{2}^{h_{2},2g_{1}}\arrow[l]\arrow[rr]&&|[color=green,fill=red]|h_3^{h_3,2g_2}\arrow[ul,"2"]\arrow[r]&\cdots\arrow[r]&\color{green}h_{q}^{h_q,2g_{q-1}}\arrow[rr]&&\color{green}h_{q+1}^{h_{q+1},2g_q}\arrow[ul,"2"]\arrow[rr]&&\color{green}r_1^{r_1}\arrow[ul,"2"]\\
    \color{green}l_1^{l_1}\arrow[d]&|[color=red,fill=green]|l_2^{r_{1,2},h_{2\nearrow q+1},2g_{1\nearrow q}}\arrow[u]\\
    \color{red}r_2^{l_2}\arrow[uu,bend left]
\end{tikzcd}}\]

Mutate at $h_2$:

\[\adjustbox{scale=0.8,center}{
\begin{tikzcd}
    &&\color{green}g_1^{g_1}\arrow[dll,"2",swap]&&|[color=red,fill=green]|g_2^{g_2,2h_3}\arrow[dl,"2"]&&&&\color{red}g_{q-1}^{g_{q-1},2h_q}\arrow[dl,"2"]&&\color{red}g_q^{g_q,2h_{q+1}}\arrow[dl,"2"]\\
    \color{red}m_1^{l_1,h_1}\arrow[r]\arrow[d]&|[color=red,fill=green]|h_{2}^{h_{2},2g_{1}}\arrow[d]\arrow[ur,"2",swap]&&|[color=green,fill=red]|h_3^{h_3,2g_2}\arrow[ll]\arrow[rr]&&|[color=green,fill=red]|h_4^{h_4,2g_3}\arrow[ul,"2"]\arrow[r]&\cdots\arrow[r]&\color{green}h_{q}^{h_q,2g_{q-1}}\arrow[rr]&&\color{green}h_{q+1}^{h_{q+1},2g_q}\arrow[ul,"2"]\arrow[rr]&&\color{green}r_1^{r_1}\arrow[ul,"2"]\\
    \color{green}l_1^{l_1}\arrow[d]&|[color=red,fill=green]|l_2^{r_{1,2},h_{3\nearrow q+1},2g_{2\nearrow q}}\arrow[urr]\\
    \color{red}r_2^{l_2}\arrow[uu,bend left]
\end{tikzcd}}\]

Observe that the subdiagrams are isomorphic, we may conclude that after $h_{q+1}$, the diagram becomes:

\[\adjustbox{scale=0.9,center}{
\begin{tikzcd}
    &\color{green}g_1^{g_1}\arrow[dl,"2"]&&&&\color{green}g_{q-1}^{g_{q-1}}\arrow[dl,"2"]&&\color{green}g_q^{g_q}\arrow[dl,"2"]\\
    \color{red}m_1^{l_1,h_1}\arrow[rr]\arrow[d]&&\color{red}h_2^{h_2,2g_1}\arrow[r]\arrow[ul,"2"]&\cdots\arrow[r]&\color{red}h_{q-1}^{h_q,2g_{q-1}}\arrow[rr]&&\color{red}h_{q}^{h_{q},2g_{q-1}}\arrow[rr]\arrow[ul,"2"]&&\color{red}h_{q+1}^{h_{q+1},2g_q}\arrow[ul,"2"]\arrow[dd,bend left]\\
    \color{green}l_1^{l_1}\arrow[d]&&&&&&&&\color{green}r_1^{r_1}\arrow[u]\\
    \color{red}r_2^{l_2}\arrow[uu,bend left]&&&&&&&&\color{red}l_2^{r_{1,2}}\arrow[u]
\end{tikzcd}}\]

We have unwrapped $m_1,h_2,h_3,...,h_{q+1}$.

Mutate at $g_1,g_2,...,g_q$:

\[\adjustbox{scale=0.9,center}{
\begin{tikzcd}
    &\color{red}g_1^{g_1}\arrow[dr,"2"]&&&&\color{red}g_{q-1}^{g_{q-1}}\arrow[dr,"2"]&&\color{red}g_q^{g_q}\arrow[dr,"2"]\\
    \color{red}m_1^{l_1,h_1}\arrow[d]\arrow[ur,"2"]&&\color{red}h_2^{h_2}\arrow[ll]&\cdots\arrow[l]&\color{red}h_{q-1}^{h_q}\arrow[l]\arrow[ur,"2"]&&\color{red}h_{q}^{h_{q}}\arrow[ll]\arrow[ur,"2"]&&\color{red}h_{q+1}^{h_{q+1}}\arrow[ll]\arrow[dd,bend left]\\
    \color{green}l_1^{l_1}\arrow[d]&&&&&&&&\color{green}r_1^{r_1}\arrow[u]\\
    \color{red}r_2^{l_2}\arrow[uu,bend left]&&&&&&&&\color{red}l_2^{r_{1,2}}\arrow[u]
\end{tikzcd}}\]

Now all pending arcs are connected to the corner again.

Step 6(d) consists of $l_1,r_1$.

At $l_1,r_1$ and adjacent vertices, we have:

\[\begin{tikzcd}
    \color{red}m_1^{l_1,h_1}\arrow[dd]&&\color{red}h_2^{h_2}\arrow[dddd,bend left]\\
    &\color{red}m_3^{m_1,r_1}\arrow[dr]&\\
    \color{green}l_{1}^{l_1}\arrow[dd]\arrow[ur]&&\color{green}r_{1}^{r_1}\arrow[uu]\arrow[dl]\\
    &\color{red}h_1^{m_2,l_1}\arrow[ul]&\\
    \color{red}\color{red}r_2^{l_2}\arrow[uuuu,bend left]&&\color{red}l_2^{r_{1,2}}\arrow[uu]
\end{tikzcd}\]

After mutating at $l_1,r_1$, we have:

\[\begin{tikzcd}
    \color{red}m_1^{h_1}\arrow[dr]&&\color{red}h_2^{h_2}\arrow[dd]\\
    &\color{red}m_3^{m_1}\arrow[ur]\arrow[dl]&\\
    \color{red}l_{1}^{l_1}\arrow[uu]\arrow[dr]&&\color{red}r_{1}^{r_1}\arrow[dd]\arrow[ul]\\
    &\color{red}h_1^{m_2}\arrow[ur]\arrow[dl]&\\
    \color{red}\color{red}r_2^{l_2}\arrow[uu]&&\color{red}l_2^{r_2}\arrow[ul]
\end{tikzcd}\]

Now $l_1$ and $r_1$ return to their original positions, as promised, and all vertices are red again. Therefore $\Delta_1$ is a maximal green sequence. $\blacksquare$

\subsubsection{The case where $p>4$ and is even}\label{even}

\begin{proposition}
    $\Delta_1$ is a maximal green sequence when $p>4$ and even.
\end{proposition}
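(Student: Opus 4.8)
The plan is to reuse the six-part decomposition of $\Delta_1$ and instantiate each part for $p$ even with $p>4$. Since then $\lfloor(p-1)/2\rfloor=\tfrac{p}{2}-1$ and $\lfloor p/2\rfloor=\tfrac{p}{2}$, step~1 reads $(\alpha_{p-2},\alpha_{p-4},\dots,\alpha_2)$, step~2 is exactly the sequence handled by Lemma~\ref{p>2} (with middle term $m_1$), step~3 reads $(\beta_{p-3},\beta_{p-5},\dots,\beta_1)$, step~4(a) is the sequence of Lemma~\ref{p>4} (with $\mu=m_1$), step~4(b) is $(l_2,r_2,l_4,r_4,\dots,l_{p-4},r_{p-4})$, the block 4(c)--5--6(a) is precisely the input of Lemma~\ref{corner} with $\gamma$ supplying its middle mutations $x=m_{p-2}$, $y=l_{p-2}$, $z=r_{p-2}$, step~6(b) is $(\delta_{p-4},\delta_{p-6},\dots,\delta_2)$, step~6(c) is $(m_1,h_2,\dots,h_{q+1},g_1,\dots,g_q)$ (here $\varepsilon=h_2,\dots,h_{q+1}$ and $\omega=\phi$), and step~6(d) is $(l_1,r_1)$. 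So I would quote Lemma~\ref{p>2} for step~2, Lemma~\ref{p>4} for step~4(a) and its mirror for step~6(c), and Lemma~\ref{corner} for the corner block, supplying genuinely new computations only for the inner-arc steps 1, 3, 4(b), 6(b) that were vacuous when $p=4$.

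First I would dispatch steps 1 and 3, whose role is to notch all interior punctures. The inner-arc region of $D(T_{p,q})$ is a periodic ladder: each $m_k$ lies in oriented cycles with $l_{k-1},l_k$ and with $r_{k-1},r_k$, so the full subdiagram on the six vertices touched by $\alpha_k=r_k,l_k,m_{k+1},m_k,l_k,r_k$ is, up to an index shift and the frozen decorations, identical to the $\alpha_2$ picture already computed in the $p=4$ case. Because the rungs used in step~1 carry disjoint indices $p-2,p-4,\dots,2$, their local computations do not interfere, and the $\alpha_2$ computation transports verbatim: it notches the odd interior punctures $3,5,\dots,p-1$ and returns every touched inner arc to its slot. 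The same local-isomorphism argument applied to the $\beta_1$ picture shows step~3 notches the even interior punctures $2,4,\dots,p-2$ and leaves the inner arcs $l_k,r_k$ of odd index displaced from the corner. After steps 1--3 every interior puncture is notched, and since the odd indices $1,3,\dots,p-3$ (step~3) and even indices $2,4,\dots,p-4$ (step~4(b)) together exhaust $1,\dots,p-3$, exactly the inner arcs get displaced while $l_{p-2},r_{p-2}$ remain in place.

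The crux is steps 4(b) and 6(b). I would first confirm that the post-step-3 diagram restricts near the core to the hypothesis of Lemma~\ref{p>4} (so that step~4(a) wraps the pending arcs into $m_1$ verbatim as before), which is part of the ladder bookkeeping. Then I would extract a new lemma, proved by the same ``isomorphic subdiagram and induction'' template: starting from the output of step~4(a), mutating along $(l_2,r_2,\dots,l_{p-4},r_{p-4})$ slides each even-indexed inner arc outward one rung, and the resulting diagram is exactly the hypothesis diagram of Lemma~\ref{corner}, with $w=m_1$, $x=m_{p-2}$, $y=l_{p-2}$, $z=r_{p-2}$, outer actors $l_{p-1},r_{p-1},f_1,f_2$, and the containment $W\subset Y$ holding because the superscript of $l_{p-2}$ already contains every frozen vertex appearing on $m_1$ (plus extra terms such as $l_1$ and $h_1$), exactly as in the $p=4$ case. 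Granting this, Lemma~\ref{corner} executes 4(c)--5--6(a) in one stroke and notches the corner. Step~6(b), being $(\delta_{p-4},\dots,\delta_2)$ with $\delta_k=r_k,r_{k+1},l_k,l_{k+1}$, then reseats the displaced arcs two rungs at a time (each $\delta_k$ returns the even pair $l_k,r_k$ together with the odd pair $l_{k+1},r_{k+1}$), and I would verify it as the local inverse of the combined 3--4(b) displacement. I expect the frozen-vertex bookkeeping along the ladder---checking that the superscripts align so that Lemma~\ref{corner} applies and that the $\delta_k$ genuinely invert the displacement---to be the main obstacle, since this is exactly where the even parity of $p$ and the interleaving of the odd and even rungs from steps 1 and 3 enter.

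Finally I would close out steps 6(c) and 6(d). Step~6(c) unwraps the core arcs from $m_1$ by the mirror of the Lemma~\ref{p>4} computation: mutating $m_1$, then $h_2,\dots,h_{q+1}$, reverses the wrapping, and mutating $g_1,\dots,g_q$ returns every pending arc to the corner, turning all of $m_1,h_i,g_i$ red. Step~6(d), the single pair $(l_1,r_1)$, is the four-vertex direct computation already exhibited in the $p=4$ case, which reseats the innermost inner arcs and flips $l_1,r_1$ to red. At that point every non-frozen vertex carries only its own primed copy as an outgoing frozen arrow, i.e.\ all non-frozen vertices are red, so $\Delta_1$ is a maximal green sequence when $p>4$ is even. $\blacksquare$
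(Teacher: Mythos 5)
Your proposal follows the paper's proof essentially verbatim: the same instantiation of the six steps for even $p>4$, Lemma \ref{p>2} for step 2, Lemma \ref{p>4} for step 4(a), Lemma \ref{corner} for the merged block 4(c)--5--6(a), and inductive ``isomorphic subdiagram'' ladder computations for steps 1, 3, 4(b), and 6(b), closing with the $p=4$ computations for 6(c) and 6(d). One bookkeeping slip to correct: for $p>4$ the apex vertex $w$ of the oriented triangle fed into Lemma \ref{corner} is $r_{p-4}$ (carrying superscript $r_{p-3},r_{p-2}$), not $m_1$ --- $m_1$ is the apex only when $p=4$, since for larger $p$ the step 4(b) rewiring leaves $r_{p-4}$ adjacent to $l_{p-2}$ and $r_{p-2}$; with that correction the required containment $W\subset Y$ holds because $r_{p-3},r_{p-2}$ lie in the superscript $r_{1\nearrow p-2}$ of $l_{p-2}$, not for the reason you state.
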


\textbf{Proof} When $p>4$ and is even, the mutation sequence $\Delta_1$ translates to 

\begin{enumerate}
    \item Tagging alternate punctures notched. $(\alpha_{p-2},\alpha_{p-4},...,\alpha_2)$
    \item Tagging the first interior puncture notched. $(h_{q+1},h_q,...,h_1,m_{1},h_2,h_3,...,h_{q+1})$
    \item Tagging the remaining interior punctures notched and moving half the inner arcs away from the corner. $(\beta_{p-3},\beta_{p-5},...,\beta_{1})$
    \item Moving arcs away from the corner.
    \begin{enumerate}
        \item Moving core arcs away from the corner. $(g_1,g_2,...,g_q,h_{q+1},h_{q},...,h_2,m_1)$
        \item Moving the remaining inner arcs away from the corner. $(l_{2},r_{2},l_{4},r_{4},...,l_{p-4},r_{p-4})$
        \item Moving outer arcs away from the corner. $(l_{p-1},f_1,r_{p-1},f_2,l_{p-1},f_1)$
    \end{enumerate}
    \item Tagging the corner notched. $(m_{p-2},l_{p-2},r_{p-2},m_{p-2})$
    \item Moving arcs back to the corner.
    \begin{enumerate}
        \item Moving outer arcs back to the corner. 
        $(f_1,l_{p-1},f_2,r_{p-1},f_1,l_{p-1})$
        \item Moving inner arcs back to the corner. 
        $(\delta_{p-4},\delta_{p-6},...,\delta_{2})$
        \item Moving core arcs back to the corner. $(m_1,h_2,h_3,...,h_{q+1},g_1,g_2,...,g_q)$
        \item Moving the remaining pair of inner arc back to the corner. $(l_1,r_1)$
    \end{enumerate}
\end{enumerate}

where

\begin{itemize}
    \item $\alpha_k=r_k,l_k,m_{k+1},m_{k},l_k,r_k$
    \item $\beta_k=\begin{cases}
        r_k,l_k,m_{k+2},m_{k-1}&\text{if }k>1,\\
        r_k,l_k,m_{k+2},h_1&\text{otherwise}
    \end{cases}$
    \item $\delta_k=r_k,r_{k+1},l_k,l_{k+1}$
\end{itemize}

The triangulation $T_{1,p,q}$ looks like this:

\[\begin{tikzpicture}[every edge quotes/.style={auto=right}]
    \begin{scope}[every node/.style={sloped,allow upside down}][every edge quotes/.style={auto=right}]

\node at (6.85,6.85) {$\iddots$};
        \node at ($(0,5)!.5!(5,0)$) {$\ddots$};
        \draw[cyan] (0,0) to [bend right=15,black] node [below=.15,right] {$g_1$} (3.2,1.8);
        \draw[cyan] (0,0) to [bend left=15,black] node [above=.15,right] {$g_q$} (1.8,3.2);
        \draw[magenta] (0,0) -- node[rotate=-90,left,black] {$f_1$} (0,10);
        \draw[magenta] (0,10) -- node[above,black] {$f_2$} (10,10);
        \draw[magenta] (10,0) -- node[rotate=-90,right,black] {$f_1$} (10,10);
        \draw[magenta] (0,0) -- node[below,black] {$f_2$} (10,0);
        \draw[cyan] (0,0) to [bend right=45,black] node[below=.15,right=1] {$h_1$} (5,5);
        \draw[cyan] (0,0) to [bend left=45,black] node[above=.15,right=1] {$h_{q+1}$} (5,5);
        \draw[cyan] (0,0) to [bend right=15,black] node[right=1,below] {$h_2$} (5,5);
        \draw[cyan] (0,0) to [bend left=15,black] node[right=1,above] {$h_q$} (5,5);
        \draw[yellow] (0,0) to [bend right=45,black] node[near end,right,below=-.1] {$l_1$} (6.25,6.25);
        \draw[yellow] (0,0) to [bend left=45] node[near end,right,above=-.1,black] {$r_1$} (6.25,6.25);
        \draw (0,0) to [bend right=45,black] node[near end,below] {$l_{p-2}$} (8.75,8.75);
        \draw (0,0) to [bend left=45,black] node[near end,above] {$r_{p-2}$} (8.75,8.75);
        \draw[yellow] (0,0) to [bend right=45,black] node[near end,above] {$l_{p-3}$} (8,8);
        \draw[yellow] (0,0) to [bend left=45,black] node[near end,above] {$r_{p-3}$} (8,8);
        \draw[magenta] (0,0) to [bend right=45,black] node[near end,below,black] {$l_{p-1}$} (10,10);
        \draw[magenta] (0,0) to [bend left=45,black] node[near end,above] {$r_{p-1}$} (10,10);
        \draw (5,5) to node[below] {$m_1$} (6.25,6.25);
        \draw (8.75,8.75) to node[below] {$m_{p-1}$} (10,10);
        \draw (6.25,6.25) to node[below] {$m_2$} (6.5,6.5);
        \draw (8,8) to node[left=.2,below] {$m_{p-2}$} (8.75,8.75);
        \draw (7.5,7.5) to node[below=.2,left] {$m_{p-3}$} (8,8);
        \fill (0,10) circle (2pt);
        \fill (10,0) circle (2pt);
        \fill (0,0) circle (2pt);
        \fill (10,10) circle (2pt);
        \fill (5,5) circle (2pt);
        \fill (3.2,1.8) node[cross=2pt,rotate=30] {};
        \fill (1.8,3.2) node[cross=2pt,rotate=30] {};
        \fill (6.25,6.25) circle (2pt);
        \fill (8.75,8.75) circle (2pt);
        \fill (8,8) circle (2pt);
    \end{scope}
\end{tikzpicture}\]

Step 1 consists of $(\alpha_{p-2},\alpha_{p-4},...,\alpha_2)$

We first look at the vertices involved in $\alpha_k$ and adjacent vertices.

\[\begin{tikzcd}
    \color{green}l_{k-1}^{l_{k-1}}\arrow[dr]&&\color{green}r_{k-1}^{r_{k-1}}\arrow[dd]\\
    &\color{green}m_k^{m_k}\arrow[ur]\arrow[dl]&\\
    \color{green}l_{k}^{l_k}\arrow[uu]\arrow[dr]&&\color{green}r_{k}^{r_k}\arrow[ul]\arrow[dd]\\
    &\color{green}m_{k+1}^{m_{k+1}}\arrow[ur]\arrow[dl]&\\
    \color{green}l_{k+1}^{l_{k+1}}\arrow[uu]&&\color{green}r_{k+1}^{r_{k+1}}\arrow[ul]
\end{tikzcd}\]

By computation, mutating at $\alpha_k$ itself changes the subdiargam like this:

\[\begin{tikzcd}
    \color{green}l_{k-1}^{l_{k-1},m_k}\arrow[dr]&&\color{green}r_{k-1}^{r_{k-1,k}}\arrow[dd]\\
    &\color{red}m_{k+1}^{r_k}\arrow[ur]\arrow[dl]&\\
    \color{red}l_{k}^{m_k}\arrow[uu]\arrow[dr]&&\color{red}r_{k}^{m_{k+1}}\arrow[ul]\arrow[dd]\\
    &\color{red}m_k^{l_k}\arrow[ur]\arrow[dl]&\\
    \color{green}l_{k+1}^{l_{k+1,k}}\arrow[uu]&&\color{green}r_{k+1}^{r_{k+1},m_{k+1}}\arrow[ul]
\end{tikzcd}\]

As $l_{k+1},r_{k+1}$ are connected to additional frozen vertices because of the mutations in $\alpha_{k+2}$ (for $k<p-2$), and $l_{k-1},r_{k-1}$ are connected to additional frozen vertices because of the mutations in $\alpha_{k-2}$ (for $k>2$), after step 1, the above subdiagram looks like this for $2<k<p-2$ (for $k=2$ and $k=p-2$, ignore the change on $l_{k-1},r_{k-1}$ and $l_{k+1},r_{k+1}$ respectively):

\[\begin{tikzcd}
    \color{green}l_{k-1}^{l_{k-1,k-2},m_k}\arrow[dr]&&\color{green}r_{k-1}^{r_{k-1,k},m_{k-1}}\arrow[dd]\\
    &\color{red}m_{k+1}^{r_k}\arrow[ur]\arrow[dl]&\\
    \color{red}l_{k}^{m_k}\arrow[uu]\arrow[dr]&&\color{red}r_{k}^{m_{k+1}}\arrow[ul]\arrow[dd]\\
    &\color{red}m_k^{l_k}\arrow[ur]\arrow[dl]&\\
    \color{green}l_{k+1}^{l_{k+1,k},m_{k+2}}\arrow[uu]&&\color{green}r_{k+1}^{r_{k+1,k+2},m_{k+1}}\arrow[ul]
\end{tikzcd}\]

In terms of triangulation, the flips that correspond to mutations in $\alpha_k$ tag the $k+1$-th interior puncture notched, just like in the case where $p=4$.

After step 1, the $p-1$-th, $p-3$-th,..., $3$-rd interior puncture are tagged notched.

Step 2 consists of $(h_{q+1},h_q,...,h_1,m_{1},h_2,h_3,...,h_{q+1})$. Look at them and adjacent vertices:

\[\begin{tikzcd}
    &|[color=green]|g_{1}^{g_1}\arrow[dr,"2"]&&|[color=green]|g_{2}^{g_2}\arrow[dr,"2"]&&&&|[color=green]|g_{q}^{g_q}\arrow[dr,"2"]\\
    |[color=green]|h_{1}^{h_1}\arrow[ur,"2"]\arrow[drrrr]&&|[color=green]|h_{2}^{h_2}\arrow[ur,"2"]\arrow[ll]&&|[color=green]|{h_{3}^{h_{3}}}\arrow[ll]&\cdots\arrow[l]&|[color=green]|h_{q}^{h_q}\arrow[ur,"2"]\arrow[l]&&|[color=green]|{h_{q+1}^{h_{q+1}}}\arrow[ll]\arrow[ddll]\\
    &&&&|[color=green]|m_1^{m_1}\arrow[urrrr]\arrow[dll]\\
    &&|[color=green]|l_1^{l_1,m_2}\arrow[uull]&&&&|[color=green]|r_1^{r_{1,2}}\arrow[ull]
\end{tikzcd}\]

By applying Lemma \ref{p>2}, we have:

\[\adjustbox{scale=0.9,center}{
\begin{tikzcd}
    &\color{green}g_1^{g_1,2h_2}\arrow[dr,"2"]&&&&\color{green}g_{q-1}^{g_{q-1},2h_q}\arrow[dr,"2"]&&\color{green}g_q^{g_q,2h_{q+1}}\arrow[dr,"2"]\\
    \color{red}m_1^{h_2}\arrow[drrrr]\arrow[ur,"2"]&&\color{red}h_2^{h_3}\arrow[ll]&\cdots\arrow[l]&\color{red}h_{q-1}^{h_q}\arrow[ur,"2"]\arrow[l]&&\color{red}h_{q}^{h_{q+1}}\arrow[ll]\arrow[ur,"2"]&&\color{red}h_{q+1}^{m_1}\arrow[ll]\arrow[ddll]\\
    &&&&\color{red}h_1^{h_1}\arrow[urrrr]\arrow[dll]\\
    &&\color{green}l_1^{l_1,m_2,h_1}\arrow[uull]&&&&\color{green}r_1^{r_{1,2},m_1}\arrow[ull]
\end{tikzcd}}\]

In terms of triangulation, we have tagged the first interior puncture notched.

Step 3 consists of $\beta_{p-3},\beta_{p-5},...\beta_{1}$.

Now look at the vertices involved in $\beta_k$ for $k>1$ and adjacent vertices. Note that as a result of the $\alpha_k$'s, $m_{k-1}$ and $m_{k+2}$ (instead of $m_k$ and $m_{k+1}$) are adjacent to $l_k$ and $r_k$.

\[\begin{tikzcd}
    \color{red}l_{k-1}^{m_{k-1}}\arrow[dr]&&\color{red}r_{k-1}^{m_k}\arrow[dd]\\
    &\color{red}m_{k-1}^{l_{k-1}}\arrow[ur]\arrow[dl]&\\
    \color{green}l_{k}^{l_{k,k-1},m_{k+1}}\arrow[uu]\arrow[dr]&&\color{green}r_{k}^{r_{k,k+1},m_k}\arrow[ul]\arrow[dd]\\
    &\color{red}m_{k+2}^{r_{k+1}}\arrow[ur]\arrow[dl]&\\
    \color{red}l_{k+1}^{m_{k+1}}\arrow[uu]&&\color{red}r_{k+1}^{m_{k+2}}\arrow[ul]
\end{tikzcd}\]

By computation, mutating at $\beta_k$ itself gives:

\[\begin{tikzcd}
    \color{red}l_{k-1}^{m_{k-1}}\arrow[dd]&&\color{green}r_{k-1}^{r_{k,k+1}}\arrow[dddd,bend left]\\
    &\color{red}m_{k+2}^{m_k,r_k}\arrow[dr]&\\
    \color{red}l_{k}^{l_{k-1}
    }\arrow[dd]\arrow[ur]&&\color{red}r_{k}^{r_{k+1}}\arrow[dl]\arrow[uu]\\
    &\color{red}m_{k-1}^{m_{k+1},l_k}\arrow[ul]&\\
    \color{green}l_{k+1}^{l_{k,k-1}}\arrow[uuuu,bend left]&&\color{red}r_{k+1}^{m_{k+2}}\arrow[uu]
\end{tikzcd}\]

Again, as $r_{k+1}$ is connected to additional frozen vertices because of the mutations in $\beta_{k+2}$ (for $k<p-3$), and $l_{k-1}$ is connected to additional frozen vertices because of the mutations in $\beta_{k-2}$, after step 1, the above subdiagram looks like this for $3<k<p-3$ (for $k=p-3$, ignore the change on $r_{k+1}$, for $k=3$, ignore the change on $l_{k-1}$):

\[\begin{tikzcd}
    \color{green}l_{k-1}^{l_{k-2,k-3}}\arrow[dd]&&\color{green}r_{k-1}^{r_{k,k+1}}\arrow[dddd,bend left]\\
    &\color{red}m_{k+2}^{m_k,r_k}\arrow[dr]&\\
    \color{red}l_{k}^{l_{k-1}
    }\arrow[dd]\arrow[ur]&&\color{red}r_{k}^{r_{k+1}}\arrow[dl]\arrow[uu]\\
    &\color{red}m_{k-1}^{m_{k+1},l_k}\arrow[ul]&\\
    \color{green}l_{k+1}^{l_{k,k-1}}\arrow[uuuu,bend left]&&\color{green}r_{k+1}^{r_{k+2,k+3}}\arrow[uu]
\end{tikzcd}\]

In terms of triangulation, $\beta_k$ tags the $k+1$-th interior puncture notched and encloses this puncture with $l_k$ and $r_k$ because unlike in $\alpha_k$ there is no $l_k,r_k$ after $m_{k-1}$

After mutating at $\beta_3$, at the vertices involved in $\beta_1$ and adjacent vertices, we have:

\[\begin{tikzcd}
    \color{red}m_1^{h_2}\arrow[dr]&&\color{red}h_{q+1}^{m_1}\arrow[dd]\\
    &\color{red}h_{1}^{h_1}\arrow[ur]\arrow[dl]&\\
    \color{green}l_{1}^{l_1,h_1,m_{2}}\arrow[uu]\arrow[dr]&&\color{green}r_{1}^{r_{1,2},m_1}\arrow[ul]\arrow[dd]\\
    &\color{red}m_{3}^{r_{2}}\arrow[ur]\arrow[dl]&\\
    \color{red}l_{2}^{m_{2}}\arrow[uu]&&\color{green}r_{2}^{r_{3,4}}\arrow[ul]
\end{tikzcd}\]

After mutating at $\beta_1$, we have:

\[\begin{tikzcd}
    \color{red}m_1^{h_2}\arrow[dd]&&\color{green}h_{q+1}^{r_{1,2}}\arrow[dddd,bend left]\\
    &\color{red}m_{3}^{m_1,r_1}\arrow[dr]&\\
    \color{red}l_{1}^{h_1}\arrow[dd]\arrow[ur]&&\color{red}r_{1}^{r_2}\arrow[dl]\arrow[uu]\\
    &\color{red}h_1^{m_2,l_1}\arrow[ul]&\\
    \color{green}l_{2}^{l_1,h_1}\arrow[uuuu,bend left]&&\color{green}r_{2}^{r_{3,4}}\arrow[uu]
\end{tikzcd}\]

In terms of triangulation, this is same as other $\beta_k$'s with the $2$-nd interior puncture tagged notched.

After step 3, the $p-2$-th, $p-4$-th,..., $2$-nd interior puncture are tagged notched. Together with step 1 and step 2, the remaining puncture not yet tagged notched is the corner. Also the arcs $l_k$ and $r_k$ are moved away from the corner for $k=1,3,...,p-3$.

Step 4(a) consists of $(g_1,g_2,...,g_q,h_{q+1},h_{q},...,h_2,m_1)$.

Mutating at $g_1$ does the following:

\[
\begin{tikzcd}
    &\color{green}g_{1}^{g_1,2h_2}\arrow[dr,"2"]&\\
    \color{red}m_1^{h_2}\arrow[ur,"2"]&&\color{red}h_2^{h_3}\arrow[ll]
\end{tikzcd}
\rightarrow
\begin{tikzcd}
    &\color{red}g_{1}^{g_1,2h_2}\arrow[dl,"2"]&\\
    \color{green}m_1^{h_2,2g_1}\arrow[rr]&&\color{red}h_2^{h_3}\arrow[ul,"2"]
\end{tikzcd}
\]

Observe that for all $g_k$, $k=1,2,...,q$, the full subdiagram containing $g_k$ and adjancent vertices are isomorphic (again except the frozen vertices attached), so after mutating at $g_1,g_2,...,g_q$, we have:

\[\adjustbox{scale=0.9,center}{
\begin{tikzcd}
    &\color{red}g_1^{g_1,2h_2}\arrow[dl,"2"]&&&&\color{red}g_{q-1}^{g_{q-1},2h_q}\arrow[dl,"2"]&&\color{red}g_q^{g_q,2h_{q+1}}\arrow[dl,"2"]\\
    \color{green}m_1^{h_2,2g_1}\arrow[rr]&&\color{green}h_2^{h_3,2g_2}\arrow[r]\arrow[ul,"2"]&\cdots\arrow[r]&\color{green}h_{q-1}^{h_q,2g_{q-1}}\arrow[rr]&&\color{green}h_{q}^{h_{q+1},2g_q}\arrow[rr]\arrow[ul,"2"]&&\color{green}h_{q+1}^{r_{1,2}}\arrow[ul,"2"]
\end{tikzcd}}\]

In terms of triangulation, all pending arcs are now connected to the first interior puncture (like in the case where $p=2,q>1$).

Now look at $h_{q+1}$, $h_{q}$, $h_{q-1}$, and adjacent vertices:

\[\adjustbox{scale=0.9,center}{
\begin{tikzcd}
    &\color{red}g_1^{g_1,2h_2}\arrow[dl,"2"]&&&&\color{red}g_{q-1}^{g_{q-1},2h_q}\arrow[dl,"2"]&&\color{red}g_q^{g_q,2h_{q+1}}\arrow[dl,"2"]\\
    \color{green}m_1^{h_2,2g_1}\arrow[rr]\arrow[d]&&\color{green}h_2^{h_3,2g_2}\arrow[r]\arrow[ul,"2"]&\cdots\arrow[r]&\color{green}h_{q-1}^{h_q,2g_{q-1}}\arrow[rr]&&\color{green}h_{q}^{h_{q+1},2g_q}\arrow[rr]\arrow[ul,"2"]&&\color{green}h_{q+1}^{r_{1,2}}\arrow[ul,"2"]\arrow[dd,bend left]
    \\\color{red}l_1^{h_1}\arrow[d]&&&&&&&&\color{red}r_1^{r_2}\arrow[u]
    \\\color{green}l_2^{l_1,h_1}\arrow[uu,bend left]&&&&&&&&\color{green}r_2^{r_{3,4}}\arrow[u]
\end{tikzcd}}\]

By applying Lemma \ref{p>4}, after step 4(a), we have:

$\adjustbox{scale=0.8,center}{
\begin{tikzcd}
    &&\color{red}g_1^{g_1,2h_2}\arrow[dl,"2"]&&\color{red}g_2^{g_2,2h_3}\arrow[dl,"2"]&&\color{red}g_{q-1}^{g_{q-1},2h_q}\arrow[dl,"2"]&&\color{red}g_q^{g_q,2h_{q+1}}\arrow[dl,"2"]\\
    \color{red}m_1^{h_{2\nearrow q+1},2g_{1\nearrow q},r_{1,2}}\arrow[dd,bend right]\arrow[r]&\color{green}h_{2}^{h_{2},2g_{1}}\arrow[dl]\arrow[rr]&&\color{green}h_3^{h_3,2g_2}\arrow[ul,"2"]\arrow[r]&\cdots\arrow[r]&\color{green}h_{q}^{h_q,2g_{q-1}}\arrow[rr]&&\color{green}h_{q+1}^{h_{q+1},2g_q}\arrow[ul,"2"]\arrow[rr]&&\color{green}r_1^{r_1}\arrow[ul,"2"]\\
    \color{red}l_1^{h_1}\arrow[u]&\color{green}r_2^{r_{3,4}}\arrow[ul]\\
    \color{green}l_2^{l_1,h_{1\nearrow q+1},2g_{1\nearrow q},r_{1,2}}\arrow[ur]
\end{tikzcd}}$

In terms of triangulation, we have wrapped the arcs $g_1,g_2,...,g_q,h_2,h_3,...,h_q,l_1,r_1$ inside $m_1$ (and thus removed all core arcs away from the corner). We will wrap this with additional arcs successively later.

At the vertices $l_2,r_2,l_4,r_4,...,l_{p-4},r_{p-4}$ and adjacent vertices, the diagram looks like this (for simplicity, write $h_{1\nearrow q+1},2g_{1\nearrow q}$ as $G$)

\[\begin{tikzcd}
    &&|[color=red,fill=green]|m_1^{G\setminus h_1,r_{1,2}}\arrow[ddll]\\\\
    |[color=green,fill=red]|l_2^{l_1,G,r_{1,2}}\arrow[d]\arrow[rr]&&|[color=green,fill=red]|r_2^{r_{3,4}}\arrow[uu]\arrow[dd,bend left]\\
    |[color=red,fill=green]|l_3^{l_2}\arrow[d]&&|[color=red,fill=green]|r_3^{r_4}\arrow[u]\\
    |[color=green,fill=red]|l_4^{l_{3,2}}\arrow[d]\arrow[uu,bend left]&&|[color=green,fill=red]|r_4^{r_{5,6}}\arrow[u]\arrow[dd,bend left]\\
    \color{red}l_5^{l_4}\arrow[d]&&\color{red}r_5^{r_6}\arrow[u]\\
    \color{green}l_6^{l_{5,4}}\arrow[uu,bend left]\arrow[d]&&\color{green}r_6^{r_{7,8}}\arrow[u]\\
    \vdots\arrow[d]&&\vdots\arrow[u]\\
    \color{green}l_{p-4}^{l_{p-5,p-6}}\arrow[d]&&\color{green}r_{p-4}^{r_{p-3,p-2}}\arrow[u]\arrow[dd,bend left]\\
    \color{red}l_{p-3}^{l_{p-4}}\arrow[d]&&\color{red}r_{p-3}^{r_{p-2}}\arrow[u]\\
    \color{green}l_{p-2}^{l_{p-3,p-4}}\arrow[uu,bend left]&&\color{red}r_{p-2}^{m_{p-1}}\arrow[u]
\end{tikzcd}\]

After mutating at $l_2,r_2$, we have:

\[\begin{tikzcd}
    &&\color{green}m_1^{l_1,h_1}\arrow[dddll]\\\\
    \color{red}l_2^{l_1,G,r_{1,2}}\arrow[uurr]\arrow[rr]&&|[color=red,fill=green]|r_2^{r_{3,4}}\arrow[d]\arrow[ddll]\\
    \color{red}l_3^{l_2}\arrow[u]&&\color{green}r_3^{r_3}\arrow[ull]\\
    |[color=green,fill=red]|l_4^{l_{3\searrow1},r_{1\nearrow4},G}\arrow[d]\arrow[rr]&&|[color=green,fill=red]|r_4^{r_{5,6}}\arrow[uu,bend right]\arrow[dd,bend left]\\
    |[color=red,fill=green]|l_5^{l_4}\arrow[d]&&|[color=red,fill=green]|r_5^{r_6}\arrow[u]\\
    |[color=green,fill=red]|l_6^{l_{5,4}}\arrow[uu,bend left]\arrow[d]&&|[color=green,fill=red]|r_6^{r_{7,8}}\arrow[u]\\
    \vdots\arrow[d]&&\vdots\arrow[u]\\
    \color{green}l_{p-4}^{l_{p-5,p-6}}\arrow[d]&&\color{green}r_{p-4}^{r_{p-3,p-2}}\arrow[u]\arrow[dd,bend left]\\
    \color{red}l_{p-3}^{l_{p-4}}\arrow[d]&&\color{red}r_{p-3}^{r_{p-2}}\arrow[u]\\
    \color{green}l_{p-2}^{l_{p-3,p-4}}\arrow[uu,bend left]&&\color{red}r_{p-2}^{m_{p-1}}\arrow[u]
\end{tikzcd}\]

After mutating at $l_4,r_4$, we have:

\[\begin{tikzcd}
    &&\color{green}m_1^{l_1,h_1}\arrow[dddll]\\\\
    \color{red}l_2^{l_1,G,r_{1,2}}\arrow[uurr]\arrow[rr]&&\color{green}r_2^{r_{1,2},l_{3\searrow1},G}\arrow[d]\arrow[dddll]\\
    \color{red}l_3^{l_2}\arrow[u]&&\color{green}r_3^{r_3}\arrow[ull]\\
    \color{red}l_4^{l_{3\searrow1},r_{1\nearrow4},G}\arrow[uurr]\arrow[rr]&&\color{red}r_4^{r_{5,6}}\arrow[d]\arrow[ddll]\\
    \color{red}l_5^{l_4}\arrow[u]&&\color{green}r_5^{r_5}\arrow[ull]\\
    \color{green}l_6^{l_{5\searrow1},r_{1\nearrow6},G}\arrow[rr]\arrow[d]&&\color{green}r_6^{r_{7,8}}\arrow[uu,bend right]\\
    \vdots\arrow[d]&&\vdots\arrow[u]\\
    \color{green}l_{p-4}^{l_{p-5,p-6}}\arrow[d]&&\color{green}r_{p-4}^{r_{p-3,p-2}}\arrow[u]\arrow[dd,bend left]\\
    \color{red}l_{p-3}^{l_{p-4}}\arrow[d]&&\color{red}r_{p-3}^{r_{p-2}}\arrow[u]\\
    \color{green}l_{p-2}^{l_{p-3,p-4}}\arrow[uu,bend left]&&\color{red}r_{p-2}^{m_{p-1}}\arrow[u]
\end{tikzcd}\]

Observe the pattern at the highlighted vertices (and also the frozen vertices connected to $r_{k-2}$ after $l_k,r_k$), we can argue by induction that after mutating at $l_{p-4},r_{p-4}$, we have:

\[\begin{tikzcd}
    &&\color{green}m_1^{l_1,h_1}\arrow[dddll]\\\\
    \color{red}l_2^{l_1,G,r_{1,2}}\arrow[uurr]\arrow[rr]&&\color{green}r_2^{r_{1,2},l_{3\searrow1},G}\arrow[d]\arrow[dddll]\\
    \color{red}l_3^{l_2}\arrow[u]&&\color{green}r_3^{r_3}\arrow[ull]\\
    \color{red}l_4^{l_{3\searrow1},r_{1\nearrow4},G}\arrow[uurr]\arrow[rr]&&\color{green}r_4^{r_{1\nearrow4},l_{5\searrow1},G}\arrow[d]\arrow[dddll]\\
    \color{red}l_5^{l_4}\arrow[u]&&\color{green}r_5^{r_5}\arrow[ull]\\
    \color{red}l_6^{l_{5\searrow1},r_{1\nearrow6},G}\arrow[uurr]\arrow[rr]&&\color{green}r_6^{r_{1\nearrow6},l_{7\searrow1},G}\arrow[d]\\
    \vdots\arrow[u]&&\vdots\arrow[ddll]\\
    \color{red}l_{p-6}^{l_{p-7\searrow1},r_{1\nearrow p-6},G}\arrow[urr]\arrow[rr]&&\color{green}r_{p-6}^{r_{1\nearrow p-6},l_{p-5\searrow1},G}\arrow[d]\arrow[dddll]\\
    \color{red}l_{p-5}^{l_{p-6}}\arrow[u]&&\color{green}r_{p-5}^{r_{p-5}}\arrow[ull]\\
    \color{red}l_{p-4}^{l_{p-5\searrow1},r_{1\nearrow p-4},G}\arrow[rr]\arrow[uurr]&&\color{red}r_{p-4}^{r_{p-3,p-2}}\arrow[d]\arrow[ddll]\\
    \color{red}l_{p-3}^{l_{p-4}}\arrow[u]&&\color{green}r_{p-3}^{r_{p-3}}\arrow[ull]\\
    \color{green}l_{p-2}^{l_{p-3\searrow1},r_{1\nearrow p-2},G}\arrow[rr]&&\color{red}r_{p-2}^{m_{p-1}}\arrow[uu,bend right]
\end{tikzcd}\]

In terms of triangulation, the arcs shown except $l_{p-2}$ and $r_{p-2}$ are wrapped in $r_{p-4}$.

We shall merge steps 4(c), 5, and 6(a) so that we can apply Lemma \ref{corner}.

At $l_{p-1},r_{p-1},f_1,f_2,m_{p-2},l_{p-2},r_{p-2}$ and adjacent vertices, we have:

\[\begin{tikzcd}
    &&\color{red}r_{p-4}^{r_{p-3,p-2}}\arrow[dll]&&\\
    \color{green}l_{p-2}^{l_{p-3\searrow1},r_{1\nearrow p-2},G}\arrow[rrrr]\arrow[drr]&&&&\color{red}r_{p-2}^{m_{p-1}}\arrow[dl]\arrow[ull]\\
    &\color{green}l_{p-1}^{l_{p-1,p-2}}\arrow[ul]\arrow[dr]&\color{red}m_{p-2}^{l_{p-2}}\arrow[l]\arrow[urr]&\color{green}r_{p-1}^{r_{p-1},m_{p-1}}\arrow[l]\arrow[dl]\\
    &&\color{green}f_1^{f_1}\arrow[d,"4"]&\\
    &&\color{green}f_2^{f_2}\arrow[uul]\arrow[uur]&
\end{tikzcd}\]

By applying Lemma \ref{corner}, after mutating at $l_{p-1},f_1,r_{p-1},f_2,l_{p-1},f_1,m_{p-2},l_{p-2},r_{p-2},m_{p-2},f_1,l_{p-1},f_2,r_{p-1},f_1,l_{p-1}$, we have:

\[\begin{tikzcd}
    &&\color{green}r_{p-4}^{r_{1\nearrow p-4},l_{p-3\searrow1},G}\arrow[dll]&&\\
    \color{red}r_{p-2}^{l_{p-2}}\arrow[rrrr]\arrow[drr]&&&&\color{red}l_{p-2}^{l_{p-3\searrow1},r_{1\nearrow p-2},G}\arrow[dl]\arrow[ull]\\
    &\color{red}l_{p-1}^{l_{p-1}}\arrow[ul]\arrow[dr]&\color{red}m_{p-2}^{m_{p-1}}\arrow[l]\arrow[urr]&\color{red}r_{p-1}^{r_{p-1}}\arrow[l]\arrow[dl]\\
    &&\color{red}f_1^{f_1}\arrow[d,"4"]&\\
    &&\color{red}f_2^{f_2}\arrow[uul]\arrow[uur]&
    \vspace*{-0.1\linewidth}
\end{tikzcd}\]

Step 6(b) consists of $(\delta_{p-4},\delta_{p-6},...,\delta_{2})$.

Now look at vertices involved in $\delta_k$ for $k=2,4,...,p-4$ and adjacent vertices:

\[\adjustbox{center,scale=0.75}{

\begin{tikzcd}
    &&\color{green}m_1^{l_1,h_1}\arrow[ddddll]\\\\
    \color{red}l_2^{l_1,G,r_{1,2}}\arrow[uurr]\arrow[rr]&&\color{green}r_2^{r_{1,2},l_{3\searrow1},G}\arrow[dd]\arrow[ddddddll,bend left=15]\\
    &\color{red}m_{5}^{m_3,r_{3}}\arrow[dr]&\\
    \color{red}l_3^{l_2}\arrow[uu]\arrow[ur]&&\color{green}r_3^{r_3}\arrow[uull,bend right=15]\arrow[dl]\\
    &\color{red}m_{2}^{m_4,l_{3}}\arrow[ul]&\\
    \color{red}l_4^{l_{3\searrow1},r_{1\nearrow4},G}\arrow[uuuurr,bend left]\arrow[rr,bend left=15]&&\color{green}r_4^{r_{1\nearrow4},l_{5\searrow1},G}\arrow[dd]\arrow[dddddll]\\
    &\color{red}m_{7}^{m_5,r_{5}}\arrow[dr]&\\
    \color{red}l_5^{l_4}\arrow[uu]\arrow[ur]&&\color{green}r_5^{r_5}\arrow[uull,bend right=15]\arrow[dl]\\
    &\color{red}m_{4}^{m_6,l_{5}}\arrow[ul]&\\
    \color{red}l_6^{l_{5\searrow1},r_{1\nearrow6},G}\arrow[uuuurr,bend left=15]\arrow[rr,bend right]&&\color{green}r_6^{r_{1\nearrow6},l_{7\searrow1},G}\arrow[d]\\
    \vdots\arrow[u]&&\vdots\arrow[dddll]\\
    \color{red}l_{p-6}^{l_{p-7\searrow1},r_{1\nearrow p-6},G}\arrow[urr]\arrow[rr]&&|[color=green,fill=red]|r_{p-6}^{r_{1\nearrow p-6},l_{p-5\searrow1},G}\arrow[dd]\arrow[ddddddll,bend left=15]\\
    &\color{red}m_{p-3}^{m_{p-5},r_{p-5}}\arrow[dr]&\\
    \color{red}l_{p-5}^{l_{p-6}}\arrow[uu]\arrow[ur]&&\color{green}r_{p-5}^{r_{p-5}}\arrow[uull,bend right=15]\arrow[dl]\\
    &\color{red}m_{p-6}^{m_{p-4},l_{p-5}}\arrow[ul]&\\
    |[color=red,fill=green]|l_{p-4}^{l_{p-5\searrow1},r_{1\nearrow p-4},G}\arrow[rr,bend left=15]\arrow[uuuurr,bend left=40]&&|[color=green,fill=red]|r_{p-4}^{r_{1\nearrow p-4},l_{p-3\searrow1},G}\arrow[dd]\arrow[ddddll,bend right=15]\\
    &|[color=red,fill=green]|m_{p-1}^{m_{p-3},r_{p-3}}\arrow[dr]&\\
    |[color=red,fill=green]|l_{p-3}^{l_{p-4}}\arrow[uu]\arrow[ur]&&|[color=green,fill=red]|r_{p-3}^{r_{p-3}}\arrow[uull,bend right]\arrow[dl]\\
    &|[color=red,fill=green]|m_{p-4}^{m_{p-2},l_{p-3}}\arrow[ul]&\\
     |[color=red,fill=green]|r_{p-2}^{l_{p-2}}\arrow[rr,bend right]&&|[color=red,fill=green]|l_{p-2}^{l_{p-3\searrow1},r_{1\nearrow p-2},G}\arrow[uuuu,bend right]
\end{tikzcd}}\]

After mutating at $\delta_{p-4}$, we have:

\[\adjustbox{center,scale=0.6}{\begin{tikzcd}
    &&\color{green}m_1^{l_1,h_1}\arrow[ddddll]\\\\
    \color{red}l_2^{l_1,G,r_{1,2}}\arrow[uurr]\arrow[rr]&&\color{green}r_2^{r_{1,2},l_{3\searrow1},G}\arrow[dd]\arrow[ddddddll,bend left=15]\\
    &\color{red}m_{5}^{m_3,r_{3}}\arrow[dr]&\\
    \color{red}l_3^{l_2}\arrow[uu]\arrow[ur]&&\color{green}r_3^{r_3}\arrow[uull,bend right=15]\arrow[dl]\\
    &\color{red}m_{2}^{m_4,l_{3}}\arrow[ul]&\\
    \color{red}l_4^{l_{3\searrow1},r_{1\nearrow4},G}\arrow[uuuurr,bend left]\arrow[rr,bend left=15]&&\color{green}r_4^{r_{1\nearrow4},l_{5\searrow1},G}\arrow[dd]\arrow[dddddll]\\
    &\color{red}m_{7}^{m_5,r_{5}}\arrow[dr]&\\
    \color{red}l_5^{l_4}\arrow[uu]\arrow[ur]&&\color{green}r_5^{r_5}\arrow[uull,bend right=15]\arrow[dl]\\
    &\color{red}m_{4}^{m_6,l_{5}}\arrow[ul]&\\
    \color{red}l_6^{l_{5\searrow1},r_{1\nearrow6},G}\arrow[uuuurr,bend left=15]\arrow[rr,bend right]&&\color{green}r_6^{r_{1\nearrow6},l_{7\searrow1},G}\arrow[d]\\
    \vdots\arrow[u]&&\vdots\arrow[dddll]\\
    |[color=red,fill=green]|l_{p-6}^{l_{p-7\searrow1},r_{1\nearrow p-6},G}\arrow[urr]\arrow[rr]&&|[color=green,fill=red]|r_{p-6}^{r_{1\nearrow p-6},l_{p-5\searrow1},G}\arrow[dd]\arrow[ddddll,bend right=40]\\
    &|[color=red,fill=green]|m_{p-3}^{m_{p-5},r_{p-5}}\arrow[dr]&\\
    |[color=red,fill=green]|l_{p-5}^{l_{p-6}}\arrow[uu]\arrow[ur]&&|[color=green,fill=red]|r_{p-5}^{r_{p-5}}\arrow[uull,bend right]\arrow[dl]\\
    &|[color=red,fill=green]|m_{p-6}^{m_{p-4},l_{p-5}}\arrow[ul]&\\
    |[color=red,fill=green]|l_{p-4}^{l_{p-4}}\arrow[dr]\arrow[rr,bend left=15]&&|[color=red,fill=green]|r_{p-4}^{r_{1\nearrow p-4},l_{p-5\searrow1},G}\arrow[dd]\arrow[uu,bend right]\\
    &\color{red}m_{p-1}^{m_{p-3}}\arrow[dl]\arrow[ur]&\\
    \color{red}l_{p-3}^{l_{p-3}}\arrow[uu]\arrow[dr]&&\color{red}r_{p-3}^{r_{p-3}}\arrow[dd]\arrow[ul]\\
    &\color{red}m_{p-4}^{m_{p-2}}\arrow[dl]\arrow[ur]&\\
     \color{red}r_{p-2}^{l_{p-2}}\arrow[uu]&&\color{red}l_{p-2}^{r_{p-2}}\arrow[ul]
\end{tikzcd}}\]

Mutating at $r_{p-4}$ "unwraps" the arc and mutating at $r_{p-3}$ puts it back to its original position which $\beta_{p-3}$ did not do because it was impossible. Same goes for $l_{p-4}$ and $l_{p-3}$.

Observe that the full subdiagram with vertices involved in $\delta_{p-6}$ and adjacent vertices is isomorphic to that of $\delta_{p-4}$ before mutating, we may conclude by induction that after mutating at $\delta_4$, the diagram is:

\[\adjustbox{center,scale=0.6
}{\begin{tikzcd}
    &&\color{green}m_1^{l_1,h_1}\arrow[ddddll]\\\\
    \color{red}l_2^{l_1,G,r_{1,2}}\arrow[uurr]\arrow[rr]&&\color{green}r_2^{r_{1,2},l_{3\searrow1},G}\arrow[dd]\arrow[ddddll,bend right=40]\\
    &\color{red}m_{5}^{m_3,r_{3}}\arrow[dr]&\\
    \color{red}l_3^{l_2}\arrow[uu]\arrow[ur]&&\color{green}r_3^{r_3}\arrow[uull,bend right]\arrow[dl]\\
    &\color{red}m_{2}^{m_4,l_3}\arrow[ul]&\\
    \color{red}l_4^{l_4}\arrow[rr,bend left=15]\arrow[dr]&&\color{red}r_4^{r_{1\nearrow4},l_{3\searrow1},G}\arrow[dd]\arrow[uuuu,bend right]\\
    &\color{red}m_{7}^{m_5}\arrow[ur]\arrow[dl]&\\
    \color{red}l_5^{l_5}\arrow[uu]\arrow[dr]&&\color{red}r_5^{r_5}\arrow[ul]\arrow[dd]\\
    &\color{red}m_{4}^{m_6}\arrow[dl]\arrow[ur]&\\
    \color{red}l_6^{l_6}\arrow[uu]&&\color{red}r_6^{r_6}\arrow[d]\arrow[ul]\\
    \vdots\arrow[u]\arrow[dr]&&\vdots\arrow[dd]\\
    &\color{red}m_{p-8}^{m_{p-6}}\arrow[dl]\arrow[ur]&\\
    \color{red}l_{p-6}^{l_{p-6}}\arrow[dr]\arrow[uu]&&\color{red}r_{p-6}^{r_{p-6}}\arrow[dd]\arrow[ul]\\
    &\color{red}m_{p-3}^{m_{p-5}}\arrow[dl]\arrow[ur]&\\
    \color{red}l_{p-5}^{l_{p-5}}\arrow[uu]\arrow[dr]&&\color{red}r_{p-5}^{r_{p-5}}\arrow[ul]\arrow[dd]\\
    &\color{red}m_{p-6}^{m_{p-4}}\arrow[dl]\arrow[ur]&\\
    \color{red}l_{p-4}^{l_{p-4}}\arrow[dr]\arrow[uu]&&\color{red}r_{p-4}^{r_{p-4}}\arrow[dd]\arrow[ul]\\
    &\color{red}m_{p-1}^{m_{p-3}}\arrow[dl]\arrow[ur]&\\
    \color{red}l_{p-3}^{l_{p-3}}\arrow[uu]\arrow[dr]&&\color{red}r_{p-3}^{r_{p-3}}\arrow[dd]\arrow[ul]\\
    &\color{red}m_{p-4}^{m_{p-2}}\arrow[dl]\arrow[ur]&\\
     \color{red}r_{p-2}^{l_{p-2}}\arrow[uu]&&\color{red}l_{p-2}^{r_{p-2}}\arrow[ul]
\end{tikzcd}}\]

By computation, after mutating at $\delta_2$, we have:

\[\adjustbox{center,scale=0.6
}{\begin{tikzcd}
    &&\color{green}m_1^{l_1,h_1}\arrow[ddll]\\\\
    \color{red}l_2^{l_2}\arrow[rr]\arrow[dr]&&\color{red}r_2^{r_{1,2},l_1,G}\arrow[dd]\arrow[uu]\\
    &\color{red}m_{5}^{m_3}\arrow[dl]\arrow[ur]&\\
    \color{red}l_3^{l_3}\arrow[uu]\arrow[dr]&&\color{red}r_3^{r_3}\arrow[ul]\arrow[dd]\\
    &\color{red}m_{2}^{m_4}\arrow[dl]\arrow[ur]&\\
    \color{red}l_4^{l_4}\arrow[uu]\arrow[dr]&&\color{red}r_4^{r_4}\arrow[dd]\arrow[ul]\\
    &\color{red}m_{7}^{m_5}\arrow[ur]\arrow[dl]&\\
    \color{red}l_5^{l_5}\arrow[uu]\arrow[dr]&&\color{red}r_5^{r_5}\arrow[ul]\arrow[dd]\\
    &\color{red}m_{4}^{m_6}\arrow[dl]\arrow[ur]&\\
    \color{red}l_6^{l_6}\arrow[uu]&&\color{red}r_6^{r_6}\arrow[d]\arrow[ul]\\
    \vdots\arrow[u]\arrow[dr]&&\vdots\arrow[dd]\\
    &\color{red}m_{p-8}^{m_{p-6}}\arrow[dl]\arrow[ur]&\\
    \color{red}l_{p-6}^{l_{p-6}}\arrow[dr]\arrow[uu]&&\color{red}r_{p-6}^{r_{p-6}}\arrow[dd]\arrow[ul]\\
    &\color{red}m_{p-3}^{m_{p-5}}\arrow[dl]\arrow[ur]&\\
    \color{red}l_{p-5}^{l_{p-5}}\arrow[uu]\arrow[dr]&&\color{red}r_{p-5}^{r_{p-5}}\arrow[ul]\arrow[dd]\\
    &\color{red}m_{p-6}^{m_{p-4}}\arrow[dl]\arrow[ur]&\\
    \color{red}l_{p-4}^{l_{p-4}}\arrow[dr]\arrow[uu]&&\color{red}r_{p-4}^{r_{p-4}}\arrow[dd]\arrow[ul]\\
    &\color{red}m_{p-1}^{m_{p-3}}\arrow[dl]\arrow[ur]&\\
    \color{red}l_{p-3}^{l_{p-3}}\arrow[uu]\arrow[dr]&&\color{red}r_{p-3}^{r_{p-3}}\arrow[dd]\arrow[ul]\\
    &\color{red}m_{p-4}^{m_{p-2}}\arrow[dl]\arrow[ur]&\\
     \color{red}r_{p-2}^{l_{p-2}}\arrow[uu]&&\color{red}l_{p-2}^{r_{p-2}}\arrow[ul]
\end{tikzcd}}\]

Now all the $l_k$ and $r_k$'s have returned to their original positions with the exception of $l_{p-2}$ and $r_{p-2}$ which swapped and $l_1$ and $r_1$ which we will later deal with.

Step 6(c) consists of $(m_1,h_2,h_3,...,h_{q+1},g_1,g_2,...,g_q)$. We shall look at them and adjacent vertices:

\[\adjustbox{scale=0.8,center}{
\begin{tikzcd}
    &&\color{red}g_1^{g_1,2h_2}\arrow[dl,"2"]&&\color{red}g_2^{g_2,2h_3}\arrow[dl,"2"]&&\color{red}g_{q-1}^{g_{q-1},2h_q}\arrow[dl,"2"]&&\color{red}g_q^{g_q,2h_{q+1}}\arrow[dl,"2"]\\
    \color{green}m_1^{l_1,h_1}\arrow[dd,bend right]\arrow[r]&\color{green}h_{2}^{h_{2},2g_{1}}\arrow[dl]\arrow[rr]&&\color{green}h_3^{h_3,2g_2}\arrow[ul,"2"]\arrow[r]&\cdots\arrow[r]&\color{green}h_{q}^{h_q,2g_{q-1}}\arrow[rr]&&\color{green}h_{q+1}^{h_{q+1},2g_q}\arrow[ul,"2"]\arrow[rr]&&\color{green}r_1^{r_1}\arrow[ul,"2"]\\
    \color{red}l_1^{h_1}\arrow[u]&\color{red}r_2^{r_{1,2},l_1,h_{1\nearrow q+1},2g_{1\nearrow q}}\arrow[ul]\\
    \color{red}l_2^{l_2}\arrow[ur]
\end{tikzcd}}\]

Observe that the subtriangulation is same as the case where $p=4$, after step 6(c), we have:

\[\adjustbox{scale=0.9,center}{
\begin{tikzcd}
    &\color{red}g_1^{g_1}\arrow[dr,"2"]&&&&\color{red}g_{q-1}^{g_{q-1}}\arrow[dr,"2"]&&\color{red}g_q^{g_q}\arrow[dr,"2"]\\
    \color{red}m_1^{l_1,h_1}\arrow[d]\arrow[ur,"2"]&&\color{red}h_2^{h_2}\arrow[ll]&\cdots\arrow[l]&\color{red}h_{q-1}^{h_q}\arrow[l]\arrow[ur,"2"]&&\color{red}h_{q}^{h_{q}}\arrow[ll]\arrow[ur,"2"]&&\color{red}h_{q+1}^{h_{q+1}}\arrow[ll]\arrow[dd,bend left]\\
    \color{green}l_1^{l_1}\arrow[d]&&&&&&&&\color{green}r_1^{r_1}\arrow[u]\\
    \color{red}l_2^{l_2}\arrow[uu,bend left]&&&&&&&&\color{red}r_2^{r_{1,2}}\arrow[u]
\end{tikzcd}}\]

Now we have unwrapped $m_1,h_2,h_3,...,h_{q+1}$ and all pending arcs are connected to the corner again.

Step 6(d) consists of $l_1,r_1$.

At $l_1,r_1$ and adjacent vertices, we have:

\[\begin{tikzcd}
    \color{red}m_1^{l_1,h_1}\arrow[dd]&&\color{red}h_2^{h_2}\arrow[dddd,bend left]\\
    &\color{red}m_3^{m_1,r_1}\arrow[dr]&\\
    \color{green}l_{1}^{l_1}\arrow[dd]\arrow[ur]&&\color{green}r_{1}^{r_1}\arrow[uu]\arrow[dl]\\
    &\color{red}h_1^{m_2,l_1}\arrow[ul]&\\
    \color{red}\color{red}l_2^{l_2}\arrow[uuuu,bend left]&&\color{red}r_2^{r_{1,2}}\arrow[uu]
\end{tikzcd}\]

After mutating at $l_1,r_1$, we have:

\[\begin{tikzcd}
    \color{red}m_1^{h_1}\arrow[dr]&&\color{red}h_2^{h_2}\arrow[dd]\\
    &\color{red}m_3^{m_1}\arrow[ur]\arrow[dl]&\\
    \color{red}l_{1}^{l_1}\arrow[uu]\arrow[dr]&&\color{red}r_{1}^{r_1}\arrow[dd]\arrow[ul]\\
    &\color{red}h_1^{m_2}\arrow[ur]\arrow[dl]&\\
    \color{red}\color{red}l_2^{l_2}\arrow[uu]&&\color{red}r_2^{r_2}\arrow[ul]
\end{tikzcd}\]

Now $l_1$ and $r_1$ return to their original positions, as promised, and all vertices are red again. Therefore $\Delta_1$ is a maximal green sequence. $\blacksquare$

\subsubsection{The case where $p=3$}\label{p=3}

\begin{proposition}
    $\Delta_1$ is a maximal green sequence when $p=3$.
\end{proposition}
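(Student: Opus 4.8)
The plan is to specialise the general mutation sequence $\Delta_1$ to $p=3$ and then verify, block by block, that it tags every puncture notched and terminates at an all-red framed diagram, following the template established in Sections \ref{2,1}--\ref{even}. Substituting $p=3$ gives $\lfloor\frac{p-1}{2}\rfloor=\lfloor\frac{p}{2}\rfloor=1$, so $\mu=m_2$, $\gamma=(m_1,l_1,r_1,m_1)$, $\varepsilon=(h_2,\dots,h_q)$ and $\omega=(h_{q+1},g_q)$; moreover there are no inner arcs (here $l_1,r_1$ play the role of the black arcs $l_{p-2},r_{p-2}$), so steps 3, 4(b), 6(b) and 6(d) are empty. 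Explicitly, $\Delta_1$ reduces to step 1 $=(r_1,l_1,m_2,m_1,l_1,r_1)$, step 2 $=(h_{q+1},\dots,h_1,m_2,h_2,\dots,h_q)$, step 4(a) $=(g_1,\dots,g_q,h_{q+1},\dots,h_2,m_2)$, step 4(c) $=(l_2,f_1,r_2,f_2,l_2,f_1)$, step 5 $=(m_1,l_1,r_1,m_1)$, step 6(a) $=(f_1,l_2,f_2,r_2,f_1,l_2)$ and step 6(c) $=(m_2,h_2,\dots,h_q,g_1,\dots,g_q,h_{q+1},g_q)$. The geometric goal is unchanged: the surface carries the glued corner together with two interior punctures, and each of the three must be made notched.

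I would first verify that step 1 ($=\alpha_1$) tags the second interior puncture notched. Since $\alpha_1$ is the boundary instance of the general $\alpha_k$ that actually touches $m_1$ against the core region (unlike the $\alpha_k$ with $k\ge 2$ appearing in the even cases), I would run this short computation directly: the flips $m_2,m_1$ create and notch a once-punctured digon, while the outer flips $r_1,l_1$ return $l_1,r_1$ to the corner, and I would record the modified superscripts induced on the neighbouring core vertices.

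The genuinely new point is step 2, where the odd case departs from Lemma \ref{p>2}: the symmetric engine of Section \ref{2,>1} pivots on $m_1$ and ends at $h_{q+1}$, whereas for $p=3$ the sequence pivots on $m_2$ and stops at $h_q$, precisely because $\alpha_1$ has already interchanged the core roles of $m_1$ and $m_2$. I would establish the corresponding variant of Lemma \ref{p>2} for this input (equivalently, invoke Lemma \ref{p>2} after the bookkeeping relabelling $m_1\leftrightarrow m_2$), obtaining the first interior puncture tagged notched. With both interior punctures notched, steps 4(a), 4(c), 5 and 6(a) then follow the $p=4$ pattern of Section \ref{p=4}: step 4(a) wraps the core arcs into $m_2$ by the analogue of Lemma \ref{p>4} (here $m_2$ occupies the position $m_1$ had in the even case), and the concatenation 4(c)+5+6(a) is a direct appeal to Lemma \ref{corner} with the appropriate nested frozen sets $W\subset Y$, notching the corner and reversing the outer-arc flips.

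I expect the main obstacle to be step 6(c), together with the correct restoration of $l_1,r_1$. After $(m_2,h_2,\dots,h_q,g_1,\dots,g_q)$ the diagram is almost entirely red, but the odd-$p$ asymmetry leaves $h_{q+1}$ and $g_q$ green; this is exactly the reason $\omega=(h_{q+1},g_q)$ is appended, and I would confirm, by the inductive subdiagram matching used in Sections \ref{2,>1} and \ref{p=4}, that these two final mutations unwrap $h_{q+1},g_q$ and turn every non-frozen vertex red. I would also need to track $l_1,r_1$ through the wrapping: for $p=3$ I expect them to be unwrapped inside $\gamma$ and 6(a) rather than by a separate step 6(d), which is why that step is absent. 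The real work throughout is the careful tracking of the frozen-vertex superscripts across the three asymmetric blocks (steps 2, 4(a), 6(c)); once the $m_1\leftrightarrow m_2$ variants of Lemmas \ref{p>2} and \ref{p>4} are recorded, the remainder reduces to the local computations already exhibited in the earlier subsections.
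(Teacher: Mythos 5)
Your specialisation of $\Delta_1$ to $p=3$ is correct in every component ($\mu=m_2$, $\gamma=(m_1,l_1,r_1,m_1)$, $\omega=(h_{q+1},g_q)$, the empty steps, and the role of $l_1,r_1$ as the black arcs absorbed into $\gamma$), and your plan — direct computation for $\alpha_1$, an $m_2$-pivoted variant of Lemma \ref{p>2} for step 2, an odd-$p$ analogue of Lemma \ref{p>4} for step 4(a) (this is exactly the paper's Lemma \ref{p>3}), Lemma \ref{corner} for the merged steps 4(c)+5+6(a), and the final three mutations $g_q,h_{q+1},g_q$ to repair the asymmetry left by $h_{q+1}$ remaining attached to the second interior puncture — is precisely the route the paper takes. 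This is essentially the same proof.
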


\textbf{Proof} When $p=3$, the mutation sequence $\Delta_1$ translates to

\begin{enumerate}
    \item Tagging alternate punctures notched. $(r_1,l_1,m_2,m_1,l_1,r_1)$
    \item Tagging the first interior puncture notched. $(h_{q+1},h_q,...,h_1,m_{2},h_2,h_3,...,h_{q})$
    \item Tagging the remaining interior punctures notched and moving half the inner arcs away from the corner. $(\phi)$
    \item Moving arcs away from the corner.
    \begin{enumerate}
        \item Moving core arcs away from the corner. $(g_1,g_2,...,g_q,h_{q+1},h_{q},...,h_2,m_2)$
        \item Moving the remaining inner arcs away from the corner. $(\phi)$
        \item Moving outer arcs away from the corner. $(l_{2},f_1,r_{2},f_2,l_{2},f_1)$
    \end{enumerate}
    \item Tagging the corner notched. $(m_{1},l_{1},r_{1},m_{1})$
    \item Moving arcs back to the corner.
    \begin{enumerate}
        \item Moving outer arcs back to the corner. 
        $(f_1,l_{2},f_2,r_{2},f_1,l_{2})$
        \item Moving inner arcs back to the corner. 
        $(\phi)$
        \item Moving core arcs back to the corner. $(m_2,h_2,h_3,...,h_q,g_1,g_2,...,g_{q-1},g_q,h_{q+1},g_q)$
    \end{enumerate}
\end{enumerate}

In terms of triangulation, this is mostly similar to the case where $p=4$, so we will not mention the triangulations as much.

Step 1 consists of $r_1,l_1,m_2,m_1,l_1,r_1$.

At vertices involved in $r_1,l_1,m_2,m_1,l_1,r_1$ and adjacent vertices, we have:

\[\begin{tikzcd}
    \color{green}h_1^{h_1}\arrow[dr]&&\color{green}h_{q+1}^{h_{q+1}}\arrow[dd]\\
    &\color{green}m_1^{m_1}\arrow[ur]\arrow[dl]&\\
    \color{green}l_{1}^{l_1}\arrow[uu]\arrow[dr]&&\color{green}r_{1}^{r_1}\arrow[ul]\arrow[dd]\\
    &\color{green}m_{2}^{m_2}\arrow[ur]\arrow[dl]&\\
    \color{green}l_{2}^{l_2}\arrow[uu]&&\color{green}r_{2}^{r_{2}}\arrow[ul]
\end{tikzcd}\]

After $r_1,l_1,m_2,m_1,l_1,r_1$, we have:

\[\begin{tikzcd}
    \color{green}h_1^{h_1,m_1}\arrow[dr]&&\color{green}h_{q+1}^{h_{q+1},r_1}\arrow[dd]\\
    &\color{red}m_2^{r_1}\arrow[ur]\arrow[dl]&\\
    \color{red}l_{1}^{m_1}\arrow[uu]\arrow[dr]&&\color{red}r_{1}^{m_2}\arrow[ul]\arrow[dd]\\
    &\color{red}m_{1}^{l_1}\arrow[ur]\arrow[dl]&\\
    \color{green}l_{2}^{l_{2,1}}\arrow[uu]&&\color{green}r_{2}^{r_{2},m_2}\arrow[ul]
\end{tikzcd}\]

Step 2 consists of $(h_{q+1},h_q,...,h_1,m_{2},h_2,h_3,...,h_{q})$. Look at these vertices and adjacent vertices:

\[\begin{tikzcd}
    &&\color{green}g_{q-2}^{g_{q-2}}\arrow[dr,"2"]&&\color{green}g_{q-1}^{g_{q-1}}\arrow[dr,"2"]&&|[color=green,fill=red]|g_{q}^{g_q}\arrow[dr,"2"]\\
    \cdots&\color{green}h_{q-2}^{h_{q-2}}\arrow[ur,"2"]\arrow[l]&&\color{green}h_{q-1}^{h_{q-1}}\arrow[ur,"2"]\arrow[ll]&&|[color=green,fill=red]|h_{q}^{h_q}\arrow[ur,"2"]\arrow[ll]&&|[color=green,fill=red]|h_{q+1}^{h_{q+1},r_1}\arrow[ll]\arrow[ddll]\\
    &&&|[color=red,fill=green]|m_2^{r_1}\arrow[urrrr]\\
    &&&&&|[color=red,fill=green]|r_1^{m_2}\arrow[ull]
\end{tikzcd}\]

Mutate at $h_{q+1}$:

\begin{tikzcd}
    &&\color{green}g_{q-2}^{g_{q-2}}\arrow[dr,"2"]&&|[color=green,fill=red]|g_{q-1}^{g_{q-1}}\arrow[dr,"2"]&&&\color{green}g_{q}^{g_q,2h_{q+1},2r_1}\arrow[dddll,"2"]\\
    \cdots&\color{green}h_{q-2}^{h_{q-2}}\arrow[ur,"2"]\arrow[l]&&|[color=green,fill=red]|h_{q-1}^{h_{q-1}}\arrow[ur,"2"]\arrow[ll]&&|[color=green,fill=red]|h_{q}^{h_q}\arrow[r]\arrow[ll]&|[color=red,fill=green]|h_{q+1}^{h_{q+1},r_1}\arrow[ur,"2"]\arrow[dlll]\\
    &&&|[color=green,fill=red]|m_2^{h_{q+1}}\arrow[urr]\\
    &&&&&\color{red}r_1^{m_2}\arrow[uur]
\end{tikzcd}

Again we see the pattern at the highlighted subdiagrams and therefore after $h_2$ we have:

\[\begin{tikzcd}
 &&\color{green}g_1^{g_1,2h_2}\arrow[dr,"2"] &&\color{green}g_2^{g_2,2h_3}\arrow[dr,"2"]&&&\color{green}g_{q-1}^{g_{q-1},2h_{q}}\arrow[dr,"2"]&\color{green}g_q^{g_q,2h_{q+1},2r_1}\arrow[dddll,"2"]\\ \color{green}h_1^{h_1,m_1}\arrow[r]&\color{red}h_2^{h_2}\arrow[ur,"2"]\arrow[drrr]&&\color{red}h_3^{h_3}\arrow[ur,"2"]\arrow[ll]&&\cdots\arrow[ll]&\color{red}h_q^q\arrow[l]\arrow[ur,"2"]&&\color{red}h_{q+1}^{h_{q+1},r_1}\arrow[ll]\arrow[u,"2"]\\
 &&&&\color{green}m_2^{h_{q+1\searrow2}}\arrow[dll]&&&&\\
 &&\color{red}l_1^{m_1}\arrow[uull]&&&&\color{red}r_1^{m_2}\arrow[uurr]&&
\end{tikzcd}\]

Mutate at $h_1,m_2$:

\[\adjustbox{scale=0.9,center}{\begin{tikzcd}
 &&|[color=green,fill=red]|g_1^{g_1,2h_2}\arrow[dr,"2"] &&\color{green}g_2^{g_2,2h_3}\arrow[dr,"2"]&&&\color{green}g_{q-1}^{g_{q-1},2h_{q}}\arrow[dr,"2"]&\color{green}g_q^{g_q,2h_{q+1},2r_1}\arrow[dddll,"2"]\\|[color=red,fill=green]|m_2^{h_{q+1\searrow2}}\arrow[r]&|[color=green,fill=red]|h_2^{h_{q+1\searrow3}}\arrow[ur,"2"]\arrow[drrr]&&|[color=red,fill=green]|h_3^{h_3}\arrow[ur,"2"]\arrow[ll]&&\cdots\arrow[ll]&\color{red}h_q^{h_q}\arrow[l]\arrow[ur,"2"]&&\color{red}h_{q+1}^{h_{q+1},r_1}\arrow[ll]\arrow[u,"2"]\\
 &&&&|[color=red,fill=green]|h_1^{h_1,m_1}\arrow[dll]&&&&\\
 &&\color{green}l_1^{h_1}\arrow[uull]&&&&\color{red}r_1^{m_2}\arrow[uurr]&&
\end{tikzcd}}\]

Mutate at $h_2$:

\[\adjustbox{scale=0.9,center}{\begin{tikzcd}
 &&\color{green}g_1^{g_1,2h_2}\arrow[dl,"2"] &&|[color=green,fill=red]|g_2^{g_2,2h_3}\arrow[dr,"2"]&&&&\color{green}g_{q-1}^{g_{q-1},2h_{q}}\arrow[dr,"2"]&\color{green}g_q^{g_q,2h_{q+1},2r_1}\arrow[dddlll,"2"]\\ \color{red}m_2^{h_{2}}\arrow[drrrr]\arrow[urr,"2"]&|[color=red,fill=green]|h_2^{h_{q+1\searrow3}}\arrow[l]\arrow[rr]&&|[color=green,fill=red]|h_3^{h_{q+1\searrow4}}\arrow[ur,"2"]\arrow[dr]&&|[color=red,fill=green]|h_4^{h_4}\arrow[ll]&\cdots\arrow[l]&\color{red}h_q^{h_q}\arrow[l]\arrow[ur,"2"]&&\color{red}h_{q+1}^{h_{q+1},r_1}\arrow[ll]\arrow[u,"2"]\\
 &&&&|[color=red,fill=green]|h_1^{h_1,m_1}\arrow[dll]\arrow[ulll,shift right]&&&&\\
 &&\color{green}l_1^{h_1}\arrow[uull]&&&&\color{red}r_1^{m_2}\arrow[uurrr]&&
\end{tikzcd}}\]

Observe the pattern at the highlighted subdiagrams, we see that after mutating at $h_3,...,h_{q}$, we have:

\begin{tikzcd}
    &\color{green}g_1^{g_1,2h_2}\arrow[dr,"2"]&&&&\color{green}g_{q-1}^{g_{q-1},2h_q}\arrow[dr,"2"]&&\color{green}g_q^{g_q,2h_{q+1},2r_1}\arrow[dddll,"2"]\\
    \color{red}m_2^{h_2}\arrow[drrrr]\arrow[ur,"2"]&&\color{red}h_2^{h_3}\arrow[ll]&\cdots\arrow[l]&\color{red}h_{q-1}^{h_q}\arrow[ur,"2"]\arrow[l]&&\color{red}h_{q}^{h_{q+1}}\arrow[ll]\arrow[r]&\color{red}h_{q+1}^{r_1}\arrow[u,"2"]\arrow[dlll]\\
    &&&&\color{red}h_1^{h_1,m_1}\arrow[urr]\arrow[dll]\\
    &&\color{green}l_1^{h_1}\arrow[uull]&&&\color{red}r_1^{m_2}\arrow[uurr]
\end{tikzcd}

Unlike the cases where $p$ is even, $h_{q+1}$ is still connected to the second interior puncture:

\[\begin{tikzpicture}[every edge quotes/.style={auto=right}]
    \begin{scope}[every node/.style={sloped,allow upside down}][every edge quotes/.style={auto=right}]
        \node at ($(0,5)!.5!(5,0)$) {$\ddots$};
        \draw (0,0) to [bend right=15] node [below=.15,right,black] {$g_1$} (3.2,1.8);
        \draw (0,0) to [bend left=15] node [above=.15,right,black] {$g_q$} (1.2,3.8);
        \draw (0,0) to [bend left=15] node [below=.15,right,black] {$g_{q-1}$} (2,3);
        \draw (0,0) to [bend right=45] node[below=.15,right=1] {$m_2$} (5,5);
        \draw (0,0) to [bend left=30] node[above] {$h_{q+1}$} (10,10);
        \draw (0,0) to [bend right=15] node[right=1,below,black] {$h_2$} (5,5);
        \draw (0,0) to [bend left=15] node[right=1,below=-.15,black,near end] {$h_{q-1}$} (5,5);
        \draw (0,0) to [bend left=30] node[right=1,below=-.15,black,near end] {$h_q$} (5,5);
        \draw (0,0) to [bend right=60] node[near end,below,black] {$l_{1}$} (10,10);
        \draw (0,0) to [bend left=60] node[near end,above,black] {$r_{1}$} (10,10);
        \draw (5,5) to node[below] {$h_1$} (10,10);
        \fill (0,0) circle (2pt);
        \fill[blue] (10,10) circle (2pt);
        \fill[blue] (5,5) circle (2pt);
        \fill (3.2,1.8) node[cross=2pt,rotate=30] {};
        \fill (1.2,3.8) node[cross=2pt,rotate=30] {};
        \fill (2,3) node[cross=2pt,rotate=30] {};
    \end{scope}
\end{tikzpicture}\]

This will affect the final part of the maximal green sequence.

Step 4(a) consists of $(g_1,g_2,...,g_q,h_{q+1},h_{q},...,h_2,m_2)$.

Now look at $g_1,g_2,...,g_q,h_q,h_{q-1},...,h_2,m_2$, and adjacent vertices:

\[\begin{tikzcd}
    &\color{green}g_1^{g_1,2h_2}\arrow[dr,"2"]&&&&\color{green}g_{q-1}^{g_{q-1},2h_q}\arrow[dr,"2"]&&\color{green}g_q^{g_q,2h_{q+1},2r_1}\arrow[dddll,"2"]\\
    \color{red}m_2^{h_2}\arrow[drrrr]\arrow[ur,"2"]&&\color{red}h_2^{h_3}\arrow[ll]&\cdots\arrow[l]&\color{red}h_{q-1}^{h_q}\arrow[ur,"2"]\arrow[l]&&\color{red}h_{q}^{h_{q+1}}\arrow[ll]\arrow[r]&\color{red}h_{q+1}^{r_1}\arrow[u,"2"]\arrow[dlll]\\
    &&&&\color{red}h_1^{h_1,m_1}\arrow[urr]\arrow[dll]\\
    &&\color{green}l_1^{h_1}\arrow[uull]&&&\color{red}r_1^{m_2}\arrow[uurr]
\end{tikzcd}\]

For convenience, we shall show the result of step 4(a) in a lemma:

\begin{lemma}\label{p>3}
    Mutating the following diagram (the blue colour of the vertex $r_1$ means that the greenness of $r_1$ does not matter):
\[\begin{tikzcd}
    &\color{green}g_1^{g_1,2h_2}\arrow[dr,"2"]&&&&\color{green}g_{q-1}^{g_{q-1},2h_q}\arrow[dr,"2"]&&\color{green}g_q^{g_q,2h_{q+1},2r_1}\arrow[dddll,"2"]\\
    \color{red}m_2^{h_2}\arrow[drrrr]\arrow[ur,"2"]&&\color{red}h_2^{h_3}\arrow[ll]&\cdots\arrow[l]&\color{red}h_{q-1}^{h_q}\arrow[ur,"2"]\arrow[l]&&\color{red}h_{q}^{h_{q+1}}\arrow[ll]\arrow[r]&\color{red}h_{q+1}^{r_1}\arrow[u,"2"]\arrow[dlll]\\
    &&&&\color{red}h_1^{h_1,m_1}\arrow[urr]\arrow[dll]\\
    &&\color{green}l_1^{h_1}\arrow[uull]&&&\color{blue}r_1^{R}\arrow[uurr]
\end{tikzcd}\]
    , where $R$ is a set of (frozen) vertices, with the mutation sequence $(g_1,g_2,...,g_q,h_q,h_{q-1},...,h_2,m_2)$ gives the following diagram (after relocating some vertices):
\[\adjustbox{scale=0.9,center}{\begin{tikzcd}
    &&\color{red}g_1^{g_1,2h_2}\arrow[dl,"2"]&&\color{red}g_{q-2}^{g_{q-2},2h_{q-1}}\arrow[d,"2"]&\color{red}g_{q-1}^{g_{q-1},2h_q}\arrow[dr,"2"]&&\color{green}g_q^{g_q,2h_{q+1}}\arrow[ddlll,"2",pos=0.1,bend right=10]\\
    \color{red}m_2^{h_{2\nearrow q+1},2g_{1\nearrow q},r_1}\arrow[r]\arrow[ddrr]&\color{green}h_2^{h_{2},2g_{1}}\arrow[rr]\arrow[drrr]&&\cdots\arrow[r]\arrow[ul,"2"]&\color{green}h_{q-1}^{h_{q-1},2g_{q-2}}\arrow[rr]&&\color{green}h_{q}^{h_{q},2g_{q-1}}\arrow[r]\arrow[ull,"2"]&\color{red}h_{q+1}^{h_{q+1}}\arrow[ull,"2"]\arrow[u,"2"]\\
    &&&&\color{red}h_1^{h_1,m_1}\arrow[urrr]\arrow[ullll]\\
    &&\color{green}l_1^{r_1,h_{1\nearrow q+1},2g_{1\nearrow q}}\arrow[rrr]&&&\color{blue}r_1^{R}\arrow[uulllll]
\end{tikzcd}}\]
\end{lemma}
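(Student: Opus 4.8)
The plan is to mimic the block-by-block strategy of Lemmas~\ref{p>2} and~\ref{p>4}: carry out the mutation sequence in its three natural blocks $(g_1,\dots,g_q)$, $(h_q,\dots,h_2)$ and $(m_2)$, drawing at each stage only the local subdiagram around the vertex being mutated, and closing the middle block by induction once the repeating local picture has been identified. Since every local mutation is determined by the rule $\pm\sqrt{c}\pm\sqrt{d}=\sqrt{ab}$, the whole argument reduces to a finite base computation together with an isomorphism of consecutive local pictures.

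First I would dispose of the $g$-block. For $k<q$ every $g_k$ sits in an identical local configuration, with a weight-$2$ arrow into it and a weight-$2$ arrow out of it, so a single computation of $\mu_{g_k}$ suffices and then applies uniformly: each $g_k$ turns red, its two double arrows reverse, and it deposits a weight-$2$ frozen arrow $2g_k$ onto the neighbouring $m_2$ or $h$-vertex, all in accordance with the mutation rule. The vertex $g_q$ must be handled separately, since it additionally carries the frozen datum $2r_1$ and is tied to $h_{q+1}$, which --- unlike in the even case --- is never mutated in this sequence; this asymmetry is exactly what makes $g_q$ emerge green rather than red in the target diagram.

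Next I would run the $h$-block $(h_q,h_{q-1},\dots,h_2)$, the inductive heart of the argument. I would exhibit $\mu_{h_q}$ and $\mu_{h_{q-1}}$ explicitly on the local picture, check that after relocating vertices (and discarding the attached frozen vertices) the configuration around $h_{q-2}$ is isomorphic to the one around $h_{q-1}$ one step earlier, and then invoke induction down to $h_2$. The effect of this block is to wrap the core arcs successively inside, which is recorded by the telescoping growth of the superscripts on $m_2$ (to $h_{2\nearrow q+1},2g_{1\nearrow q},r_1$) and on $l_1$ (to $r_1,h_{1\nearrow q+1},2g_{1\nearrow q}$); verifying that these labels accumulate correctly through the long arrows $m_2\to h_{q+1}$ and $l_1\to r_1$ is the routine-but-delicate part. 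A final mutation $\mu_{m_2}$ then unwinds the last short arrow at $m_2$ and produces the stated diagram.

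The step I expect to be the main obstacle is the frozen-vertex bookkeeping across the $h$-block in the presence of the weight-$2$ pending-arc faces: because the mutation rule is non-additive on these faces, one must check at every inductive step that the induced arrow weights collapse back to $1$ or $2$, that no spurious weight-$4$ frozen arrows are created, and --- crucially for the odd case --- that the $g_q$--$h_{q+1}$ link is left intact so that $g_q$ retains its green status while every mutated vertex becomes red. Once the base case and the single-step isomorphism are verified, everything else is a finite and mechanical check, exactly as in the companion lemmas.
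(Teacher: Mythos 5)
Your overall strategy (block-by-block local computation, then induction on isomorphic local subdiagrams) is the same as the paper's, but there is a concrete error at the centre of your argument: you assert that $h_{q+1}$ is never mutated and make this the explanation for why $g_q$ ends up green. In fact the sequence computed in the paper's proof (and the one actually applied in Sections \ref{p=3} and \ref{odd}, where step 4(a) is $(g_1,\dots,g_q,h_{q+1},h_q,\dots,h_2,m_2)$) mutates $h_{q+1}$ immediately after the $g$-block; the omission of $h_{q+1}$ from the sequence displayed in the lemma is a typo. That mutation is not optional. The only mutable neighbours of $g_q$ are $h_{q+1}$ and $r_1$, and $r_1$ is never mutated, so under your sequence $g_q$ is mutated once, turns red with superscript $g_q,2h_{q+1},2r_1$, and can never return to the green $g_q^{g_q,2h_{q+1}}$ demanded by the target diagram. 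Likewise $h_{q+1}$ cannot trade its frozen datum $r_1$ for $h_{q+1}$ without being mutated, and $h_q$ is still red at the moment you propose to mutate it, which both breaks the greenness of the ambient maximal green sequence and changes the arithmetic of every subsequent step of your $h$-block.

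In the paper's proof the single mutation at $h_{q+1}$ does three jobs at once: the composite $g_q\xrightarrow{2}h_{q+1}\xrightarrow{4}h_{q+1}'$ played against the existing weight-$2$ arrow $h_{q+1}'\to g_q$ collapses to a weight-$2$ arrow $g_q\to h_{q+1}'$ (so $g_q$ becomes green with superscript $g_q,2h_{q+1}$, exactly the target), the $r_1'$-arrow at $g_q$ cancels to weight $0$, and $h_q$ acquires the superscript $h_{q+1},2g_q,r_1$ and turns green so that the descending block $h_q,\dots,h_2$ can proceed. Your claimed asymmetry with the even case is also illusory: Lemma \ref{p>4} likewise begins its $h$-block with $h_{q+1}$. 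The genuine difference in the odd case lies elsewhere (in the triangulation $h_{q+1}$ remains attached to the second interior puncture, which is why step 6(c) needs three extra mutations), not in step 4(a). Reinstate the mutation at $h_{q+1}$ and redo the $g_q$/$h_q$ bookkeeping; the rest of your plan then coincides with the paper's proof.
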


\begin{proof}
    After $g_1,g_2,...,g_q$, we have:

\[\begin{tikzcd}
    &\color{red}g_1^{g_1,2h_2}\arrow[dl,"2"]&&&&\color{red}g_{q-1}^{g_{q-1},2h_q}\arrow[dl,"2"]&&\color{red}g_q^{g_q,2h_{q+1},2r_1}\arrow[d,"2"]\\
    \color{green}m_2^{h_2,2g_1}\arrow[drrrr]\arrow[rr]&&\color{green}h_2^{h_3,2g_2}\arrow[r]\arrow[ul,"2"]&\cdots\arrow[r]&\color{green}h_{q-1}^{h_q,2g_{q-1}}\arrow[rr]&&\color{red}h_{q}^{h_{q+1}}\arrow[ul,"2"]\arrow[r]&\color{green}h_{q+1}^{4h_{q+1},r_1,2g_q}\arrow[dlll]\arrow[ddll]\\
    &&&&\color{red}h_1^{h_1,m_1}\arrow[urr]\arrow[dll]\\
    &&\color{green}l_1^{h_1}\arrow[uull]&&&\color{blue}r_1^{R}\arrow[uuurr,"2",pos=0.9]
\end{tikzcd}\]

Mutate at $h_{q+1}$:

$\adjustbox{scale=0.9,center}{\begin{tikzcd}
    &\color{red}g_1^{g_1,2h_2}\arrow[dl,"2"]&&&&|[color=red,fill=green]|g_{q-1}^{g_{q-1},2h_q}\arrow[dl,"2"]&&\color{green}g_q^{g_q,2h_{q+1}}\arrow[ddlll,"2",pos=0.1,bend right=10]\\
    \color{green}m_2^{h_2,2g_1}\arrow[drrrr]\arrow[rr]&&\color{green}h_2^{h_3,2g_2}\arrow[r]\arrow[ul,"2"]&\cdots\arrow[r]&|[color=green,fill=red]|h_{q-1}^{h_q,2g_{q-1}}\arrow[rr]&&|[color=green,fill=red]|h_{q}^{h_{q+1},2g_q,r_1}\arrow[ul,"2"]\arrow[ddl]&|[color=red,fill=green]|h_{q+1}^{4h_{q+1},r_1,2g_q}\arrow[l]\arrow[u,"2"]\\
    &&&&\color{red}h_1^{h_1,m_1}\arrow[urrr]\arrow[dll]\\
    &&\color{green}l_1^{h_1}\arrow[uull]&&&|[color=blue,fill=green]|r_1^{R}\arrow[uurr]
\end{tikzcd}}$

Mutate at $h_{q}$:

\[\adjustbox{scale=0.8,center}{\begin{tikzcd}
    &\color{red}g_1^{g_1,2h_2}\arrow[dl,"2"]&&&&|[color=red,fill=green]|g_{q-2}^{g_{q-2},2h_{q-1}}\arrow[dl,"2"]&&\color{red}g_{q-1}^{g_{q-1},2h_q}\arrow[dr,"2"]&&\color{green}g_q^{g_q,2h_{q+1}}\arrow[ddlllll,"2",pos=0.1,bend right=10]\\
    \color{green}m_2^{h_2,2g_1}\arrow[drrrr]\arrow[rr]&&\color{green}h_2^{h_3,2g_2}\arrow[r]\arrow[ul,"2"]&\cdots\arrow[r]&|[color=green,fill=red]|h_{q-2}^{h_{q-1},2g_{q-2}}\arrow[rr]&&|[color=green,fill=red]|h_{q-1}^{h_{q,q+1},2g_{q-1,q},r_1}\arrow[ddr]\arrow[ul,"2"]&&|[color=red,fill=green]|h_{q}^{h_{q+1},2g_q,r_1}\arrow[ll]\arrow[r]&\color{red}h_{q+1}^{h_{q+1}}\arrow[ull,"2"]\arrow[u,"2"]\\
    &&&&\color{red}h_1^{h_1,m_1}\arrow[urrrrr]\arrow[dll]\\
    &&\color{green}l_1^{h_1}\arrow[uull]&&&&&|[color=blue,fill=green]|r_1^{R}\arrow[uur]
\end{tikzcd}}\]

Notice the pattern at the highlighted subdiagrams, after $h_2$ we have:

$\adjustbox{scale=0.9,center}{\begin{tikzcd}
    &&\color{red}g_1^{g_1,2h_2}\arrow[dl,"2"]&&\color{red}g_{q-2}^{g_{q-2},2h_{q-1}}\arrow[d,"2"]&\color{red}g_{q-1}^{g_{q-1},2h_q}\arrow[dr,"2"]&&\color{green}g_q^{g_q,2h_{q+1}}\arrow[ddlll,"2",pos=0.1,bend right=10]\\
    \color{green}m_2^{h_{2\nearrow q+1},2g_{1\nearrow q},r_1}\arrow[drrrr]\arrow[ddrrrrr,shift right]&\color{red}h_2^{h_{3\nearrow q+1},2g_{2\nearrow q},r_1}\arrow[rr]\arrow[l]&&\cdots\arrow[ul,"2"]\arrow[r]&\color{green}h_{q-1}^{h_{q-1},2g_{q-2}}\arrow[rr]&&\color{green}h_{q}^{h_{q},2g_{q-1}}\arrow[r]\arrow[ull,"2"]&\color{red}h_{q+1}^{h_{q+1}}\arrow[ull,"2"]\arrow[u,"2"]\\
    &&&&\color{red}h_1^{h_1,m_1}\arrow[urrr]\arrow[dll]\\
    &&\color{green}l_1^{h_1}\arrow[uull]&&&\color{blue}r_1^{R}\arrow[uullll]
\end{tikzcd}}$

After $m_2$, we have:

$\adjustbox{scale=0.9,center}{\begin{tikzcd}
    &&\color{red}g_1^{g_1,2h_2}\arrow[dl,"2"]&&\color{red}g_{q-2}^{g_{q-2},2h_{q-1}}\arrow[d,"2"]&\color{red}g_{q-1}^{g_{q-1},2h_q}\arrow[dr,"2"]&&\color{green}g_q^{g_q,2h_{q+1}}\arrow[ddlll,"2",pos=0.1,bend right=10]\\
    \color{red}m_2^{h_{2\nearrow q+1},2g_{1\nearrow q},r_1}\arrow[r]\arrow[ddrr]&\color{green}h_2^{h_{2},2g_{1}}\arrow[rr]\arrow[drrr]&&\cdots\arrow[r]\arrow[ul,"2"]&\color{green}h_{q-1}^{h_{q-1},2g_{q-2}}\arrow[rr]&&\color{green}h_{q}^{h_{q},2g_{q-1}}\arrow[r]\arrow[ull,"2"]&\color{red}h_{q+1}^{h_{q+1}}\arrow[ull,"2"]\arrow[u,"2"]\\
    &&&&\color{red}h_1^{h_1,m_1}\arrow[urrr]\arrow[ullll]\\
    &&\color{green}l_1^{r_1,h_{1\nearrow q+1},2g_{1\nearrow q}}\arrow[rrr]&&&\color{blue}r_1^{R}\arrow[uulllll]
\end{tikzcd}}$
\end{proof}

By applying the above lemma, after step 4(a), we have:

$\adjustbox{scale=0.9,center}{\begin{tikzcd}
    &&\color{red}g_1^{g_1,2h_2}\arrow[dl,"2"]&&\color{red}g_{q-2}^{g_{q-2},2h_{q-1}}\arrow[d,"2"]&\color{red}g_{q-1}^{g_{q-1},2h_q}\arrow[dr,"2"]&&\color{green}g_q^{g_q,2h_{q+1}}\arrow[ddlll,"2",pos=0.1,bend right=10]\\
    \color{red}m_2^{h_{2\nearrow q+1},2g_{1\nearrow q},r_1}\arrow[r]\arrow[ddrr]&\color{green}h_2^{h_{2},2g_{1}}\arrow[rr]\arrow[drrr]&&\cdots\arrow[r]\arrow[ul,"2"]&\color{green}h_{q-1}^{h_{q-1},2g_{q-2}}\arrow[rr]&&\color{green}h_{q}^{h_{q},2g_{q-1}}\arrow[r]\arrow[ull,"2"]&\color{red}h_{q+1}^{h_{q+1}}\arrow[ull,"2"]\arrow[u,"2"]\\
    &&&&\color{red}h_1^{h_1,m_1}\arrow[urrr]\arrow[ullll]\\
    &&\color{green}l_1^{r_1,h_{1\nearrow q+1},2g_{1\nearrow q}}\arrow[rrr]&&&\color{red}r_1^{m_2}\arrow[uulllll]
\end{tikzcd}}$

Just like in the previous cases, we shall merge step 4(c), 5, and 6(a).

At $l_{2},r_{2},f_1,f_2,m_{1},l_{1},r_{1}$ and adjacent vertices, we have:

\[\begin{tikzcd}
    &&\color{red}m_2^{h_{2\nearrow q+1},2g_{1\nearrow q},r_1}\arrow[dll]&&\\
    \color{green}l_1^{r_1,h_{1\nearrow q+1},2g_{1\nearrow q}}\arrow[rrrr]\arrow[drr]&&&&\color{red}r_1^{m_2}\arrow[dl]\arrow[ull]\\
    &\color{green}l_{2}^{l_{2,1}}\arrow[ul]\arrow[dr]&\color{red}m_{1}^{l_{1}}\arrow[l]\arrow[urr]&\color{green}r_{2}^{r_{2},m_{2}}\arrow[l]\arrow[dl]\\
    &&\color{green}f_1^{f_1}\arrow[d,"4"]&\\
    &&\color{green}f_2^{f_2}\arrow[uul]\arrow[uur]&
\end{tikzcd}\]

By applying Lemma \ref{corner}, after mutating at $l_{2},f_1,r_{2},f_2,l_{2},f_1,m_{1},l_{1},r_{1},m_{1},f_1,l_{2},f_2,r_{2},f_1,l_{2}$, we have:

\[\begin{tikzcd}
    &&\color{green}m_2^{h_1}\arrow[dll]&&\\
    \color{red}r_{1}^{l_{1}}\arrow[rrrr]\arrow[drr]&&&&\color{red}l_{1}^{r_{1},h_{1\nearrow q+1},2g_{1\nearrow q}}\arrow[dl]\arrow[ull]\\
    &\color{red}l_{p-1}^{l_{p-1}}\arrow[ul]\arrow[dr]&\color{red}m_{1}^{m_{2}}\arrow[l]\arrow[urr]&\color{red}r_{2}^{r_{2}}\arrow[l]\arrow[dl]\\
    &&\color{red}f_1^{f_1}\arrow[d,"4"]&\\
    &&\color{red}f_2^{f_2}\arrow[uul]\arrow[uur]&
\end{tikzcd}\]

Step 6(c) consists of $(m_2,h_2,h_3,...,h_q,g_1,g_2,...,g_{q-1},g_q,h_{q+1},g_q)$.

At $m_2,h_2,h_3,...,h_{q+1},g_1,g_2,...,g_{q-1},g_q$ and adjacent vertices, the diagram looks like:

\[\adjustbox{scale=0.9,center}{\begin{tikzcd}
    &&\color{red}g_1^{g_1,2h_2}\arrow[dl,"2"]&&\color{red}g_{q-2}^{g_{q-2},2h_{q-1}}\arrow[d,"2"]&\color{red}g_{q-1}^{g_{q-1},2h_q}\arrow[dr,"2"]&&\color{green}g_q^{g_q,2h_{q+1}}\arrow[ddlll,"2",pos=0.1,bend right=10]\\
    \color{green}m_2^{h_1}\arrow[r]\arrow[ddrr]&\color{green}h_2^{h_{2},2g_{1}}\arrow[rr]\arrow[drrr]&&\cdots\arrow[r]\arrow[ul,"2"]&\color{green}h_{q-1}^{h_{q-1},2g_{q-2}}\arrow[rr]&&\color{green}h_{q}^{h_{q},2g_{q-1}}\arrow[r]\arrow[ull,"2"]&\color{red}h_{q+1}^{h_{q+1}}\arrow[ull,"2"]\arrow[u,"2"]\\
    &&&&\color{red}h_1^{h_1,m_1}\arrow[urrr]\arrow[ullll]\\
    &&\color{red}r_{1}^{l_{1}}\arrow[rrr]&&&\color{red}l_{1}^{r_{1},h_{1\nearrow q+1},2g_{1\nearrow q}}\arrow[uulllll]
\end{tikzcd}}\]

Mutate at $m_2$:

\[\adjustbox{scale=0.9,center}{\begin{tikzcd}
    &&|[color=red,fill=green]|g_1^{g_1,2h_2}\arrow[dl,"2"]&&&\color{red}g_{q-2}^{g_{q-2},2h_{q-1}}\arrow[d,"2"]&\color{red}g_{q-1}^{g_{q-1},2h_q}\arrow[dr,"2"]&&\color{green}g_q^{g_q,2h_{q+1}}\arrow[ddllll,"2",pos=0.1,bend right=10]\\
    |[color=red,fill=green]|m_2^{h_1}\arrow[drrrr]\arrow[ddrrrrr]&|[color=green,fill=red]|h_2^{h_{2},2g_{1}}\arrow[rr]\arrow[l]&&|[color=green,fill=red]|h_3^{h_3,2g_2}\arrow[r]\arrow[ul,"2"]&\cdots\arrow[r]&\color{green}h_{q-1}^{h_{q-1},2g_{q-2}}\arrow[rr]&&\color{green}h_{q}^{h_{q},2g_{q-1}}\arrow[r]\arrow[ull,"2"]&\color{red}h_{q+1}^{h_{q+1}}\arrow[ull,"2"]\arrow[u,"2"]\\
    &&&&\color{red}h_1^{m_1}\arrow[urrrr]\arrow[dll]\\
    &&\color{red}r_1^{l_{1}}\arrow[uull]&&&|[color=red,fill=green]|l_1^{r_1,h_{2\nearrow q+1},2g_{1\nearrow q}}\arrow[uullll]
\end{tikzcd}}\]

Mutate at $h_2$:

\[\adjustbox{scale=0.8,center}{\begin{tikzcd}
    &\color{green}g_1^{g_1}\arrow[dl,"2"]&&&|[color=red,fill=green]|g_2^{g_2,2h_3}\arrow[dl,"2"]&&&\color{red}g_{q-2}^{g_{q-2},2h_{q-1}}\arrow[d,"2",near start]&\color{red}g_{q-1}^{g_{q-1},2h_q}\arrow[dr,"2"]&&\color{green}g_q^{g_q,2h_{q+1}}\arrow[ddllllll,"2",pos=0.1,bend right=10]\\
    \color{red}m_2^{h_1}\arrow[drrrr]\arrow[rr]&&|[color=red,fill=green]|h_2^{h_{2},2g_{1}}\arrow[ul,"2"]\arrow[ddrrr]&|[color=green,fill=red]|h_3^{h_3,2g_2}\arrow[l]\arrow[rr]&&|[color=green,fill=red]|h_4^{h_4,2g_3}\arrow[r]\arrow[ul,"2"]&\cdots\arrow[r]&\color{green}h_{q-1}^{h_{q-1},2g_{q-2}}\arrow[rr]&&\color{green}h_{q}^{h_{q},2g_{q-1}}\arrow[r]\arrow[ull,"2"]&\color{red}h_{q+1}^{h_{q+1}}\arrow[ull,"2"]\arrow[u,"2"]\\
    &&&&\color{red}h_1^{m_1}\arrow[urrrrrr]\arrow[dll]\\
    &&\color{red}r_1^{l_{1}}\arrow[uull]&&&|[color=red,fill=green]|l_1^{r_1,h_{3\nearrow q+1},2g_{2\nearrow q}}\arrow[uull,bend right,shift left]
\end{tikzcd}}\]

Again observe the pattern at the highlighted subdiagrams and we see that after $h_q$ we have:

\[\adjustbox{scale=0.9,center}{\begin{tikzcd}
    &\color{green}g_1^{g_1}\arrow[dl,"2"]&&\color{green}g_2^{g_2}\arrow[dl,"2"]&&&&\color{green}g_{q-1}^{g_{q-1}}\arrow[dl,"2"]&&\color{green}g_q^{g_q,2h_{q+1}}\arrow[ddlllll,"2",pos=0.1,bend right=10]\\
    \color{red}m_2^{h_1}\arrow[drrrr]\arrow[rr]&&\color{red}h_2^{h_{2},2g_{1}}\arrow[ul,"2"]\arrow[rr]&&\color{red}h_3^{h_3,2g_2}\arrow[r]\arrow[ul,"2"]&\cdots\arrow[r]&\color{red}h_{q-1}^{h_{q-1},2g_{q-2}}\arrow[rr]&&\color{red}h_{q}^{h_{q},2g_{q-1}}\arrow[ul,"2"]\arrow[ddlll]&\color{red}h_{q+1}^{h_{q+1}}\arrow[u,"2"]\arrow[l]\\
    &&&&\color{red}h_1^{m_1}\arrow[urrrrr]\arrow[dll]\\
    &&\color{red}r_1^{l_{1}}\arrow[uull]&&&\color{red}l_1^{r_1,h_{q+1},2g_{q}}\arrow[uurrrr]
\end{tikzcd}}\]

Mutate at $g_1,g_2,...,g_{q-1}$:

\[\adjustbox{scale=0.9,center}{\begin{tikzcd}
    &\color{red}g_1^{g_1}\arrow[dr,"2"]&&\color{red}g_2^{g_2}\arrow[dr,"2"]&&&&\color{red}g_{q-1}^{g_{q-1}}\arrow[dr,"2"]&&\color{green}g_q^{g_q,2h_{q+1}}\arrow[ddlllll,"2",pos=0.1,bend right=10]\\
    \color{red}m_2^{h_1}\arrow[drrrr]\arrow[ur,"2"]&&\color{red}h_2^{h_{2}}\arrow[ur,"2"]\arrow[ll]&&\color{red}h_3^{h_3}\arrow[ll]&\cdots\arrow[l]&\color{red}h_{q-1}^{h_{q-1}}\arrow[l]\arrow[ur,"2"]&&\color{red}h_{q}^{h_{q}}\arrow[ll]\arrow[ddlll]&\color{red}h_{q+1}^{h_{q+1}}\arrow[u,"2"]\arrow[l]\\
    &&&&\color{red}h_1^{m_1}\arrow[urrrrr]\arrow[dll]\\
    &&\color{red}r_1^{l_{1}}\arrow[uull]&&&\color{red}l_1^{r_1,h_{q+1},2g_{q}}\arrow[uurrrr]
\end{tikzcd}}\]

Now, due to the difference between this case and previous cases mentioned at the end of step 2, we need $3$ more mutations to bring $g_q$ back to its original position (unlike when $p$ is even we only needed $1$):

\[\begin{tikzpicture}[every edge quotes/.style={auto=right}]
    \begin{scope}[every node/.style={sloped,allow upside down}][every edge quotes/.style={auto=right}]
        \node at ($(0,5)!.5!(5,0)$) {$\ddots$};
        \draw (0,0) to [bend right=15] node [below=.15,right,black] {$g_1$} (3.2,1.8);
        \draw (7,8) to [bend left=15] node [below=.15,right,black] {$g_q$} (10,10);
        \draw (0,0) to [bend left=15] node [below=.15,right,black] {$g_{q-1}$} (2,3);
        \draw (0,0) to [bend right=45] node[below=.15,right=1] {$m_2$} (5,5);
        \draw (5,5) to [bend left=30] node[above] {$h_{q+1}$} (10,10);
        \draw (0,0) to [bend right=15] node[right=1,below,black] {$h_2$} (5,5);
        \draw (0,0) to [bend left=15] node[right=1,below=-.15,black,near end] {$h_{q-1}$} (5,5);
        \draw (0,0) to [bend left=30] node[right=1,below=-.15,black,near end] {$h_q$} (5,5);
        \draw (0,0) to [bend right=60] node[near end,below,black] {$l_{1}$} (10,10);
        \draw (0,0) to [bend left=60] node[near end,above,black] {$r_{1}$} (10,10);
        \draw (5,5) to node[below] {$h_1$} (10,10);
        \fill[blue] (0,0) circle (2pt);
        \fill[blue] (10,10) circle (2pt);
        \fill[blue] (5,5) circle (2pt);
        \fill (3.2,1.8) node[cross=2pt,rotate=30] {};
        \fill (7,8) node[cross=2pt,rotate=30] {};
        \fill (2,3) node[cross=2pt,rotate=30] {};
    \end{scope}
\end{tikzpicture}\]

Mutate at $g_q$:

\[\adjustbox{scale=0.9,center}{\begin{tikzcd}
    &\color{red}g_1^{g_1}\arrow[dr,"2"]&&\color{red}g_2^{g_2}\arrow[dr,"2"]&&&&\color{red}g_{q-1}^{g_{q-1}}\arrow[dr,"2"]&&\color{red}g_q^{g_q,2h_{q+1}}\arrow[d,"2"]\\
    \color{red}m_2^{h_1}\arrow[drrrr]\arrow[ur,"2"]&&\color{red}h_2^{h_{2}}\arrow[ur,"2"]\arrow[ll]&&\color{red}h_3^{h_3}\arrow[ll]&\cdots\arrow[l]&\color{red}h_{q-1}^{h_{q-1}}\arrow[l]\arrow[ur,"2",near end]&&\color{red}h_{q}^{h_{q}}\arrow[ll]\arrow[ddlll]&\color{green}h_{q+1}^{h_{q+1},2g_q}\arrow[dlllll]\arrow[l]\\
    &&&&\color{red}h_1^{m_1}\arrow[uurrrrr,"2",pos=0.9,bend left=10]\arrow[dll]\\
    &&\color{red}r_1^{l_{1}}\arrow[uull]&&&\color{red}l_1^{r_1,h_{q+1},2g_{q}}\arrow[uurrrr]
\end{tikzcd}}\]

Mutate at $h_{q+1}$:

\[\adjustbox{scale=0.9,center}{\begin{tikzcd}
    &\color{red}g_1^{g_1}\arrow[dr,"2"]&&\color{red}g_2^{g_2}\arrow[dr,"2"]&&&&\color{red}g_{q-1}^{g_{q-1}}\arrow[dr,"2"]&&\color{green}g_q^{g_q}\arrow[dl,"2"]\\
    \color{red}m_2^{h_1}\arrow[drrrr]\arrow[ur,"2"]&&\color{red}h_2^{h_{2}}\arrow[ur,"2"]\arrow[ll]&&\color{red}h_3^{h_3}\arrow[ll]&\cdots\arrow[l]&\color{red}h_{q-1}^{h_{q-1}}\arrow[l]\arrow[ur,"2",near end]&&\color{red}h_{q}^{h_{q}}\arrow[ll]\arrow[r]&\color{red}h_{q+1}^{h_{q+1},2g_q}\arrow[ddllll]\arrow[u,"2"]\\
    &&&&\color{red}h_1^{m_1}\arrow[dll]\arrow[urrrrr]\\
    &&\color{red}r_1^{l_{1}}\arrow[uull]&&&\color{red}l_1^{r_1}\arrow[ul]
\end{tikzcd}}\]

Mutate at $g_q$:

\[\adjustbox{scale=0.9,center}{\begin{tikzcd}
    &\color{red}g_1^{g_1}\arrow[dr,"2"]&&\color{red}g_2^{g_2}\arrow[dr,"2"]&&&&\color{red}g_{q-1}^{g_{q-1}}\arrow[dr,"2"]&&\color{red}g_q^{g_q}\arrow[d,"2"]\\
    \color{red}m_2^{h_1}\arrow[drrrr]\arrow[ur,"2"]&&\color{red}h_2^{h_{2}}\arrow[ur,"2"]\arrow[ll]&&\color{red}h_3^{h_3}\arrow[ll]&\cdots\arrow[l]&\color{red}h_{q-1}^{h_{q-1}}\arrow[l]\arrow[ur,"2",near end]&&\color{red}h_{q}^{h_{q}}\arrow[ll]\arrow[ur,"2"]&\color{red}h_{q+1}^{h_{q+1}}\arrow[ddllll]\arrow[l]\\
    &&&&\color{red}h_1^{m_1}\arrow[dll]\arrow[urrrrr]\\
    &&\color{red}r_1^{l_{1}}\arrow[uull]&&&\color{red}l_1^{r_1}\arrow[ul]
\end{tikzcd}}\]

Since all vertices are red, $\Delta_1$ is a maximal green sequence. $\blacksquare$

\subsubsection{The case where $p>4$ and is odd}\label{odd}

\begin{proposition}
    $\Delta_1$ is a maximal green sequence when $p>4$ and is odd.
\end{proposition}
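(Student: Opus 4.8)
The plan is to run the template of the preceding subsections: translate $\Delta_1$ into explicit mutations for $p>4$ odd, then verify step by step that it notches every interior puncture, wraps all arcs off the corner, notches the corner, and finally reverses every flip. For odd $p$ the translation gives step~1 $=(\alpha_{p-2},\alpha_{p-4},\dots,\alpha_1)$, which notches the even-indexed interior punctures $2,4,\dots,p-1$; step~2 $=(h_{q+1},\dots,h_1,m_2,h_2,\dots,h_q)$; step~3 $=(\beta_{p-3},\dots,\beta_2)$, notching the odd-indexed punctures $3,5,\dots,p-2$ while pushing the even-indexed inner arcs off the corner; step~4(b) pushing the odd-indexed inner arcs $l_1,r_1,l_3,r_3,\dots,l_{p-4},r_{p-4}$ off the corner; step~5 $=(m_{p-2},l_{p-2},r_{p-2},m_{p-2})$; step~6(b) $=(\delta_{p-4},\dots,\delta_1)$; and step~6(d) \emph{empty}. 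Thus after steps~1--3 all $p-1$ interior punctures are notched, with the parity of $p$ merely interchanging which inner arcs are treated in step~3 versus step~4(b) relative to the even case of Section~\ref{even}.

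The argument is assembled from the four lemmas already available. Step~2 is the odd-parity computation of Section~\ref{p=3}: it uses $m_2$ rather than $m_1$ and stops at $h_q$, so that, exactly as noted there, $h_{q+1}$ remains attached to the second interior puncture. For this reason step~4(a) $=(g_1,\dots,g_q,h_{q+1},h_q,\dots,h_2,m_2)$ is governed by Lemma~\ref{p>3} rather than by Lemma~\ref{p>4}; here $r_1$ serves as the ``blue'' vertex, carrying an enlarged frozen set $R$ inherited from step~1. The merged block of steps~4(c),~5,~6(a) is dispatched verbatim by Lemma~\ref{corner}, with $x=m_{p-2}$, $y=l_{p-2}$, $z=r_{p-2}$. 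Every other portion---steps~1, 3, 4(b), 6(b) and the bulk of 6(c)---consists of the local $\alpha_k$-, $\beta_k$-, $\delta_k$- and $h_k$-patterns whose subdiagrams are pairwise isomorphic up to frozen superscripts, so each is established by displaying one representative mutation and inducting on $k$, just as in Sections~\ref{even} and~\ref{p=3}.

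I expect the genuine difficulty to lie at the two parity-forced tails. In step~6(b) the descending chain of $\delta_k$ terminates at $\delta_1=r_1,r_2,l_1,l_2$ rather than at $\delta_2$: this boundary case couples an odd inner arc ($l_1,r_1$, moved in step~4(b)), an even one ($l_2,r_2$, moved in step~3) and the accumulated core-arc block $h_{1\nearrow q+1},2g_{1\nearrow q}$ (written $G$ in Section~\ref{even}), and I will have to check that the inductive superscript pattern survives $\delta_1$ and restores $l_1,r_1$, there being no separate step~6(d) to clean up afterwards. In step~6(c) the reattachment of the pending arcs needs the extra tail $\omega=h_{q+1},g_q$: because $h_{q+1}$ stayed linked to the second puncture, the single final $g_q$-mutation that suffices when $p$ is even no longer sends $g_q$ home, and the two further mutations $h_{q+1},g_q$ are required, precisely as in the $p=3$ calculation. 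The decisive and most delicate verification is that, after $\omega$, these parity corrections leave every mutable vertex red; once that is confirmed, $\Delta_1$ is a maximal green sequence.
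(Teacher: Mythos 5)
Your proposal is correct and follows essentially the same route as the paper: the same translation of $\Delta_1$ for odd $p$, the same reuse of the $p=3$ computation for step~2, Lemma~\ref{p>3} (with $r_1$ as the blue vertex) for step~4(a), Lemma~\ref{corner} for the merged block 4(c)--5--6(a), and induction on the isomorphic $\alpha_k$-, $\beta_k$-, $\delta_k$-subdiagrams elsewhere. The two parity-forced tails you single out --- $\delta_1$ closing step~6(b) with no step~6(d) to follow, and the extra $g_q,h_{q+1},g_q$ mutations needed because $h_{q+1}$ stays attached to the second interior puncture --- are exactly the points the paper verifies by direct computation.
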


\textbf{Proof} When $p>4$ and is odd, the mutation sequence $\Delta_1$ translates to

\begin{enumerate}
    \item Tagging alternate punctures notched. $(\alpha_{p-2},\alpha_{p-4},...,\alpha_{1})$
    \item Tagging the first interior puncture notched. $(h_{q+1},h_q,...,h_1,m_{2},h_2,h_3,...,h_{q})$
    \item Tagging the remaining interior punctures notched and moving half the inner arcs away from the corner. $(\beta_{p-3},\beta_{p-5},...,\beta_{2})$
    \item Moving arcs away from the corner.
    \begin{enumerate}
        \item Moving core arcs away from the corner. $(g_1,g_2,...,g_q,h_{q+1},h_{q},...,h_2,m_2)$
        \item Moving the remaining inner arcs away from the corner. $(l_{1},r_{1},l_{3},r_{3},...,l_{p-4},r_{p-4})$
        \item Moving outer arcs away from the corner. $(l_{p-1},f_1,r_{p-1},f_2,l_{p-1},f_1)$
    \end{enumerate}
    \item Tagging the corner notched. $(m_{p-2},l_{p-2},r_{p-2},m_{p-2})$
    \item Moving arcs back to the corner.
    \begin{enumerate}
        \item Moving outer arcs back to the corner. 
        $(f_1,l_{p-1},f_2,r_{p-1},f_1,l_{p-1})$
        \item Moving inner arcs back to the corner. 
        $(\delta_{p-4},\delta_{p-6},...,\delta_{1})$
        \item Moving core arcs back to the corner. $(m_2,h_2,h_3,...,h_q,g_1,g_2,...,g_{q-1},g_q,h_{q+1},g_q)$
    \end{enumerate}
\end{enumerate}

where

\begin{itemize}
    \item $\alpha_k=r_k,l_k,m_{k+1},m_{k},l_k,r_k$
    \item $\beta_k=r_k,l_k,m_{k+2},m_{k-1}$
    \item $\delta_k=r_k,r_{k+1},l_k,l_{k+1}$
\end{itemize}

In terms of triangulation, this is mostly similar to the case where $p>4$ and is even, so we will not mention the triangulations as much.

Step 1 consists of $(\alpha_{p-2},\alpha_{p-4},...,\alpha_{1})$.

For $\alpha_{p-2},\alpha_{p-4},...,\alpha_{3}$, the subdiagrams and subtriangulations are the same as the case where $p>2$ and is even, so we shall jump to the conclusion: after $\alpha_3$, the full subdiagram with vertices involved in $\alpha_k$ and adjacent vertices looks like this for $1<k\leq p-2$ (except that for $k=p-2$, $l_{k+1}$ and $r_{k+1}$ are only attached to $l_{k+1,k}$ and $r_{k+1},m_{k+1}$ respectively with weight $1$ and for $k=3$, $l_{k-1}$ and $r_{k-1}$ are not attached to $l_{k-2}$ and $m_{k-1}$ respectively yet):

\[\begin{tikzcd}
    \color{green}l_{k-1}^{l_{k-1,k-2},m_k}\arrow[dr]&&\color{green}r_{k-1}^{r_{k-1,k},m_{k-1}}\arrow[dd]\\
    &\color{red}m_{k+1}^{r_k}\arrow[ur]\arrow[dl]&\\
    \color{red}l_{k}^{m_k}\arrow[uu]\arrow[dr]&&\color{red}r_{k}^{m_{k+1}}\arrow[ul]\arrow[dd]\\
    &\color{red}m_k^{l_k}\arrow[ur]\arrow[dl]&\\
    \color{green}l_{k+1}^{l_{k+1,k},m_{k+2}}\arrow[uu]&&\color{green}r_{k+1}^{r_{k+1,k+2},m_{k+1}}\arrow[ul]
\end{tikzcd}\]

After $\alpha_3$, at vertices involved in $\alpha_1$ and adjacent vertices, we have:

\[\begin{tikzcd}
    \color{green}h_1^{h_1}\arrow[dr]&&\color{green}h_{q+1}^{h_{q+1}}\arrow[dd]\\
    &\color{green}m_1^{m_1}\arrow[ur]\arrow[dl]&\\
    \color{green}l_{1}^{l_1}\arrow[uu]\arrow[dr]&&\color{green}r_{1}^{r_1}\arrow[ul]\arrow[dd]\\
    &\color{green}m_{2}^{m_2}\arrow[ur]\arrow[dl]&\\
    \color{green}l_{2}^{l_2,m_3}\arrow[uu]&&\color{green}r_{2}^{r_{2,3}}\arrow[ul]
\end{tikzcd}\]

After $\alpha_1$, we have:

\[\begin{tikzcd}
    \color{green}h_1^{h_1,m_1}\arrow[dr]&&\color{green}h_{q+1}^{h_{q+1},r_1}\arrow[dd]\\
    &\color{red}m_2^{r_1}\arrow[ur]\arrow[dl]&\\
    \color{red}l_{1}^{m_1}\arrow[uu]\arrow[dr]&&\color{red}r_{1}^{m_2}\arrow[ul]\arrow[dd]\\
    &\color{red}m_{1}^{l_1}\arrow[ur]\arrow[dl]&\\
    \color{green}l_{2}^{l_{2,1},m_3}\arrow[uu]&&\color{green}r_{2}^{r_{2,3},m_2}\arrow[ul]
\end{tikzcd}\]

Step 2 consists of $(h_{q+1},h_q,...,h_1,m_{2},h_2,h_3,...,h_{q})$. Look at these vertices and adjacent vertices:

\[\begin{tikzcd}
    &&\color{green}g_{q-2}^{g_{q-2}}\arrow[dr,"2"]&&\color{green}g_{q-1}^{g_{q-1}}\arrow[dr,"2"]&&|[color=green,fill=red]|g_{q}^{g_q}\arrow[dr,"2"]\\
    \cdots&\color{green}h_{q-2}^{h_{q-2}}\arrow[ur,"2"]\arrow[l]&&\color{green}h_{q-1}^{h_{q-1}}\arrow[ur,"2"]\arrow[ll]&&|[color=green,fill=red]|h_{q}^{h_q}\arrow[ur,"2"]\arrow[ll]&&|[color=green,fill=red]|h_{q+1}^{h_{q+1},r_1}\arrow[ll]\arrow[ddll]\\
    &&&|[color=red,fill=green]|m_2^{r_1}\arrow[urrrr]\\
    &&&&&|[color=red,fill=green]|r_1^{m_2}\arrow[ull]
\end{tikzcd}\]

Observe that the subdiagram is same as the case where $p=3$, therefore after step 2, we have:

\begin{tikzcd}
    &\color{green}g_1^{g_1,2h_2}\arrow[dr,"2"]&&&&\color{green}g_{q-1}^{g_{q-1},2h_q}\arrow[dr,"2"]&&\color{green}g_q^{g_q,2h_{q+1},2r_1}\arrow[dddll,"2"]\\
    \color{red}m_2^{h_2}\arrow[drrrr]\arrow[ur,"2"]&&\color{red}h_2^{h_3}\arrow[ll]&\cdots\arrow[l]&\color{red}h_{q-1}^{h_q}\arrow[ur,"2"]\arrow[l]&&\color{red}h_{q}^{h_{q+1}}\arrow[ll]\arrow[r]&\color{red}h_{q+1}^{r_1}\arrow[u,"2"]\arrow[dlll]\\
    &&&&\color{red}h_1^{h_1,m_1}\arrow[urr]\arrow[dll]\\
    &&\color{green}l_1^{h_1}\arrow[uull]&&&\color{red}r_1^{m_2}\arrow[uurr]
\end{tikzcd}

Step 3 consists of $(\beta_{p-3},\beta_{p-5},...,\beta_{2})$.

Again this is similar to the case where $p>4$ and is even, so we shall jump to the conclusion: after $\beta_4$ the full subdiagram with vertices involved in $\beta_k$ and adjacent vertices looks like this for $2<k\leq p-3$ (except that for $k=p-3$, $r_{k+1}$ is red and is attached to $m_{k+2}$ with weight $1$ instead and for $k=4$, $l_{k-1}$ has not become the current state yet):

\[\begin{tikzcd}
    \color{green}l_{k-1}^{l_{k-2,k-3}}\arrow[dd]&&\color{green}r_{k-1}^{r_{k,k+1}}\arrow[dddd,bend left]\\
    &\color{red}m_{k+2}^{m_k,r_k}\arrow[dr]&\\
    \color{red}l_{k}^{l_{k-1}
    }\arrow[dd]\arrow[ur]&&\color{red}r_{k}^{r_{k+1}}\arrow[dl]\arrow[uu]\\
    &\color{red}m_{k-1}^{m_{k+1},l_k}\arrow[ul]&\\
    \color{green}l_{k+1}^{l_{k,k-1}}\arrow[uuuu,bend left]&&\color{green}r_{k+1}^{r_{k+2,k+3}}\arrow[uu]
\end{tikzcd}\]

After $\beta_4$, at the vertices involved in $\beta_2$ and adjacent vertices, we have:

\[\begin{tikzcd}
    \color{green}l_1^{h_1}\arrow[dr]&&\color{red}r_1^{m_2}\arrow[dd]\\
    &\color{red}m_1^{l_1}\arrow[ur]\arrow[dl]&\\
    \color{green}l_{2}^{l_{2,1},m_{3}}\arrow[uu]\arrow[dr]&&\color{green}r_{2}^{r_{2,3},m_2}\arrow[ul]\arrow[dd]\\
    &\color{red}m_{4}^{r_{3}}\arrow[ur]\arrow[dl]&\\
    \color{red}l_{3}^{m_{3}}\arrow[uu]&&\color{green}r_{3}^{r_{4,5}}\arrow[ul]
\end{tikzcd}\]

After $\beta_2$, we have:

\[\begin{tikzcd}
    \color{green}l_1^{h_1}\arrow[dd]&&\color{green}r_1^{r_{2,3}}\arrow[dddd,bend left]\\
    &\color{red}m_{4}^{m_1,r_2}\arrow[dr]&\\
    \color{red}l_{2}^{l_1}\arrow[dd]\arrow[ur]&&\color{red}r_{2}^{r_2}\arrow[dl]\arrow[uu]\\
    &\color{red}m_1^{m_3,l_2}\arrow[ul]&\\
    \color{green}l_{3}^{l_{2,1}}\arrow[uuuu,bend left]&&\color{green}r_{3}^{r_{4,5}}\arrow[uu]
\end{tikzcd}\]

Step 4(a) consists of $(g_1,g_2,...,g_q,h_{q+1},h_{q},...,h_2,m_2)$.

Now look at $g_1,g_2,...,g_q,h_q,h_{q-1},...,h_2,m_2$, and adjacent vertices:

\[\begin{tikzcd}
    &\color{green}g_1^{g_1,2h_2}\arrow[dr,"2"]&&&&\color{green}g_{q-1}^{g_{q-1},2h_q}\arrow[dr,"2"]&&\color{green}g_q^{g_q,2h_{q+1},2r_1}\arrow[dddll,"2",pos=0.1]\\
    \color{red}m_2^{h_2}\arrow[drrrr]\arrow[ur,"2"]&&\color{red}h_2^{h_3}\arrow[ll]&\cdots\arrow[l]&\color{red}h_{q-1}^{h_q}\arrow[ur,"2"]\arrow[l]&&\color{red}h_{q}^{h_{q+1}}\arrow[ll]\arrow[r]&\color{red}h_{q+1}^{r_1}\arrow[u,"2"]\arrow[dlll]\\
    &&&&\color{red}h_1^{h_1,m_1}\arrow[urr]\arrow[dll]\\
    &&\color{green}l_1^{h_1}\arrow[uull]&&&\color{green}r_1^{r_{2,3}}\arrow[uurr]
\end{tikzcd}\]

By applying Lemma \ref{p>3}, after step 4(a), we have:

$\adjustbox{scale=0.9,center}{\begin{tikzcd}
    &&\color{red}g_1^{g_1,2h_2}\arrow[dl,"2"]&&\color{red}g_{q-2}^{g_{q-2},2h_{q-1}}\arrow[d,"2"]&\color{red}g_{q-1}^{g_{q-1},2h_q}\arrow[dr,"2"]&&\color{green}g_q^{g_q,2h_{q+1}}\arrow[ddlll,"2",pos=0.1,bend right=10]\\
    \color{red}m_2^{h_{2\nearrow q+1},2g_{1\nearrow q},r_1}\arrow[r]\arrow[ddrr]&\color{green}h_2^{h_{2},2g_{1}}\arrow[rr]\arrow[drrr]&&\cdots\arrow[r]\arrow[ul,"2"]&\color{green}h_{q-1}^{h_{q-1},2g_{q-2}}\arrow[rr]&&\color{green}h_{q}^{h_{q},2g_{q-1}}\arrow[r]\arrow[ull,"2"]&\color{red}h_{q+1}^{h_{q+1}}\arrow[ull,"2"]\arrow[u,"2"]\\
    &&&&\color{red}h_1^{h_1,m_1}\arrow[urrr]\arrow[ullll]\\
    &&\color{green}l_1^{r_1,h_{1\nearrow q+1},2g_{1\nearrow q}}\arrow[rrr]&&&\color{green}r_1^{r_{2,3}}\arrow[uulllll]
\end{tikzcd}}$

For simplicity we shall write $G=h_{1\nearrow q+1},2g_{1\nearrow q}$.

Step 4(b) consists of $l_1,r_1,l_3,r_3,...,l_{p-4},r_{p-4}$. At these and adjacent vertices, the diagram looks like this:

\[\begin{tikzcd}
    &&|[color=red,fill=green]|m_2^{G\setminus h_1,r_1}\arrow[ddll]\\\\
    |[color=green,fill=red]|l_1^{r_1,G}\arrow[d]\arrow[rr]&&|[color=green,fill=red]|r_1^{r_{2,3}}\arrow[uu]\arrow[dd,bend left]\\
    |[color=red,fill=green]|l_2^{l_1}\arrow[d]&&|[color=red,fill=green]|r_2^{r_3}\arrow[u]\\
    |[color=green,fill=red]|l_3^{l_{2,1}}\arrow[d]\arrow[uu,bend left]&&|[color=green,fill=red]|r_3^{r_{4,5}}\arrow[u]\arrow[dd,bend left]\\
    \color{red}l_4^{l_3}\arrow[d]&&\color{red}r_4^{r_5}\arrow[u]\\
    \color{green}l_5^{l_{4,3}}\arrow[uu,bend left]\arrow[d]&&\color{green}r_5^{r_{6,7}}\arrow[u]\\
    \vdots\arrow[d]&&\vdots\arrow[u]\\
    \color{green}l_{p-4}^{l_{p-5,p-6}}\arrow[d]&&\color{green}r_{p-4}^{r_{p-3,p-2}}\arrow[u]\arrow[dd,bend left]\\
    \color{red}l_{p-3}^{l_{p-4}}\arrow[d]&&\color{red}r_{p-3}^{r_{p-2}}\arrow[u]\\
    \color{green}l_{p-2}^{l_{p-3,p-4}}\arrow[uu,bend left]&&\color{red}r_{p-2}^{m_{p-1}}\arrow[u]
\end{tikzcd}\]

Mutate at $l_1,r_1$:

\[\begin{tikzcd}
    &&\color{green}m_2^{h_1}\arrow[dddll]\\\\
    \color{red}l_1^{r_1,G}\arrow[rr]\arrow[uurr]&&|[color=red,fill=green]|r_1^{r_{2,3}}\arrow[d]\arrow[ddll]\\
    \color{red}l_2^{l_1}\arrow[u]&&\color{green}r_2^{r_2}\arrow[ull]\\
    |[color=green,fill=red]|l_3^{l_{2,1},r_{1,2,3},G}\arrow[d]\arrow[rr]&&|[color=green,fill=red]|r_3^{r_{4,5}}\arrow[uu,bend right]\arrow[dd,bend left]\\
    |[color=red,fill=green]|l_4^{l_3}\arrow[d]&&|[color=red,fill=green]|r_4^{r_5}\arrow[u]\\
    |[color=green,fill=red]|l_5^{l_{4,3}}\arrow[uu,bend left]\arrow[d]&&|[color=green,fill=red]|r_5^{r_{6,7}}\arrow[u]\\
    \vdots\arrow[d]&&\vdots\arrow[u]\\
    \color{green}l_{p-4}^{l_{p-5,p-6}}\arrow[d]&&\color{green}r_{p-4}^{r_{p-3,p-2}}\arrow[u]\arrow[dd,bend left]\\
    \color{red}l_{p-3}^{l_{p-4}}\arrow[d]&&\color{red}r_{p-3}^{r_{p-2}}\arrow[u]\\
    \color{green}l_{p-2}^{l_{p-3,p-4}}\arrow[uu,bend left]&&\color{red}r_{p-2}^{m_{p-1}}\arrow[u]
\end{tikzcd}\]

Similar to the case where $p>4$ and is even, after $l_1,r_1,l_3,r_3,...,l_{p-4},r_{p-4}$, we have:

\[\begin{tikzcd}
    &&\color{green}m_2^{h_1}\arrow[dddll]\\\\
    \color{red}l_1^{r_1,G}\arrow[uurr]\arrow[rr]&&\color{green}r_1^{r_1,l_{2,1},G}\arrow[d]\arrow[dddll]\\
    \color{red}l_2^{l_1}\arrow[u]&&\color{green}r_2^{r_2}\arrow[ull]\\
    \color{red}l_3^{l_{2,1},r_{1\nearrow3},G}\arrow[uurr]\arrow[rr]&&\color{green}r_3^{r_{1\nearrow3},l_{4\searrow1},G}\arrow[d]\arrow[dddll]\\
    \color{red}l_4^{l_{3}}\arrow[u]&&\color{green}r_4^{r_{4}}\arrow[ull]\\
    \color{red}l_5^{l_{4\searrow1},r_{1\nearrow5},G}\arrow[uurr]\arrow[rr]&&\color{green}r_5^{r_{1\nearrow5},l_{6\searrow1},G}\arrow[d]\\
    \vdots\arrow[u]&&\vdots\arrow[ddll]\\
    \color{red}l_{p-6}^{l_{p-7\searrow1},r_{1\nearrow p-6},G}\arrow[urr]\arrow[rr]&&\color{green}r_{p-6}^{r_{1\nearrow p-6},l_{p-5\searrow1},G}\arrow[d]\arrow[dddll]\\
    \color{red}l_{p-5}^{l_{p-6}}\arrow[u]&&\color{green}r_{p-5}^{r_{p-5}}\arrow[ull]\\
    \color{red}l_{p-4}^{l_{p-5\searrow1},r_{1\nearrow p-4},G}\arrow[rr]\arrow[uurr]&&\color{red}r_{p-4}^{r_{p-3,p-2}}\arrow[d]\arrow[ddll]\\
    \color{red}l_{p-3}^{l_{p-4}}\arrow[u]&&\color{green}r_{p-3}^{r_{p-3}}\arrow[ull]\\
    \color{green}l_{p-2}^{l_{p-3\searrow1},r_{1\nearrow p-2},G}\arrow[rr]&&\color{red}r_{p-2}^{m_{p-1}}\arrow[uu,bend right]
\end{tikzcd}\]

Just like in the previous cases, we shall merge step 4(c), 5, and 6(a).

At $l_{p-1},r_{p-1},f_1,f_2,m_{p-2},l_{p-2},r_{p-2}$ and adjacent vertices, we have:

\[\begin{tikzcd}
    &&\color{red}r_{p-4}^{r_{p-3,p-2}}\arrow[dll]&&\\
    \color{green}l_{p-2}^{l_{p-3\searrow1},r_{1\nearrow p-2},G}\arrow[rrrr]\arrow[drr]&&&&\color{red}r_{p-2}^{m_{p-1}}\arrow[dl]\arrow[ull]\\
    &\color{green}l_{p-1}^{l_{p-1,p-2}}\arrow[ul]\arrow[dr]&\color{red}m_{p-2}^{l_{p-2}}\arrow[l]\arrow[urr]&\color{green}r_{p-1}^{r_{p-1},m_{p-1}}\arrow[l]\arrow[dl]\\
    &&\color{green}f_1^{f_1}\arrow[d,"4"]&\\
    &&\color{green}f_2^{f_2}\arrow[uul]\arrow[uur]&
\end{tikzcd}\]

By applying Lemma \ref{corner}, after mutating at $l_{p-1},f_1,r_{p-1},f_2,l_{p-1},f_1,m_{p-2},l_{p-2},r_{p-2},m_{p-2},f_1,l_{p-1},f_2,r_{p-1},f_1,l_{p-1}$, we have:

\[\begin{tikzcd}
    &&\color{green}r_{p-4}^{r_{1\nearrow p-4},l_{p-3\searrow1},G}\arrow[dll]&&\\
    \color{red}r_{p-2}^{l_{p-2}}\arrow[rrrr]\arrow[drr]&&&&\color{red}l_{p-2}^{l_{p-3\searrow1},r_{1\nearrow p-2},G}\arrow[dl]\arrow[ull]\\
    &\color{red}l_{p-1}^{l_{p-1}}\arrow[ul]\arrow[dr]&\color{red}m_{p-2}^{m_{p-1}}\arrow[l]\arrow[urr]&\color{red}r_{p-1}^{r_{p-1}}\arrow[l]\arrow[dl]\\
    &&\color{red}f_1^{f_1}\arrow[d,"4"]&\\
    &&\color{red}f_2^{f_2}\arrow[uul]\arrow[uur]&
\end{tikzcd}\]

Step 6(b) consists of $(\delta_{p-4},\delta_{p-6},...,\delta_{1})$.

Now look at the vertices involved in $\delta_k$ for $k=3,5,...,p-4$ and adjacent vertices:

\[\adjustbox{scale=0.7,center}{\begin{tikzcd}
    &&\color{green}m_2^{h_1}\arrow[ddddll]\\\\
    \color{red}l_1^{r_1,G}\arrow[uurr]\arrow[rr]&&\color{green}r_1^{r_1,l_{2,1},G}\arrow[dd]\arrow[ddddddll,bend left,shift right]\\
    &\color{red}m_4^{m_1,r_2}\arrow[dr]&
    \\
    \color{red}l_2^{l_1}\arrow[uu]\arrow[ur]&&\color{green}r_2^{r_2}\arrow[uull,bend right]\arrow[dl]\\
    &\color{red}m_1^{m_3,l_2}\arrow[ul]&\\
    \color{red}l_3^{l_{2,1},r_{1\nearrow3},G}\arrow[uuuurr]\arrow[rr]&&\color{green}r_3^{r_{1\nearrow3},l_{4\searrow1},G}\arrow[dd]\arrow[dddddll,bend left]\\
    &\color{red}m_{6}^{m_4,r_4}\arrow[dr]&\\
    \color{red}l_4^{l_3}\arrow[uu]\arrow[ur]&&\color{green}r_4^{r_4}\arrow[uull,bend right]\arrow[dl]\\
    &\color{red}m_3^{m_5,l_4}\arrow[ul]&\\
    \color{red}l_5^{l_{4\searrow1},r_{1\nearrow5},G}\arrow[uuuurr]\arrow[rr]&&\color{green}r_5^{r_{1\nearrow5},l_{6\searrow1},G}\arrow[d]\\
    \vdots\arrow[u]&&\vdots\arrow[dddll]\\
    \color{red}l_{p-6}^{l_{p-7\searrow1},r_{1\nearrow p-6},G}\arrow[urr]\arrow[rr]&&|[color=green,fill=red]|r_{p-6}^{r_{1\nearrow p-6},l_{p-5\searrow1},G}\arrow[dd]\arrow[ddddddll,bend left]\\
    &\color{red}m_{p-3}^{m_{p-5},r_{p-5}}\arrow[dr]&\\
    \color{red}l_{p-5}^{l_{p-6}}\arrow[uu]\arrow[ur]&&\color{green}r_{p-5}^{r_{p-5}}\arrow[uull,bend right]\arrow[dl]\\
    &\color{red}m_{p-6}^{m_{p-4},l_{p-5}}\arrow[ul]&\\
    |[color=red,fill=green]|l_{p-4}^{l_{p-5\searrow1},r_{1\nearrow p-4},G}\arrow[rr]\arrow[uuuurr]&&|[color=green,fill=red]|r_{p-4}^{r_{1\nearrow p-4},l_{p-3\searrow1},G}\arrow[dd]\arrow[ddddll]\\
    &|[color=red,fill=green]|m_{p-1}^{m_{p-3},r_{p-3}}\arrow[dr]&\\
    |[color=red,fill=green]|l_{p-3}^{l_{p-4}}\arrow[uu]\arrow[ur]&&|[color=green,fill=red]|r_{p-3}^{r_{p-3}}\arrow[uull,bend right]\arrow[dl]\\
    &|[color=red,fill=green]|m_{p-4}^{m_{p-2},l_{p-3}}\arrow[ul]&\\
    |[color=red,fill=green]|r_{p-2}^{l_{p-2}}\arrow[rr]&&|[color=red,fill=green]|l_{p-2}^{l_{p-3\searrow1},r_{1\nearrow p-2},G}\arrow[uuuu,bend right]
\end{tikzcd}}\]

Mutating at $r_{p-4},r_{p-3},l_{p-4},l_{p-3}$ gives:

\[\adjustbox{scale=0.7,center}{\begin{tikzcd}
    &&\color{green}m_2^{h_1}\arrow[ddddll]\\\\
    \color{red}l_1^{r_1,G}\arrow[uurr]\arrow[rr]&&\color{green}r_1^{r_1,l_{2,1},G}\arrow[dd]\arrow[ddddddll,bend left,shift right]\\
    &\color{red}m_4^{m_1,r_2}\arrow[dr]&
    \\
    \color{red}l_2^{l_1}\arrow[uu]\arrow[ur]&&\color{green}r_2^{r_2}\arrow[uull,bend right]\arrow[dl]\\
    &\color{red}m_1^{m_3,l_2}\arrow[ul]&\\
    \color{red}l_3^{l_{2,1},r_{1\nearrow3},G}\arrow[uuuurr]\arrow[rr]&&\color{green}r_3^{r_{1\nearrow3},l_{4\searrow1},G}\arrow[dd]\arrow[dddddll,bend left]\\
    &\color{red}m_{6}^{m_4,r_4}\arrow[dr]&\\
    \color{red}l_4^{l_3}\arrow[uu]\arrow[ur]&&\color{green}r_4^{r_4}\arrow[uull,bend right]\arrow[dl]\\
    &\color{red}m_3^{m_5,l_4}\arrow[ul]&\\
    \color{red}l_5^{l_{4\searrow1},r_{1\nearrow5},G}\arrow[uuuurr]\arrow[rr]&&\color{green}r_5^{r_{1\nearrow5},l_{6\searrow1},G}\arrow[d]\\
    \vdots\arrow[u]&&\vdots\arrow[dddll]\\
    |[color=red,fill=green]|l_{p-6}^{l_{p-7\searrow1},r_{1\nearrow p-6},G}\arrow[urr]\arrow[rr]&&|[color=green,fill=red]|r_{p-6}^{r_{1\nearrow p-6},l_{p-5\searrow1},G}\arrow[dd]\arrow[ddddll]\\
    &|[color=red,fill=green]|m_{p-3}^{m_{p-5},r_{p-5}}\arrow[dr]&\\
    |[color=red,fill=green]|l_{p-5}^{l_{p-6}}\arrow[uu]\arrow[ur]&&|[color=green,fill=red]|r_{p-5}^{r_{p-5}}\arrow[uull,bend right]\arrow[dl]\\
    &|[color=red,fill=green]|m_{p-6}^{m_{p-4},l_{p-5}}\arrow[ul]&\\
    |[color=red,fill=green]|l_{p-4}^{l_{p-4}}\arrow[dr]\arrow[rr]&&|[color=red,fill=green]|r_{p-4}^{r_{1\nearrow p-4},l_{p-5\searrow1},G}\arrow[dd]\arrow[uuuu,bend right]\\
    &\color{red}m_{p-1}^{m_{p-3}}\arrow[dl]\arrow[ur]&\\
    \color{red}l_{p-3}^{l_{p-3}}\arrow[dr]\arrow[uu]&&\color{red}r_{p-3}^{r_{p-3}}\arrow[ul]\arrow[dd]\\
    &\color{red}m_{p-4}^{m_{p-2}}\arrow[dl]\arrow[ur]&\\
    \color{red}r_{p-2}^{l_{p-2}}\arrow[uu]&&\color{red}l_{p-2}^{r_{p-2}}\arrow[ul]
\end{tikzcd}}\]

By induction, after $\delta_3$, the diagram is:

\[\adjustbox{scale=0.7,center}{\begin{tikzcd}
    &&\color{green}m_2^{h_1}\arrow[ddddll]\\\\
    \color{red}l_1^{r_1,G}\arrow[uurr]\arrow[rr]&&\color{green}r_1^{r_1,l_{2,1},G}\arrow[dd]\arrow[ddddll]\\
    &\color{red}m_4^{m_1,r_2}\arrow[dr]&
    \\
    \color{red}l_2^{l_1}\arrow[uu]\arrow[ur]&&\color{green}r_2^{r_2}\arrow[uull,bend right,shift left]\arrow[dl]\\
    &\color{red}m_1^{m_3,l_2}\arrow[ul]&\\
    \color{red}l_3^{l_3}\arrow[rr]\arrow[dr]&&\color{red}r_3^{r_{1\nearrow3},l_{2,1},G}\arrow[dd]\arrow[uuuu,bend right]\\
    &\color{red}m_{6}^{m_4}\arrow[ur]\arrow[dl]&\\
    \color{red}l_4^{l_4}\arrow[uu]\arrow[dr]&&\color{red}r_4^{r_4}\arrow[dd]\arrow[ul]\\
    &\color{red}m_3^{m_5}\arrow[dl]\arrow[ur]&\\
    \color{red}l_5^{l_5}\arrow[uu]&&\color{red}r_5^{r_5}\arrow[d]\arrow[ul]\\
    \vdots\arrow[u]&&\vdots\arrow[d]\\
    \color{red}l_{p-6}^{l_{p-6}}\arrow[u]\arrow[dr]&&\color{red}r_{p-6}^{r_{p-6}}\arrow[dd]\\
    &\color{red}m_{p-3}^{m_{p-5}}\arrow[ur]\arrow[dl]&\\
    \color{red}l_{p-5}^{l_{p-5}}\arrow[uu]\arrow[dr]&&\color{red}r_{p-5}^{r_{p-5}}\arrow[dd]\arrow[ul]\\
    &\color{red}m_{p-6}^{m_{p-4}}\arrow[ur]\arrow[dl]&\\
    \color{red}l_{p-4}^{l_{p-4}}\arrow[dr]\arrow[uu]&&\color{red}r_{p-4}^{r_{p-4}}\arrow[dd]\arrow[ul]\\
    &\color{red}m_{p-1}^{m_{p-3}}\arrow[dl]\arrow[ur]&\\
    \color{red}l_{p-3}^{l_{p-3}}\arrow[dr]\arrow[uu]&&\color{red}r_{p-3}^{r_{p-3}}\arrow[ul]\arrow[dd]\\
    &\color{red}m_{p-4}^{m_{p-2}}\arrow[dl]\arrow[ur]&\\
    \color{red}r_{p-2}^{l_{p-2}}\arrow[uu]&&\color{red}l_{p-2}^{r_{p-2}}\arrow[ul]
\end{tikzcd}}\]

After $\delta_1$:

\[\adjustbox{scale=0.7,center}{\begin{tikzcd}
    &&\color{green}m_2^{h_1}\arrow[ddll]\\\\
    \color{red}l_1^{l_{1}}\arrow[rr]\arrow[dr]&&\color{red}r_1^{r_1,G}\arrow[dd]\arrow[uu]\\
    &\color{red}m_4^{m_1}\arrow[dl]\arrow[ur]&
    \\
    \color{red}l_2^{l_2}\arrow[uu]\arrow[dr]&&\color{red}r_2^{r_2}\arrow[ul]\arrow[dd]\\
    &\color{red}m_1^{m_3}\arrow[dl]\arrow[ur]&\\
    \color{red}l_3^{l_3}\arrow[uu]\arrow[dr]&&\color{red}r_3^{r_{3}}\arrow[dd]\arrow[ul]\\
    &\color{red}m_{6}^{m_4}\arrow[ur]\arrow[dl]&\\
    \color{red}l_4^{l_4}\arrow[uu]\arrow[dr]&&\color{red}r_4^{r_4}\arrow[dd]\arrow[ul]\\
    &\color{red}m_3^{m_5}\arrow[dl]\arrow[ur]&\\
    \color{red}l_5^{l_5}\arrow[uu]&&\color{red}r_5^{r_5}\arrow[d]\arrow[ul]\\
    \vdots\arrow[u]&&\vdots\arrow[d]\\
    \color{red}l_{p-6}^{l_{p-6}}\arrow[u]\arrow[dr]&&\color{red}r_{p-6}^{r_{p-6}}\arrow[dd]\\
    &\color{red}m_{p-3}^{m_{p-5}}\arrow[ur]\arrow[dl]&\\
    \color{red}l_{p-5}^{l_{p-5}}\arrow[uu]\arrow[dr]&&\color{red}r_{p-5}^{r_{p-5}}\arrow[dd]\arrow[ul]\\
    &\color{red}m_{p-6}^{m_{p-4}}\arrow[ur]\arrow[dl]&\\
    \color{red}l_{p-4}^{l_{p-4}}\arrow[dr]\arrow[uu]&&\color{red}r_{p-4}^{r_{p-4}}\arrow[dd]\arrow[ul]\\
    &\color{red}m_{p-1}^{m_{p-3}}\arrow[dl]\arrow[ur]&\\
    \color{red}l_{p-3}^{l_{p-3}}\arrow[dr]\arrow[uu]&&\color{red}r_{p-3}^{r_{p-3}}\arrow[ul]\arrow[dd]\\
    &\color{red}m_{p-4}^{m_{p-2}}\arrow[dl]\arrow[ur]&\\
    \color{red}r_{p-2}^{l_{p-2}}\arrow[uu]&&\color{red}l_{p-2}^{r_{p-2}}\arrow[ul]
\end{tikzcd}}\]

Step 6(c) consists of $(m_2,h_2,h_3,...,h_q,g_1,g_2,...,g_{q-1},g_q,h_{q+1},g_q)$.

At $m_2,h_2,h_3,...,h_{q+1},g_1,g_2,...,g_{q-1},g_q$ and adjacent vertices, the diagram looks like:

\[\adjustbox{scale=0.9,center}{\begin{tikzcd}
    &&\color{red}g_1^{g_1,2h_2}\arrow[dl,"2"]&&\color{red}g_{q-2}^{g_{q-2},2h_{q-1}}\arrow[d,"2"]&\color{red}g_{q-1}^{g_{q-1},2h_q}\arrow[dr,"2"]&&\color{green}g_q^{g_q,2h_{q+1}}\arrow[ddlll,"2",pos=0.1,bend right=10]\\
    \color{green}m_2^{h_1}\arrow[r]\arrow[ddrr]&\color{green}h_2^{h_{2},2g_{1}}\arrow[rr]\arrow[drrr]&&\cdots\arrow[r]\arrow[ul,"2"]&\color{green}h_{q-1}^{h_{q-1},2g_{q-2}}\arrow[rr]&&\color{green}h_{q}^{h_{q},2g_{q-1}}\arrow[r]\arrow[ull,"2"]&\color{red}h_{q+1}^{h_{q+1}}\arrow[ull,"2"]\arrow[u,"2"]\\
    &&&&\color{red}h_1^{h_1,m_1}\arrow[urrr]\arrow[ullll]\\
    &&\color{red}l_1^{l_{1}}\arrow[rrr]&&&\color{red}r_1^{r_1,G}\arrow[uulllll]
\end{tikzcd}}\]

Observe that the subtriangulation is same as the case where $p=3$, after step 6(c), we have:

\[\adjustbox{scale=0.9,center}{\begin{tikzcd}
    &\color{red}g_1^{g_1}\arrow[dr,"2"]&&\color{red}g_2^{g_2}\arrow[dr,"2"]&&&&\color{red}g_{q-1}^{g_{q-1}}\arrow[dr,"2"]&&\color{red}g_q^{g_q}\arrow[d,"2"]\\
    \color{red}m_2^{h_1}\arrow[drrrr]\arrow[ur,"2"]&&\color{red}h_2^{h_{2}}\arrow[ur,"2"]\arrow[ll]&&\color{red}h_3^{h_3}\arrow[ll]&\cdots\arrow[l]&\color{red}h_{q-1}^{h_{q-1}}\arrow[l]\arrow[ur,"2",near end]&&\color{red}h_{q}^{h_{q}}\arrow[ll]\arrow[ur,"2"]&\color{red}h_{q+1}^{h_{q+1}}\arrow[ddllll]\arrow[l]\\
    &&&&\color{red}h_1^{m_1}\arrow[dll]\arrow[urrrrr]\\
    &&\color{red}l_1^{l_{1}}\arrow[uull]&&&\color{red}r_1^{r_1}\arrow[ul]
\end{tikzcd}}\]

Since all vertices are red, $\Delta_1$ is a maximal green sequence. $\blacksquare$

Since we have gone through all cases for $n=1$, Theorem \ref{main_1} is proved for $n=1$.

\section{The case where $n=0$}\label{n=0}

\subsection{Constructing the triangulation for $n=0$}\label{construction_0}

As we have shown in section \ref{puncture} that maximal green sequences do not exist when $p=1$, we shall omit the construction in this case. We shall divide the construction into the cases $p=2$ and $p>2$.

An orientable surface of genus $0$ with $2$ punctures and $q$ orbifold points, $q\geq2$, has triangulation $T_{0,2,q}$, as shown below:

\[
    \begin{tikzpicture}[every edge quotes/.style={auto=right}]
        \begin{scope}[every node/.style={sloped,allow upside down}][every edge quotes/.style={auto=right}]
        \node at ($(0,0)!.5!(0,5)$) {$\dots$};
        \draw (0,0) to [bend left=80,min distance=50mm] node[below] {$h_1$} (0,5);
        \draw (0,0) to [bend left=60,min distance=40mm] node[below] {$h_2$} (0,5);
        \draw (0,0) to [bend left=40] node[below] {$h_3$} (0,5);
        \draw (0,0) to [bend right=60,min distance=40mm] node[above] {$h_{q-1}$} (0,5);
        \draw (0,0) to [bend right=80,min distance=50mm] node[below] {$h_q$} (0,5);
        \draw (0,5) to [bend right] node[above,rotate=180,near end] {$g_1$} (-3,2.5);
        \draw (0,5) to [bend right=15] node[above,rotate=180,near end] {$g_2$} (-2,2.5);
        \draw (0,5) to [bend left] node[above,near end] {$g_{q-1}$} (3,2.5);
        \draw (0,5) to node[below] {$g_q$} (0,7.5);
        \fill (-3,2.5) node[cross=2pt,rotate=30] {};
        \fill (-2,2.5) node[cross=2pt,rotate=30] {};
        \fill (3,2.5) node[cross=2pt,rotate=30] {};
        \fill (0,7.5) node[cross=2pt,rotate=30] {};
        \fill (0,5) circle (2pt);
        \fill (0,0) circle (2pt);
    \end{scope}
\end{tikzpicture}\]

The following diagram is associated to $T_{0,2,q}$

\[\begin{tikzcd}
    &g_1\arrow[dr,"2"]&&&&g_{q-3}\arrow[dr,"2"]&&g_{q-2}\arrow[dr,"2"]&&g_{q-1}\arrow[dr,"2"]\\
    h_1\arrow[ur,"2"]\arrow[rrrrrrrrrr,bend right]&&h_2\arrow[ll]&\cdots\arrow[l]&h_{q-3}\arrow[l]\arrow[ur,"2"]&&h_{q-2}\arrow[ur,"2"]\arrow[ll]&&h_{q-1}\arrow[ur,"2"]\arrow[ll]&&h_q\arrow[dlllll,"2"]\arrow[ll]\\&&&&&g_q\arrow[ulllll,"2"]
\end{tikzcd}\]

An orientable surface of genus $0$ with $p$ punctures and $q$ orbifold points, $p,q\in\mathbb{Z}^+$, $p>2$, has triangulation $T_{0,p,q}$, as shown below:

\[\begin{tikzpicture}[every edge quotes/.style={auto=right}]
    \begin{scope}[every node/.style={sloped,allow upside down}][every edge quotes/.style={auto=right}]

\node at (6.85,6.85) {$\iddots$};
        \node at ($(0,5)!.5!(5,0)$) {$\ddots$};
        \draw (0,0) to [bend right=15,black] node [below=.15,right] {$g_1$} (3.2,1.8);
        \draw (0,0) to [bend left=15,black] node [above=.15,right] {$g_q$} (1.8,3.2);
        \draw (0,0) to [bend right=45,black] node[below=.15,right=1] {$h_1$} (5,5);
        \draw (0,0) to [bend left=45,black] node[above=.15,right=1] {$h_{q+1}$} (5,5);
        \draw (0,0) to [bend right=15,black] node[right=1,below] {$h_2$} (5,5);
        \draw (0,0) to [bend left=15,black] node[right=1,above] {$h_q$} (5,5);
        \draw (0,0) to [bend right=45,black] node[near end,right,below=-.1] {$l_1$} (6.25,6.25);
        \draw (0,0) to [bend left=45] node[near end,right,above=-.1,black] {$r_1$} (6.25,6.25);
        \draw (0,0) to [bend right=45,black] node[near end,below] {$l_{p-3}$} (8.75,8.75);
        \draw (0,0) to [bend left=45,black] node[near end,above] {$r_{p-3}$} (8.75,8.75);
        \draw (0,0) to [bend right=45,black] node[near end,above] {$l_{p-4}$} (8,8);
        \draw (0,0) to [bend left=45,black] node[near end,above] {$r_{p-4}$} (8,8);
        \draw (0,0) to [bend right=45,black] node[near end,below,black] {$s$} (10,10);
        \draw (5,5) to node[below] {$m_1$} (6.25,6.25);
        \draw (8.75,8.75) to node[below] {$m_{p-2}$} (10,10);
        \draw (6.25,6.25) to node[below] {$m_2$} (6.5,6.5);
        \draw (8,8) to node[left=.2,below] {$m_{p-3}$} (8.75,8.75);
        \draw (7.5,7.5) to node[below=.2,left] {$m_{p-4}$} (8,8);
        \fill (0,0) circle (2pt);
        \fill (10,10) circle (2pt);
        \fill (5,5) circle (2pt);
        \fill (3.2,1.8) node[cross=2pt,rotate=30] {};
        \fill (1.8,3.2) node[cross=2pt,rotate=30] {};
        \fill (6.25,6.25) circle (2pt);
        \fill (8.75,8.75) circle (2pt);
        \fill (8,8) circle (2pt);
    \end{scope}
\end{tikzpicture}\]

\label{core}

If we remove arc $s$ from this triangulation, we see that the resulting subtriangulation is contained in $T_{1,p-1,q}$. This fact will be used in the proof.

As a result, the associated diagram of $T_{0,p,q}$ is also similar to that of $T_{1,p-1,q}$:

\[\begin{tikzcd}
 & g_1\arrow[dr,"2"] & & g_2\arrow[dr,"2"]&&&&g_q\arrow[dr,"2"]\\
 h_1\arrow[drrrr]\arrow[ur,"2"] && h_2\arrow[ur,"2"]\arrow[ll] && h_3\arrow[ll]&\cdots\arrow[l]&h_q\arrow[l]\arrow[ur,"2"]&&h_{q+1}\arrow[ll]\arrow[ddll]\\
 &&&&m_1\arrow[dll]\arrow[urrrr]&&&&\\
 &&l_1\arrow[uull]\arrow[drr]&&&&r_1\arrow[ull]\arrow[dd]&&\\
 &&&&m_2\arrow[dll]\arrow[urr]&&&&\\
 &&l_2\arrow[uu]\arrow[drr]&&&&r_2\arrow[ull]\arrow[dd]&&\\
 &&&&m_3\arrow[dll]\arrow[urr]&&&&\\
 &&\vdots\arrow[uu]&&\vdots&&\vdots\arrow[ull]\\
 &&\vdots&&\vdots&&\vdots\arrow[dd]\\
 &&&&m_{p-3}\arrow[dll]&&&&\\
 &&l_{p-3}\arrow[uu]\arrow[drr]&&&&r_{p-3}\arrow[ull]\arrow[ddll]\\
 &&&&m_{p-2}\arrow[urr]&&&&\\
 &&&&s\arrow[uull]&&
\end{tikzcd}\]

\subsection{Proof of the main result for $n=0$}\label{proof_0}

\begin{theorem}\label{main_0}
    For an orbifold $\mathcal{O}$ of genus $0$ with $p$ punctures and $q$ orbifold points such that $p+q\geq4$, the diagram $D(T_{0,p,q})$ associated to the triangulation $T_{0,p,q}$ has a maximal green sequence if $p\geq 2$. Moreover, if $p=1$ then $D(T)$ does not admit a maximal green sequence for any triangulation $T$ of $\mathcal{O}$.
\end{theorem}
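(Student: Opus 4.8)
The plan is to split along the three regimes the statement suggests and to reduce as much as possible to the genus-one analysis of Section~\ref{n=1}. The case $p=1$ needs no new work: the argument of Section~\ref{puncture} applies verbatim to any triangulation of a once-punctured closed surface of genus $0$ with any number of orbifold points, since the obstruction there---that one cannot notch an end of an arc without first producing a triangulated once-punctured digon, which a once-punctured closed surface never admits---is insensitive to the genus. So it remains to produce explicit maximal green sequences when $p=2$ and when $p>2$.

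For $p=2$ I would argue directly from the diagram $D(T_{0,2,q})$ displayed above, without invoking Section~\ref{n=1}. The strategy mirrors the demonstration case $p=2,q=1$ of Section~\ref{2,1}: one tags the two punctures notched in turn by repeatedly creating triangulated once-punctured digons among the arcs $h_i$ and $g_i$. Concretely, I would exhibit a sequence that first walks the pending arcs $g_1,\dots,g_q$ and the arcs $h_1,\dots,h_q$ so as to notch the first puncture, then undoes these flips in reverse order to notch the second, checking greenness locally. Because the $h_i$ form a single cyclic chain around the two punctures and the only weighted arrows are the weight-$2$ arrows $h_i\to g_i\to h_{i+1}$ together with the closing triangle on $h_1,h_q,g_q$, the verification reduces to an induction over the isomorphic local pictures at each pair $(g_i,h_i)$, exactly as in Lemma~\ref{p>2}.

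For $p>2$ the key is the structural observation recorded after the construction: deleting the arc $s$ leaves a subtriangulation contained in $T_{1,p-1,q}$, and correspondingly $D(T_{0,p,q})$ agrees with $D(T_{1,p-1,q})$ everywhere except in the outermost region, where the handle cap with its arcs $l_{p-1},r_{p-1},f_1,f_2$ is replaced by the single arc $s$ meeting $m_{p-2}$. I would therefore take the sequence $\Delta_1$ for $T_{1,p-1,q}$ and keep verbatim all of its parts that tag the interior punctures notched and that move the core and inner arcs away from and back to the base puncture; these act only on subdiagrams isomorphic to those appearing in Section~\ref{n=1}, so Lemmas~\ref{p>2}, \ref{p>4} (or \ref{p>3}, according to the parity of $p$) and the accompanying inductions apply unchanged. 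The blocks that manipulate the corner---steps 4(c), 5, 6(a), governed by Lemma~\ref{corner}---are the only ones that must be rewritten, and I would replace them by a short local sequence on $s$ and $m_{p-2}$ that tags the outermost puncture notched and returns the neighbouring arcs to position.

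The \textbf{main obstacle} I anticipate is precisely this outermost block. Because the subtriangulation is contained in, but not equal to, $T_{1,p-1,q}$, the arc $s$ has different connectivity from the arcs $l_{p-1},r_{p-1},f_1,f_2$ it replaces, and the frozen-vertex superscripts entering and leaving the outermost region differ from those in Lemma~\ref{corner}. The crux is thus to prove a genus-zero analogue of Lemma~\ref{corner}: a local lemma asserting that the chosen sequence on $\{s,m_{p-2},l_{p-3},r_{p-3}\}$ turns $s$ red, notches the last puncture, and restores the superscripts on the adjacent vertices in exactly the form the inherited parts of $\Delta_1$ require in order to continue making every vertex red. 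Once this boundary bookkeeping is pinned down, concatenating the inherited blocks with the new outermost block yields a maximal green sequence, completing the case $p>2$ and hence Theorem~\ref{main_0}.
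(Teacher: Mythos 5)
Your proposal follows the paper's own route in all three regimes: the $p=1$ case via Section~\ref{puncture}, a direct two-stage tagging of the punctures of $D(T_{0,2,q})$ for $p=2$, and for $p>2$ the reuse of $\Delta_1$ for $T_{1,p-1,q}$ with only the outermost (corner) block replaced by a local sequence on $s$ and its neighbours --- which is exactly what the paper formalises in Lemma~\ref{one_puncture_summary} and the four-term sequence $\gamma=(s,l_{p-3},r_{p-3},m_{p-3})$. The obstacle you flag (the boundary bookkeeping of frozen superscripts around $s$, i.e.\ a genus-zero analogue of Lemma~\ref{corner}) is precisely where the paper spends its effort, so the plan is sound and essentially identical to the published argument.
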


The proof of the "moreover" part is mentioned in section \ref{puncture}, and we shall prove the rest by providing maximal green sequences.

As the triangulations when $p=2$ and $p>2$ are different, we shall divide the proof into two cases.

\subsubsection{The case where $p=2$}\label{0,2}

\begin{proposition}
    The mutation sequence $\Delta_0$ formed by concatenating the following steps is a maximal green sequence of $D(T_{0,2,q})$:
    \begin{enumerate}
    \item Tagging the lower puncture notched. $(h_q,h_{q-1},...,h_1,h_3,h_4,...,h_q)$
    \item Moving all pending arcs away from the lower puncture. $(g_1,g_2,...,g_q)$
    \item Tagging the upper puncture notched. $(h_2,h_1,h_3,h_4,...,h_q,h_{q-2},h_{q-3},...,h_3,h_1,h_2)$
    \item Moving all pending arcs back to the lower puncture. $(g_1,g_2,...,g_q)$
\end{enumerate}
\end{proposition}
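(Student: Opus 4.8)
The plan is to run the sequence $\Delta_0$ on the framed diagram $\hat D(T_{0,2,q})$ while recording frozen-vertex attachments with the superscript notation of Section \ref{notations}, checking two things along the way: that the vertex mutated at each step is green at that moment, and that after the final mutation every non-frozen vertex is red. The whole diagram is built from $q$ combinatorially identical oriented weight-$2$ triangles $h_i\to g_i\to h_{i+1}\to h_i$, closing up cyclically through the long arrow $h_1\to h_q$ and the weight-$2$ triangle on $h_q,g_q,h_1$. Because of this repetition, the content of each step is a single local computation that I would promote to the entire diagram by an "isomorphic subdiagram" induction, exactly the device used throughout the $n=1$ proofs. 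Unlike those proofs, this case has no inner arcs, outer arcs, or $m$-arcs, so the bookkeeping is lighter and genuinely self-contained; in particular I would reprove the local facts I need rather than invoke Lemma \ref{p>2} or Lemma \ref{corner}, matching the claim in the introduction that the $p=2$ case does not rely on Section \ref{n=1}.

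First I would treat Step 1, whose geometric meaning is that $(h_q,\dots,h_1,h_3,\dots,h_q)$ tags the lower puncture notched. The descending run $h_q,\dots,h_1$ creates once-punctured digons along the necklace of $h$-arcs, and the ascending run $h_3,\dots,h_q$ flips them back with the notch installed, the indices $h_1,h_2$ omitted from the ascent marking where the digon is absorbed; this is the same mechanism as Lemma \ref{p>2}, stripped of the $m_1$ insertion. I would carry out the first two or three mutations explicitly to expose the green/red pattern and the attached frozen vertices, then close by induction on the chain length. Step 2 is then immediate: each $g_i$ lies in its own oriented weight-$2$ triangle, so mutating $g_1,\dots,g_q$ (each green when reached) flips the pending arcs in unison, and the isomorphism of the $q$ local pictures reduces this to one triangle.

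The main obstacle is Step 3, tagging the upper puncture notched via $(h_2,h_1,h_3,\dots,h_q,h_{q-2},\dots,h_3,h_1,h_2)$. This is where \emph{all} the pending arcs are incident, so after Step 2 the neighbourhood of each $h_i$ carries the extra $g$-attachments, and the mutation order is non-monotone (up, across, then down), which makes the inductive pattern harder to set up; moreover the boundary indices $h_1,h_2,h_q$ together with the cycle-closing vertex $g_q$ must be handled as separate base cases. I expect to isolate this step as a dedicated lemma, analogous to but independent of Lemma \ref{p>2}, stating that this sequence carries the post-Step-2 diagram to the post-Step-2 diagram with the upper puncture notched, and to prove it by the same subdiagram-isomorphism induction after computing the turnaround by hand.

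Finally, Step 4 repeats the $g_1,\dots,g_q$ mutations, flipping the pending arcs back. Since after Step 3 both punctures carry notches, this last run should turn every surviving green vertex red, which I would confirm by the same single-triangle local computation together with a check that no green vertex remains. Reaching the all-red framed diagram certifies that $\Delta_0$ is a maximal green sequence, and it is consistent with the criterion of Section \ref{puncture}: the net effect of $\Delta_0$ is precisely to notch both punctures while returning every arc to its original (untagged) position.
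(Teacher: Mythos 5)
Your plan is correct and, at bottom, is the same proof as the paper's: track the framed diagram through $\Delta_0$ with the superscript notation, compute two or three instances of the repeating local weight-$2$ triangle, close each step by an isomorphic-subdiagram induction, and keep the argument self-contained rather than importing Lemma \ref{p>2} or Lemma \ref{corner}. The one substantive divergence is Step 3, which you single out as ``the main obstacle'' deserving a dedicated lemma with a hand computation of the non-monotone turnaround. The paper avoids this entirely: after Step 2 it relocates the vertices and observes that the resulting diagram is isomorphic to the \emph{initial} diagram $D(T_{0,2,q})$ (only the frozen attachments differ), and under that isomorphism ($h_k\mapsto h_{q+1-k}$ for $k\le q-2$, $h_{q-1}\mapsto h_1$, $h_q\mapsto h_2$) the Step 3 sequence $(h_2,h_1,h_3,\dots,h_q,h_{q-2},\dots,h_3,h_1,h_2)$ is exactly the image of the Step 1 sequence $(h_q,\dots,h_1,h_3,\dots,h_q)$; so Step 3 is Step 1 rerun on a relabelled copy and needs no new lemma, only a re-reading of the superscripts. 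Your route would still go through, but this self-similarity is what explains the otherwise odd shape of the Step 3 sequence and is worth building into the write-up. Two smaller points: the notch is not installed by the ascending run but by the final mutations $h_2,h_1$ of the descending run, which flip the two sides of the single once-punctured digon cut out by $h_3$ (the ascent $h_3,\dots,h_q$ merely unwraps the wrapped arcs); and the terminal triangulation is the original one with both punctures notched and $h_1,h_2$ transposed, not with every arc literally in its starting position, though this does not affect the all-red conclusion.
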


In all steps, all vertices are either mutated at or adjacent to vertices mutated at, so we shall show the entire diagram in all steps.

Step 1 consists of $(h_q,h_{q-1},...,h_1,h_3,h_4,...,h_q)$.

\[\begin{tikzcd}
    &\color{green}g_1^{g_1}\arrow[dr,"2"]&&&&\color{green}g_{q-3}^{g_{q-3}}\arrow[dr,"2"]&&\color{green}g_{q-2}^{g_{q-2}}\arrow[dr,"2"]&&\color{green}g_{q-1}^{g_{q-1}}\arrow[dr,"2"]\\
    \color{green}h_1^{h_1}\arrow[ur,"2"]\arrow[rrrrrrrrrr,bend right]&&\color{green}h_2^{h_2}\arrow[ll]&\cdots\arrow[l]&\color{green}h_{q-3}^{h_{q-3}}\arrow[l]\arrow[ur,"2"]&&\color{green}h_{q-2}^{h_{q-2}}\arrow[ur,"2"]\arrow[ll]&&\color{green}h_{q-1}^{h_{q-1}}\arrow[ur,"2"]\arrow[ll]&&\color{green}h_q^{h_q}\arrow[dlllll,"2"]\arrow[ll]\\&&&&&\color{green}g_q^{g_q}\arrow[ulllll,"2"]
\end{tikzcd}\]

Mutate at $h_q$:

\[\adjustbox{scale=0.8}{\begin{tikzcd}
    &\color{green}g_1^{g_1}\arrow[dr,"2"]&&&&\color{green}g_{q-3}^{g_{q-3}}\arrow[dr,"2"]&&|[color=green,fill=red]|g_{q-2}^{g_{q-2}}\arrow[dr,"2"]&&\color{green}g_{q-1}^{g_{q-1},2h_q}\arrow[rr,"4"]&&\color{green}g_q^{g_q}\arrow[dl,"2"]\\
    |[color=green,fill=red]|h_1^{h_{1,q}}\arrow[ur,"2"]\arrow[rrrrrrrr,bend right]&&\color{green}h_2^{h_2}\arrow[ll]&\cdots\arrow[l]&\color{green}h_{q-3}^{h_{q-3}}\arrow[l]\arrow[ur,"2"]&&|[color=green,fill=red]|h_{q-2}^{h_{q-2}}\arrow[ur,"2"]\arrow[ll]&&|[color=green,fill=red]|h_{q-1}^{h_{q-1}}\arrow[rr]\arrow[ll]&&|[color=red,fill=green]|h_q^{h_q}\arrow[llllllllll,bend left,shift left]\arrow[ul,"2"]
\end{tikzcd}}\]

Mutate at $h_{q-1}$:

\[\adjustbox{scale=0.8}{\begin{tikzcd}
    &\color{green}g_1^{g_1}\arrow[dr,"2"]&&&&|[color=green,fill=red]|g_{q-3}^{g_{q-3}}\arrow[dr,"2"]&&\color{green}g_{q-2}^{g_{q-2},2h_{q-1}}\arrow[drrr,"2"]&&\color{green}g_{q-1}^{g_{q-1},2h_q}\arrow[rr,"4"]&&\color{green}g_q^{g_q}\arrow[dl,"2"]\\
    |[color=green,fill=red]|h_1^{h_{1,q,q-1}}\arrow[ur,"2"]\arrow[rrrrrr,bend right]&&\color{green}h_2^{h_2}\arrow[ll]&\cdots\arrow[l]&|[color=green,fill=red]|h_{q-3}^{h_{q-3}}\arrow[l]\arrow[ur,"2"]&&|[color=green,fill=red]|h_{q-2}^{h_{q-2}}\arrow[rr]\arrow[ll]&&|[color=red,fill=green]|h_{q-1}^{h_{q-1}}\arrow[ul,"2"]\arrow[llllllll,bend left,shift left]&&\color{red}h_q^{h_q}\arrow[ll]\arrow[ul,"2"]
\end{tikzcd}}\]

Observe the pattern at the highlighted vertices, after mutating at $h_3$, we have:

\[\adjustbox{scale=0.6}{\begin{tikzcd}
    &\color{green}g_1^{g_1}\arrow[dr,"2"]&&\color{green}g_2^{g_2,2h_3}&&\color{green}g_{q-4}^{g_{q-4},2h_{q-3}}\arrow[drrr,"2"]&&|[color=green]|g_{q-3}^{g_{q-3},2h_{q-2}}\arrow[drrr,"2"]&&\color{green}g_{q-2}^{g_{q-2},2h_{q-1}}\arrow[drrr,"2"]&&\color{green}g_{q-1}^{g_{q-1},2h_q}\arrow[rr,"4"]&&\color{green}g_q^{g_q}\arrow[dl,"2"]\\
    |[color=green]|h_1^{h_{1,q\searrow3}}\arrow[ur,"2"]&&\color{green}h_2^{h_2}\arrow[rr]&&\color{red}h_3^{h_3}\arrow[ul,"2"]\arrow[llll,bend left]&\cdots\arrow[l]&|[color=red]|h_{q-3}^{h_{q-3}}\arrow[l]\arrow[ul,"2"]&&|[color=red]|h_{q-2}^{h_{q-2}}\arrow[ul,"2"]\arrow[ll]&&|[color=red]|h_{q-1}^{h_{q-1}}\arrow[ul,"2"]\arrow[ll]&&\color{red}h_q^{h_q}\arrow[ll]\arrow[ul,"2"]
\end{tikzcd}}\]

In terms of triangulation, we have wrapped most arcs inside $h_3$ like in the case where $n=1,p=2,q>1$ (and we shall reuse the picture with slight modification):

\begin{flushleft}
\hspace*{-0.25\linewidth}
\begin{tikzpicture}[every node/.style={draw}][every edge quotes/.style={auto=right}]
    \begin{scope}[every node/.style={sloped,allow upside down}][every edge quotes/.style={auto=right}]
        \fill (1,1.5) node[cross=2pt,rotate=30] {};
        \fill (1.5,1) node[cross=2pt,rotate=30] {};
        \fill (2,0.5) node[cross=2pt,rotate=30] {};
        \fill (2.8,0.5) node[cross=2pt,rotate=30] {};
        \node[font=\small] at (-1.25,-1.25) {$\iddots$};
        \node at (2.6,0.5) {$\dots$};
        \draw (1,1.5) to node[near start,below=-.1] {$g_q$} (2.5,2.5);
        \draw (1.5,1) to node[near start,below=-.1] {$g_{q-1}$} (2.5,2.5);
        \draw (2,0.5) to node[near start,below=-.1] {$g_{q-2}$} (2.5,2.5);
        \draw (2.8,0.5) to node[near start,above=-.1] {$g_{2}$} (2.5,2.5);
        \draw (2.5,2.5) to [loop,in=255,out=195,min distance=60mm] node[below=-.1] {$h_q$} (2.5,2.5);
        \draw (2.5,2.5) to [loop,in=270,out=180,min distance=90mm] node[below=-.1] {$h_{q-1}$} (2.5,2.5);
        \draw (2.5,2.5) to [loop,in=285,out=180,min distance=130mm] node[below=0.1,left=.7] {$h_{3}$} (2.5,2.5);
        \draw (2.5,2.5) to [bend right] node [below] {$h_1$} (5,5);
        \draw (2.5,2.5) to [bend left] node [above] {$h_2$} (5,5);
        \draw (2.5,2.5) to node [above] {$g_1$} (3.75,3.75);
        \fill (2.5,2.5) circle (2pt);
        \fill (5,5) circle (2pt);
        \fill (3.75,3.75) node[cross=2pt,rotate=30] {};
    \end{scope}
\end{tikzpicture}
\end{flushleft}

\vspace*{-0.4\linewidth}

Observe that the arcs $h_3,g_1,h_{2},h_1$ form a triangulated once-punctured digon. Mutate at $h_2,h_1$ (and switch their positions):

\[\adjustbox{scale=0.6}{\begin{tikzcd}
    &\color{green}g_1^{g_1,2h_3}\arrow[dr,"2"]&&|[color=green,fill=red]|g_2^{g_2,2h_3}\arrow[drr,"2"]&&&\color{green}g_{q-4}^{g_{q-4},2h_{q-3}}\arrow[drrr,"2"]&&|[color=green]|g_{q-3}^{g_{q-3},2h_{q-2}}\arrow[drrr,"2"]&&\color{green}g_{q-2}^{g_{q-2},2h_{q-1}}\arrow[drrr,"2"]&&\color{green}g_{q-1}^{g_{q-1},2h_q}\arrow[rr,"4"]&&\color{green}g_q^{g_q}\arrow[dl,"2"]\\|[color=red,fill=green]|h_2^{h_2}\arrow[ur,"2"]&&|[color=red,fill=green]|h_1^{h_{1,q\searrow3}}\arrow[rr]&&|[color=green,fill=red]|h_3^{h_{1,q\searrow4}}\arrow[ul,"2"]\arrow[llll,bend left]&|[color=red,fill=green]|h_4^{h_4}\arrow[l]&\cdots\arrow[l]&|[color=red]|h_{q-3}^{h_{q-3}}\arrow[l]\arrow[ul,"2"]&&|[color=red]|h_{q-2}^{h_{q-2}}\arrow[ul,"2"]\arrow[ll]&&|[color=red]|h_{q-1}^{h_{q-1}}\arrow[ul,"2"]\arrow[ll]&&\color{red}h_q^{h_q}\arrow[ll]\arrow[ul,"2"]
\end{tikzcd}}\]

Now the upper puncture (lower puncture in the original picture) is tagged notched.

After mutating at $h_3,h_4,...,h_{q-1}$, again by observing the pattern at the highlighted vertices, we have:

\[\adjustbox{scale=0.8}{\begin{tikzcd}
    &\color{green}g_1^{g_1,2h_2}\arrow[dr,"2"]&&&&\color{green}g_{q-3}^{g_{q-3},2h_{q-2}}\arrow[dr,"2"]&&|[color=green]|g_{q-2}^{g_{q-2},2h_{q-1}}\arrow[dr,"2"]&&\color{green}g_{q-1}^{g_{q-1},2h_q}\arrow[rr,"4"]&&\color{green}g_q^{g_q}\arrow[dl,"2"]\\
    |[color=red]|h_2^{h_{2}}\arrow[ur,"2"]\arrow[rrrrrrrr,bend right]&&\color{red}h_1^{h_3}\arrow[ll]&\cdots\arrow[l]&\color{red}h_{q-3}^{h_{q-2}}\arrow[l]\arrow[ur,"2"]&&|[color=red]|h_{q-2}^{h_{q-1}}\arrow[ur,"2"]\arrow[ll]&&|[color=red]|h_{q-1}^{h_{1,q}}\arrow[rr]\arrow[ll]&&|[color=green]|h_q^{h_1}\arrow[llllllllll,bend left,shift left]\arrow[ul,"2"]
\end{tikzcd}}\]

Mutate at $h_q$:

\[\adjustbox{scale=0.8}{\begin{tikzcd}
    &\color{green}g_1^{g_1,2h_2}\arrow[dr,"2"]&&&&\color{green}g_{q-3}^{g_{q-3},2h_{q-2}}\arrow[dr,"2"]&&|[color=green]|g_{q-2}^{g_{q-2},2h_{q-1}}\arrow[dr,"2"]&&\color{green}g_{q-1}^{g_{q-1},2h_q}\arrow[dr,"2"]\\
    |[color=red]|h_2^{h_{2}}\arrow[ur,"2"]\arrow[rrrrrrrrrr,bend right]&&\color{red}h_1^{h_3}\arrow[ll]&\cdots\arrow[l]&\color{red}h_{q-3}^{h_{q-2}}\arrow[l]\arrow[ur,"2"]&&|[color=red]|h_{q-2}^{h_{q-1}}\arrow[ur,"2"]\arrow[ll]&&|[color=red]|h_{q-1}^{h_{q}}\arrow[ur,"2"]\arrow[ll]&&|[color=red]|h_q^{h_1}\arrow[ll]\arrow[dlllll,"2"]\\&&&&&\color{green}g_q^{g_q,2h_1}\arrow[ulllll,"2"]
\end{tikzcd}}\]

We get back the original triangulation with the lower puncture tagged notched and $h_1,h_2$ swapped:

\[
    \begin{tikzpicture}[every edge quotes/.style={auto=right}]
        \begin{scope}[every node/.style={sloped,allow upside down}][every edge quotes/.style={auto=right}]
        \node at ($(0,0)!.5!(0,5)$) {$\dots$};
        \draw (0,0) to [bend left=80,min distance=50mm] node[below] {$h_2$} (0,5);
        \draw (0,0) to [bend left=60,min distance=40mm] node[below] {$h_1$} (0,5);
        \draw (0,0) to [bend left=40] node[below] {$h_3$} (0,5);
        \draw (0,0) to [bend right=60,min distance=40mm] node[above] {$h_{q-1}$} (0,5);
        \draw (0,0) to [bend right=80,min distance=50mm] node[below] {$h_q$} (0,5);
        \draw (0,5) to [bend right] node[above,rotate=180,near end] {$g_1$} (-3,2.5);
        \draw (0,5) to [bend right=15] node[above,rotate=180,near end] {$g_2$} (-2,2.5);
        \draw (0,5) to [bend left] node[above,near end] {$g_{q-1}$} (3,2.5);
        \draw (0,5) to node[below] {$g_q$} (0,7.5);
        \fill (-3,2.5) node[cross=2pt,rotate=30] {};
        \fill (-2,2.5) node[cross=2pt,rotate=30] {};
        \fill (3,2.5) node[cross=2pt,rotate=30] {};
        \fill (0,7.5) node[cross=2pt,rotate=30] {};
        \fill (0,5) circle (2pt);
        \fill[blue] (0,0) circle (2pt);
    \end{scope}
\end{tikzpicture}\]

Step 2 consists of $(g_1,g_2,...,g_q)$, mutate:

\[\adjustbox{scale=0.7}{\begin{tikzcd}
    &\color{red}g_1^{g_1,2h_2}\arrow[dl,"2"]&&&&\color{red}g_{q-3}^{g_{q-3},2h_{q-2}}\arrow[dl,"2"]&&|[color=red]|g_{q-2}^{g_{q-2},2h_{q-1}}\arrow[dl,"2"]&&\color{red}g_{q-1}^{g_{q-1},2h_q}\arrow[dl,"2"]\\
    |[color=green]|h_2^{h_{2},2g_1}\arrow[drrrrr,"2"]\arrow[rr]&&\color{green}h_1^{h_3,2g_2}\arrow[r]\arrow[ul,"2"]&\cdots\arrow[r]&\color{green}h_{q-3}^{h_{q-2},2g_{q-3}}\arrow[rr]&&|[color=green]|h_{q-2}^{h_{q-1},2g_{q-2}}\arrow[ul,"2"]\arrow[rr]&&|[color=green]|h_{q-1}^{h_{q},2g_{q-1}}\arrow[ul,"2"]\arrow[rr]&&|[color=green]|h_q^{h_1,2g_q}\arrow[llllllllll,bend left]\arrow[ul,"2"]\\&&&&&\color{red}g_q^{g_q,2h_1}\arrow[urrrrr,"2"]
\end{tikzcd}}\]

We can relocate the vertices to make the diagram isomorphic to the starting diagram (except the frozen vertices):

\[\adjustbox{scale=0.7}{\begin{tikzcd}
    &\color{red}g_{q-1}^{g_{q-1},2h_q}\arrow[dr,"2"]&&&&\color{red}g_{3}^{g_{3},2h_4}\arrow[dr,"2"]&&\color{red}g_{2}^{g_{2},2h_3}\arrow[dr,"2"]&&\color{red}g_{1}^{g_{1},2h_2}\arrow[dr,"2"]\\
    \color{green}h_q^{h_1,2g_q}\arrow[ur,"2"]\arrow[rrrrrrrrrr,bend right]&&\color{green}h_{q-1}^{h_{q},2g_{q-1}}\arrow[ll]&\cdots\arrow[l]&\color{green}h_{4}^{h_{5},2g_4}\arrow[l]\arrow[ur,"2"]&&\color{green}h_{3}^{h_{4},2g_3}\arrow[ur,"2"]\arrow[ll]&&\color{green}h_{1}^{h_{3},2g_2}\arrow[ur,"2"]\arrow[ll]&&\color{green}h_2^{h_2,2g_1}\arrow[dlllll,"2"]\arrow[ll]\\&&&&&\color{red}g_q^{g_q,2h_1}\arrow[ulllll,"2"]
\end{tikzcd}}\]

Step 3 consists of $(h_2,h_1,h_3,h_4,...,h_q,h_{q-2},h_{q-3},...,h_3,h_1,h_2)$. Since the diagram is isomorphic to the starting diagram, we see that after mutating at $h_2,h_1,h_3,h_4,...,h_q$, we have:

\[\adjustbox{scale=0.6}{\begin{tikzcd}
    &\color{red}g_{q-1}^{g_{q-1},2h_q}\arrow[dr,"2"]&&\color{green}g_{q-2}^{g_{q-2}}&&\color{green}g_{4}^{g_4}\arrow[drrr,"2"]&&|[color=green]|g_{3}^{g_{3}}\arrow[drrr,"2"]&&\color{green}g_{2}^{g_2}\arrow[drrr,"2"]&&\color{green}g_{1}^{g_{1}}\arrow[rr,"4"]&&\color{red}g_q^{g_q,2h_1}\arrow[dl,"2"]\\
    |[color=green]|h_q^{h_{1\nearrow q-1},2g_{q,1\nearrow q-2}}\arrow[ur,"2"]&&\color{green}h_{q-1}^{h_q,2g_{q-1}}\arrow[rr]&&\color{red}h_{q-2}^{h_{q-1},2g_{q-2}}\arrow[ul,"2"]\arrow[llll,bend left]&\cdots\arrow[l]&|[color=red]|h_{4}^{h_{5},2g_4}\arrow[l]\arrow[ul,"2"]&&|[color=red]|h_{3}^{h_{4},2g_3}\arrow[ul,"2"]\arrow[ll]&&|[color=red]|h_{1}^{h_{3},2g_2}\arrow[ul,"2"]\arrow[ll]&&\color{red}h_2^{h_2,2g_1}\arrow[ll]\arrow[ul,"2"]
\end{tikzcd}}\]

Just like in the middle of step 1, $h_{q-2},g_{q-1},h_{q-1},h_q$ form a once-punctured digon. Mutate at $h_{q-1},h_q$ and switch their positions:

\[\adjustbox{scale=0.6}{\begin{tikzcd}
    &\color{green}g_{q-1}^{g_{q-1}}\arrow[dr,"2"]&&\color{green}g_{q-2}^{g_{q-2}}&&\color{green}g_{4}^{g_4}\arrow[drrr,"2"]&&|[color=green]|g_{3}^{g_{3}}\arrow[drrr,"2"]&&\color{green}g_{2}^{g_2}\arrow[drrr,"2"]&&\color{green}g_{1}^{g_{1}}\arrow[rr,"4"]&&\color{red}g_q^{g_q,2h_1}\arrow[dl,"2"]\\
    |[color=red]|h_{q-1}^{h_q,2g_{q-1}}\arrow[ur,"2"]&&\color{red}h_q^{h_{1\nearrow q-1},2g_{q,1\nearrow q-2}}\arrow[rr]&&\color{green}h_{q-2}^{h_{1\nearrow q-2},2g_{q,1\nearrow q-3}}\arrow[ul,"2"]\arrow[llll,bend left]&\cdots\arrow[l]&|[color=red]|h_{4}^{h_{5},2g_4}\arrow[l]\arrow[ul,"2"]&&|[color=red]|h_{3}^{h_{4},2g_3}\arrow[ul,"2"]\arrow[ll]&&|[color=red]|h_{1}^{h_{3},2g_2}\arrow[ul,"2"]\arrow[ll]&&\color{red}h_2^{h_2,2g_1}\arrow[ll]\arrow[ul,"2"]
\end{tikzcd}}\]

Now the upper puncture is tagged notched.

Mutate at $h_{q-2},h_{q-3},...,h_3,h_1$:

\[\adjustbox{scale=0.7}{\begin{tikzcd}
    &\color{green}g_{q-1}^{g_{q-1}}\arrow[dr,"2"]&&&&\color{green}g_{3}^{g_{3}}\arrow[dr,"2"]&&|[color=green]|g_{2}^{g_2}\arrow[dr,"2"]&&\color{green}g_{1}^{g_{1}}\arrow[rr,"4"]&&\color{red}g_q^{g_q,2h_{1}}\arrow[dl,"2"]\\
    |[color=red]|h_{q-1}^{h_q,2g_{q-1}}\arrow[ur,"2"]\arrow[rrrrrrrr,bend right]&&\color{red}h_q^{h_{q-1},2g_{q-2}}\arrow[ll]&\cdots\arrow[l]&\color{red}h_{4}^{h_{4},2g_3}\arrow[l]\arrow[ur,"2"]&&|[color=red]|h_{3}^{h_{3},2g_2}\arrow[ur,"2"]\arrow[ll]&&|[color=red]|h_{1}^{h_{1,2},2g_{q,1}}\arrow[rr]\arrow[ll]&&|[color=green]|h_2^{h_1,2g_{q}}\arrow[llllllllll,bend left,shift left]\arrow[ul,"2"]
\end{tikzcd}}\]

Mutate at $h_2$:

\[\adjustbox{scale=0.8}{\begin{tikzcd}
    &\color{green}g_{q-1}^{g_{q-1}}\arrow[dr,"2"]&&&&\color{green}g_{3}^{g_{3}}\arrow[dr,"2"]&&|[color=green]|g_{2}^{g_2}\arrow[dr,"2"]&&\color{green}g_{1}^{g_{1}}\arrow[dr,"2"]\\
    |[color=red]|h_{q-1}^{h_q,2g_{q-1}}\arrow[ur,"2"]\arrow[rrrrrrrrrr,bend right]&&\color{red}h_q^{h_{q-1},2g_{q-2}}\arrow[ll]&\cdots\arrow[l]&\color{red}h_{4}^{h_{4},2g_3}\arrow[l]\arrow[ur,"2"]&&|[color=red]|h_{3}^{h_{3},2g_2}\arrow[ur,"2"]\arrow[ll]&&|[color=red]|h_{1}^{h_{2},2g_1}\arrow[ur,"2"]\arrow[ll]&&|[color=red]|h_2^{h_1,2g_{q}}\arrow[ll]\arrow[dlllll,"2"]\\&&&&&\color{green}g_q^{g_q}\arrow[ulllll,"2"]
\end{tikzcd}}\]

Mutate at $g_1,g_2,...,g_q$:

\[\adjustbox{scale=0.8}{\begin{tikzcd}
    &\color{red}g_{q-1}^{g_{q-1}}\arrow[dl,"2"]&&&&\color{red}g_{3}^{g_{3}}\arrow[dl,"2"]&&|[color=red]|g_{2}^{g_2}\arrow[dl,"2"]&&\color{red}g_{1}^{g_{1}}\arrow[dl,"2"]\\
    |[color=red]|h_{q-1}^{h_q}\arrow[drrrrr,"2"]\arrow[rr]&&\color{red}h_q^{h_{q-1}}\arrow[r]\arrow[ul,"2"]&\cdots\arrow[r]&\color{red}h_{4}^{h_{4}}\arrow[rr]&&|[color=red]|h_{3}^{h_{3}}\arrow[ul,"2"]\arrow[rr]&&|[color=red]|h_{1}^{h_{2}}\arrow[ul,"2"]\arrow[rr]&&|[color=red]|h_2^{h_1,2g_{q}}\arrow[llllllllll,bend left]\arrow[ul,"2"]\\&&&&&\color{red}g_q^{g_q}\arrow[urrrrr,"2"]
\end{tikzcd}}\]

Since all vertices are red, $\Delta_0$ is a maximal green sequence. $\blacksquare$

\subsubsection{The case where $p>2$}\label{0,>2}

\begin{proposition}
    The mutation sequence $\Delta_0$ formed by concatenating the following steps is a maximal green sequence of $D(T_{0,p,q})$ for $p>2$:
    \begin{enumerate}
    \item Tagging alternate punctures notched. $(\alpha_{p-3},\alpha_{p-5},...,\alpha_{p-1-2\lfloor \frac{p-2}{2}\rfloor})$
    \item Tagging the first interior puncture notched. $(h_{q+1},h_q,...,h_1,m_{p-2\lfloor\frac{p-1}{2}\rfloor},h_2,h_3,...,h_{q+2-p+2\lfloor \frac{p-1}{2}\rfloor})$
    \item Tagging the remaining interior punctures notched and moving half the inner arcs away from the bottom left. $(\beta_{p-4},\beta_{p-6},...,\beta_{p-2\lfloor\frac{p-1}{2}\rfloor})$
    \item Moving arcs away from the bottom left.
    \begin{enumerate}
        \item Moving core arcs away from the bottom left. $(g_1,g_2,...,g_q,h_{q+\operatorname{sgn}(p-3)},h_{q+\operatorname{sgn}(p-3)-1},...,h_2,\mu)$
        \item Moving the remaining inner arcs away from the bottom left. $(l_{p-1-2\lfloor\frac{p-2}{2}\rfloor},r_{p-1-2\lfloor\frac{p-2}{2}\rfloor},l_{p+1-2\lfloor\frac{p-2}{2}\rfloor},r_{p+1-2\lfloor\frac{p-2}{2}\rfloor},\\...,l_{p-5},r_{p-5})$
    \end{enumerate}
    \item Tagging the bottom left and top right notched. $(\gamma)$
    \item Moving arcs back to the bottom left.
    \begin{enumerate}
        \item Moving inner arcs back to the bottom left. 
        $(\delta_{p-5},\delta_{p-7},...,\delta_{p-1-2\lfloor\frac{p-2}{2}\rfloor})$
        \item Moving core arcs back to the bottom left. $(\mu,\varepsilon,g_1,g_2,...,g_{q-1},g_q,\omega)$
        \item (If $p>3$ and is odd) Moving the remaining pair of inner arc back to the bottom left. $(l_1,r_1)$
    \end{enumerate}
\end{enumerate}

where

\begin{itemize}
    \item $\alpha_k=r_k,l_k,m_{k+1},m_{k},l_k,r_k$
    \item $\beta_k=\begin{cases}
        r_k,l_k,m_{k+2},m_{k-1}&\text{if }k>1,\\
        r_k,l_k,m_{k+2},h_1&\text{otherwise}
    \end{cases}$
    \item $\mu=\begin{cases}
        m_{p-2\lfloor\frac{p-1}{2}\rfloor}&\text{if }p>3,\\
        \phi & \text{if }p=3
    \end{cases}$
    \item $\gamma=\begin{cases}
        s,l_{p-3},r_{p-3},m_{p-3}&\text{if }p>3,\\
        s,m_1,h_{q+1},h_1 & \text{if }p=3
    \end{cases}$
    \item $\delta_k=r_k,r_{k+1},l_k,l_{k+1}$
    \item $\varepsilon=\begin{cases}
        h_2,h_3,...,h_{q+1}&\text{if }p\text{ is odd and}>3\\
        h_2,h_3,...,h_q &\text{otherwise}
    \end{cases}$
    \item $\omega=\begin{cases}
        h_{q+1},g_q&\text{if }p\text{ is even},\\
        \phi & \text{otherwise}
    \end{cases}$
\end{itemize}
\end{proposition}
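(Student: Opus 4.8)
The plan is to exploit the close structural relationship between $D(T_{0,p,q})$ and $D(T_{1,p-1,q})$ recorded in Section \ref{construction_0}: deleting the arc $s$ from $T_{0,p,q}$ yields a subtriangulation of $T_{1,p-1,q}$, so the full subdiagram of $D(T_{0,p,q})$ on all vertices other than $s$ agrees (as a weighted directed graph) with the part of $D(T_{1,p-1,q})$ carrying the core, inner, and middle arcs. The sequence $\Delta_0$ has been written deliberately to parallel $\Delta_1$ step for step, with indices shifted by one and with the four outer arcs $f_1,f_2,l_{p-1},r_{p-1}$ of the torus replaced by the single arc $s$ joining the two distinguished punctures (bottom left and top right). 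I would therefore split the argument into the parity sub-cases already used for $n=1$, namely $p=3$, $p>3$ even, and $p>3$ odd, and verify that each matches a shifted instance of the corresponding $n=1$ computation. Note that the parity is reversed relative to genus $1$, since genus-$0$ case $p$ tracks genus-$1$ case $p-1$; this is exactly reflected in the definitions of $\varepsilon$, $\omega$, and step 6(c).

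Steps 1 through 4 can be imported almost verbatim from the $n=1$ proofs. The arc $s$ is attached only to $m_{p-2}$ and $r_{p-3}$, forming the oriented triangle $s\to m_{p-2}\to r_{p-3}\to s$, so it is a near-spectator while the interior punctures are tagged and the core and inner arcs are moved toward the middle. Concretely, I would apply Lemma \ref{p>2} to carry out step 2 on the subdiagram obtained by ignoring $s$, apply Lemma \ref{p>4} (when $p$ is even) or Lemma \ref{p>3} (when $p$ is odd) for step 4(a), and reuse the $\alpha_k$ and $\beta_k$ calculations for steps 1 and 3. The only additional bookkeeping is to record which frozen vertices become attached to $s$ and to $r_{p-3}$; this is local and follows the same inductive pattern established for $n=1$.

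The main obstacle, and the genuinely new content, is step 5 (the sequence $\gamma$) together with the surrounding treatment of $s$. In the $n=1$ case the corner was a \emph{single} puncture appearing four times, and tagging it notched was absorbed into Lemma \ref{corner} via the outer arcs $f_1,f_2,l_{p-1},r_{p-1}$. Here the bottom left and top right are two \emph{distinct} punctures joined by $s$, and $\gamma$ must tag both notched simultaneously. I would establish a replacement for Lemma \ref{corner} adapted to this configuration: after steps 1--4 the local picture around $s,l_{p-3},r_{p-3},m_{p-3}$ (respectively $s,m_1,h_{q+1},h_1$ when $p=3$) is a once-punctured-digon-type arrangement spanning the two punctures, and a direct computation shows that mutating along $\gamma$ flips $s$ and its neighbours so that both punctures acquire notched tags, returning the non-$s$ vertices to colours matching the start of the analogous $n=1$ reversal. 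This is precisely where the torus and sphere differ, so it cannot be cited from Section \ref{n=1} and must be checked by hand.

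Finally, step 6 undoes steps 1--4 in reverse, again mirroring $\Delta_1$: I would invoke the reverse directions of Lemmas \ref{p>4}/\ref{p>3} and \ref{p>2}, together with the $\delta_k$, $\varepsilon$, $\omega$ and the closing $l_1,r_1$ in the odd case, to bring every core and inner arc back to the bottom left, now with all punctures notched. Throughout, the defining property of a maximal green sequence---that each mutated vertex is green immediately before it is mutated, and that every non-frozen vertex is red at the end---reduces to reading off the superscripts of the diagrams, exactly as in the worked $n=1$ cases. The terminal check that all vertices are red then follows from the final diagram of each sub-case, completing the verification that $\Delta_0$ is a maximal green sequence.
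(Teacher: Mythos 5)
Your proposal takes essentially the same route as the paper: Section \ref{0,>2} reduces steps 1--4 and 6 to the genus-one computations (packaged there as Lemma \ref{one_puncture_summary} rather than as case-by-case invocations of Lemmas \ref{p>2}, \ref{p>4}, and \ref{p>3}), and then verifies step 5 by exactly the direct local computation at $s$ and its neighbours that you single out as the genuinely new content, where the order $s,y,z,x$ in $\gamma$ is forced because $x$ stays red until $y$ and $z$ have been mutated. One minor slip that does not affect your strategy: in $D(T_{0,p,q})$ the vertex $s$ is adjacent to $l_{p-3}$ and $r_{p-3}$ (the two arrows between $s$ and $m_{p-2}$ cancel), not to $m_{p-2}$ and $r_{p-3}$.
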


Notice that all steps except step 5 are in the mutation sequence $\Delta_1$ with $p$ replaced by $p-1$. We will make use of this fact with the following lemma:

\begin{lemma}\label{one_puncture_summary}

Let $p>1$. Denote the full subdiagram of $D(T_{1,p,q})$ formed by removing $l_{p-1},r_{p-1},f_1,f_2$ by $\Tilde{D}(T_{1,p,q})$. Let $D$ be a diagram containing $\Tilde{D}(T_{1,p,q})$ as a full subdiagram. If the full subdiagram containing all vertices in $\Tilde{D}(T_{1,p,q})$ and adjacent vertices is one of the following:
\[\adjustbox{scale=1}{\begin{tikzcd}
    &&g_1\arrow[dr,"2"]&&\\
    &h_1\arrow[ur,"2"]\arrow[dr]&&h_2\arrow[ll]\arrow[ddl]\\
    &&m_1\arrow[ur]\\&&
    s\arrow[uul]
\end{tikzcd}}\adjustbox{scale=1}{\begin{tikzcd}
 h_{1}\arrow[dr]&\dots&h_{q+1}\arrow[ddl]\\
 &m_{1}\arrow[ur]\\
 &s\arrow[uul]
\end{tikzcd}}\adjustbox{scale=1}{\begin{tikzcd}
 \vdots&\vdots&\vdots\\
 l_{p-2}\arrow[dr]&&r_{p-2}\arrow[ddl]\\
 &m_{p-1}\arrow[ur]&&&&\\
 &s\arrow[uul]&&
\end{tikzcd}}\]\[\adjustbox{scale=0.9}{\begin{tikzcd}
    &&g_1\arrow[dr,"2"]&&\\
    &h_1\arrow[ur,"2"]\arrow[dr]&&h_2\arrow[ll]\arrow[dr]&\\
    u\arrow[ur]&&m_1\arrow[ll]\arrow[ur]&&v\arrow[ll]
\end{tikzcd}}\begin{tikzcd}
 h_1\arrow[dr]&\dots&h_{q+1}\arrow[dd]\\
 &m_{1}\arrow[ur]\arrow[dl]\\
 u\arrow[uu]&&v\arrow[ul]
\end{tikzcd}\begin{tikzcd}
 \vdots&\vdots&\vdots\\
 l_{p-2}\arrow[dr]&&r_{p-2}\arrow[dd]\\
 &m_{p-1}\arrow[ur]\arrow[dl]&&&&\\
 u\arrow[uu]&&v\arrow[ul]&&
\end{tikzcd}\]

Then after mutating $\hat{D}$ following step 1 to 4(b) of the mutation sequence in section \ref{puncture}, we get (the vertices in $\Tilde{D}(T_{1,p,q})\setminus\{w,x,y,z\}$ are the same as that when $n=1$):

\[\adjustbox{scale=0.9}{\begin{tikzcd}
    &&\color{red}g_1^{g_1,2h_2}\arrow[dl,"2"]&&\\
    &\color{green}m_1^{h_2,2g_1}\arrow[rr]\arrow[dr]&&\color{red}h_2^{m_1}\arrow[ul,"2"]\arrow[ddl]&\\
    &&\color{red}h_1^{h_1}\arrow[ur]\\&&
    \color{green}s^{s,h_1,m_{1}}\arrow[uul]
\end{tikzcd}}\begin{tikzcd}
    &&\color{red}w^{W}\arrow[dll]\\
    \color{green}y^{Y}\arrow[rrrr]\arrow[drr]&&&&\color{red}z^{m_{p-1}}\arrow[ddll]\arrow[ull]\\
    &&\color{red}x^{X}\arrow[urr]\\&&
    \color{green}s^{s,X,m_{p-1}}\arrow[uull]
\end{tikzcd}\]
\[\adjustbox{scale=0.9}{\begin{tikzcd}
    &&\color{red}g_1^{g_1,2h_2}\arrow[dl,"2"]&&\\
    &\color{green}m_1^{h_2,2g_1}\arrow[rr]\arrow[dr]&&\color{red}h_2^{m_1}\arrow[ul,"2"]\arrow[dr]&\\
    \color{green}u^{u,h_1}\arrow[ur]&&\color{red}h_1^{h_1}\arrow[ll]\arrow[ur]&&\color{green}v^{v,m_1}\arrow[ll]
\end{tikzcd}}\begin{tikzcd}
    &&\color{red}w^{W}\arrow[dl]\\&
    \color{green}y^{Y}\arrow[rr]\arrow[dr]&&\color{red}z^{m_{p-1}}\arrow[dr]\arrow[ul]\\
    \color{green}u^{u,X}\arrow[ur]&&\color{red}x^{X}\arrow[ll]\arrow[ur]&&\color{green}v^{v,m_{p-1}}\arrow[ll]
\end{tikzcd}\]

where $w=\begin{cases}
    h_2&\text{if }p=2,q>1\\
    m_2&\text{if }p=3\\
    m_1&\text{if }p=4\\
    r_{p-4}&\text{otherwise}
\end{cases}, x=\begin{cases}
    h_1&\text{if }p=2,q>1\\
    m_{p-2}&\text{otherwise}
\end{cases},y=\begin{cases}
    m_1&\text{if }p=2,q>1\\
    l_{p-2}&\text{otherwise}
\end{cases},
z=\begin{cases}
    h_{q+1}&\text{if }p=2,q>1\\
    r_{p-2}&\text{otherwise}
\end{cases}$

$W,Y$ are sets of (frozen) vertices with $W\subset Y$, and $X=\begin{cases}
    h_1&\text{if }p=2,q>1\\
    l_{p-2}&\text{otherwise}
\end{cases}$.

after this, for $p=2,q=1$, if after a sequence $\zeta$ of mutation at vertices other than $g_1$, we get the diagrams on the left with vertices in $D\setminus \Tilde{D}(T_{1,p,q})$ not connected to $g_1$;

for $p\neq2$ or $q\neq1$, $\zeta$ does not contain vertices not in $\Tilde{D}(T_{1,p,q})\setminus\{x,y,z\}$, we get the diagrams on the right with vertices in $D\setminus \Tilde{D}(T_{1,p,q})$ not connected to $w$:

\[\adjustbox{scale=1}{\begin{tikzcd}
    &&\color{green}g_1^{g_1}\arrow[dl,"2"]&&\\
    &\color{red}h_2^{h_1}\arrow[ddr]\arrow[rr]&&\color{red}m_1^{h_2,2g_1}\arrow[ul,"2"]\arrow[dl]&\\
    &&\color{red}h_1^{s}\arrow[ul]\\&&
    \color{red}s^{m_{1}}\arrow[uur]
\end{tikzcd}}\begin{tikzcd}
    &&\color{green}w^{Y\setminus W}\arrow[dll]\\
    \color{red}z^{X}\arrow[ddrr]\arrow[rrrr]&&&&\color{red}y^{Y}\arrow[ull]\arrow[dll]\\
    &&\color{red}x^{s}\arrow[ull]\\&&
    \color{red}s^{m_{p-1}}\arrow[uurr]
\end{tikzcd}\]\[\adjustbox{scale=1}{\begin{tikzcd}
    &&&\color{green}g_1^{g_1}\arrow[dl,"2"]&&\\
    &&\color{red}h_2^{h_1}\arrow[dr]&&\color{red}m_1^{h_2,2g_1}\arrow[ul,"2"]\arrow[ll]&\\
    &&&\color{red}h_1^{m_1}\arrow[ur]&&
\end{tikzcd}}\begin{tikzcd}
    &&&\color{green}w^{Y\setminus W}\arrow[dl]\\&&
    \color{red}z^{X}\arrow[rr]\arrow[dr]&&\color{red}y^{Y}\arrow[ul]\\&
    &&\color{red}x^{m_{p-1}}\arrow[ur]&&\\
\end{tikzcd}\]

and vertices in $D\setminus\Tilde{D}(T_{1,p,q})$ all red, then after mutating $\hat{D}$ following step 6(b) to (d) of the mutation sequence in section \ref{puncture}, all vertices in $\hat{D}$ will be red.

\end{lemma}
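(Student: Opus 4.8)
Let me understand what Lemma \ref{one_puncture_summary} is asserting. This is a technical "black box" lemma designed to be reused in the higher-genus cases ($n=0$ with $p>2$, and later $n>1$). The key observation, stated in Section \ref{core}, is that removing the arc $s$ from $T_{0,p,q}$ yields a subtriangulation contained in $T_{1,p-1,q}$. So the idea is that the bulk of the mutation sequence $\Delta_0$ for the genus-$0$ case coincides, combinatorially, with $\Delta_1$ for genus $1$ with $p$ replaced by $p-1$ — except that there is an extra vertex $s$ (and the outer arcs $f_1, f_2, l_{p-1}, r_{p-1}$ are absent, which is why we work with the truncated diagram $\Tilde{D}(T_{1,p,q})$). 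The lemma packages two facts: first, running steps 1 through 4(b) on the embedded copy of $\Tilde{D}$ produces a prescribed local picture involving $g_1, m_1, h_1, h_2, s$ (and the boundary vertices $w,x,y,z$); and second, once we are in the appropriate configuration after the "corner-tagging" phase, running steps 6(b)–(d) turns everything red.

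**The plan.**

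My plan is to prove this entirely by transport of structure from the genus-$1$ computations already established in Section \ref{n=1}. The point is that every mutation in steps 1–4(b) is a mutation at a vertex lying in the embedded copy of $\Tilde{D}(T_{1,p-1,q})$, and since mutation of a diagram is a purely local operation (it only touches a mutated vertex and its neighbours), the evolution of those vertices is identical to what was computed in the proofs of the six subcases of Section \ref{n=1}, regardless of what else $D$ contains. So first I would observe that the arc $s$ is never mutated during steps 1–4(b), and track how $s$ accumulates frozen-vertex superscripts as its neighbours get mutated. Concretely, $s$ is a neighbour of whichever arc plays the role of the "corner outer arc" in the genus-$1$ picture; its superscript changes exactly when an adjacent vertex is mutated, according to the bookkeeping rules for $A \setminus B$ and the diagram-mutation rule $\pm\sqrt{c}\pm\sqrt{d}=\sqrt{ab}$ laid out in Section \ref{notations}. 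I would verify, by inspecting the two given starting configurations (the cases $p=2,q>1$ versus the generic $p$, corresponding to the two rows of starting diagrams), that the final superscript of $s$ is exactly $s, X, m_{p-1}$ (resp.\ $s, h_1, m_1$ in the small case), and that $s$ remains green throughout because it only ever receives arrows from green vertices and its arrow to the frozen $s'$ is never reversed.

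**Key steps in order.**

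The steps I would carry out are: (i) fix the dictionary between the genus-$0$ labels and the genus-$1$ labels under the identification of Section \ref{core}, and identify $w,x,y,z$ as the images of the vertices $r_{p-4}$ (or its small-$p$ analogues), $m_{p-2}$, $l_{p-2}$, $r_{p-2}$; (ii) invoke the already-proved subcase computations to assert that, after steps 1–4(b), every vertex in $\Tilde{D}(T_{1,p-1,q})\setminus\{w,x,y,z\}$ attains exactly the state it attains in the genus-$1$ proof; (iii) separately compute the evolution of the four boundary vertices $w,x,y,z$ together with $s$, since these are the only vertices whose neighbourhoods differ from the pure genus-$1$ situation (because of the presence of $s$ and the absence of the $f$-arcs) — this is the genuinely new bookkeeping and produces the displayed target diagrams; (iv) for the second half, note that after the bottom-left/top-right has been tagged notched, the hypothesis on $\zeta$ guarantees $s$ has been untouched and all of $D\setminus\Tilde D$ is red, so running steps 6(b)–(d) reverses steps 1–4(b) on the embedded copy and, by a direct check analogous to Lemma \ref{corner}, flips $s$ and the remaining green vertices to red. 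I would phrase (ii) as "by the computations in Sections \ref{2,>1}–\ref{odd}" and only write out (iii) in detail.

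**The main obstacle.**

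The hard part will be step (iii): the careful tracking of the superscripts on $s$, $w$, $x$, $y$, $z$ across the entire sequence, because these are precisely the vertices where the genus-$0$ diagram differs from the genus-$1$ template, so the reusable computations do not directly apply to them. In particular I must check that the once-punctured-digon flips (the $h_1,m_1,h_2$ and $l_k,r_k,m$ maneuvers) interact correctly with $s$, and that $s$ genuinely stays green until the final phase — the whole construction collapses if $s$ turns red prematurely or fails to turn red at the end. A secondary subtlety is confirming the case split on $w$ (the four-way definition by $p=2,q>1$; $p=3$; $p=4$; else) is consistent with the termination state of step 4(b) in each parity of $p$, since the inner-arc "unwrapping" in step 4(b) leaves different vertices adjacent to the top-right region depending on $p \bmod 2$. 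I expect each individual verification to be a short direct computation once the dictionary is fixed, so the difficulty is organizational rather than deep; the substance of the argument is the locality of mutation, which lets me quote the genus-$1$ work wholesale for the interior vertices.
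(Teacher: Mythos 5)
Your proposal is correct and follows essentially the same route as the paper: the paper's proof likewise rests on the locality of mutation (so the embedded copy of $\Tilde{D}(T_{1,p,q})$ evolves exactly as in the genus-$1$ computations), observes that $s$ (resp.\ $u,v$) is affected by steps 1--4(b) in precisely the way $l_{p-1},r_{p-1}$ are when $n=1$ (picking up extra frozen vertices only in step 2 for $p=2$ or step 1 for $p>2$), and for the second half notes that after $\zeta$ the displayed subdiagram matches the genus-$1$ state after step 6(a) while the rest of $\Tilde{D}$ is untouched, so steps 6(b)--(d) finish as before. The only cosmetic difference is that you propose to recompute the evolution of $w,x,y,z$ separately, whereas the paper absorbs these into the quoted genus-$1$ computation and isolates only $s$ as new bookkeeping.
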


\begin{proof}
    Since after step 1 to 4(b), the only vertices affected are vertices in $\Tilde{D}(T_{1,p,q})$ and adjacent vertices, to prove the first part, we only need to observe that $s$ (or $u,v$) is affected by the mutations in the way of $l_{p-1},r_{p-1}$ when $n=1$, and the only time $l_{p-1},r_{p-1}$ are adjacent to additional frozen vertices is during step 2 (if $p=2$) or step 1 (if $p>2$).

    Now observe that after $\zeta$, the part of diagram shown is same as that when $n=1$ after step 6(a) in all cases, and that $\zeta$ does not change $\Tilde{D}(T_{1,p,q})\setminus\{w,x,y,z\}$ (since vertices in $\Tilde{D}(T_{1,p,q})\setminus\{w,x,y,z\}$ are adjacent to only $w$, and $\zeta$ does not involve $w$ nor these vertices), we may conclude that after step 6(b) to (d), the remaining vertices will be red.
\end{proof}

With this lemma, it suffices to show that step 5 satisfies the conditions of $\zeta$ in the lemma.

As the case $p=3,q=1$ (which corresponds to the case where $p=2,q=1$ in the lemma) and the remaining cases have similar diagrams and mutation sequences, we shall give the proof for the remaining cases (that is, $p>3$ or $q>1$).

After step 4(b), by Lemma \ref{one_puncture_summary}, at the vertices involved in $\gamma$ and adjacent vertices, the diagram looks like this:

\[\begin{tikzcd}
    &&\color{red}w^{W}\arrow[dll]\\
    \color{green}y^{Y}\arrow[rrrr]\arrow[drr]&&&&\color{red}z^{m_{p-2}}\arrow[ddll]\arrow[ull]\\
    &&\color{red}x^{X}\arrow[urr]\\&&
    \color{green}s^{s,X,m_{p-2}}\arrow[uull]
\end{tikzcd}\]

where $w=\begin{cases}
    h_2&\text{if }p=3,q>1\\
    m_2&\text{if }p=4\\
    m_1&\text{if }p=5\\
    r_{p-5}&\text{otherwise}
\end{cases}, x=\begin{cases}
    h_1&\text{if }p=3\\
    m_{p-3}&\text{otherwise}
\end{cases},y=\begin{cases}
    m_1&\text{if }p=3\\
    l_{p-3}&\text{otherwise}
\end{cases},
z=\begin{cases}
    h_{q+1}&\text{if }p=3\\
    r_{p-3}&\text{otherwise}
\end{cases}$,

$W,Y$ are sets of (frozen) vertices with $W\subset Y$, and $X=\begin{cases}
    h_1&\text{if }p=3\\
    l_{p-3}&\text{otherwise}
\end{cases}$.

In terms of triangulation, after step 4(b), the arc $w$ wraps in the remaining arcs:

\[\adjustbox{scale=0.5}{\begin{tikzpicture}[every edge quotes/.style={auto=right}]
    \begin{scope}[every node/.style={sloped,allow upside down}][every edge quotes/.style={auto=right}] 
        \node at (6.85,6.85) {$\iddots$};
        \draw (0,0) to [bend right=45,black] node[near end,below] {$y$} (8.75,8.75);
        \draw (0,0) to [bend left=45,black] node[near end,above] {$z$} (8.75,8.75);
        \draw (0,0) to [bend right=45,black] node[near end,below,black] {$s$} (10,10);
        \draw (8.75,8.75) to node[below] {$x$} (10,10);
        \draw (8.75,8.75) to [loop,in=255,out=195,min distance=150mm] node[below=0.1] {$w$} (8.75,8.75);
        \fill (0,0) circle (2pt);
        \fill (10,10) circle (2pt);
        \fill[blue] (8.75,8.75) circle (2pt);
    \end{scope}
\end{tikzpicture}}\]

Notice the once-punctured digon formed by $y,z,s,x$. If we flip the arcs $s,x$, then the top right puncture is tagged notched. However, mutating at $x$ after $s$ is impossible as $x$ is still red:

\adjustbox{scale=0.5}{\begin{tikzpicture}[every edge quotes/.style={auto=right}]
    \begin{scope}[every node/.style={sloped,allow upside down}][every edge quotes/.style={auto=right}]
        \node at (6.85,6.85) {$\iddots$};
        \draw (0,0) to [bend right=45,black] node[near end,below] {$y$} (8.75,8.75);
        \draw (0,0) to [bend left=45,black] node[near end,above] {$z$} (8.75,8.75);
        \draw (8.75,8.75) to [bend right=45,black] node[below,black] {$s$} node[near end,rotate=90] {$\bowtie$}  (10,10);
        \draw (8.75,8.75) to node[below] {$x$} (10,10);
        \draw (8.75,8.75) to [loop,in=255,out=195,min distance=150mm] node[below=0.1] {$w$} (8.75,8.75);
        \fill (0,0) circle (2pt);
        \fill (10,10) circle (2pt);
        \fill[blue] (8.75,8.75) circle (2pt);
    \end{scope}
\end{tikzpicture}}\begin{tikzcd}[baseline=-150pt]
    &&\color{red}w^{W}\arrow[dll]\\
    \color{green}y^{Y}\arrow[ddrr]\arrow[drr]&&&&\color{green}z^{s,X}\arrow[ull]\\
    &&\color{red}x^{X}\arrow[urr]\\&&
    \color{red}s^{s,X,m_{p-2}}\arrow[uurr]
\end{tikzcd}

Observe that by treating $s$ and $x$ as the same arc, there is a once-punctured digon formed by $w,s$ and $x$, $y,z$. Mutate at $y,z$:

\adjustbox{scale=0.5}{\begin{tikzpicture}[every edge quotes/.style={auto=right}]
    \begin{scope}[every node/.style={sloped,allow upside down}][every edge quotes/.style={auto=right}]
        \node at (6.85,6.85) {$\iddots$};
        \draw (0,0) to [bend right=45,black] node[near end,below] {$z$} (8.75,8.75);
        \draw (0,0) to [bend left=45,black] node[near end,above] {$y$} (8.75,8.75);
        \draw (8.75,8.75) to [bend right=45,black] node[below,black] {$s$} node[near end,rotate=90] {$\bowtie$}  (10,10);
        \draw (8.75,8.75) to node[below] {$x$} (10,10);
        \draw (8.75,8.75) to [loop,in=255,out=195,min distance=150mm] node[below=0.1] {$w$} (8.75,8.75);
        \fill[blue] (0,0) circle (2pt);
        \fill (10,10) circle (2pt);
        \fill[blue] (8.75,8.75) circle (2pt);
    \end{scope}
\end{tikzpicture}}\begin{tikzcd}[baseline=-150pt]
    &&\color{green}w^{Y\setminus W}\arrow[dll]\\
    \color{red}z^{s,X}\arrow[ddrr]\arrow[drr]&&&&\color{red}y^{Y}\arrow[ull]\\
    &&\color{green}x^{s}\arrow[urr]\\&&
    \color{red}s^{m_{p-2}}\arrow[uurr]
\end{tikzcd}

Now we can mutate at $x$ and tag the top right puncture notched:

\adjustbox{scale=0.5}{\begin{tikzpicture}[every edge quotes/.style={auto=right}]
    \begin{scope}[every node/.style={sloped,allow upside down}][every edge quotes/.style={auto=right}]
        \node at (6.85,6.85) {$\iddots$};
        \draw (0,0) to [bend right=45,black] node[near end,below] {$z$} (8.75,8.75);
        \draw (0,0) to [bend left=45,black] node[near end,above] {$y$} (8.75,8.75);
        \draw (8.75,8.75) to node[below,black] {$s$}  (10,10);
        \draw (0,0) to [bend right=45,black] node[below] {$x$} (10,10);
        \draw (8.75,8.75) to [loop,in=255,out=195,min distance=150mm] node[below=0.1] {$w$} (8.75,8.75);
        \fill[blue] (0,0) circle (2pt);
        \fill[blue] (10,10) circle (2pt);
        \fill[blue] (8.75,8.75) circle (2pt);
    \end{scope}
\end{tikzpicture}}\begin{tikzcd}[baseline=-150pt]
    &&\color{green}w^{Y\setminus W}\arrow[dll]\\
    \color{red}z^{X}\arrow[ddrr]\arrow[rrrr]&&&&\color{red}y^{Y}\arrow[ull]\arrow[dll]\\
    &&\color{red}x^{s}\arrow[ull]\\&&
    \color{red}s^{m_{p-2}}\arrow[uurr]
\end{tikzcd}

and we obtain the desired diagram in Lemma \ref{one_puncture_summary}. Therefore, the mutation sequence $\Delta_0$ is a maximal green sequence. $\blacksquare$

\section{The case where $n>1$}\label{n>1}

\subsection{Constructing the triangulation for $n>1$}\label{construction_n}

As we have shown in section \ref{puncture} that maximal green sequences do not exist when $p=1$, we shall omit the construction in this case.

An orientable surface of genus $2$ with $p$ punctures and $q$ orbifold points, $p>1$ has triangulation $T_{2,p,q}$, as shown below:

\[\begin{tikzpicture}[every edge quotes/.style={auto=right}]
    \begin{scope}[every node/.style={sloped,allow upside down}][every edge quotes/.style={auto=right}]

\node at (6.85,6.85) {$\iddots$};
        \node at ($(0,5)!.5!(5,0)$) {$\ddots$};
        \draw (0,0) to [bend right=15,black] node [below=.15,right] {$g_1$} (3.2,1.8);
        \draw (0,0) to [bend left=15,black] node [above=.15,right] {$g_q$} (1.8,3.2);
        \draw (0,0) to [bend right=45,black] node[below=.15,right=1] {$h_1$} (5,5);
        \draw (0,0) to [bend left=45,black] node[above=.15,right=1] {$h_{q+1}$} (5,5);
        \draw (0,0) to [bend right=15,black] node[right=1,below] {$h_2$} (5,5);
        \draw (0,0) to [bend left=15,black] node[right=1,above] {$h_q$} (5,5);
        \draw (0,0) to [bend right=45,black] node[near end,right,below=-.1] {$l_1$} (6.25,6.25);
        \draw (0,0) to [bend left=45] node[near end,right,above=-.1,black] {$r_1$} (6.25,6.25);
        \draw (0,0) to [bend right=45,black] node[near end,below] {$l_{p-2}$} (8.75,8.75);
        \draw (0,0) to [bend left=45,black] node[near end,above] {$r_{p-2}$} (8.75,8.75);
        \draw (0,0) to [bend right=45,black] node[near end,above] {$l_{p-3}$} (8,8);
        \draw (0,0) to [bend left=45,black] node[near end,above] {$r_{p-3}$} (8,8);
        \draw (0,0) to [bend right=45,black] node[near end,below,black] {$e_1$} (10,10);
        \draw (0,0) to [bend left=45,black] node[near end,below,black] {$e_2$} (10,10);
        \draw (5,5) to node[below] {$m_1$} (6.25,6.25);
        \draw (8.75,8.75) to node[below] {$m_{p-1}$} (10,10);
        \draw (6.25,6.25) to node[below] {$m_2$} (6.5,6.5);
        \draw (8,8) to node[left=.2,below] {$m_{p-2}$} (8.75,8.75);
        \draw (7.5,7.5) to node[below=.2,left] {$m_{p-3}$} (8,8);
        \draw (0,0) to node[below=.2,left] {$b_1$} (4,0);
        \draw (4,0) to node[below=.2,left] {$a_1$} (9,1);
        \draw (9,1) to node[below=.2,left] {$b_1$} (10,6);
        \draw (10,6) to node[below=.2,left] {$a_1$} (10,10);
        \draw (0,0) [bend right=30,black] to node[below=.2,left] {$d_1$} (10,6);
        \draw (4,0) [bend right=20,black] to node[below=.2,left] {$c_1$} (10,6);
        \draw (0,0) to node[above=.2,left] {$b_2$} (0,4);
        \draw (0,4) to node[above=.2,left] {$a_2$} (1,9);
        \draw (1,9) to node[above=.2,left] {$b_2$} (6,10);
        \draw (6,10) to node[above=.2,left] {$a_2$} (10,10);
        \draw (0,0) [bend left=30,black] to node[above=.2,left] {$d_2$} (6,10);
        \draw (0,4) [bend left=20,black] to node[above=.2,left] {$c_2$} (6,10);
        \fill (0,0) circle (2pt);
        \fill (10,10) circle (2pt);
        \fill (5,5) circle (2pt);
        \fill (3.2,1.8) node[cross=2pt,rotate=30] {};
        \fill (1.8,3.2) node[cross=2pt,rotate=30] {};
        \fill (6.25,6.25) circle (2pt);
        \fill (8.75,8.75) circle (2pt);
        \fill (8,8) circle (2pt);
        \fill (4,0) circle (2pt);
        \fill (10,6) circle (2pt);
        \fill (9,1) circle (2pt);
        \fill (0,4) circle (2pt);
        \fill (6,10) circle (2pt);
        \fill (1,9) circle (2pt);
    \end{scope}
\end{tikzpicture}\]

Notice that the triangulation consists of two parts: one of them being the subtriangulation mentioned in page \pageref{core}, the other being two copies of the following pentagon (here we have $k=1,2$):

\[\begin{tikzpicture}[every edge quotes/.style={auto=right}]
    \begin{scope}[every node/.style={sloped,allow upside down}][every edge quotes/.style={auto=right}]
        \draw (0,2) to node[above] {$e_k$} (5,2);
        \draw (0,2) to node[above] {$d_k$} (4,1);
        \draw (1,1) to node[above] {$c_k$} (4,1);
        \draw (0,2) to node[above=-0.05] {$b_k$} node [rotate=-90,black] {$\blacktriangle$} (1,1);
        \draw (1,1) to node[above] {$a_k$} node [rotate=-90,black] {$\blacktriangle$} (2.5,0);
        \draw (2.5,0) to node[above] {$b_k$} node [rotate=90,black] {$\blacktriangle$} (4,1);
        \draw (4,1) to node[above] {$a_k$} node [rotate=90,black] {$\blacktriangle$} (5,2);
        \fill (0,2) circle (2pt);
        \fill (5,2) circle (2pt);
        \fill (1,1) circle (2pt);
        \fill (4,1) circle (2pt);
        \fill (2.5,0) circle (2pt);
    \end{scope}
\end{tikzpicture}\]

We shall use an "arc" labeled $\hat{e_k}$ as a shorthand of this pentagon. For example, $T_{2,p,q}$ can be rewritten as follows:

\[\begin{tikzpicture}[every edge quotes/.style={auto=right}]
    \begin{scope}[every node/.style={sloped,allow upside down}][every edge quotes/.style={auto=right}]

\node at (6.85,6.85) {$\iddots$};
        \node at ($(0,5)!.5!(5,0)$) {$\ddots$};
        \draw (0,0) to [bend right=15,black] node [below=.15,right] {$g_1$} (3.2,1.8);
        \draw (0,0) to [bend left=15,black] node [above=.15,right] {$g_q$} (1.8,3.2);
        \draw (0,0) to [bend right=45,black] node[below=.15,right=1] {$h_1$} (5,5);
        \draw (0,0) to [bend left=45,black] node[above=.15,right=1] {$h_{q+1}$} (5,5);
        \draw (0,0) to [bend right=15,black] node[right=1,below] {$h_2$} (5,5);
        \draw (0,0) to [bend left=15,black] node[right=1,above] {$h_q$} (5,5);
        \draw (0,0) to [bend right=45,black] node[near end,right,below=-.1] {$l_1$} (6.25,6.25);
        \draw (0,0) to [bend left=45] node[near end,right,above=-.1,black] {$r_1$} (6.25,6.25);
        \draw (0,0) to [bend right=45,black] node[near end,below] {$l_{p-2}$} (8.75,8.75);
        \draw (0,0) to [bend left=45,black] node[near end,above] {$r_{p-2}$} (8.75,8.75);
        \draw (0,0) to [bend right=45,black] node[near end,above] {$l_{p-3}$} (8,8);
        \draw (0,0) to [bend left=45,black] node[near end,above] {$r_{p-3}$} (8,8);
        \draw (0,0) to [bend right=45,black] node[near end,below,black] {$\hat{e_1}$} (10,10);
        \draw (0,0) to [bend left=45,black] node[near end,below,black] {$\hat{e_2}$} (10,10);
        \draw (5,5) to node[below] {$m_1$} (6.25,6.25);
        \draw (8.75,8.75) to node[below] {$m_{p-1}$} (10,10);
        \draw (6.25,6.25) to node[below] {$m_2$} (6.5,6.5);
        \draw (8,8) to node[left=.2,below] {$m_{p-2}$} (8.75,8.75);
        \draw (7.5,7.5) to node[below=.2,left] {$m_{p-3}$} (8,8);
        \fill (0,0) circle (2pt);
        \fill (10,10) circle (2pt);
        \fill (5,5) circle (2pt);
        \fill (3.2,1.8) node[cross=2pt,rotate=30] {};
        \fill (1.8,3.2) node[cross=2pt,rotate=30] {};
        \fill (6.25,6.25) circle (2pt);
        \fill (8.75,8.75) circle (2pt);
        \fill (8,8) circle (2pt);
    \end{scope}
\end{tikzpicture}\]

If we glue the arcs $e_1,e_2$ of the pentagons along the arcs $\hat{e_1},\hat{e_2}$ respectively, we get back the triangulation $T_{2,p,q}$.

The following diagram is associated to $T_{2,p,q}$:

\[\begin{tikzcd}
 & g_1\arrow[dr,"2"] & & g_2\arrow[dr,"2"]&&&&g_q\arrow[dr,"2"]\\
 h_1\arrow[drrrr]\arrow[ur,"2"] && h_2\arrow[ur,"2"]\arrow[ll] && h_3\arrow[ll]&\cdots\arrow[l]&h_q\arrow[l]\arrow[ur,"2"]&&h_{q+1}\arrow[ll]\arrow[ddll]\\
 &&&&m_1\arrow[dll]\arrow[urrrr]&&&&\\
 &&l_1\arrow[uull]\arrow[drr]&&&&r_1\arrow[ull]\arrow[dd]&&\\
 &&&&m_2\arrow[dll]\arrow[urr]&&&&\\
 &&l_2\arrow[uu]\arrow[drr]&&&&r_2\arrow[ull]\arrow[dd]&&\\
 &&&&m_3\arrow[dll]\arrow[urr]&&&&\\
 &&\vdots\arrow[uu]&&\vdots&&\vdots\arrow[ull]\\
 &&\vdots&&\vdots&&\vdots\arrow[dd]\\
 &&&&m_{p-2}\arrow[dll]&&&&\\
 &&l_{p-2}\arrow[uu]\arrow[drr]&&&&r_{p-2}\arrow[ull]\arrow[dd]\\
 &&&&m_{p-1}\arrow[urr]\arrow[dll]&&&&\\
 &&e_1\arrow[uu]\arrow[dr]&&&&e_2\arrow[ull]\arrow[dr]\\
    &d_1\arrow[ur]\arrow[dr]&&a_1\arrow[ll]\arrow[dl]&&d_2\arrow[ur]\arrow[dr]&&a_2\arrow[ll]\arrow[dl]\\
    &&c_1\arrow[d,"4"]&&&&c_2\arrow[d,"4"]\\
    &&b_1\arrow[uur]\arrow[uul]&&&&b_2\arrow[uur]\arrow[uul]
\end{tikzcd}\]

An orientable surface of genus $n$ with $p$ punctures and $q$ orbifold points, $n,p,q\in\mathbb{Z}$, $n>2$, $p,q\geq1$, has triangulation $T_{n,p,q}$, as shown below:

\[\begin{tikzpicture}[every edge quotes/.style={auto=right}]
    \begin{scope}[every node/.style={sloped,allow upside down}][every edge quotes/.style={auto=right}]

\node at (6.85,6.85) {$\iddots$};
        \node at ($(0,5)!.5!(5,0)$) {$\ddots$};
        \draw (0,0) to [bend right=15,black] node [below=.15,right] {$g_1$} (3.2,1.8);
        \draw (0,0) to [bend left=15,black] node [above=.15,right] {$g_q$} (1.8,3.2);
        \draw (0,0) to [bend right=45,black] node[below=.15,right=1] {$h_1$} (5,5);
        \draw (0,0) to [bend left=45,black] node[above=.15,right=1] {$h_{q+1}$} (5,5);
        \draw (0,0) to [bend right=15,black] node[right=1,below] {$h_2$} (5,5);
        \draw (0,0) to [bend left=15,black] node[right=1,above] {$h_q$} (5,5);
        \draw (0,0) to [bend right=45,black] node[near end,right,below=-.1] {$l_1$} (6.25,6.25);
        \draw (0,0) to [bend left=45] node[near end,right,above=-.1,black] {$r_1$} (6.25,6.25);
        \draw (0,0) to [bend right=45,black] node[near end,below] {$l_{p-2}$} (8.75,8.75);
        \draw (0,0) to [bend left=45,black] node[near end,above] {$r_{p-2}$} (8.75,8.75);
        \draw (0,0) to [bend right=45,black] node[near end,above] {$l_{p-3}$} (8,8);
        \draw (0,0) to [bend left=45,black] node[near end,above] {$r_{p-3}$} (8,8);
        \draw (0,0) to [bend right=45,black] node[near end,below,black] {$\hat{e_1}$} (10,10);
        \draw (0,0) to node[near end,below,black] {$\hat{e_2}$} (0,5);
        \draw (0,5) to node[near end,below,black] {$\hat{e_3}$} (1,8);
        \draw (8,12) to node[above,black] {$\hat{e_n}$} (10,10);
        \draw (5,11) to node[above,black] {$\hat{e_{n-1}}$} (8,12);
        \draw (0,0) to [bend left=45,black] node[near end,below] {$f_1$} (10,10);
        \draw (0,5) to [bend left=30,black] node[below] {$f_2$} (10,10);
        \draw (1,8) to [bend left=15] node [above] {$f_3$} (10,10);
        \draw (5,11) to [bend left=30] node[near end,below] {$f_{n-2}$} (10,10);
        \draw (5,5) to node[below] {$m_1$} (6.25,6.25);
        \draw (8.75,8.75) to node[below] {$m_{p-1}$} (10,10);
        \draw (6.25,6.25) to node[below] {$m_2$} (6.5,6.5);
        \draw (8,8) to node[left=.2,below] {$m_{p-2}$} (8.75,8.75);
        \draw (7.5,7.5) to node[below=.2,left] {$m_{p-3}$} (8,8);
        \node at (3,10) {\reflectbox{$\ddots$}};
        \fill (0,0) circle (2pt);
        \fill (10,10) circle (2pt);
        \fill (0,5) circle (2pt);
        \fill (1,8) circle (2pt);
        \fill (8,12) circle (2pt);
        \fill (5,5) circle (2pt);
        \fill (3.2,1.8) node[cross=2pt,rotate=30] {};
        \fill (1.8,3.2) node[cross=2pt,rotate=30] {};
        \fill (6.25,6.25) circle (2pt);
        \fill (8.75,8.75) circle (2pt);
        \fill (8,8) circle (2pt);
        \fill (5,11) circle (2pt);
    \end{scope}
\end{tikzpicture}\]

This triangulation is same as $T_{2,p,q}$ with the "arc" $\hat{e_2}$ replaced by the "$n$-gon" bounded by the arc $f_1$ and the "arcs" $\hat{e_2},\hat{e_3},\dots,\hat{e_n}$, triangulated by the arcs $f_2,f_3,\dots,f_{n-2}$.

The following diagram is associated to $T_{n,p,q}$:

\[\adjustbox{scale=0.5,center}{\begin{tikzcd}
 & g_1\arrow[dr,"2"] & & g_2\arrow[dr,"2"]&&&&g_q\arrow[dr,"2"]\\
 h_1\arrow[drrrr]\arrow[ur,"2"] && h_2\arrow[ur,"2"]\arrow[ll] && h_3\arrow[ll]&\cdots\arrow[l]&h_q\arrow[l]\arrow[ur,"2"]&&h_{q+1}\arrow[ll]\arrow[ddll]\\
 &&&&m_1\arrow[dll]\arrow[urrrr]&&&&\\
 &&l_1\arrow[uull]\arrow[drr]&&&&r_1\arrow[ull]\arrow[dd]&&\\
 &&&&m_2\arrow[dll]\arrow[urr]&&&&\\
 &&l_2\arrow[uu]\arrow[drr]&&&&r_2\arrow[ull]\arrow[dd]&&\\
 &&&&m_3\arrow[dll]\arrow[urr]&&&&\\
 &&\vdots\arrow[uu]&&\vdots&&\vdots\arrow[ull]\\
 &&\vdots&&\vdots&&\vdots\arrow[dd]\\
 &&&&m_{p-2}\arrow[dll]&&&&\\
 &&l_{p-2}\arrow[uu]\arrow[drr]&&&&r_{p-2}\arrow[ull]\arrow[dd]\\
 &&&&m_{p-1}\arrow[urr]\arrow[dll]&&&&&&&&&&&&a_n\arrow[dd]\arrow[dr]\\
 &&e_1\arrow[uu]\arrow[dr]&&&&f_1\arrow[ull]\arrow[dr]&&&f_2\arrow[dr]\arrow[lll]&&f_3\arrow[ll]&\cdots\arrow[l]&f_{n-2}\arrow[l]\arrow[dr]&&e_n\arrow[ll]\arrow[ur]&&c_n\arrow[r,"4"]&b_n\arrow[ull]\arrow[dll]\\
    &d_1\arrow[ur]\arrow[dr]&&a_1\arrow[ll]\arrow[dl]&&&&e_2\arrow[dr]\arrow[urr]&&&e_3\arrow[dr]\arrow[ur]&&&&e_{n-1}\arrow[ur]\arrow[dr]&&d_n\arrow[ul]\arrow[ur]\\
    &&c_1\arrow[d,"4"]&&&&d_2\arrow[ur]\arrow[dr]&&a_2\arrow[ll]\arrow[dl]&d_3\arrow[ur]\arrow[dr]&&a_3\arrow[ll]\arrow[dl]&&d_{n-1}\arrow[ur]\arrow[dr]&&a_{n-1}\arrow[ll]\arrow[dl]\\
    &&b_1\arrow[uur]\arrow[uul]&&&&&c_2\arrow[d,"4"]&&&c_3\arrow[d,"4"]&&&&c_{n-1}\arrow[d,"4"]\\
    &&&&&&&b_2\arrow[uul]\arrow[uur]&&&b_3\arrow[uul]\arrow[uur]&&&&b_{n-1}\arrow[uul]\arrow[uur]
\end{tikzcd}}\]

\subsection{Proof of the main result for $n>1$}\label{proof_n}

\begin{theorem}\label{main_n}
    For an orbifold $\mathcal{O}$ of genus $n$ with $p$ punctures and $q$ orbifold points, where $n>1$, the diagram $D(T_{n,p,q})$ associated to the triangulation $T_{n,p,q}$ has a maximal green sequence if $p\geq 2$. Moreover, if $p=1$ then $D(T)$ does not admit a maximal green sequence for any triangulation $T$ of $\mathcal{O}$.
\end{theorem}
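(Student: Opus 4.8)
The plan is to reduce the $n>1$ case to the $n=1$ machinery, exactly as was done for $n=0$, via Lemma \ref{one_puncture_summary}. The $p=1$ statement is immediate from Section \ref{puncture}. For $p\geq 2$, the first step is to observe that $D(T_{n,p,q})$ contains the reduced diagram $\widetilde{D}(T_{1,p,q})$ of Lemma \ref{one_puncture_summary} as a full subdiagram: the vertices $g_i,h_i,l_i,r_i,m_i$ form precisely this core, and the remaining vertices (the handle arcs $a_k,b_k,c_k,d_k,e_k$ together with the gluing arcs $f_k$) attach to the core only through $e_1$ and $e_2$, which play the roles of $u$ and $v$ in the second family of admissible boundary configurations in Lemma \ref{one_puncture_summary}. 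Consequently steps 1--4(b) act on $D(T_{n,p,q})$ exactly as on $D(T_{1,p,q})$, and the lemma reduces the whole problem to exhibiting a sequence $\zeta$, mutating only the handle and gluing vertices together with $x,y,z$, that (i) turns every handle and gluing vertex red and (ii) leaves the local picture around $w,x,y,z$ in the prescribed form; steps 6(b)--(d) then close out the core, and all vertices become red.

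Thus the entire content of the $n>1$ proof is the construction and verification of $\zeta$ on the genus part. I would first treat $n=2$ in Section \ref{n=2}, where $\zeta$ must process the two pentagon gadgets $\hat{e}_1,\hat{e}_2$. For each pentagon I would flip the arcs in an order mimicking the square-handle sequence of the $n=1$ case, with $a_k,b_k,c_k,d_k,e_k$ playing a role analogous to $f_1,f_2,l_{p-1},r_{p-1}$; I expect the pentagon to admit a local mutation sequence --- the genus-$1$ analogue of Lemma \ref{corner} --- realizing the self-gluing and returning all five vertices to red. The key geometric move, just as in the $n=0$ argument, is to manufacture a once-punctured digon at the shared corner puncture: first flip the gluing arcs so the handle regions abut, then flip $y,z$ and finally $x$ to tag the corner notched. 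I would package this as a handle lemma and verify it by the same kind of direct computation used for Lemma \ref{corner}.

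For $n>2$ in Section \ref{n>2} I would exploit the modular description of $T_{n,p,q}$ as $T_{2,p,q}$ with the slot $\hat{e}_2$ expanded into a chain of pentagons $\hat{e}_2,\dots,\hat{e}_n$ joined by the arcs $f_2,\dots,f_{n-2}$. The natural device is induction on the genus: after flipping the gluing arc $f_1$ (and its successors) to detach one handle at a time, the remaining configuration is that of $T_{n-1,p,q}$, so the handle lemma from the $n=2$ case applies repeatedly, peeling off one pentagon per stage while the $f_i$-arrows transmit the red/green status down the chain. I would order $\zeta$ so the handles are resolved from the $e_1$ end inward, checking that each application leaves the partially processed chain in exactly the state the induction hypothesis requires and that the $f_i$ vertices themselves are red after the final handle is closed.

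The hardest part will be the handle lemma itself. Unlike the square handle of the $n=1$ case, a pentagon carries the two weight-$4$ arrows at $c_k$ and the identified sides $a_k,b_k$, so the local combinatorics are genuinely more intricate, and I must confirm that the once-punctured digon needed to notch the corner actually materializes once the handle arcs are flipped (the analogue of the ``treat $s$ and $x$ as the same arc'' step in Section \ref{0,>2}). A second source of difficulty is bookkeeping the frozen-vertex superscripts across the chain in the $n>2$ induction: the inclusion $W\subset Y$ and the disjointness hypotheses of Lemma \ref{one_puncture_summary} must persist as handles are peeled off, so the inductive statement has to track these superscripts precisely rather than merely the colours. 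Once the handle lemma and this bookkeeping are established, the final assembly through Lemma \ref{one_puncture_summary} is routine, and Theorem \ref{main_n} follows for all $n>1$.
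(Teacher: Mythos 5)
Your overall strategy is the one the paper actually uses: reduce to the $n=1$ machinery via Lemma \ref{one_puncture_summary} with the handle/gluing vertices attached to the core only through the positions of $u,v$, then show that the genus part admits a mutation sequence $\zeta$ that turns all handle and gluing vertices red while leaving the local picture around $w,x,y,z$ in the prescribed form. Your ``handle lemma'' is precisely the pair of Lemmas \ref{lemma_iota} and \ref{lemma_pi} (the sequences $\iota_k$ and $\pi_k$, each verified by direct computation), and your identification of the key geometric move --- forming a once-punctured digon at the corner after the handle arcs have been wrapped away, so that $y,z$ can be flipped to notch it --- is exactly what happens in Section \ref{n=2} and in step 5 of $\Delta_n$. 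For $n=2$ your plan is essentially the paper's proof.

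The genuine problem is your $n>2$ argument. You propose induction on the genus, claiming that after flipping $f_1$ and its successors to ``detach one handle at a time, the remaining configuration is that of $T_{n-1,p,q}$.'' That reduction cannot work as stated: flipping the $f_i$ does not lower the genus or delete the processed pentagon's vertices from the diagram, so the resulting diagram is never $D(T_{n-1,p,q})$, and more importantly the critical move of the whole sequence --- notching the outside puncture --- can only be performed \emph{once}, after \emph{all} $n$ pentagons (and the inner/core arcs) have been moved off that puncture, because only then do $w,x,y,z$ bound a once-punctured digon. An induction whose hypothesis is ``$T_{n-1,p,q}$ has a maximal green sequence'' would have to re-enter the notching step at each stage, which is impossible. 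The paper instead processes the whole chain in parallel: step 4(c) separates the pentagons with $f_1,\dots,f_{n-2}$, step 4(d) applies $\iota_k$ to every pentagon, step 4(e) moves the $f_i$ off the corner (after which $x$ wraps around all $n$ pentagons at once), and only then is the digon formed; the verification along the chain is an induction on the \emph{index} $k$ within a single pass, not on the genus. Your underlying instinct --- process the pentagon chain one gadget at a time and track the frozen superscripts --- is sound, but the inductive statement has to be a claim about the partially mutated diagram of $T_{n,p,q}$ itself (the shape of the subdiagram after $f_1,\dots,f_k$ and $\iota_1,\dots,\iota_k$), not a reduction to a lower-genus surface.
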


The proof of the "moreover" part is mentioned in section \ref{puncture}, and we shall prove the rest by providing maximal green sequences.

The mutation sequence $\Delta_n$ formed by concatenating the following steps is a maximal green sequence of $D(T_{n,p,q})$ for $n>1$:

\begin{enumerate}
    \item Tagging alternate punctures notched. $(\alpha_{p-2},\alpha_{p-4},...,\alpha_{p-2\lfloor \frac{p-1}{2}\rfloor})$
    \item Tagging the first interior puncture notched. $(h_{q+1},h_q,...,h_1,m_{p+1-2\lfloor\frac{p}{2}\rfloor},h_2,h_3,...,h_{q+1-p+2\lfloor \frac{p}{2}\rfloor})$
    \item Tagging the remaining interior punctures notched and moving half the inner arcs away from the corner. $(\beta_{p-3},\beta_{p-5},...,\beta_{p+1-2\lfloor\frac{p}{2}\rfloor})$
    \item Moving arcs away from the outside puncture.
    \begin{enumerate}
        \item Moving core arcs away from the outside puncture. $(g_1,g_2,...,g_q,h_{q+\operatorname{sgn}(p-2)},h_{q+\operatorname{sgn}(p-2)-1},...,h_2,\mu)$
        \item Moving the remaining inner arcs away from the outside puncture. $(l_{p-2\lfloor\frac{p-1}{2}\rfloor},r_{p-2\lfloor\frac{p-1}{2}\rfloor},l_{p+2-2\lfloor\frac{p-1}{2}\rfloor},r_{p+2-2\lfloor\frac{p-1}{2}\rfloor},\\...,l_{p-4},r_{p-4})$
        \item Separating the $e_i$'s. $(f_1,f_2...,f_{n-2})$
        \item Moving arcs in each pentagon away from the outside puncture. $(\iota_1,\iota_2,...,\iota_n)$
        \item Moving $f_i$'s away from the outside puncture. $(f_{n-2},f_{n-1},...,f_1)$
    \end{enumerate}
    \item Tagging the outside puncture notched. $(\gamma)$
    \item Moving arcs back to the outside puncture.
    \begin{enumerate}
        \item Moving $f_i$'s back to the outside puncture. $(f_1,f_2...,f_{n-2})$
        \item Moving arcs in each pentagon back to the outside puncture. $(\pi_1,\pi_2,...,\pi_n)$
        \item Moving $f_i$'s back to their original positions. $(f_{n-2},f_{n-1},...,f_1)$
        \item Moving inner arcs back to the outside puncture. 
        $(\delta_{p-4},\delta_{p-6},...,\delta_{p-2\lfloor\frac{p-1}{2}\rfloor})$
        \item Moving core arcs back to the outside puncture. $(\mu,\varepsilon,g_1,g_2,...,g_{q-1},g_q,\omega)$
        \item (If $p>2$ and is even) Moving the remaining pair of inner arc back to the outside puncture. $(l_1,r_1)$
    \end{enumerate}
\end{enumerate}

where

\begin{itemize}
    \item $\alpha_k=r_k,l_k,m_{k+1},m_{k},l_k,r_k$
    \item $\beta_k=\begin{cases}
        r_k,l_k,m_{k+2},m_{k-1}&\text{if }k>1,\\
        r_k,l_k,m_{k+2},h_1&\text{otherwise}
    \end{cases}$
    \item $\mu=\begin{cases}
        m_{p+1-2\lfloor\frac{p}{2}\rfloor}&\text{if }p>2,\\
        \phi & \text{if }p=2
    \end{cases}$
    \item $\iota_k=a_k,e_k,c_k,b_k,a_k,d_k,e_k,c_k,b_k$
    \item $\pi_k=b_k,c_k,e_k,d_k,b_k,a_k,c_k,e_k,b_k$
    \item $\gamma=\begin{cases}
        m_{p-2},l_{p-2},r_{p-2},m_{p-2}&\text{if }p>2,\\
        h_1,m_1,h_{q+1},h_1 & \text{if }p=2
    \end{cases}$
    \item $\delta_k=r_k,r_{k+1},l_k,l_{k+1}$
    \item $\varepsilon=\begin{cases}
        h_2,h_3,...,h_{q+1}&\text{if }p\text{ is even and}>2\\
        h_2,h_3,...,h_q &\text{otherwise}
    \end{cases}$
    \item $\omega=\begin{cases}
        h_{q+1},g_q&\text{if }p\text{ is odd},\\
        \phi & \text{otherwise}
    \end{cases}$
\end{itemize}

As the triangulations when $n=2$ and $n>2$ are different, we shall divide the proof into two cases.

Notice that all steps except step 4(c) to step 6(c) appear in the mutation sequence $\Delta_1$. Therefore in both cases, with Lemma \ref{one_puncture_summary}, it suffices to show that the steps 4(c) to 6(c) satisfy the conditions for $\zeta$ in the lemma. Also, as the case $p=2,q=1$ and the remaining cases have similar diagrams and mutation sequences, we shall give the proof for the remaining cases (that is, $p>2$ or $q>1$).

\subsubsection{The case where $n=2$}\label{n=2}

When $n=2$, the steps 4(c) to 6(b) translate to 

\begin{enumerate}[start=4]
    \item Moving arcs away from the outside puncture.
    \begin{enumerate}[start=3]
        \item Separating the $e_i$'s. $(\phi)$
        \item Moving arcs in each pentagon away from the outside puncture. $(\iota_1,\iota_2)$
        \item Moving $f_i$'s away from the outside puncture. $(\phi)$
    \end{enumerate}
    \item Tagging the outside puncture notched. $(\gamma)$
    \item Moving arcs back to the outside puncture.
    \begin{enumerate}
        \item Moving $f_i$'s back to the outside puncture. $(\phi)$
        \item Moving arcs in each pentagon back to the outside puncture. $(\pi_1,\pi_2)$
        \item Moving $f_i$'s back to their original positions. $(\phi)$
    \end{enumerate}
\end{enumerate}

where

\begin{itemize}
    \item $\iota_k=a_k,e_k,c_k,b_k,a_k,d_k,e_k,c_k,b_k$
    \item $\pi_k=b_k,c_k,e_k,d_k,b_k,a_k,c_k,e_k,b_k$
    \item $\gamma=\begin{cases}
        m_{p-2},l_{p-2},r_{p-2},m_{p-2}&\text{if }p>2,\\
        h_1,m_1,h_{q+1},h_1 & \text{if }p=2
    \end{cases}$
\end{itemize}

According to Lemma \ref{one_puncture_summary}, after step 1 to step 4(b), the full subdiagram containing vertices involved in step 4(c) to step 6(c) and adjacent vertices look like this:

\[\begin{tikzcd}
    &&&\color{red}w^{W}\arrow[dl]\\&&
    \color{green}y^{Y}\arrow[rr]\arrow[dr]&&\color{red}z^{m_{p-1}}\arrow[dr]\arrow[ul]\\&
    \color{green}e_1^{e_1,X}\arrow[ur]\arrow[dr]&&\color{red}x^{X}\arrow[ll]\arrow[ur]&&\color{green}e_2^{e_2,m_{p-1}}\arrow[ll]\arrow[dr]\\
    \color{green}d_1^{d_1}\arrow[ur]\arrow[dr]&&\color{green}a_1^{a_1}\arrow[ll]\arrow[dl]&&\color{green}{d_2}^{d_2}\arrow[ur]\arrow[dr]&&\color{green}a_2^{a_2}\arrow[ll]\arrow[dl]\\
    &\color{green}c_1^{c_1}\arrow[d,"4"]&&&&\color{green}c_2^{c_2}\arrow[d,"4"]\\
    &\color{green}b_1^{b_1}\arrow[uur]\arrow[uul]&&&&\color{green}b_2^{b_2}\arrow[uur]\arrow[uul]
\end{tikzcd}\]

where $w=\begin{cases}
    h_2&\text{if }p=2,q>1\\
    m_2&\text{if }p=3\\
    m_1&\text{if }p=4\\
    r_{p-4}&\text{otherwise}
\end{cases}, x=\begin{cases}
    h_1&\text{if }p=2,q>1\\
    m_{p-2}&\text{otherwise}
\end{cases},y=\begin{cases}
    m_1&\text{if }p=2,q>1\\
    l_{p-2}&\text{otherwise}
\end{cases},
z=\begin{cases}
    h_{q+1}&\text{if }p=2,q>1\\
    r_{p-2}&\text{otherwise}
\end{cases}$

$W,Y$ are sets of (frozen) vertices with $W\subset Y$, and $X=\begin{cases}
    h_1&\text{if }p=2,q>1\\
    l_{p-2}&\text{otherwise}
\end{cases}$.

The corresponding subtriangulation looks like this:

\[\hspace*{-0.25\linewidth}\begin{tikzpicture}[every edge quotes/.style={auto=right}]
    \begin{scope}[every node/.style={sloped,allow upside down}][every edge quotes/.style={auto=right}]
        \draw (0,0) to [bend right=45,black] node[near end,below] {$y$} (8.75,8.75);
        \draw (0,0) to [bend left=45,black] node[near end,above] {$z$} (8.75,8.75);
        \draw (0,0) to [bend right=45,black] node[near end,below,black] {$e_1$} (10,10);
        \draw (0,0) to [bend left=45,black] node[near end,below,black] {$e_2$} (10,10);
        \draw (8.75,8.75) to node[below] {$x$} (10,10);
        \draw (8.75,8.75) to [loop,in=255,out=195,min distance=150mm] node[below=0.1] {$w$} (8.75,8.75);
        \draw (0,0) to node[below=.2,left] {$b_1$} (4,0);
        \draw (4,0) to node[below=.2,left] {$a_1$} (9,1);
        \draw (9,1) to node[below=.2,left] {$b_1$} (10,6);
        \draw (10,6) to node[below=.2,left] {$a_1$} (10,10);
        \draw (0,0) [bend right=30,black] to node[below=.2,left] {$d_1$} (10,6);
        \draw (4,0) [bend right=20,black] to node[below=.2,left] {$c_1$} (10,6);
        \draw (0,0) to node[above=.2,left] {$b_2$} (0,4);
        \draw (0,4) to node[above=.2,left] {$a_2$} (1,9);
        \draw (1,9) to node[above=.2,left] {$b_2$} (6,10);
        \draw (6,10) to node[above=.2,left] {$a_2$} (10,10);
        \draw (0,0) [bend left=30,black] to node[above=.2,left] {$d_2$} (6,10);
        \draw (0,4) [bend left=20,black] to node[above=.2,left] {$c_2$} (6,10);
        \node at (6.85,6.85) {$\iddots$};
        \fill (0,0) circle (2pt);
        \fill (10,10) circle (2pt);
        \fill [blue] (8.75,8.75) circle (2pt);
        \fill (4,0) circle (2pt);
        \fill (10,6) circle (2pt);
        \fill (9,1) circle (2pt);
        \fill (0,4) circle (2pt);
        \fill (6,10) circle (2pt);
        \fill (1,9) circle (2pt);
    \end{scope}
\end{tikzpicture}\]

Step 4(d) consists of $(\iota_1,\iota_2)$. For convenience, we shall show the result of the mutation sequence $\iota_k$ in a lemma:

\begin{lemma}\label{lemma_iota}
    Mutating the following diagram (the blue colour of the vertex $t$ means that the greenness of $t$ does not matter):
    \[\adjustbox{scale=1}{\begin{tikzcd}
    \color{blue}t^{T}\arrow[rr]&&\color{red}u^{U}\arrow[dl]\\
    &\color{green}e_k^{e_k,U}\arrow[ul]\arrow[dr]&&\\
    \color{green}d_k^{d_k}\arrow[ur]\arrow[dr]&&\color{green}a_k^{a_k}\arrow[ll]\arrow[dl]\\
    &\color{green}c_k^{c_k}\arrow[d,"4"]\\
    &\color{green}b_k^{b_k}\arrow[uur]\arrow[uul]
\end{tikzcd}}\]
    with the mutation sequence $\iota_k$ gives the following diagram (after relocating some vertices):

\[\adjustbox{scale=1}{\begin{tikzcd}
    \color{blue}t^{T}\arrow[dr]&&\color{green}u^{E_k,U}\arrow[ll]\\
    &\color{red}b_k^{e_k,U}\arrow[dr]\arrow[ur]&&\\
    \color{red}c_k^{d_k}\arrow[dr]\arrow[ur]&&\color{red}a_k^{a_k}\arrow[ll]\arrow[dl]\\
    &\color{red}d_k^{c_k}\arrow[d,"4"]\\
    &\color{red}e_k^{b_k}\arrow[uul]\arrow[uur]
\end{tikzcd}}\]

where $E_k=4e_k,4a_k,4c_k,4b_k,4d_k$.

\end{lemma}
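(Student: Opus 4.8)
The claim is purely local: the mutation sequence $\iota_k = a_k, e_k, c_k, b_k, a_k, d_k, e_k, c_k, b_k$ transforms the displayed pentagon diagram into the stated final diagram, where the frozen-vertex bookkeeping on the two ``boundary'' vertices $t$ and $u$ evolves in the prescribed way. The plan is therefore to verify the statement by direct computation, tracking the full subdiagram on the nine relevant vertices $\{t, u, e_k, d_k, a_k, c_k, b_k\}$ through each of the nine mutations, and at each stage recording (i) the arrows among the mutable vertices $e_k, d_k, a_k, c_k, b_k$ together with their weights, and (ii) the superscript (set of attached frozen vertices, with weights) of every vertex. The key observation that makes this tractable is that the pentagon $\hat{e_k}$ is a self-contained block: the only vertex connecting it to the rest of the diagram is $e_k$ (via $t$ and $u$), so $t$ and $u$ are affected only when a mutation is applied at a vertex adjacent to them, and all other mutations stay internal to the pentagon.

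First I would set up the initial data explicitly from the displayed diagram, being careful about the weight-$4$ arrow $c_k \to b_k$ and the weight-$2$ arrows incident to the pending-arc structure, and note that $e_k$ carries the composite frozen set $e_k, U$ at the start (this is what couples the block to $u$). I would then apply the nine mutations in order using the diagram-mutation rule from the Preliminaries, $\pm\sqrt{c} \pm \sqrt{d} = \sqrt{ab}$, at each triangle, and the frozen-vertex update rule encoded in the $A \setminus B$ and $A \subset B$ notation from Section \ref{notations}. The mutations at $a_k, e_k, c_k, b_k$ (the first four) reorganize the internal pentagon arrows; the fifth through ninth mutations ($a_k, d_k, e_k, c_k, b_k$) complete the cyclic relabeling so that, after relocation, $b_k$ sits where $e_k$ was, $c_k$ where $d_k$ was, and so on, with $e_k$ ending at the former $b_k$ position carrying superscript $b_k$. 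The crucial coupling step is tracking how $u$ accumulates the frozen set $E_k = 4e_k, 4a_k, 4c_k, 4b_k, 4d_k$: this happens precisely at the mutations where $u$ is adjacent to an internal pentagon vertex being flipped, and the weight-$4$ arrows arise from the doubling that occurs when two arrows into a pending-arc vertex merge.

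The main obstacle I expect is not conceptual but combinatorial precision: keeping the signs in the mutation rule correct (determining at each step whether three vertices form an oriented cycle, which fixes whether weights add or subtract under $\pm\sqrt{c}\pm\sqrt{d}=\sqrt{ab}$) and, simultaneously, propagating the frozen-vertex superscripts without error through nine successive mutations. Because several of the intermediate weights are $4$ (coming from pending arcs and from the genus-handle gluing), a single sign or weight slip compounds and invalidates the endpoint. I would mitigate this by drawing the intermediate diagram after each of the nine mutations and checking at every stage that the greenness/redness of each vertex is consistent with its superscript (a green vertex must have no incoming frozen arrows, a red vertex no outgoing), using this as an internal consistency check. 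Since $t$ is marked blue, its color need never be verified, which removes one bookkeeping burden; the only outputs that must match the statement exactly are the internal pentagon configuration and the superscript $E_k, U$ on $u$. Once $\iota_1$ is verified in this way, $\iota_2$ follows by the same computation with $k=2$, and the two applications chain because the pentagons share only the central vertices already handled by Lemma \ref{one_puncture_summary}.
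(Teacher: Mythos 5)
Your proposal is correct and takes essentially the same route as the paper: the paper's entire proof of this lemma is the single line ``Direct computation,'' and your plan is precisely to carry out that computation, with sensible safeguards (treating the pentagon as a self-contained block coupled to the rest only through $e_k$, and using the green/red consistency of superscripts as an internal check). One small caution: the initial subdiagram here has only the one weight-$4$ arrow $c_k\to b_k$ and no weight-$2$ arrows, so the higher weights in $E_k$ emerge purely from the mutation dynamics rather than from any pending-arc data already present in the block.
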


\begin{proof}
    Direct computation.
\end{proof}

In terms of triangulation, at the start, the corresponding triangulation looks like this:

\[\begin{tikzpicture}[every edge quotes/.style={auto=right}]
    \begin{scope}[every node/.style={sloped,allow upside down}][every edge quotes/.style={auto=right}]
        \draw (-4.33,2.5) to node[above] {$t$} (0,5);
        \draw (0,5) to node[above] {$u$} (4.33,2.5);
        \draw (-4.33,2.5) to node[above] {$e_k$} (4.33,2.5);
        \draw (4.33,-2.5) to node[above] {$a_k$} (4.33,2.5);
        \draw (-4.33,-2.5) to node[above] {$b_k$} (-4.33,2.5);
        \draw (-4.33,2.5) to node[above] {$d_k$} (4.33,-2.5);
        \draw (-4.33,-2.5) to node[above] {$c_k$} (4.33,-2.5);
        \draw (0,-5) to node[above] {$b_k$} (4.33,-2.5);
        \draw (-4.33,-2.5) to node[above] {$a_k$} (0,-5);
        \fill (-4.33,2.5) circle (2pt);
        \fill (-4.33,-2.5) circle (2pt);
        \fill (4.33,-2.5) circle (2pt);
        \fill (4.33,2.5) circle (2pt);
        \fill (0,-5) circle (2pt);
        \fill [blue] (0,5) circle (2pt);
    \end{scope}
\end{tikzpicture}\]

After the mutation sequence $\iota_k$, the triangulation becomes this (the subscripts $k$ in $a_k$ to $e_k$ are removed for readibility):

\[\begin{tikzpicture}[every edge quotes/.style={auto=right}]
    \begin{scope}[every node/.style={sloped,allow upside down}][every edge quotes/.style={auto=right}]
        \draw (-4.33,2.5) to node[above] {$t$} (0,5);
        \draw (0,5) to node[above] {$u$} (4.33,2.5);
        \draw (-4.33,2) node [left] {$b$} to (0,5);
        \draw (0,5) to (4.33,2.17) node [right] {$b$};
        \draw (0,5) to (4.33,1.83) node [right] {$c$};
        \draw (0,5) to (4.33,1.5) node [right] {$e$};
        \draw (0,5) to (4.33,1.17) node [right] {$d$};
        \draw (0,5) to (4.33,0.83) node [right] {$c$};
        \draw (0,5) to (4.33,0.5) node [right] {$a$};
        \draw (0,5) to (4.33,0.17) node [right] {$e$};
        \draw (0,5) to (4.33,-0.17) node [right] {$d$};
        \draw (0,5) to (4.33,-0.5) node [right] {$a$};
        \draw (3.897,-2.75) node [below,right] {$b$} to [bend left] (4.33,-2.17) node [right] {$b$};
        \draw (3.464,-3) node [below,right] {$c$} to [bend left] (4.33,-1.83) node [right] {$c$};
        \draw (3.031,-3.25) node [below,right] {$e$} to [bend left] (4.33,-1.5) node [right] {$e$};
        \draw (-0.28,-4.83) node [below] {$b$} to [bend left] (0.433,-4.75) node [below,right] {$b$};
        \draw (-0.57,-4.67) node [below] {$c$} to [bend left] (0.866,-4.5) node [below,right] {$c$};
        \draw (-0.86,-4.5) node [below] {$e$} to [bend left] (1.299,-4.25) node [below,right] {$e$};
        \draw (-1.15,-4.33) node [below] {$d$} to [bend left] (1.732,-4) node [below,right] {$d$};
        \draw (-1.44,-4.17) node [below] {$c$} to [bend left] (2.165,-3.75) node [below,right] {$c$};
        \draw (-1.73,-4) node [below] {$a$} to [bend left] (2.598,-3.5) node [below,right] {$a$};
        \draw (-4.33,-2) node [left] {$b$} to [bend left] (-4.05,-2.67) node [below] {$b$};
        \draw (-4.33,-1.5) node [left] {$c$} to [bend left] (-3.76,-2.83) node [below] {$c$};
        \draw (-4.33,-1) node [left] {$e$} to [bend left] (-3.47,-3) node [below] {$e$};
        \draw (-4.33,-0.5) node [left] {$d$} to [bend left] (-3.18,-3.17) node [below] {$d$};
        \draw (-4.33,0) node [left] {$c$} to [bend left] (-2.89,-3.33) node [below] {$c$};
        \draw (-4.33,0.5) node [left] {$a$} to [bend left] (-2.6,-3.5) node [below] {$a$};
        \draw (-4.33,1) node [left] {$e$} to [bend left] (-2.31,-3.67) node [below] {$e$};
        \draw (-4.33,1.5) node [left] {$c$} to [bend left] (4.33,-0.83) node [right] {$c$};
        \draw (-2.02,-3.83) node [below] {$d$} to [bend left] (4.33,-1.17) node [right] {$d$};
        \fill (-4.33,2.5) circle (2pt);
        \fill (-4.33,-2.5) circle (2pt);
        \fill (4.33,-2.5) circle (2pt);
        \fill (4.33,2.5) circle (2pt);
        \fill (0,-5) circle (2pt);
        \fill (0,5) [blue] circle (2pt);
    \end{scope}
\end{tikzpicture}\]

As we can see, the arcs $a_k$ to $e_k$ are no longer connected to the outside puncture.

By applying the lemma twice, after step 4(d), the subdiagram looks like this:

\[\begin{tikzcd}
    &&&\color{red}w^{W}\arrow[dl]\\&&
    \color{green}y^{Y}\arrow[rr]\arrow[dl]&&\color{green}z^{E_2,m_{p-1}}\arrow[dr]\arrow[ul]\\&
    \color{red}b_1^{e_1,X}\arrow[rr]\arrow[dr]&&\color{green}x^{E_1,X}\arrow[ul]\arrow[ur]&&\color{red}b_2^{e_2,m_{p-1}}\arrow[ll]\arrow[dr]\\
    \color{red}c_1^{d_1}\arrow[ur]\arrow[dr]&&\color{red}a_1^{a_1}\arrow[ll]\arrow[dl]&&\color{red}{c_2}^{d_2}\arrow[ur]\arrow[dr]&&\color{red}a_2^{a_2}\arrow[ll]\arrow[dl]\\
    &\color{red}d_1^{c_1}\arrow[d,"4"]&&&&\color{red}d_2^{c_2}\arrow[d,"4"]\\
    &\color{red}e_1^{b_1}\arrow[uur]\arrow[uul]&&&&\color{red}e_2^{b_2}\arrow[uur]\arrow[uul]
\end{tikzcd}\]

Step 5 consists of $(x,y,z,x)$. If we mutate at these vertices, we get:

\[\adjustbox{scale=0.9}{\begin{tikzcd}
    &&&\color{green}w^{Y\setminus W}\arrow[dl]\\&&
    \color{red}z^{E_1,X}\arrow[rr]\arrow[dl]&&\color{red}y^{Y}\arrow[ul]\arrow[dl]\\&
    \color{green}b_1^{E_1\setminus e_1}\arrow[rr]\arrow[dr]&&\color{red}x^{E_2,m_{p-1}}\arrow[ul]\arrow[rr]&&\color{green}b_2^{E_2\setminus e_2}\arrow[ul]\arrow[dr]\\
    \color{red}c_1^{d_1}\arrow[ur]\arrow[dr]&&\color{red}a_1^{a_1}\arrow[ll]\arrow[dl]&&\color{red}{c_2}^{d_2}\arrow[ur]\arrow[dr]&&\color{red}a_2^{a_2}\arrow[ll]\arrow[dl]\\
    &\color{red}d_1^{c_1}\arrow[d,"4"]&&&&\color{red}d_2^{c_2}\arrow[d,"4"]\\
    &\color{red}e_1^{b_1}\arrow[uur]\arrow[uul]&&&&\color{red}e_2^{b_2}\arrow[uur]\arrow[uul]
\end{tikzcd}}\]

In terms of triangulation, after flipping the arc $x$, we have: \label{flip_x}

\[\hspace*{-0.25\linewidth}\begin{tikzpicture}[every edge quotes/.style={auto=right}]
    \begin{scope}[every node/.style={sloped,allow upside down}][every edge quotes/.style={auto=right}]
        \draw (0,0) to [bend right=45,black] node[near end,above] {$y$} (8.75,8.75);
        \draw (0,0) to [bend left=45,black] node[near end,below] {$z$} (8.75,8.75);
        \draw (8.75,8.75) to [loop,in=255,out=195,min distance=150mm] node[below=0.1] {$w$} (8.75,8.75);
        \draw (3,0) node[below] {$b_1$} to [bend left] (5,0.2) node[below] {$b_1$};
        \draw (8,0.8) node[below] {$b_1$} to [bend left] (9.2,2) node [right] {$b_1$};
        \draw (9.8,5) node[right] {$b_1$} to [bend left] (10,7) node [right] {$b_1$};
        \draw (2,0) node[below] {$b_1$} to [bend right=45] (8.75,8.75);
        \draw (8.75,8.75) to (10,9) node [right] {$b_1$};
        \draw (0,3) node[left] {$b_2$} to [bend right] (0.2,5) node[left] {$b_2$};
        \draw (0.8,8) node[left] {$b_2$} to [bend right] (2,9.2) node [above] {$b_2$};
        \draw (5,9.8) node[above] {$b_2$} to [bend right] (7,10) node [above] {$b_2$};
        \draw (0,2) node[left] {$b_2$} to [bend left=45] (8.75,8.75);
        \draw (8.75,8.75) to (9,10) node [above] {$b_2$};
        \node at (6.85,6.85) {$\iddots$};
        \draw (1,0) node[below] {$x$} to [bend right=45] (8.75,8.75);
        \draw (0,1) node[left] {$x$} to [bend left=45] (8.75,8.75);
        \draw (3.5,0) node[below] {$x$} to [bend left] (4.5,0.1) node[below] {$x$};
        \draw (8.5,0.9) node[below] {$x$} to [bend left] (9.1,1.5) node [right] {$x$};
        \draw (9.9,5.5) node[right] {$x$} to [bend left] (10,6.5) node [right] {$x$};
        \draw (0,3.5) node[left] {$x$} to [bend right] (0.1,4.5) node[left] {$x$};
        \draw (0.9,8.5) node[left] {$x$} to [bend right] (1.5,9.1) node [above] {$x$};
        \draw (5.5,9.9) node[above] {$x$} to [bend right] (6.5,10) node [above] {$x$};
        \draw (9.5,10) node[above] {$x$} to [bend right] (10,9.5) node [right] {$x$};
        \fill (0,0) circle (2pt);
        \fill (10,10) circle (2pt);
        \fill [blue] (8.75,8.75) circle (2pt);
        \fill (4,0) circle (2pt);
        \fill (10,6) circle (2pt);
        \fill (9,1) circle (2pt);
        \fill (0,4) circle (2pt);
        \fill (6,10) circle (2pt);
        \fill (1,9) circle (2pt);
    \end{scope}
\end{tikzpicture}\]

The arc $x$ goes around the two pentagons, and we see that the arcs $x,w,y,z$ form a triangulated once-punctured digon, so flipping $y,z$ tags the outside puncture notched, and flipping $x$ again moves it back to the original position.

Step 6(b) consists of $\pi_1,\pi_2$. Just like in step 4(d), we shall show the result of the mutation sequence $\pi_k$ in a lemma:

\begin{lemma}\label{lemma_pi}
    Mutating the following diagram, where $T_0=\phi\text{ or }T$ (the blue colour of the vertex $u$ means that the greenness of $u$ does not matter):
    \[\adjustbox{scale=0.9}{\begin{tikzcd}
    \color{red}t^{E_k,T}\arrow[dr]&&\color{blue}u^{U}\arrow[ll]
    &&\\&
    \color{green}b_k^{E_k\setminus e_k,T_0}\arrow[ur]\arrow[dr]&&&&
    \\
    \color{red}c_k^{d_k}\arrow[ur]\arrow[dr]&&\color{red}a_k^{a_k}\arrow[ll]\arrow[dl]&&\\
    &\color{red}d_k^{c_k}\arrow[d,"4"]&&&&\\
    &\color{red}e_k^{b_k}\arrow[uur]\arrow[uul]&&&&
\end{tikzcd}}\]
with the mutation sequence $\pi_k$ gives the following diagram if $T_0=\phi$:

\[\adjustbox{scale=0.9}{\begin{tikzcd}
    \color{red}t^{T}\arrow[rr]&&\color{blue}u^{U}\arrow[dl]
    &&\\&
    \color{red}b_k^{e_k}\arrow[dr]\arrow[ul]&&&&
    \\
    \color{red}d_k^{d_k}\arrow[dr]\arrow[ur]&&\color{red}e_k^{a_k}\arrow[ll]\arrow[dl]&&\\
    &\color{red}c_k^{c_k}\arrow[d,"4"]&&&&\\
    &\color{red}a_k^{b_k}\arrow[uur]\arrow[uul]&&&&
\end{tikzcd}}\]

and the following diagram if $T_0=T$:

\[\adjustbox{scale=0.9}{\begin{tikzcd}
    \color{green}t^{T}\arrow[rr]&&\color{blue}u^{U}\arrow[dl]
    &&\\&
    \color{red}b_k^{e_k,T}\arrow[dr]\arrow[ul]&&&&
    \\
    \color{red}d_k^{d_k}\arrow[dr]\arrow[ur]&&\color{red}e_k^{a_k}\arrow[ll]\arrow[dl]&&\\
    &\color{red}c_k^{c_k}\arrow[d,"4"]&&&&\\
    &\color{red}a_k^{b_k}\arrow[uur]\arrow[uul]&&&&
\end{tikzcd}}\]
\end{lemma}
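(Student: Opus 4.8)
The plan is to prove Lemma \ref{lemma_pi} by direct computation, applying the diagram-mutation rule to each of the nine mutations of $\pi_k = b_k, c_k, e_k, d_k, b_k, a_k, c_k, e_k, b_k$ in turn and recording the full subdiagram after every step. Although $\pi_k$ is designed to undo the wrapping produced by $\iota_k$ in Lemma \ref{lemma_iota}, the two sequences are not literal reverses of one another (they agree only in their first four entries), so the single-mutation involution of Remark \ref{involution} does not let me short-circuit the calculation; I must genuinely push the diagram through all nine mutations. The computation is local: every vertex touched is $t$, $u$, or one of the pentagon vertices $a_k, \ldots, e_k$, so at each stage I only need the full subdiagram on these seven mutable vertices together with their frozen superscripts.

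I would run two layers of bookkeeping in parallel. The first is the arrow structure among the mutable vertices, where the weight-$2$ and weight-$4$ arrows inside the pentagon must be combined through the weighted rule $\pm\sqrt{c}\pm\sqrt{d}=\sqrt{ab}$ from the definition of diagram mutation; the self-gluing of the pentagon (the arcs $a_k$ and $b_k$ each appear twice on its boundary) is what produces the weight-$4$ arrow $c_k \to b_k$ and the square-root interactions, and these are the entries most prone to sign and weight slips. The second layer is the frozen superscripts, where the delicate object is the bundle $E_k = 4e_k, 4a_k, 4c_k, 4b_k, 4d_k$ sitting on $t$: I would apply the set-difference conventions of Section \ref{notations} carefully (so that, for instance, $E_k \setminus e_k$ drops only one unit of the $e_k$-weight) as this bundle migrates and splits across the pentagon vertices during the sequence.

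The main obstacle is the split into the two cases $T_0 = \phi$ and $T_0 = T$, which differ only in whether the frozen set $T$ sits on $b_k$ at the outset. I expect the arrow computation among the seven mutable vertices, and the evolution of the pentagon-internal frozen labels, to be \emph{identical} in the two cases, so the real task is to isolate the single place where the presence of $T$ on $b_k$ changes the outcome. I would argue this is decided at the final mutation at $b_k$, together with the earlier $t$--$b_k$ interactions: the arrow between $t$ and $b_k$ reverses orientation over the course of $\pi_k$, and whether the frozen copies $T'$ end up pointing out of $t$ (so $t$ is green) rather than into $t$ (so $t$ stays red) is governed precisely by whether those copies were routed through $b_k$, i.e. by whether $T_0 = T$. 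Pinning this down cleanly, rather than reading it off the last one or two diagrams by inspection, is the step I would take the most care over, since it is the only place the two claimed outputs diverge: green $t^T$ with $b_k^{e_k,T}$ versus red $t^T$ with $b_k^{e_k}$.

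Finally, I would use a consistency check against the surrounding argument. The output of $\pi_k$ should match, up to the relabelling of the pentagon arcs recorded in the statement, the configuration from which $\iota_k$ was launched in Lemma \ref{lemma_iota}: the two subdiagrams are isomorphic, their frozen superscripts agree position by position, and at the level of triangulations this reflects that $\pi_k$ reconnects the arcs $a_k, \ldots, e_k$ to the outside puncture. Agreement here gives independent confirmation that the weighted and frozen bookkeeping has been carried out correctly, and in particular that the big bundle $E_k$ has been fully dispersed as claimed.
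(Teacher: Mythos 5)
Your proposal is correct and takes essentially the same route as the paper, whose entire proof of this lemma is the single line ``Direct computation'': you propose to push the subdiagram on $t$, $u$, $a_k,\ldots,e_k$ through the nine mutations of $\pi_k$ one at a time, tracking arrows and frozen superscripts. The extra structure you add (isolating the unique point where the $T_0=\phi$ and $T_0=T$ cases diverge, and the consistency check against the input of Lemma \ref{lemma_iota}) is sensible bookkeeping but does not change the method.
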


\begin{proof}
    Direct computation.
\end{proof}

In terms of triangulation, after the mutation sequence $\pi_k$, the triangulation becomes this:

\[\begin{tikzpicture}[every edge quotes/.style={auto=right}]
    \begin{scope}[every node/.style={sloped,allow upside down}][every edge quotes/.style={auto=right}]
        \draw (-4.33,2.5) to node[above] {$t$} (0,5);
        \draw (0,5) to node[above] {$u$} (4.33,2.5);
        \draw (-4.33,2.5) to node[above] {$b_k$} (4.33,2.5);
        \draw (4.33,-2.5) to node[above] {$e_k$} (4.33,2.5);
        \draw (-4.33,-2.5) to node[above] {$a_k$} (-4.33,2.5);
        \draw (-4.33,2.5) to node[above] {$d_k$} (4.33,-2.5);
        \draw (-4.33,-2.5) to node[above] {$c_k$} (4.33,-2.5);
        \draw (0,-5) to node[above] {$a_k$} (4.33,-2.5);
        \draw (-4.33,-2.5) to node[above] {$e_k$} (0,-5);
        \fill [blue] (-4.33,2.5) circle (2pt);
        \fill [blue] (-4.33,-2.5) circle (2pt);
        \fill [blue] (4.33,-2.5) circle (2pt);
        \fill [blue] (4.33,2.5) circle (2pt);
        \fill [blue] (0,-5) circle (2pt);
        \fill [blue] (0,5) circle (2pt);
    \end{scope}
\end{tikzpicture}\]

As we can see, the arcs $a_k$ to $e_k$ are now connected to the outside puncture.

By applying the lemma twice, after step 6(b), the subdiagram looks like this:

\[\begin{tikzcd}
    &&&\color{green}w^{Y\setminus W}\arrow[dl]\\&&
    \color{red}z^{X}\arrow[rr]\arrow[dr]&&\color{red}y^{Y}\arrow[ul]\arrow[dl]\\&
    \color{red}b_1^{e_1}\arrow[ur]\arrow[dr]&&\color{red}x^{m_{p-1}}\arrow[ll]\arrow[rr]&&\color{red}b_2^{e_2}\arrow[ul]\arrow[dr]\\
    \color{red}d_1^{d_1}\arrow[ur]\arrow[dr]&&\color{red}e_1^{a_1}\arrow[ll]\arrow[dl]&&\color{red}{d_2}^{d_2}\arrow[ur]\arrow[dr]&&\color{red}e_2^{a_2}\arrow[ll]\arrow[dl]\\
    &\color{red}c_1^{c_1}\arrow[d,"4"]&&&&\color{red}c_2^{c_2}\arrow[d,"4"]\\
    &\color{red}a_1^{b_1}\arrow[uur]\arrow[uul]&&&&\color{red}a_2^{b_2}\arrow[uur]\arrow[uul]
\end{tikzcd}\]

Since we obtained the desired diagram in Lemma \ref{one_puncture_summary}, the mutation sequence $\Delta_2$ is a maximal green sequence. $\blacksquare$

\subsubsection{The case where $n>2$}\label{n>2}

When $n>2$, the step 4(c) to 6(c) translate to

\begin{enumerate}[start=4]
    \item Moving arcs away from the outside puncture.
    \begin{enumerate}[start=3]
        \item Separating the $e_i$'s. $(f_1,f_2...,f_{n-2})$
        \item Moving arcs in each pentagon away from the outside puncture. $(\iota_1,\iota_2,...,\iota_n)$
        \item Moving $f_i$'s away from the outside puncture. $(f_{n-2},f_{n-1},...,f_1)$
    \end{enumerate}
    \item Tagging the outside puncture notched. $(\gamma)$
    \item Moving arcs back to the outside puncture.
    \begin{enumerate}
        \item Moving $f_i$'s back to the outside puncture. $(f_1,f_2...,f_{n-2})$
        \item Moving arcs in each pentagon back to the outside puncture. $(\pi_1,\pi_2,...,\pi_n)$
        \item Moving $f_i$'s back to their original positions. $(f_{n-2},f_{n-1},...,f_1)$
    \end{enumerate}
\end{enumerate}

where

\begin{itemize}
    \item $\iota_k=a_k,e_k,c_k,b_k,a_k,d_k,e_k,c_k,b_k$
    \item $\pi_k=b_k,c_k,e_k,d_k,b_k,a_k,c_k,e_k,b_k$
    \item $\gamma=\begin{cases}
        m_{p-2},l_{p-2},r_{p-2},m_{p-2}&\text{if }p>2,\\
        h_1,m_1,h_{q+1},h_1 & \text{if }p=2
    \end{cases}$
\end{itemize}

According to Lemma \ref{one_puncture_summary}, after step 1 to step 4(b), the full subdiagram containing vertices involved in step 4(c) to step 6(b) and adjacent vertices look like this:

\[\adjustbox{scale=0.5}{\begin{tikzcd}
    &&&\color{red}w^{W}\arrow[dl]&&\\
    &&\color{green}y^{Y}\arrow[rr]\arrow[dr]&&\color{red}z^{m_{p-1}}\arrow[ul]\arrow[dr]&&&&&&&&&&&\color{green}a_n^{a_n}\arrow[dd]\arrow[dr]\\
    &\color{green}e_1^{e_1,X}\arrow[ur]\arrow[dr]&&\color{red}x^{X}\arrow[ll]\arrow[ur]&&\color{green}f_1^{f_1,m_{p-1}}\arrow[ll]\arrow[dr]&&&\color{green}f_2^{f_2}\arrow[dr]\arrow[lll]&&\color{green}f_3^{f_3}\arrow[ll]&\cdots\arrow[l]&\color{green}f_{n-2}^{f_{n-2}}\arrow[l]\arrow[dr]&&\color{green}e_n^{e_n}\arrow[ll]\arrow[ur]&&\color{green}c_n^{c_n}\arrow[r,"4"]&\color{green}b_n^{b_n}\arrow[ull]\arrow[dll]\\
    \color{green}d_1^{d_1}\arrow[ur]\arrow[dr]&&\color{green}a_1^{a_1}\arrow[ll]\arrow[dl]&&&&\color{green}e_2^{e_2}\arrow[dr]\arrow[urr]&&&\color{green}e_3^{e_3}\arrow[dr]\arrow[ur]&&&&\color{green}e_{n-1}^{e_{n-1}}\arrow[ur]\arrow[dr]&&\color{green}d_n^{d_n}\arrow[ul]\arrow[ur]\\
    &\color{green}c_1^{c_1}\arrow[d,"4"]&&&&\color{green}d_2^{d_2}\arrow[ur]\arrow[dr]&&\color{green}a_2^{a_2}\arrow[ll]\arrow[dl]&\color{green}d_3^{d_3}\arrow[ur]\arrow[dr]&&\color{green}a_3^{a_3}\arrow[ll]\arrow[dl]&&\color{green}d_{n-1}^{d_{n-1}}\arrow[ur]\arrow[dr]&&\color{green}a_{n-1}^{a_{n-1}}\arrow[ll]\arrow[dl]\\
    &\color{green}b_1^{b_1}\arrow[uur]\arrow[uul]&&&&&\color{green}c_2^{c_2}\arrow[d,"4"]&&&\color{green}c_3^{c_3}\arrow[d,"4"]&&&&\color{green}c_{n-1}^{c_{n-1}}\arrow[d,"4"]\\
    &&&&&&\color{green}b_2^{b_2}\arrow[uul]\arrow[uur]&&&\color{green}b_3^{b_3}\arrow[uul]\arrow[uur]&&&&\color{green}b_{n-1}^{b_{n-1}}\arrow[uul]\arrow[uur]
\end{tikzcd}}\]
where $w=\begin{cases}
    h_2&\text{if }p=2,q>1\\
    m_2&\text{if }p=3\\
    m_1&\text{if }p=4\\
    r_{p-4}&\text{otherwise}
\end{cases}, x=\begin{cases}
    h_1&\text{if }p=2,q>1\\
    m_{p-2}&\text{otherwise}
\end{cases},y=\begin{cases}
    m_1&\text{if }p=2,q>1\\
    l_{p-2}&\text{otherwise}
\end{cases},
z=\begin{cases}
    h_{q+1}&\text{if }p=2,q>1\\
    r_{p-2}&\text{otherwise}
\end{cases}$

$W,Y$ are sets of (frozen) vertices with $W\subset Y$, and $X=\begin{cases}
    h_1&\text{if }p=2,q>1\\
    l_{p-2}&\text{otherwise}
\end{cases}$.

The corresponding subtriangulation looks like this:

\[\hspace*{-0.25\linewidth}\begin{tikzpicture}[every edge quotes/.style={auto=right}]
    \begin{scope}[every node/.style={sloped,allow upside down}][every edge quotes/.style={auto=right}]

        \node at (6.85,6.85) {$\iddots$};
        
        \draw (8.75,8.75) to [loop,in=255,out=195,min distance=150mm] node[below=0.1] {$w$} (8.75,8.75);
        \draw (0,0) to [bend right=45,black] node[near end,below] {$y$} (8.75,8.75);
        \draw (0,0) to [bend left=45,black] node[near end,above] {$z$} (8.75,8.75);
        \draw (0,0) to [bend right=45,black] node[near end,below,black] {$\hat{e_1}$} (10,10);
        \draw (0,0) to node[near end,below,black] {$\hat{e_2}$} (0,5);
        \draw (0,5) to node[near end,below,black] {$\hat{e_3}$} (1,8);
        \draw (8,12) to node[above,black] {$\hat{e_n}$} (10,10);
        \draw (5,11) to node[above,black] {$\hat{e_{n-1}}$} (8,12);
        \draw (0,0) to [bend left=45,black] node[near end,below] {$f_1$} (10,10);
        \draw (0,5) to [bend left=30,black] node[below] {$f_2$} (10,10);
        \draw (1,8) to [bend left=15] node [above] {$f_3$} (10,10);
        \draw (5,11) to [bend left=30] node[near end,below] {$f_{n-2}$} (10,10);
        \draw (8.75,8.75) to node[below] {$x$} (10,10);
        \node at (3,10) {\reflectbox{$\ddots$}};
        \fill (0,0) circle (2pt);
        \fill (10,10) circle (2pt);
        \fill (0,5) circle (2pt);
        \fill (1,8) circle (2pt);
        \fill (8,12) circle (2pt);
        \fill [blue] (8.75,8.75) circle (2pt);
        \fill (5,11) circle (2pt);
    \end{scope}
\end{tikzpicture}\]

Step 4(c) consists of $(f_1,f_2,...,f_{n-2})$. Look at the vertices involved and adjacent vertices:

\[\adjustbox{scale=0.8}{\begin{tikzcd} 
    &|[color=red,fill=green]|x^{X}\arrow[dl]\\|[color=red,fill=green]|z^{m_{p-1}}\arrow[rr]&&|[color=green,fill=red]|f_1^{f_1,m_{p-1}}\arrow[ul]\arrow[dr]&&|[color=green,fill=red]|f_2^{f_2}\arrow[dr]\arrow[ll]&&\color{green}f_3^{f_3}\arrow[ll]&\cdots\arrow[l]&\color{green}f_{n-2}^{f_{n-2}}\arrow[l]\arrow[dr]&&\color{green}e_n^{e_n}\arrow[ll]\\&&&|[color=green,fill=red]|e_2^{e_2}\arrow[ur]&&\color{green}e_3^{e_3}\arrow[ur]&&&&\color{green}e_{n-1}^{e_{n-1}}\arrow[ur]
\end{tikzcd}}\]

Mutate at $f_1$:

\[\adjustbox{scale=0.8}{\begin{tikzcd} 
    &|[color=red,fill=green]|x^{X}\arrow[dr]\\\color{green}z^{f_1}\arrow[drrr]&&|[color=red,fill=green]|f_1^{f_1,m_{p-1}}\arrow[ll]\arrow[rr]&&|[color=green,fill=red]|f_2^{f_{2,1},m_{p-1}}\arrow[dr]\arrow[ulll]&&|[color=green,fill=red]|f_3^{f_3}\arrow[ll]&\cdots\arrow[l]&\color{green}f_{n-2}^{f_{n-2}}\arrow[l]\arrow[dr]&&\color{green}e_n^{e_n}\arrow[ll]\\&&&\color{green}e_2^{e_2}\arrow[ul]&&|[color=green,fill=red]|e_3^{e_3}\arrow[ur]&&&&\color{green}e_{n-1}^{e_{n-1}}\arrow[ur]
\end{tikzcd}}\]

By observing the pattern at the highlighted vertices, after mutating at $f_{n-2}$, we have:

\[\adjustbox{scale=0.8}{\begin{tikzcd} 
    &\color{red}x^{X}\arrow[drrrrrrr]\\\color{green}z^{f_1}\arrow[drrr]&&\color{green}f_1^{f_2}\arrow[ll]\arrow[drrr]&&\color{green}f_2^{f_3}\arrow[ll]&&\color{green}f_3^{f_4}\arrow[ll]&\cdots\arrow[l]\arrow[drr]&\color{red}f_{n-2}^{f_{n-2\searrow1},m_{p-1}}\arrow[l]\arrow[rr]&&\color{green}e_n^{e_n,f_{n-2\searrow1},m_{p-1}}\arrow[ulllllllll,shift right]\\&&&\color{green}e_2^{e_2}\arrow[ul]&&\color{green}e_3^{e_3}\arrow[ul]&&&&\color{green}e_{n-1}^{e_{n-1}}\arrow[ul]
\end{tikzcd}}\]

In terms of triangulation, the subtriangulation becomes:

\[\begin{tikzpicture}[every edge quotes/.style={auto=right}]
    \hspace*{-0.15\linewidth}\begin{scope}[every node/.style={sloped,allow upside down}][every edge quotes/.style={auto=right}]
\node at (6.85,6.85) {$\iddots$};
        \draw (8.75,8.75) to [loop,in=255,out=195,min distance=150mm] node[below=0.1] {$w$} (8.75,8.75);
        \draw (0,0) to [bend right=45,black] node[near end,below] {$l_{p-2}$} (8.75,8.75);
        \draw (0,0) to [bend left=45,black] node[near end,above] {$r_{p-2}$} (8.75,8.75);
        \draw (0,0) to [bend right=45,black] node[near end,below,black] {$\hat{e_1}$} (10,10);
        \draw (0,0) to node[near end,below,black] {$\hat{e_2}$} (0,5);
        \draw (0,5) to node[near end,below,black] {$\hat{e_3}$} (1,8);
        \draw (5,10) to [bend left] node[near end,below,black] {$\hat{e_n}$} (10,10);
        \draw (0,5) to [bend left=30,black] node[near end,above] {$f_1$} (8.75,8.75);
        \draw (1,8) to [bend left=30,black] node[near end,above] {$f_2$} (8.75,8.75);
        \draw (5,10) to [bend left=30] node[near end,above] {$f_{n-2}$} (8.75,8.75);
        \draw (8.75,8.75) to node[below] {$m_{p-1}$} (10,10);
        \node at (3,9.5) {\reflectbox{$\ddots$}};
        \fill (0,0) circle (2pt);
        \fill (10,10) circle (2pt);
        \fill (0,5) circle (2pt);
        \fill (1,8) circle (2pt);
        \fill (5,10) circle (2pt);
        \fill [blue] (8.75,8.75) circle (2pt);
    \end{scope}
\end{tikzpicture}\]

The pentagons (represented by the "arcs" $\hat{e_k}$) are now separated by $f_1,f_2,...,f_{n-2}$.

Step 4(d) consists of $(\iota_1,\iota_2,...,\iota_n)$. Look at the entire subdiagram (with the locations of vertices rearranged):

\[\adjustbox{scale=0.6}{\begin{tikzcd}
    &&&\color{red}w^{W}\arrow[dl]&&\\
    &&\color{green}y^{Y}\arrow[dr]\arrow[drrrrrrrrrrr,bend left=5]&&&&&&&&&&&&&&\\
    &\color{green}e_1^{e_1,X}\arrow[ur]\arrow[dr]&&\color{red}x^{X}\arrow[ll]\arrow[rr]&&\color{red}f_{n-2}^{f_{n-2\searrow1},m_{p-1}}\arrow[dl]\arrow[rrrr]&&&&\color{green}f_{n-3}^{f_{n-2}}\arrow[dll]\arrow[r]&\cdots\arrow[r]&\color{green}f_1^{f_2}\arrow[rr]&&\color{green}z^{f_1}\arrow[dl]\arrow[uullllllllll,shift right,near end]&&&&\\
    \color{green}d_1^{d_1}\arrow[ur]\arrow[dr]&&\color{green}a_1^{a_1}\arrow[ll]\arrow[dl]&&\color{green}e_n^{e_n,f_{n-2\searrow1},m_{p-1}}\arrow[dr]\arrow[ul]&&&\color{green}e_{n-1}^{e_{n-1}}\arrow[dr]\arrow[ull]&&&&&\color{green}e_2^{e_2}\arrow[ul]\arrow[dr]&&&&\\
    &\color{green}c_1^{c_1}\arrow[d,"4"]&&\color{green}d_n^{d_n}\arrow[dr]\arrow[ur]&&\color{green}a_n^{a_n}\arrow[dl]\arrow[ll]&\color{green}d_{n-1}^{d_{n-1}}\arrow[ur]\arrow[dr]&&\color{green}a_{n-1}^{a_{n-1}}\arrow[ll]\arrow[dl]&&&\color{green}d_2^{d_2}\arrow[ur]\arrow[dr]&&\color{green}a_2^{a_2}\arrow[ll]\arrow[dl]&&\\
    &\color{green}b_1^{b_1}\arrow[uur]\arrow[uul]&&&\color{green}c_n^{c_n}\arrow[d,"4"]&&&\color{green}c_{n-1}^{c_{n-1}}\arrow[d,"4"]&&&&&\color{green}c_2^{c_2}\arrow[d,"4"]&&\\
    &&&&\color{green}b_n^{b_n}\arrow[uul]\arrow[uur]&&&\color{green}b_{n-1}^{b_{n-1}}\arrow[uul]\arrow[uur]&&&&&\color{green}b_2^{b_2}\arrow[uul]\arrow[uur]&&
\end{tikzcd}}\]

By applying Lemma \ref{lemma_iota} $n$ times, we get:

\[\adjustbox{scale=0.6}{\begin{tikzcd}
    &&&\color{red}w^{W}\arrow[dl]&&\\
    &&\color{green}y^{Y}\arrow[dl]\arrow[drrrrrrrrrrr,bend left=3]&&&&&&&&&&&&&&\\
    &\color{red}b_1^{e_1,X}\arrow[rr]\arrow[dr]&&\color{green}x^{E_1,X}\arrow[ul]\arrow[dr]&&\color{green}f_{n-2}^{E_n,f_{n-2\searrow1},m_{p-1}}\arrow[ll]\arrow[drr]&&&&\color{green}f_{n-3}^{E_{n-1},f_{n-2}}\arrow[llll]&\cdots\arrow[l]&\color{green}f_1^{E_3,f_2}\arrow[l]\arrow[dr]&&\color{green}z^{E_2,f_1}\arrow[ll]\arrow[uullllllllll,shift right,near end]&&&&\\
    \color{red}c_1^{d_1}\arrow[ur]\arrow[dr]&&\color{red}a_1^{a_1}\arrow[ll]\arrow[dl]&&\color{red}b_n^{e_n,f_{n-2\searrow1},m_{p-1}}\arrow[dr]\arrow[ur]&&&\color{red}b_{n-1}^{e_{n-1}}\arrow[dr]\arrow[urr]&&&&&\color{red}b_2^{e_2}\arrow[ur]\arrow[dr]&&&&\\
    &\color{red}d_1^{c_1}\arrow[d,"4"]&&\color{red}c_n^{d_n}\arrow[dr]\arrow[ur]&&\color{red}a_n^{a_n}\arrow[dl]\arrow[ll]&\color{red}c_{n-1}^{d_{n-1}}\arrow[ur]\arrow[dr]&&\color{red}a_{n-1}^{a_{n-1}}\arrow[ll]\arrow[dl]&&&\color{red}c_2^{d_2}\arrow[ur]\arrow[dr]&&\color{red}a_2^{a_2}\arrow[ll]\arrow[dl]&&\\
    &\color{red}e_1^{b_1}\arrow[uur]\arrow[uul]&&&\color{red}d_n^{c_n}\arrow[d,"4"]&&&\color{red}d_{n-1}^{c_{n-1}}\arrow[d,"4"]&&&&&\color{red}d_2^{c_2}\arrow[d,"4"]&&\\
    &&&&\color{red}e_n^{b_n}\arrow[uul]\arrow[uur]&&&\color{red}e_{n-1}^{b_{n-1}}\arrow[uul]\arrow[uur]&&&&&\color{red}e_2^{b_2}\arrow[uul]\arrow[uur]&&
\end{tikzcd}}\]

In terms of triangulation, just like the case where $n=2$, the arcs $a_k$ to $e_k$ are no longer connected to the outside puncture.

Step 4(d) consists of $(f_{n-2},f_{n-3},...,f_1)$. Look at the vertices involved and vertices adjacent to them:

\[\adjustbox{scale=0.7}{\begin{tikzcd}
    |[color=green,fill=red]|x^{E_1,X}\arrow[dr]&&|[color=green,fill=red]|f_{n-2}^{E_n,f_{n-2\searrow1},m_{p-1}}\arrow[ll]\arrow[dr]&&|[color=green,fill=red]|f_{n-3}^{E_{n-1},f_{n-2}}\arrow[ll]\arrow[dr]&&\color{green}f_{n-4}^{E_{n-2},f_{n-3}}\arrow[ll]&\cdots\arrow[l]&\color{green}f_1^{E_3,f_2}\arrow[l]\arrow[dr]&&\color{green}z^{E_2,f_1}\arrow[ll]&&&&\\
    &|[color=red,fill=green]|b_n^{e_n,f_{n-2\searrow1},m_{p-1}}\arrow[ur]&&|[color=red,fill=green]|b_{n-1}^{e_{n-1}}\arrow[ur]&&\color{red}b_{n-2}^{e_{n-2}}\arrow[ur]&&&&\color{red}b_2^{e_2}\arrow[ur]&&&&
\end{tikzcd}}\]

Mutate at $f_{n-2}$:

\[\adjustbox{scale=0.7}{\begin{tikzcd}
    |[color=green,fill=red]|x^{E_1,X}\arrow[rr]&&|[color=red,fill=green]|f_{n-2}^{E_n,f_{n-2\searrow1},m_{p-1}}\arrow[dl]\arrow[rr]&&|[color=green,fill=red]|f_{n-3}^{E_{n,n-1},4f_{n-2},f_{n-3\searrow1},m_{p-1}}\arrow[llll,bend right]\arrow[dr]&&|[color=green,fill=red]|f_{n-4}^{E_{n-2},f_{n-3}}\arrow[ll]&\cdots\arrow[l]&\color{green}f_1^{E_3,f_2}\arrow[l]\arrow[dr]&&\color{green}z^{E_2,f_1}\arrow[ll]&&&&\\
    &\color{green}b_n^{E_n\setminus e_n}\arrow[rr]&&\color{red}b_{n-1}^{e_{n-1}}\arrow[ul]&&|[color=red,fill=green]|b_{n-2}^{e_{n-2}}\arrow[ur]&&&&\color{red}b_2^{e_2}\arrow[ur]&&&&
\end{tikzcd}}\]

By observing the pattern at the highlighted vertices, after mutating at $f_1$, we have:

\[\adjustbox{scale=0.7}{\begin{tikzcd}
    \color{green}x^{E_1,X}\arrow[rrrrrrrr,bend left=15,shift right]&&\color{green}f_{n-2}^{E_{n-1},f_{n-2}}\arrow[dl]\arrow[drrr]&&\color{green}f_{n-3}^{E_{n-2},f_{n-3}}\arrow[ll]&&\color{green}f_{n-4}^{E_{n-3},f_{n-4}}\arrow[ll]&\cdots\arrow[l]\arrow[drr]&\color{red}f_1^{E_{n\searrow3},4f_{n-2\searrow2},f_1,m_{p-1}}\arrow[l]\arrow[rr]&&\color{green}z^{E_{n\searrow2},4f_{n-2\searrow1},m_{p-1}}\arrow[llllllllll,bend right=15] &&&&\\
    &\color{green}b_n^{E_n\setminus e_n}\arrow[rr]&&\color{red}b_{n-1}^{e_{n-1}}\arrow[ul]&&\color{red}b_{n-2}^{e_{n-2}}\arrow[ul]&&&&\color{red}b_2^{e_2}\arrow[ul]&&&&
\end{tikzcd}}\]

In terms of triangulation, just like in page \pageref{flip_x} where flipping $x$ makes it go around two pentagons, flipping $f_{k}$ makes it go around $n-k$ pentagons, and the arcs $f_k$ are no longer connected to the outside puncture.

Step 5 consists of $x,y,z,x$. Look at the vertices involved and vertices adjacent to them:

\[\adjustbox{scale=1}{\begin{tikzcd}
    &&\color{red}w^{W}\arrow[dl]\\
    &\color{green}y^{Y}\arrow[rr]\arrow[dl]&&\color{green}z^{E_{n\searrow2},4f_{n-2\searrow1},m_{p-1}}\arrow[ul]\arrow[dl]\\\color{red}b_1^{e_1,X}\arrow[rr]&&\color{green}x^{E_1,X}\arrow[ul]\arrow[rr]&&\color{red}f_1^{E_{n\searrow3},4f_{n-2\searrow2},f_1,m_{p-1}}\arrow[ul]
\end{tikzcd}}\]

Mutate at $x,y,z,x$, we get:

\[\adjustbox{scale=1}{\begin{tikzcd}
    &&\color{green}w^{Y\setminus W}\arrow[dl]\\
    &\color{red}z^{E_1,X}\arrow[rr]\arrow[dl]&&\color{red}y^{Y}\arrow[ul]\arrow[dl]\\\color{green}b_1^{E_1\setminus e_1}\arrow[rr]&&\color{red}x^{E_{n\searrow2},4f_{n-2\searrow1},m_{p-1}}\arrow[rr]\arrow[ul]&&\color{green}f_1^{E_{2},f_1}\arrow[ul]
\end{tikzcd}}\]

In terms of triangulation, flipping arc $x$ makes $x$ go around all $n$ pentagons, and the arcs $x,w,y,z$ form a triangulated once-punctured digon, so flipping $y,z$ tags the outside puncture notched, and flipping $x$ again moves it back to the original position.

Step 6(a) consists of $(f_1,f_2,...,f_{n-2})$. Look at the vertices involved and vertices adjacent to them:

\[\adjustbox{scale=0.65}{\begin{tikzcd}
    &&&&&&&&&&|[color=red,fill=green]|x^{E_{n\searrow2},4f_{n-2\searrow1},m_{p-1}}\arrow[dl]\\&\color{green}f_{n-2}^{E_{n-1},f_{n-2}}\arrow[dl]\arrow[drrr]&&\color{green}f_{n-3}^{E_{n-2},f_{n-3}}\arrow[ll]&&\color{green}f_{n-4}^{E_{n-3},f_{n-4}}\arrow[ll]&\cdots\arrow[l]\arrow[drr]&|[color=green,fill=red]|f_2^{E_{3},f_2}\arrow[drrr]\arrow [l]&&|[color=green,fill=red]|f_1^{E_{2},f_1}\arrow[ll]\arrow[rr]&&|[color=red,fill=green]|y^{Y}\arrow[ul] &&&&\\
    \color{green}b_n^{E_n\setminus e_n}\arrow[rr]&&\color{red}b_{n-1}^{e_{n-1}}\arrow[ul]&&\color{red}b_{n-2}^{e_{n-2}}\arrow[ul]&&&&\color{red}b_3^{e_3}\arrow[ul]&&|[color=red,fill=green]|b_2^{e_2}\arrow[ul]&&&&
\end{tikzcd}}\]

Mutate at $f_1$:

\[\adjustbox{scale=0.65}{\begin{tikzcd}
    &&&&&&&&&&&|[color=red,fill=green]|x^{E_{n\searrow3},4f_{n-2\searrow2},f_1,m_{p-1}}\arrow[dlll]\\&\color{green}f_{n-2}^{E_{n-1},f_{n-2}}\arrow[dl]\arrow[drrr]&&\color{green}f_{n-3}^{E_{n-2},f_{n-3}}\arrow[ll]&&\color{green}f_{n-4}^{E_{n-3},f_{n-4}}\arrow[ll]&\cdots\arrow[l]&|[color=green,fill=red]|f_3^{E_{4},f_3}\arrow[drr]\arrow[l]&|[color=green,fill=red]|f_2^{E_{3},f_2}\arrow[rr]\arrow [l]&&|[color=red,fill=green]|f_1^{E_{2},f_1}\arrow[ur]\arrow[dr]&&\color{red}y^{Y}\arrow[ll] &&&&\\
    \color{green}b_n^{E_n\setminus e_n}\arrow[rr]&&\color{red}b_{n-1}^{e_{n-1}}\arrow[ul]&&\color{red}b_{n-2}^{e_{n-2}}\arrow[ul]&&&&&|[color=red,fill=green]|b_3^{e_3}\arrow[ul]&&\color{green}b_2^{E_2\setminus e_2,f_1}\arrow[ur]&&&&
\end{tikzcd}}\]

By observing the pattern at the highlighted vertices, after mutating at $f_{n-2}$, we have:

\[\adjustbox{scale=0.55}{\begin{tikzcd}
   \color{red}x^{E_n,f_{n-2\searrow1},m_{p-1}}\arrow[dr]&&\color{red}f_{n-2}^{E_{n-1},f_{n-2}}\arrow[ll]\arrow[dr]&&\color{red}f_{n-3}^{E_{n-2},f_{n-3}}\arrow[ll]\arrow[dr]&&\color{red}f_{n-4}^{E_{n-3},f_{n-4}}\arrow[ll]&\cdots\arrow[l]&\color{red}f_2^{E_{3},f_2}\arrow[dr]\arrow [l]&&\color{red}f_1^{E_{2},f_1}\arrow[ll]\arrow[dr]&&\color{red}y^{Y}\arrow[ll] &&&&\\
    &\color{green}b_n^{E_n\setminus e_n}\arrow[ur]&&\color{green}b_{n-1}^{E_{n-1}\setminus e_{n-1},f_{n-2}}\arrow[ur]&&\color{green}b_{n-2}^{E_{n-2}\setminus e_{n-2},f_{n-3}}\arrow[ur]&&&&\color{green}b_3^{E_3\setminus e_3,f_2}\arrow[ur]&&\color{green}b_2^{E_2\setminus e_2,f_1}\arrow[ur]&&&&
\end{tikzcd}}\]

In terms of triangulation, this is just undoing step 4(e) and therefore separates $b_k$'s (which are arcs from the original pentagons).

Step 6(b) consists of $(\pi_1,\pi_2,...,\pi_n)$. Look at the entire subdiagram:

\[\adjustbox{scale=0.55}{\begin{tikzcd}
    &&&\color{green}w^{Y\setminus W}\arrow[dl]&&\\
    &&\color{red}z^{E_1,X}\arrow[dl]\arrow[drrrrrrrrrrr,bend left=3]&&&&&&&&&&&&&&\\
    &\color{green}b_1^{E_1\setminus e_1}\arrow[rr]\arrow[dr]&&\color{red}x^{E_{n},f_{n-2\searrow1},m_{p-1}}\arrow[ul]\arrow[dr]&&\color{red}f_{n-2}^{E_{n-1},f_{n-2}}\arrow[ll]\arrow[drr]&&&&\color{red}f_{n-3}^{E_{n-2},f_{n-3}}\arrow[llll]&\cdots\arrow[l]&\color{red}f_1^{E_{2},f_1}\arrow[l]\arrow[dr]&&\color{red}y^{Y}\arrow[ll]\arrow[uullllllllll,shift right,near end]&&&&\\
    \color{red}c_1^{d_1}\arrow[ur]\arrow[dr]&&\color{red}a_1^{a_1}\arrow[ll]\arrow[dl]&&\color{green}b_n^{E_n\setminus e_n}\arrow[dr]\arrow[ur]&&&\color{green}b_{n-1}^{E_{n-1}\setminus e_{n-1},f_{n-2}}\arrow[dr]\arrow[urr]&&&&&\color{green}b_2^{E_2\setminus e_2,f_1}\arrow[ur]\arrow[dr]&&&&\\
    &\color{red}d_1^{c_1}\arrow[d,"4"]&&\color{red}c_n^{d_n}\arrow[dr]\arrow[ur]&&\color{red}a_n^{a_n}\arrow[dl]\arrow[ll]&\color{red}c_{n-1}^{d_{n-1}}\arrow[ur]\arrow[dr]&&\color{red}a_{n-1}^{a_{n-1}}\arrow[ll]\arrow[dl]&&&\color{red}c_2^{d_2}\arrow[ur]\arrow[dr]&&\color{red}a_2^{a_2}\arrow[ll]\arrow[dl]&&\\
    &\color{red}e_1^{b_1}\arrow[uur]\arrow[uul]&&&\color{red}d_n^{c_n}\arrow[d,"4"]&&&\color{red}d_{n-1}^{c_{n-1}}\arrow[d,"4"]&&&&&\color{red}d_2^{c_2}\arrow[d,"4"]&&\\
    &&&&\color{red}e_n^{b_n}\arrow[uul]\arrow[uur]&&&\color{red}e_{n-1}^{b_{n-1}}\arrow[uul]\arrow[uur]&&&&&\color{red}e_2^{b_2}\arrow[uul]\arrow[uur]&&
\end{tikzcd}}\]

By applying Lemma \ref{lemma_pi} $n$ times, we get:

\[\adjustbox{scale=0.6}{\begin{tikzcd}
    &&&\color{green}w^{Y\setminus W}\arrow[dl]&&\\
    &&\color{red}z^{X}\arrow[dr]\arrow[drrrrrrrrrrr,bend left=3]&&&&&&&&&&&&&&\\
    &\color{red}b_1^{e_1}\arrow[ur]\arrow[dr]&&\color{red}x^{f_{n-2\searrow1},m_{p-1}}\arrow[ll]\arrow[rr]&&\color{green}f_{n-2}^{f_{n-2}}\arrow[rrrr]\arrow[dl]&&&&\color{green}f_{n-3}^{f_{n-3}}\arrow[dll]\arrow[r]&\cdots\arrow[r]&\color{green}f_1^{f_1}\arrow[rr]&&\color{red}y^{Y}\arrow[dl]\arrow[uullllllllll,shift right,near end]&&&&\\
    \color{red}d_1^{d_1}\arrow[ur]\arrow[dr]&&\color{red}e_1^{a_1}\arrow[ll]\arrow[dl]&&\color{red}b_n^{e_n}\arrow[dr]\arrow[ul]&&&\color{red}b_{n-1}^{e_{n-1},f_{n-2}}\arrow[dr]\arrow[ull]&&&&&\color{red}b_2^{e_2,f_1}\arrow[ul]\arrow[dr]&&&&\\
    &\color{red}c_1^{c_1}\arrow[d,"4"]&&\color{red}d_n^{d_n}\arrow[dr]\arrow[ur]&&\color{red}e_n^{a_n}\arrow[dl]\arrow[ll]&\color{red}d_{n-1}^{d_{n-1}}\arrow[ur]\arrow[dr]&&\color{red}e_{n-1}^{a_{n-1}}\arrow[ll]\arrow[dl]&&&\color{red}d_2^{d_2}\arrow[ur]\arrow[dr]&&\color{red}e_2^{a_2}\arrow[ll]\arrow[dl]&&\\
    &\color{red}a_1^{b_1}\arrow[uur]\arrow[uul]&&&\color{red}c_n^{c_n}\arrow[d,"4"]&&&\color{red}c_{n-1}^{c_{n-1}}\arrow[d,"4"]&&&&&\color{red}c_2^{c_2}\arrow[d,"4"]&&\\
    &&&&\color{red}a_n^{b_n}\arrow[uul]\arrow[uur]&&&\color{red}a_{n-1}^{b_{n-1}}\arrow[uul]\arrow[uur]&&&&&\color{red}a_2^{b_2}\arrow[uul]\arrow[uur]&&
\end{tikzcd}}\]

Step 6(c) consists of $(f_{n-2},f_{n-3},...,f_1)$. Look at the vertices involved and vertices adjacent to them.

\[\adjustbox{scale=0.6}{\begin{tikzcd}
    |[color=red,fill=green]|x^{f_{n-2\searrow1},m_{p-1}}\arrow[rr]&&|[color=green,fill=red]|f_{n-2}^{f_{n-2}}\arrow[rr]\arrow[dl]&&|[color=green,fill=red]|f_{n-3}^{f_{n-3}}\arrow[dl]\arrow[rr]&&\color{green}f_{n-4}^{f_{n-4}}\arrow[r]\arrow[dl]&\cdots\arrow[r]&\color{green}f_1^{f_1}\arrow[rr]&&\color{red}y^{Y}\arrow[dl]\\
    &|[color=red,fill=green]|b_n^{e_n}\arrow[ul]&&|[color=red,fill=green]|b_{n-1}^{e_{n-1},f_{n-2}}\arrow[ul]&&\color{red}b_{n-2}^{e_{n-2},f_{n-3}}\arrow[ul]&&&&\color{red}b_2^{e_2,f_1}\arrow[ul]
\end{tikzcd}}\]

Mutate at $f_{n-2}$:

\[\adjustbox{scale=0.6}{\begin{tikzcd}
    |[color=red,fill=green]|x^{f_{n-3\searrow1},m_{p-1}}\arrow[rrrr,bend left]&&|[color=red,fill=green]|f_{n-2}^{f_{n-2}}\arrow[ll]\arrow[dr]&&|[color=green,fill=red]|f_{n-3}^{f_{n-3}}\arrow[ll]\arrow[rr]&&|[color=green,fill=red]|f_{n-4}^{f_{n-4}}\arrow[r]\arrow[dl]&\cdots\arrow[r]&\color{green}f_1^{f_1}\arrow[rr]&&\color{red}y^{Y}\arrow[dl]\\
    &\color{red}b_n^{e_n}\arrow[ur]&&\color{red}b_{n-1}^{e_{n-1}}\arrow[ll]&&|[color=red,fill=green]|b_{n-2}^{e_{n-2},f_{n-3}}\arrow[ul]&&&&\color{red}b_2^{e_2,f_1}\arrow[ul]
\end{tikzcd}}\]

By observing the pattern at the highlighted vertices, after mutating at $f_{1}$, we have:

\[\adjustbox{scale=0.6}{\begin{tikzcd}
    \color{red}x^{m_{p-1}}\arrow[rrrrrrrrrr,bend left,shift left]&&\color{red}f_{n-2}^{f_{n-2}}\arrow[rr]\arrow[dr]&&\color{red}f_{n-3}^{f_{n-3}}\arrow[dr]\arrow[rr]&&\color{red}f_{n-4}^{f_{n-4}}\arrow[r]&\cdots\arrow[r]&\color{red}f_1^{f_1}\arrow[llllllll,bend right]\arrow[dr]&&\color{red}y^{Y}\arrow[ll]\\
    &\color{red}b_n^{e_n}\arrow[ur]&&\color{red}b_{n-1}^{e_{n-1}}\arrow[ll]&&\color{red}b_{n-2}^{e_{n-2}}\arrow[ulll]&&&&\color{red}b_2^{e_2}\arrow[ull]
\end{tikzcd}}\]

Now look at the entire subdiagram with the vertices relocated:

\[\adjustbox{scale=0.5}{\begin{tikzcd}
    &&&\color{green}w^{Y\setminus W}\arrow[dl]&&\\
    &&\color{red}z^{X}\arrow[rr]\arrow[dr]&&\color{red}y^{Y}\arrow[ul]\arrow[dr]&&&&&&&&&&&\color{red}e_n^{a_n}\arrow[dd]\arrow[dr]\\
    &\color{red}b_1^{e_1}\arrow[ur]\arrow[dr]&&\color{red}x^{m_{p-1}}\arrow[ll]\arrow[ur]&&\color{red}f_1^{f_1}\arrow[ll]\arrow[dr]&&&\color{red}f_2^{f_2}\arrow[dr]\arrow[lll]&&\color{red}f_3^{f_3}\arrow[ll]&\cdots\arrow[l]&\color{red}f_{n-2}^{f_{n-2}}\arrow[l]\arrow[dr]&&\color{red}b_n^{e_n}\arrow[ll]\arrow[ur]&&\color{red}c_n^{c_n}\arrow[r,"4"]&\color{red}a_n^{b_n}\arrow[ull]\arrow[dll]\\
    \color{red}d_1^{d_1}\arrow[ur]\arrow[dr]&&\color{red}e_1^{a_1}\arrow[ll]\arrow[dl]&&&&\color{red}b_2^{e_2}\arrow[dr]\arrow[urr]&&&\color{red}b_3^{e_3}\arrow[dr]\arrow[ur]&&&&\color{red}b_{n-1}^{e_{n-1}}\arrow[ur]\arrow[dr]&&\color{red}d_n^{d_n}\arrow[ul]\arrow[ur]\\
    &\color{red}c_1^{c_1}\arrow[d,"4"]&&&&\color{red}d_2^{d_2}\arrow[ur]\arrow[dr]&&\color{red}e_2^{a_2}\arrow[ll]\arrow[dl]&\color{red}d_3^{d_3}\arrow[ur]\arrow[dr]&&\color{red}e_3^{a_3}\arrow[ll]\arrow[dl]&&\color{red}d_{n-1}^{d_{n-1}}\arrow[ur]\arrow[dr]&&\color{red}e_{n-1}^{a_{n-1}}\arrow[ll]\arrow[dl]\\
    &\color{red}a_1^{b_1}\arrow[uur]\arrow[uul]&&&&&\color{red}c_2^{c_2}\arrow[d,"4"]&&&\color{red}c_3^{c_3}\arrow[d,"4"]&&&&\color{red}c_{n-1}^{c_{n-1}}\arrow[d,"4"]\\
    &&&&&&\color{red}a_2^{b_2}\arrow[uul]\arrow[uur]&&&\color{red}a_3^{b_3}\arrow[uul]\arrow[uur]&&&&\color{red}a_{n-1}^{b_{n-1}}\arrow[uul]\arrow[uur]
\end{tikzcd}}\]

Since we obtained the desired diagram in Lemma \ref{one_puncture_summary}, the mutation sequence $\Delta_n$ is a maximal green sequence. $\blacksquare$

\printbibliography

\end{document}